\documentclass[a4paper]{amsart}
\usepackage[T1]{fontenc}
\usepackage{amstext}
\usepackage{amsthm}
\usepackage{amssymb}
\usepackage{stackrel}
\usepackage[pdfusetitle,
 bookmarks=true,bookmarksnumbered=false,bookmarksopen=false,
 breaklinks=false,pdfborder={0 0 0},pdfborderstyle={},backref=false,colorlinks=false]
 {hyperref}

\makeatletter

\numberwithin{equation}{section}
\numberwithin{figure}{section}

\usepackage{tikz-cd}
\usepackage{mathtools}
\usepackage{adjustbox}
\usepackage{enumitem}
\tikzcdset{scale cd/.style={every label/.append style={scale=#1},
    cells={nodes={scale=#1}}}}
\usepackage{quiver}
\usepackage{xcolor}
\usepackage{eucal}
\usepackage{tikz}
\usetikzlibrary{arrows.meta, positioning, shapes, fit}
\usepackage{mathrsfs}
\newtheorem{innercustomthm}{Theorem}
\newenvironment{customthm}[1]
  {\renewcommand\theinnercustomthm{#1}\innercustomthm}
  {\endinnercustomthm}

\makeatother

\theoremstyle{plain}
\newtheorem{thm}{\protect\theoremname}[section]
\newtheorem{prop}[thm]{\protect\propositionname}
\theoremstyle{remark}
\newtheorem{rem}[thm]{\protect\remarkname}
\theoremstyle{definition}
\newtheorem{defn}[thm]{\protect\definitionname}
\newtheorem{notation}[thm]{\protect\notationname}
\newtheorem{construction}[thm]{\protect\constructionname}
\theoremstyle{plain}
\newtheorem{cor}[thm]{\protect\corollaryname}
\newtheorem{lem}[thm]{\protect\lemmaname}
\theoremstyle{definition}
\newtheorem{recollection}[thm]{\protect\recollectionname}
\newtheorem{example}[thm]{\protect\examplename}
\newtheorem{variant}[thm]{\protect\variantname}
\newtheorem{convention}[thm]{\protect\conventionname}

\providecommand{\constructionname}{Construction}
\providecommand{\conventionname}{Convention}
\providecommand{\corollaryname}{Corollary}
\providecommand{\definitionname}{Definition}
\providecommand{\examplename}{Example}
\providecommand{\lemmaname}{Lemma}
\providecommand{\notationname}{Notation}
\providecommand{\propositionname}{Proposition}
\providecommand{\recollectionname}{Recollection}
\providecommand{\remarkname}{Remark}
\providecommand{\theoremname}{Theorem}
\providecommand{\variantname}{Variant}

\begin{document}
\global\long\def\u#1{\underline{#1}}%

\global\long\def\cat#1{\mathcal{#1}}%

\global\long\def\tild#1{\widetilde{#1}}%

\global\long\def\mrm#1{\mathrm{#1}}%

\global\long\def\pr#1{\left(#1\right)}%

\global\long\def\abs#1{\left|#1\right|}%

\global\long\def\inp#1{\left\langle #1\right\rangle }%

\global\long\def\br#1{\left\{  #1\right\}  }%

\global\long\def\norm#1{\left\Vert #1\right\Vert }%

\global\long\def\hat#1{\widehat{#1}}%

\global\long\def\opn#1{\operatorname{#1}}%

\global\long\def\bigmid{\,\middle|\,}%

\global\long\def\Top{\mathsf{Top}}%

\global\long\def\Set{\mathsf{Set}}%

\global\long\def\SS{\mathsf{sSet}}%

\global\long\def\Kan{\mathsf{Kan}}%

\global\long\def\FB{\mathsf{FB}}%

\global\long\def\Fin{\mathsf{Fin}}%

\global\long\def\urCard{\mathsf{urCard}}%

\global\long\def\LPC{\mathsf{LocPresCat}}%

\global\long\def\RelCat{\mathsf{RelCat}}%

\global\long\def\Rel{\mathcal{R}\mathsf{el}}%

\global\long\def\PermRelCat{\mathsf{PermRelCat}}%

\global\long\def\RelCatlarge{\mathsf{Rel}\hat{\mathsf{Cat}}}%

\global\long\def\SMRelCat{\mathsf{SMRelCat}}%

\global\long\def\SMRelCatlarge{\mathsf{SMRel}\widehat{\mathsf{Cat}}}%

\global\long\def\CMMC{\mathsf{CombMMC}}%

\global\long\def\CSMMC{\mathsf{CombSMMC}}%

\global\long\def\TSMMC{\mathsf{TractSMMC}}%

\global\long\def\CombSymAlg#1{\mathsf{CombSym}#1\text{-}\mathsf{Alg}}%

\global\long\def\CombCentAlg#1{\mathsf{CombCent}#1\text{-}\mathsf{Alg}}%

\global\long\def\BiCat{\mathcal{B}\mathsf{i}\mathcal{C}\mathsf{at}}%

\global\long\def\Cat{\mathcal{C}\mathsf{at}}%

\global\long\def\SM{\mathcal{SM}}%

\global\long\def\Mon{\mathcal{M}\mathsf{on}}%

\global\long\def\LFib{\mathcal{L}\mathsf{Fib}}%

\global\long\def\Cart{\mathcal{C}\mathsf{art}}%

\global\long\def\Pr{\mathcal{P}\mathsf{r}}%

\global\long\def\Del{\mathbf{\Delta}}%

\global\long\def\Sig{\mathbf{\Sigma}}%

\global\long\def\id{\operatorname{id}}%

\global\long\def\Aut{\operatorname{Aut}}%

\global\long\def\End{\operatorname{End}}%

\global\long\def\Hom{\operatorname{Hom}}%

\global\long\def\Env{\operatorname{Env}}%

\global\long\def\Map{\operatorname{Map}}%

\global\long\def\Und{\operatorname{Und}}%

\global\long\def\Ar{\operatorname{Ar}}%

\global\long\def\cEnd{\mathcal{E}\mathrm{nd}}%

\global\long\def\ho{\operatorname{ho}}%

\global\long\def\ob{\operatorname{ob}}%

\global\long\def\-{\text{-}}%

\global\long\def\rep{\mathrm{rep}}%

\global\long\def\op{\mathrm{op}}%

\global\long\def\bi{\mathrm{bi}}%

\global\long\def\sc{\mathrm{sc}}%

\global\long\def\cc{\mathrm{cc}}%

\global\long\def\gr{\mathrm{gr}}%

\global\long\def\act{\mathrm{act}}%

\global\long\def\cof{\mathrm{cof}}%

\global\long\def\fib{\mathrm{fib}}%

\global\long\def\BK{\mathrm{BK}}%

\global\long\def\Seg{\mathrm{Seg}}%

\global\long\def\semi{\mathrm{semi}}%

\global\long\def\weq{\mathrm{weq}}%

\global\long\def\opd{\mathrm{opd}}%

\global\long\def\idem{\mathrm{idem}}%

\global\long\def\rel{\mathrm{rel}}%

\global\long\def\Quilleq{\mathrm{Quill.eq}}%

\global\long\def\loceq{\mathrm{loc.eq}}%

\global\long\def\To{\Rightarrow}%

\global\long\def\rr{\rightrightarrows}%

\global\long\def\rl{\rightleftarrows}%

\global\long\def\mono{\rightarrowtail}%

\global\long\def\epi{\twoheadrightarrow}%

\global\long\def\comma{\downarrow}%

\global\long\def\ot{\leftarrow}%

\global\long\def\from{\colon}%

\global\long\def\corr{\leftrightsquigarrow}%

\global\long\def\lim{\operatorname{lim}}%

\global\long\def\colim{\operatorname{colim}}%

\global\long\def\holim{\operatorname{holim}}%

\global\long\def\hocolim{\operatorname{hocolim}}%

\global\long\def\Ran{\operatorname{Ran}}%

\global\long\def\Lan{\operatorname{Lan}}%

\global\long\def\Ind{\operatorname{Ind}}%

\global\long\def\Fun{\operatorname{Fun}}%

\global\long\def\Fact{\operatorname{Fact}}%

\global\long\def\Perm{\operatorname{Perm}}%

\global\long\def\Alg{\operatorname{Alg}}%

\global\long\def\CAlg{\operatorname{CAlg}}%

\global\long\def\Coll{\operatorname{Coll}}%

\global\long\def\MInd{\mathbf{Ind}}%

\global\long\def\SSeq{\Sigma\mathrm{Seq}}%

\global\long\def\MSSeq{\mathbf{\Sigma Seq}}%

\global\long\def\B{\mathrm{B}}%

\global\long\def\H{\mathrm{H}}%

\global\long\def\Day{\mathrm{Day}}%

\global\long\def\xmono#1#2{\stackrel[#2]{#1}{\rightarrowtail}}%

\global\long\def\xepi#1#2{\stackrel[#2]{#1}{\twoheadrightarrow}}%

\global\long\def\adj{\stackrel[\longleftarrow]{\longrightarrow}{\bot}}%

\global\long\def\w{\wedge}%

\global\long\def\t{\otimes}%

\global\long\def\bt{\boxtimes}%

\global\long\def\ev{\operatorname{ev}}%

\global\long\def\bp{\boxplus}%

\global\long\def\rcone{\triangleright}%

\global\long\def\lcone{\triangleleft}%

\global\long\def\teq{\trianglelefteq}%

\global\long\def\ll{\vartriangleleft}%

\global\long\def\S{\mathsection}%

\global\long\def\p{\prime}%

\global\long\def\pp{\prime\prime}%

\global\long\def\sto{\rightsquigarrow}%

\vspace{-1.5cm}

\title[Equivalence of two approaches to enriched $\infty$-operads]{On the equivalence of Brantner's and Chu--Haugseng's approaches to enriched $\infty$-operads}
\begin{abstract}
We prove that two models of (monochromatic) enriched $\infty$-operads,
due to Brantner and Chu--Haugseng, are equivalent. We show this as
a consequence of the equivalence of two models of monoidal $\infty$-categories
of symmetric sequences and the composition product, due to Brantner
and Haugseng. As a consequence, constructions and results formulated
in either framework, such as notions of algebra and Koszul duality,
are also shown to be equivalent.
\end{abstract}

\author{Kensuke Arakawa}
\email{arakawa.kensuke.22c@st.kyoto-u.ac.jp}
\address{Department of Mathematics, Kyoto University, Kyoto, 606-8502, Japan}

\maketitle
\vspace{-1cm}

\section*{Introduction}

Enriched operads provide a unifying language for algebraic structures
across topology, geometry, algebra, and physics. But their classical
theory is sometimes too rigid to support homotopical applications.
For example, a meaningful homotopy theory of algebras is available
only under strong cofibrancy assumptions on the operad (such as $\Sigma$-cofibrancy).
This motivates the study of \textit{enriched $\infty$-operads}, which
describe algebraic structures in symmetric monoidal $\infty$-categories
and provide a more flexible framework.

But this framework also comes with a problem. A central issue is that
there are many seemingly non-equivalent definitions of enriched $\infty$-operads,
each having its own strengths. Despite substantial progress in the
work of Chu--Haugseng \cite{CH20}, a full comparison between existing
models remains incomplete. 

The goal of this paper is to establish an equivalence of two models
of (monochromatic) enriched $\infty$-operads. This appears to be
the last missing equivalence in the literature.

The models we will compare are those of Chu--Haugseng \cite{CH20}
and Brantner \cite{Brantner_PhD}. Our main result in fact proves
more than a mere comparison of the two models of enriched $\infty$-operads.
To explain this, we recall a standard perspective on operads. Namely,
monochromatic operads in a symmetric monoidal category can be described
as an algebra object in the monoidal category of \textit{symmetric
sequences }and the\textit{ composition product}.\textit{ }Brantner
and Haugseng constructed two seemingly different $\infty$-categorical
refinements of the composition product monoidal structure \cite{Brantner_PhD,Haug22}.
More precisely, to each presentably symmetric monoidal $\infty$-category
$\cat C$, they associated monoidal $\infty$-categories $\SSeq_{\B}\pr{\cat C}$
and $\SSeq_{\H}\pr{\cat C}$. The algebra objects in these monoidal
$\infty$-categories recover Brantner's and Chu--Haugseng's enriched
$\infty$-operads, respectively. Our main theorem then asserts that
these monoidal $\infty$-categories are equivalent:

\begin{customthm}{A}[Theorem \ref{thm:main}]\label{thm:main_intro}

There is a natural equivalence of monoidal $\infty$-categories
\[
\SSeq_{\B}\pr{\cat C}\simeq\SSeq_{\H}\pr{\cat C}.
\]

\end{customthm}

As an immediate consequence, we find that Brantner and Chu--Haugseng's
definitions of monochromatic enriched $\infty$-operads are equivalent.
Moreover, the monoidal equivalence ensures that the two definitions
of Koszul duality of enriched $\infty$-operads and algebras over
enriched $\infty$-operads, based on the two models, are also equivalent
to each other. To our knowledge, this comparison also has not appeared
before.

We note that the two definitions are based on fundamentally different
constructions. Because of this, there is no evident common framework
in which to relate them, explaining why the comparison has not appeared
before.

Perhaps because of their difference, the two approaches also have
complementary strengths. On the one hand, Chu--Haugseng's approach
is well-suited to developing abstract theories of enriched $\infty$-operads,
and a substantial body of such theory has already been developed with
their approach. For example, it is compatible with the classical theory
of $\infty$-operads enriched in spaces, or more generally, nice model
categories \cite[Theorem 5.2.10]{CH20}; algebras over enriched $\infty$-operads
in this sense are also known to be well-behaved and are compatible
with their strict counterpart \cite[Theorem 4.10]{Haug19}; and they
admit a characterization in terms of maps into endomorphism $\infty$-operads
\cite{Haug19}. However, their approach is technically involved and
has seen limited use in applications. On the other hand, Brantner's
approach is comparatively lightweight and has been used extensively
in the literature \cite{FG12,Knu18,araminta22,BCN23,Heuts24,ABH25, heuts_land_formality, LPY26}.
However, it is less suited to develop abstract theories of enriched
$\infty$-operads with his approach. For example, it is not apparent
from the definition that his construction is functorial in the enriching
symmetric monoidal $\infty$-categories! (See Table \ref{tab:A-quick-comparison}
for a summary.) 

This contrast highlights the need for a comparison, which Theorem
\ref{thm:main_intro} provides.

\begin{table}
\begin{tabular}{|c|c|c|}
\hline 
 & Developing abstract theories & Applications\\
\hline 
\hline
Chu--Haugseng & Well-suited & Less commonly used\\
\hline 
Brantner & Less developed & Widely used\\
\hline 
\end{tabular}
\caption{\label{tab:A-quick-comparison}A quick comparison} \end{table}

Our strategy for Theorem \ref{thm:main_intro} is to ``de-localize''
each monoidal $\infty$-category in the theorem to a model category.
That this will produce a natural equivalence is ensured by the main
theorem of \cite{arakawa_pres_smcat_comb_smcat}, which asserts that
the homotopy theory of combinatorial symmetric monoidal model categories
is equivalent to that of presentably symmetric monoidal $\infty$-categories.
At present, we do not know a direct $\infty$-categorical proof of
Theorem \ref{thm:main_intro}. This suggests the genuine subtlety
of the problem and that it is not a consequence of formal arguments.
We also remark that, while the underlying idea is simple, its implementation
requires substantial work, as reflected in the length of the paper.

\subsection*{Organization of the paper}

This paper consists of 6 sections and an appendix.

In Section \ref{sec:statement}, we review the definitions of Brantner
and Haugseng's composition product monoidal structure, and then state
our main theorem precisely. In Section \ref{sec:review_on_loc}, we
recall the definition of monoidal localization and a few related results.
In Section \ref{sec:comp_prod_mod_cat}, we establish a few key results
on Day's convolution product and composition product from a model-categorical
perspective. In Sections \ref{sec:brantner} and \ref{sec:haugseng},
we show that both Brantner and Haugseng's models are characterized
by a certain universal property involving symmetric monoidal model
categories and their localization. In Section \ref{sec:proof}, we
give the proof of the main theorem.

Appendix \ref{sec:oo2} contains a brief summary of results on $\infty$-bicategories
we need in the main body.

\subsection*{Notation and convention}
\begin{itemize}
\item Throughout the paper, we use the word \textbf{$\infty$-category}
as a synonym for quasicategory in the sense of \cite{Joyal_qcat_Kan}.
We will mainly follow \cite{HTT} and \cite{HA} in various terminology
and notation related to $\infty$-categories.
\item We will generally not notationally distinguish between categories
and their nerves, or bicategories with their Duskin nerves (Example
\ref{exa:Duskin}). 
\item If $\cat C$ is an $\infty$-category, we denote its maximal sub Kan
complex by $\cat C^{\simeq}$ and refer to it as the \textbf{core}
of $\cat C$.
\item We write $\Fin$ for the skeleton of the category of finite sets and
set maps. Explicitly, its objects are the sets $\underline{n}=\{1,\dots,n\}$
for $n\geq0$. Various set-theoretical operations will be replaced
by categorical operations in this category. For example, if $f\from S\to T$
is a map in $\Fin$ and $t\in T$, then we write $f^{-1}\pr t\in\Fin$
for the category-theoretic (not set-theoretic) fiber of $f$ over
$t$.
\item We write $\Fin_{\ast}$ for the skeleton of the category of finite
pointed sets and set maps. Explicitly, its objects are $\inp n=\pr{\{\ast,1,\dots,n\},\ast}$
for $n\geq0$.
\item We write $\FB$ for the maximal subgroupoid of $\Fin$.
\item Following \cite{HA}, we use symbols like $\mathcal{C}^{\t}\to\Fin_{\ast}$
to denote a symmetric monoidal $\infty$-category with underlying
$\infty$-category $\mathcal{C}$. We typically denote the unit object
of $\cat C^{\t}$ by $\mathbf{1}_{\cat C}=\mathbf{1}$. We typically
implicitly identify $\Fin_{\ast}$ with the larger category of all
finite pointed sets and pointed maps by choosing an inverse equivalence.
\item If $\cat C^{\t}$ and $\cat D^{\t}$ are symmetric monoidal $\infty$-categories,
we write $\Fun^{\t}\pr{\cat C,\cat D}$ for the $\infty$-category
of symmetric monoidal functors $\cat C^{\t}\to\cat D^{\t}$ \cite[Definition 2.1.3.7]{HA}.
Similar notation will be used for monoidal $\infty$-categories.
\item We write $\Mon\Cat_{\infty}$ for the $\infty$-category of small
monoidal $\infty$-categories. Formally, it is defined as the homotopy
coherent nerve of the simplicial category whose objects are the small
monoidal $\infty$-categories, and whose mapping simplicial sets are
given by $\Fun^{\t}\pr{-,-}^{\simeq}$. (Equivalently, it is the $\infty$-categorical
localization of the ordinary category of monoidal $\infty$-categories
and monoidal functors at equivalences of monoidal $\infty$-categories.)
We define the $\infty$-category of \textit{large} monoidal $\infty$-categories
$\Mon\hat{\Cat}_{\infty}$ similarly.
\item If $\pr{\cat M,\t,\mathbf{1}}$ is a symmetric monoidal category in
the ordinary sense, we write $\cat M^{\t}\to\Fin_{\ast}$ for the
associated symmetric monoidal $\infty$-category \cite[Construction 2.0.0.1]{HA}.
\end{itemize}

\section{\label{sec:statement}Stating the main result}

Let $\cat C$ be a cocomplete closed symmetric monoidal category.
A \textbf{symmetric sequence} in $\cat C$ is a functor $X\from\FB\to\cat C$.
Such a functor consists of a sequence $\pr{X\pr 0,X\pr 1,\dots}$
of objects in $\cat C$, where each $X\pr n$ carries a $\Sigma_{n}$-action,
hence the name. Given two symmetric sequences $X,Y$ in $\cat C$,
their \textbf{composition product} $X\circ Y$ is defined by the formula\footnote{Classically, the composition product is written in the reverse order.
In other words, what we just defined as $X\circ Y$ is often denoted
by $Y\circ X$. We follow \cite{Haug22} in our convention here.}
\begin{align*}
X\circ Y\pr A & =\colim_{f\in A\to B\in\FB\times_{\Fin}\Fin_{A/}}\pr{\bigotimes_{b\in B}X\pr{f^{-1}\pr b}}\otimes Y\pr B.
\end{align*}
The category $\SSeq\pr{\cat C}=\Fun\pr{\FB,\cat C}$ of symmetric
sequences is a monoidal category with respect to $\circ$ and the
unit symmetric sequence $\pr{\emptyset,\mathbf{1},\emptyset,\emptyset,\dots}$.
Monoid objects in this monoidal category are exactly (monochromatic)
operads enriched in $\cat C$.

In this section, we review the $\infty$-categorical enhancements
of the composition product monoidal structure, due to Brantner and
Chu--Haugseng. We then state the main result of this paper, which
compares the two monoidal structures.

\subsection{\label{subsec:Bra}Brantner's model}

Branter's model of composition product is based on the following observation,
found in Trimble's note \cite{Tri_Notes_Lie} and ``Author's Note''
of \cite{MR2177746} (there attributed to Carboni): The groupoid $\FB$
carries a symmetric monoidal structure, given by disjoint union of
sets. This (and its opposite) is the free symmetric monoidal category
on a singleton $\underline{1}$. It follows that the symmetric monoidal
category $\Fun\pr{\FB,\Set}$ of symmetric sequences with the and
Day's convolution product $\bigstar$ is the free cocomplete closed
symmetric monoidal category generated by the unit symmetric sequence
$\mathfrak{X}=\FB\pr{\underline{1},-}$ \cite[Theorem 5.1]{GK86}.
In analogy with ordinary algebra, let us therefore write $\Set[\mathfrak{X}]=\Fun\pr{\FB,\Set}$
for this symmetric monoidal category. The universal property implies
that the category $\Fun^{\t,L}\pr{\Set[\mathfrak{X}],\Set[\mathfrak{X}]}$
of cocontinuous symmetric monoidal functors $\Set[\mathfrak{X}]\to\Set[\mathfrak{X}]$
is equivalent $\Set[\mathfrak{X}]=\SSeq\pr{\Set}$, via evaluation
at $\mathfrak{X}\in\Set[\mathfrak{X}]$. We then have:
\begin{prop}
\label{prop:carboni}The categorical equivalence
\[
\ev_{\mathfrak{X}}\from\Fun^{\t,L}\pr{\Set[\mathfrak{X}],\Set[\mathfrak{X}]}\xrightarrow{\simeq}\SSeq\pr{\Set}
\]
can be enhanced to a monoidal equivalence, where the left-hand carries
the monoidal structure given by composition of symmetric monoidal
functors, and the right-hand side carries the composition product
monoidal structure.
\end{prop}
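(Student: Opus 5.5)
The plan is to transport the monoidal structure along $\ev_{\mathfrak{X}}$ and show that the transported product is the composition product. Since $\ev_{\mathfrak{X}}$ is an equivalence of categories, composition of functors induces a monoidal structure on $\SSeq\pr{\Set}$, unique up to unique isomorphism, making $\ev_{\mathfrak{X}}$ monoidal. Concretely, write $\Phi_{X}$ for the essentially unique cocontinuous symmetric monoidal endofunctor of $\Set[\mathfrak{X}]$ with $\Phi_{X}\pr{\mathfrak{X}}\cong X$. The transported product is then
\[
\ev_{\mathfrak{X}}\pr{\Phi_{X}\circ\Phi_{Y}}=\Phi_{X}\pr Y .
\]
Both sides of the proposition are honest monoidal categories. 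So it suffices to produce a natural isomorphism $\Phi_{X}\pr Y\cong X\circ Y$ compatible with the units and with the associators. Units are immediate: $\Phi_{\mathfrak{X}}\cong\id$ and $\mathfrak{X}=\pr{\emptyset,\mathbf{1},\emptyset,\dots}$.

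The main step is computing $\Phi_{X}\pr Y$. By the Yoneda lemma, every $Y$ is a colimit of representables:
\[
Y\cong\colim_{\pr{B,y}\in\int Y}\FB\pr{B,-},\qquad \FB\pr{B,-}\cong\mathfrak{X}^{\bigstar B}.
\]
The second isomorphism holds because $\FB$ is free symmetric monoidal on $\underline{1}$, so $\FB\pr{B,-}\cong\coprod_{\Sigma_{B}}\ldots$ reduces to the Day convolution power of $\mathfrak{X}$. Since $\Phi_{X}$ is cocontinuous and symmetric monoidal,
\[
\Phi_{X}\pr Y\cong\colim_{\pr{B,y}}X^{\bigstar B}\cong\int^{B\in\FB}X^{\bigstar B}\times Y\pr B.
\]
Evaluating at $A$, the Day convolution power satisfies $X^{\bigstar B}\pr A=\colim_{f\from A\to B}\prod_{b}X\pr{f^{-1}\pr b}$, where $f$ ranges over maps in $\Fin$ and the colimit is over $\Fin_{A/}\times_{\Fin}\FB$ restricted to $B$. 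Combining this with the coend over $B$ gives exactly the colimit formula defining $\pr{X\circ Y}\pr A$ over $\FB\times_{\Fin}\Fin_{A/}$. This yields the natural isomorphism $\mu_{X,Y}\from\Phi_{X}\pr Y\cong X\circ Y$.

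The step I expect to be the main obstacle is checking that $\mu$ is compatible with the associators, i.e. that the associativity constraint of $\circ$ (built from regrouping iterated colimits over chains $A\to B\to C$) matches the strict associativity of functor composition transported along $\ev_{\mathfrak{X}}$. I would avoid a direct diagram chase, since it is messy. Instead I would use the universal property: both structures are determined by their restriction along the cocontinuous symmetric monoidal dependence in the first variable. Specifically, $X\mapsto X\circ-$ yields cocontinuous symmetric monoidal functors $\Phi'_{X}:=X\circ-$. I would verify directly that $X\circ-$ preserves colimits (Day convolution preserves colimits in each variable) and is symmetric monoidal for $\bigstar$, via the standard isomorphism $X\circ\pr{Y\bigstar Z}\cong\pr{X\circ Y}\bigstar\pr{X\circ Z}$, which comes from splitting $B=B_{1}\sqcup B_{2}$. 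Then $\Phi'_{X}\pr{\mathfrak{X}}\cong X$, so $\Phi'_{X}\cong\Phi_{X}$, uniquely as symmetric monoidal functors. Associators on both sides are then identified because any two isomorphisms between cocontinuous symmetric monoidal functors that agree at $\mathfrak{X}$ coincide. Both associators are isomorphisms $\Phi_{X}\Phi_{Y}\pr Z\cong\Phi_{X\circ Y}\pr Z$ of such functors in $Z$, and at $Z=\mathfrak{X}$ both restrict to the identity of $X\circ Y$ up to the unit identification. Hence the pentagon-level compatibility follows formally, giving the monoidal enhancement of $\ev_{\mathfrak{X}}$.
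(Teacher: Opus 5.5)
Your proposal is correct and follows the same route as the paper. Both write $Y\cong\int^{B}\mathfrak{X}^{\bigstar B}\times Y(B)$ via $\FB(B,-)\cong\mathfrak{X}^{\bigstar B}$; the reason for that isomorphism is that the Yoneda embedding $\FB^{\op}\to\Set[\mathfrak{X}]$ is strongly symmetric monoidal, not that $\FB$ is free. Both then use cocontinuity and monoidality of $\Phi_X$ to obtain $\Phi_X(Y)\cong\int^{B}X^{\bigstar B}\times Y(B)\cong X\circ Y$.

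The paper simply asserts that these isomorphisms give the monoidal enhancement, so your coherence argument goes beyond it. That argument is sound, but it uses one input you do not state: the associator of $\circ$ must be a monoidal natural transformation in the last variable, compatible with the isomorphisms $X\circ(Y\bigstar Z)\cong(X\circ Y)\bigstar(X\circ Z)$. This is a genuine check you cannot skip, though it is easier than the direct diagram chase you wanted to avoid.
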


\begin{proof}
Since the Yoneda embedding $\FB^{\op}\to\Set[\mathfrak{X}]$ is symmetric
monoidal, we have $\FB\pr{S,-}\cong\mathfrak{X}^{\bigstar S}$ for
every finite set $S\in\FB$. Therefore, the co-Yoneda lemma gives
us an isomorphism
\[
F\cong\int^{S\in\FB}\mathfrak{X}^{\bigstar S}\cdot F\pr S
\]
natural in $F\in\Set[\mathfrak{X}]$. (A more suggestive notation
for the right-hand side will be $\sum_{n\geq0}F\pr{\underline{n}}\times_{\Sigma_{n}}\mathfrak{X}^{\bt n}$.)
Thus, if $\overline{F},\overline{G}\from\Set[\mathfrak{X}]\to\Set[\mathfrak{X}]$
are objects of $\Fun^{\t,L}\pr{\Set[\mathfrak{X}],\Set[\mathfrak{X}]}$
with images $F,G\in\SSeq\pr{\Set}$, then we have
\begin{align*}
\ev_{\mathfrak{X}}\pr{\overline{F}\circ\overline{G}} & =\overline{F}\pr G\\
 & \cong\overline{F}\pr{\int^{S\in\FB}\mathfrak{X}^{\bigstar S}\cdot G\pr S}\\
 & \cong\int^{S\in\FB}F^{\bigstar S}\cdot G\pr S\\
 & \cong F\circ G.
\end{align*}
These isomorphisms and the identity morphism $\ev_{\mathfrak{X}}\pr{\id_{\Set[\mathfrak{X}]}}=\mathfrak{X}$
enhances $\ev_{\mathfrak{X}}$ to a monoidal functor.
\end{proof}

The goal of this subsection is to describe the $\infty$-categorical
generalization of this story, due to Brantner.
\begin{rem}
As is clear from the above discussion, Brantner's model lives in the
world of $\pr{\infty,2}$-categories. In what follows, we will use
$\infty$-bicategories as our preferred model of $\pr{\infty,2}$-categories.
A brief summary of this model can be found in Appendix \ref{sec:oo2}.
We will also make a few references to Lemma \ref{lem:Fun(A.-)}. No
circularity will result from these forward references.
\end{rem}

To state the definition of Brantner's composition product monoidal
structure, we need to introduce a bit of notation.
\begin{defn}
A \textbf{presentably symmetric monoidal $\infty$-category} is a
symmetric monoidal $\infty$-category $\cat C^{\t}$ satisfying the
following pair of conditions:
\begin{itemize}
\item The $\infty$-category $\cat C$ is presentable.
\item The tensor bifunctor $\cat C\times\cat C\to\cat C$ preserves small
colimits in each variable.
\end{itemize}
\end{defn}

\begin{defn}
We define an $\infty$-bicategory $\Pr\SM^{\pr 2}$ to be the one
associated with (via Recollection \ref{recall:Bergner}) the simplicial
category whose:
\begin{itemize}
\item objects are the presentably symmetric monoidal $\infty$-categories;
and
\item whose hom-simplicial set from $\cat C^{\t}$ to $\cat D^{\t}$ is
given by the full subcategory $\Fun^{\t,L}\pr{\cat C,\cat D}\subset\Fun^{\t}\pr{\mathcal{C},\mathcal{D}}$
spanned by the symmetric monoidal functors whose underlying functor
$\cat C\to\cat D$ preserves small colimits.
\end{itemize}
The underlying $\infty$-category of $\Pr\SM^{\pr 2}$ will be denoted
by $\Pr\SM$.
\end{defn}

\begin{notation}
We denote the binary coproduct in $\Pr\SM$ by $\otimes$. More generally,
if $\cat B^{\t}\ot\cat A^{\t}\to\cat C^{\t}$ are maps in $\Pr\SM$,
we denote the pushout by $\pr{\cat B\otimes_{\cat A}\cat C}^{\t}$.
Note that it exists by \cite[Corollary 3.2.3.3]{HA} and \cite[Theorem 5.5.3.18, Corollary 5.5.3.4, and Remark 5.5.3.9]{HTT}.
\end{notation}

\begin{rem}
\label{rem:po_oo.2}Pushouts in $\Pr\SM$ satisfies the following
$\pr{\infty,2}$-universal property: Given maps $\cat B^{\t}\ot\cat A^{\t}\to\cat C^{\t}$
in $\Pr\SM$ and an object $\cat Z^{\t}\in\Pr\SM$, the functor
\[
\theta_{\cat Z}:\Fun^{\t,L}\pr{\cat B\otimes_{\cat A}\cat C,\cat Z}\to\Fun^{\t,L}\pr{\cat B,\cat Z}\times_{\Fun^{\t,L}\pr{\cat A,\cat Z}}\Fun^{\t,L}\pr{\cat C,\cat Z}
\]
is a categorical equivalence. To see this, it suffices to show that
for every $\infty$-category $\mathcal{X}$, the functor $\Fun\pr{\mathcal{X},\theta}^{\simeq}$
is a homotopy equivalence. But we can identify $\Fun\pr{\cat X,\theta}^{\simeq}$
with $\pr{\theta_{\Fun\pr{\cat X,\cat Z}}}^{\simeq}$, which is a
homotopy equivalence by the definition of pushouts. (Here $\Fun\pr{\cat X,\cat Z}^{\t}$
is defined as the fiber product $\Fun\pr{\cat X,\cat Z^{\t}}\times_{\Fun\pr{\cat X,\Fin_{\ast}}}\Fin_{\ast}$.) 
\end{rem}

\begin{notation}
Let $\cat A^{\t}$ be a small symmetric monoidal $\infty$-category.
According to \cite[Corollary 4.8.1.12]{HA}, the $\infty$-category
$\Fun\pr{\cat A,\mathcal{S}}$ admits a presentably symmetric monoidal
structure which is characterized by the property that the Yoneda embedding
$\cat A^{\op}\to\Fun\pr{\cat A,\mathcal{S}}$ can be enhanced to a
symmetric monoidal functor. We let $\Fun\pr{\cat A,\mathcal{S}}^{\bigstar}$
denote corresponding symmetric monoidal $\infty$-category. 

If $\cat C^{\t}$ is a presentably symmetric monoidal $\infty$-category,
we define another presentably symmetric monoidal $\infty$-category
$\Fun\pr{\cat A,\cat C}^{\bigstar}$ and a symmetric monoidal functor
$i_{\cat A,\cat C}$ by the pushout (or coproduct)
\[\begin{tikzcd}
	{\mathcal{S}^\times } & {\operatorname{Fun}(\mathcal{A},\mathcal{S})^{\bigstar }} \\
	{\mathcal{C}^\otimes } & {\operatorname{Fun}(\mathcal{A},\mathcal{C})^{\bigstar}}
	\arrow[from=1-1, to=1-2]
	\arrow[from=1-1, to=2-1]
	\arrow["\ulcorner"{description, pos=1}, draw=none, from=1-1, to=2-2]
	\arrow[from=1-2, to=2-2]
	\arrow["{i_{\mathcal{A},\mathcal{C}}}"', from=2-1, to=2-2]
\end{tikzcd}\](Note that the underlying $\infty$-category of $\Fun\pr{\cat A,\cat C}^{\bigstar}$
is equivalent to $\Fun\pr{\cat A,\cat C}$ by Lemma \ref{lem:Fun(A.-)},
justifying the notation.)
\end{notation}

\begin{notation}
\label{nota:symseq_day}Given a presentably symmetric monoidal $\infty$-category
$\cat C^{\t}$, we write 
\[
\cat C[\mathfrak{X}]^{\bigstar}=\Fun\pr{\FB,\cat C}^{\bigstar},
\]
and let $\mathfrak{X}\in\mathcal{C}[\mathfrak{X}]$ denote the image
of the unit symmetric sequence $\mathfrak{X}=\pr{\emptyset,\underline{1},\emptyset,\emptyset,\dots}\in\Fun\pr{\FB,\mathcal{S}}$.
We also write $i_{\cat C}=i_{\FB^{\op},\cat C}$ for the symmetric
monoidal functor $\cat C^{\t}\to\mathcal{C}[\mathfrak{X}]^{\bigstar}$.
\end{notation}

\begin{rem}
In the situation of Notation \ref{nota:symseq_day}, Lemma \ref{lem:Fun(A.-)}
shows that an object of $\cat C[\mathfrak{X}]$ can informally be
written as a sequence $\pr{X\pr 0,X\pr 1,\dots}$ of objects in $\cat C$,
where each $X\pr n$ carries a $\Sigma_{n}$-action. The functor $i_{\cat C}$
of Notation \ref{nota:symseq_day} is then given by the formula
\[
i_{\cat C}\pr C=\pr{C,\emptyset,\emptyset,\dots}.
\]
\end{rem}

We can now define Brantner's composition product monoidal structure.
\begin{defn}
Let $\mathcal{C}^{\t}$ be a presentably symmetric monoidal $\infty$-category.
We define a monoidal $\infty$-category $\SSeq_{\B}\pr{\mathcal{C}}^{\circ}$
as the endomorphism monoidal $\infty$-category (Definition \ref{def:end})
of the $\infty$-bicategory $\CAlg^{\pr 2}_{\cat C}=\pr{\Pr\SM^{\pr 2}}^{\cat C^{\t}/}$
(Example \ref{exa:slice}) at $\cat C[\mathfrak{X}]^{\bigstar}$.
In symbols, we have
\[
\SSeq_{\B}\pr{\cat C}^{\circ}=\cEnd_{\CAlg^{\pr 2}_{\cat C}}\pr{\cat C[\mathfrak{X}]^{\bigstar}}^{\circ}.
\]
\end{defn}

\begin{rem}
\label{rem:Brantner_functorial}The assignment $\cat C^{\t}\mapsto\SSeq_{\B}\pr{\cat C}^{\circ}$
can be assembled into a functor
\[
\SSeq_{\B}\pr -^{\circ}\from\Pr\SM\to\Mon\hat{\Cat}_{\infty}
\]
as follows: Define an $\infty$-bicategory $\int^{\Pr\SM}\CAlg^{\pr 2}_{\bullet}$
by the pullback
\[\begin{tikzcd}
	{\int^{\mathcal{P}\mathsf{r}\mathcal{SM}}\operatorname{CAlg}_{\bullet}^{(2)}} & {\operatorname{Fun}^{\mathrm{bi}}([1],\mathcal{P}\mathsf{r}\mathcal{SM}^{(2)})} \\
	{\mathcal{P}\mathsf{r}\mathcal{SM}} & {\mathcal{P}\mathsf{r}\mathcal{SM}^{(2)}.}
	\arrow[from=1-1, to=1-2]
	\arrow["\pi"', from=1-1, to=2-1]
	\arrow["\lrcorner"{description, pos=0}, draw=none, from=1-1, to=2-2]
	\arrow["{\operatorname{ev}_0}", from=1-2, to=2-2]
	\arrow[from=2-1, to=2-2]
\end{tikzcd}\]By Example \ref{exa:cc_po} and Remark \ref{rem:po_oo.2}, the functor
$\pi$ is a cocartesian fibration (Definition \ref{def:cart}). We
let $\CAlg^{\pr 2}_{\bullet}\from\Pr\SM\to\BiCat_{\infty}$ denote
the straightening (Theorem \ref{thm:st}) of $\pi$.

Since $\cat S^{\times}\in\Pr\SM$ is initial, there is an (essentially
unique) cocartesian section $\sigma$ of $\pi$ which carries $\cat S^{\times}$
to the inclusion
\[
i_{\cat S}\from\cat S^{\times}\hookrightarrow\cat S[\mathfrak{X}]^{\bigstar}.
\]
Via straightening, the section $\sigma$ lifts the functor $\CAlg^{\pr 2}_{\bullet}$
to a functor
\[
\Pr\SM\to\pr{\BiCat_{\infty}}_{[0]/}.
\]
Composing this with the functor $\cEnd\from\pr{\BiCat_{\infty}}_{[0]/}\to\Mon\Cat_{\infty}$
of Definition \ref{def:end}, we get the desired functor
\[
\SSeq_{\B}\pr -^{\circ}\from\Pr\SM\to\Mon\Cat_{\infty}.
\]
\end{rem}

The notation $\SSeq_{\B}\pr{\mathcal{C}}$ is justified by the following
proposition:
\begin{prop}
\label{prop:SSeq_B_underlying}Let $\cat C^{\t}$ be a presentably
symmetric monoidal $\infty$-category. Then $\cat C[\mathfrak{X}]^{\bigstar}\in\CAlg^{\pr 2}_{\cat C}$
is freely generated by $\mathfrak{X}$ in the following sense: For
every $\cat D^{\t}\in\CAlg^{\pr 2}_{\cat C}$, the evaluation at $\mathfrak{X}$
induces a categorical equivalence\footnote{Here $\CAlg^{\pr 2}_{\cat C}\pr{\cat C[\mathfrak{X}],\cat D}$ is
a shorthand for $\CAlg^{\pr 2}_{\cat C}\pr{\cat C[\mathfrak{X}]^{\bigstar},\cat D^{\t}}$.}
\[
\CAlg^{\pr 2}_{\cat C}\pr{\cat C[\mathfrak{X}],\cat D}\xrightarrow{\simeq}\cat D.
\]
In particular, the evaluation at $\mathfrak{X}$ gives a categorical
equivalence
\[
\SSeq_{\B}\pr{\cat C}\xrightarrow{\simeq}\cat C[\mathfrak{X}]\simeq\Fun\pr{\FB,\mathcal{C}}.
\]
\end{prop}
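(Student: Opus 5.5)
The plan is to reduce the universal property to two known universal properties: that of pushouts in $\Pr\SM$ (Remark \ref{rem:po_oo.2}) and that of Day convolution on presheaves (\cite[Corollary 4.8.1.12]{HA}), together with the fact that $\FB$ is the free symmetric monoidal $\infty$-category on one object.

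First, describe the mapping $\infty$-categories of the slice $\CAlg^{\pr 2}_{\cat C}=\pr{\Pr\SM^{\pr 2}}^{\cat C^{\t}/}$. By the description of slices in Example \ref{exa:slice}, $\CAlg^{\pr 2}_{\cat C}\pr{\cat C[\mathfrak{X}],\cat D}$ is the fiber of
\[
\Fun^{\t,L}\pr{\cat C[\mathfrak{X}],\cat D}\to\Fun^{\t,L}\pr{\cat C,\cat D}
\]
over the structure map $\cat C^{\t}\to\cat D^{\t}$. Since $\cat C[\mathfrak{X}]^{\bigstar}$ is the pushout of $\cat C^{\t}\ot\cat S^{\times}\to\Fun\pr{\FB,\cat S}^{\bigstar}$, Remark \ref{rem:po_oo.2} identifies $\Fun^{\t,L}\pr{\cat C[\mathfrak{X}],\cat D}$ with $\Fun^{\t,L}\pr{\cat C,\cat D}\times_{\Fun^{\t,L}\pr{\cat S,\cat D}}\Fun^{\t,L}\pr{\Fun\pr{\FB,\cat S},\cat D}$. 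Since $\cat S^{\times}$ is initial in $\Pr\SM$, the $\infty$-category $\Fun^{\t,L}\pr{\cat S,\cat D}$ is contractible. I will check this at the level of mapping $\infty$-categories, not merely mapping spaces: apply the argument of Remark \ref{rem:po_oo.2} with $\Fun\pr{\cat X,\cat D}^{\t}$. Hence the fiber is equivalent to $\Fun^{\t,L}\pr{\Fun\pr{\FB,\cat S},\cat D}$, compatibly with evaluation at $\mathfrak{X}$.

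Second, use the universal property of Day convolution. By \cite[Corollary 4.8.1.12]{HA}, or more precisely the universal property of $\cat P\pr{\cat A}$ with its symmetric monoidal structure (\cite[Proposition 4.8.1.10]{HA}), restriction along the symmetric monoidal Yoneda embedding $\FB^{\op}\to\Fun\pr{\FB,\cat S}$ gives an equivalence
\[
\Fun^{\t,L}\pr{\Fun\pr{\FB,\cat S},\cat D}\simeq\Fun^{\t}\pr{\FB^{\op},\cat D}.
\]
Then $\FB^{\op}\simeq\FB$ is the free symmetric monoidal $\infty$-category on one object, namely the symmetric monoidal envelope of the trivial $\infty$-operad, \cite[Remark 2.4.3.? / Proposition 2.2.4.9]{HA}. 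So evaluation at $\underline 1$ gives $\Fun^{\t}\pr{\FB^{\op},\cat D}\simeq\Fun\pr{\Delta^0,\cat D}\simeq\cat D$. Since the Yoneda image of $\underline 1$ is $\mathfrak{X}$, the composite of these equivalences is evaluation at $\mathfrak{X}$. This proves the first claim.

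For the final sentence, take $\cat D=\cat C[\mathfrak{X}]$. The underlying $\infty$-category of $\cEnd$ at an object is its endomorphism $\infty$-category (Definition \ref{def:end}), so $\SSeq_{\B}\pr{\cat C}\simeq\CAlg^{\pr 2}_{\cat C}\pr{\cat C[\mathfrak{X}],\cat C[\mathfrak{X}]}\simeq\cat C[\mathfrak{X}]$. Lemma \ref{lem:Fun(A.-)} then identifies $\cat C[\mathfrak{X}]$ with $\Fun\pr{\FB,\cat C}$.

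The main obstacle is the bookkeeping in the first step. It requires making precise that hom $\infty$-categories in the $\infty$-bicategorical slice are fibers of the corresponding restriction functors. It also requires that the $(\infty,2)$-pushout property can be restricted to fibers. I expect this to follow from the appendix's description of slices, since the functors involved are isofibrations of quasicategories (or can be replaced by them), so the strict fibers compute the homotopy fibers.
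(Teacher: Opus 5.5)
Your proposal is correct and follows essentially the same route as the paper: first use the $(\infty,2)$-universal property of the pushout (Remark \ref{rem:po_oo.2}) together with the slice mapping-category description to reduce to $\cat C^{\t}=\cat S^{\times}$, then chain three equivalences. These are the Day convolution universal property, the identification of $\FB$ with the symmetric monoidal envelope of $\mathsf{Triv}^{\t}$, and $\Alg_{\mathsf{Triv}}(\cat D)\simeq\cat D$. The bookkeeping step you flag, that hom $\infty$-categories in the slice are fibers of the restriction functors, is exactly what the paper handles by citing Proposition \ref{prop:mappingcat_arrow}.
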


\begin{proof}
Remark \ref{rem:po_oo.2} and Proposition \ref{prop:mappingcat_arrow}
gives an equivalence 
\[
\CAlg^{\pr 2}_{\cat C}\pr{\cat C[\mathfrak{X}],\cat D}\xrightarrow{\simeq}\Fun^{\t,L}\pr{\cat S[\mathfrak{X}],\cat D},
\]
so we are reduced to the case where $\cat C^{\t}=\cat S^{\times}$.
The universal property of the Day convolution symmetric monoidal structure
(which follows from the discussion in \cite[Corollary 4.8.1.12]{HA}
and an argument similar to Remark \ref{rem:po_oo.2}) gives an equivalence
\[
\Fun^{\t,L}\pr{\cat S[\mathfrak{X}],\cat D}\xrightarrow{\simeq}\Fun^{\t}\pr{\FB,\cat D}.
\]
Since $\FB^{\op,\amalg}\cong\FB^{\amalg}$ can be identified with
the symmetric monoidal envelope (Construction \ref{const:env}) of
the trivial $\infty$-operad $\mathsf{Triv}^{\t}$ \cite[Example 2.1.1.20]{HA},
we further have an equivalence 
\[
\Fun^{\t}\pr{\FB,\cat D}\xrightarrow{\simeq}\Alg_{\mathsf{Triv}}\pr{\cat D}.
\]
By \cite[Example 2.1.3.5]{HA}, the evaluation at the unique object
of $\mathsf{Triv}^{\t}$ gives an equivalence 
\[
\Alg_{\mathsf{Triv}}\pr{\cat D}\xrightarrow{\simeq}\cat D.
\]
The resulting equivalence $\Fun^{\t,L}\pr{\cat S[\mathfrak{X}],\cat D}\xrightarrow{\simeq}\cat D$
is given by the evaluation at $\mathfrak{X}$, and the claim follows.
\end{proof}

\subsection{\label{subsec:Haug}Haugseng's model}

We now turn to Haugseng's model of composition product. In contrast
to Brantner's model, Haugseng's model is characterized by a universal
property of maps into it. To state it, we must introduce a bit of
notation.
\begin{defn}
\cite[Definition 4.1.1]{Haug22}\label{def:Haug22_4.1} We define
a category $\Del_{\mathbb{F}}$ of \textbf{forests} as follows:
\begin{enumerate}
\item Its objects are the finite sequences $S_{0}\to\cdots\to S_{n}$ of
maps of finite sets in $\Fin$, where $n\geq0$. We think of such
a sequence as a forest whose edges are the elements of $\coprod_{i\geq0}S_{i}$
and whose vertices are the elements of $\coprod_{i>1}S_{i}$.
\item A morphism $\pr{S_{0}\to\cdots\to S_{n}}\to\pr{T_{0}\to\cdots\to T_{m}}$
is given by a morphism $\phi\from[n]\to[m]$ of $\Del$ and injective
set maps $\{u_{i}\from S_{i}\to T_{\phi\pr i}\}_{0\leq i\leq n}$
such that, for each $0\leq i\leq j\leq n$, the square 
\[\begin{tikzcd}
	{S_i} & {S_j} \\
	{T_{\phi(i)}} & {T_{\phi(j)}}
	\arrow[from=1-1, to=1-2]
	\arrow[from=1-1, to=2-1]
	\arrow[from=1-2, to=2-2]
	\arrow[from=2-1, to=2-2]
\end{tikzcd}\]is commutative and cartesian. 
\end{enumerate}
\end{defn}

In the theory of operads, the category $\Del^{\op}_{\mathbb{F}}$
roughly plays the role that $\Del^{\op}$ plays for categories. Informally,
the map $\phi$ decomposes $T_{\bullet}$ into a bunch of subtrees
(with boundaries in $T_{\phi\pr i}$ and $T_{\phi\pr{i+1}}$), and
the maps $u_{i}$ carry each of these subtrees into subcorollas of
$S_{\bullet}$ with matching boundaries or discard it entirely.
\begin{defn}
We define the ``vertex functor'' $V\from\Del^{\op}_{\mathbb{F}}\to\Fin_{\ast}$
by the formula
\[
V\pr{S_{0}\to\cdots\to S_{n}}=\pr{\coprod_{i>0}S_{i}}_{\ast},
\]
where $\pr -_{\ast}\from\Fin\to\Fin_{\ast}$ adds a disjoint basepoint.
Given a morphism $\pr{\phi,\{u_{i}\}_{i}}$ as in Definition \ref{def:Haug22_4.1},
the map $\pr{\coprod_{j>0}T_{j}}_{\ast}\to\pr{\coprod_{i>0}S_{i}}_{\ast}$
is defined as follows: Let $0<i\leq n$ and $s\in S_{j}$. Then the
preimage of $s$ is $\pr{\coprod_{\phi\pr{j-1}<k\leq\phi\pr j}T_{k}}_{u_{i}\pr s}$,
where the subscript indicates the preimage over $u_{i}\pr s\in T_{\phi\pr i}$. 
\end{defn}

\begin{defn}
\cite[Notation 4.2.7]{Haug22} Let $\cat O^{\t}\to\Del^{\op}$ be
a non-symmetric $\infty$-operad. A morphism of $\cat O^{\t}\times_{\Del^{\op}}\Del^{\op}_{\mathbb{F}}$
is called \textbf{operadic inert} if its image in $\cat O^{\t}$ is
inert. Given an $\infty$-operad $\cat C^{\t}\to\Fin_{\ast}$, we
write $\Alg^{\opd}_{\cat O^{\t}\times_{\Del^{\op}}\Del^{\op}_{\mathbb{F}}}\pr{\cat C}$
for the full subcategory of $\Fun_{\Fin_{\ast}}\pr{\cat O^{\t}\times_{\Del^{\op}}\Del^{\op}_{\mathbb{F}},\cat C^{\t}}$
spanned by the maps carrying operadic inert maps to inert maps of
$\cat C^{\t}$.
\end{defn}

With these definitions, we can state the universal property of Haugseng's
composition product monoidal structure.
\begin{thm}
\cite[Corollary 4.2.9]{Haug22}\label{thm:haug} Let $\cat C^{\t}$
be a symmetric monoidal $\infty$-category compatible with colimits
indexed by small $\infty$-groupoids. There is a monoidal $\infty$-category
$\SSeq_{\H}\pr{\cat C}^{\circ}$ characterized by the equivalence
\[
\Alg^{\opd}_{\cat O^{\t}\times_{\Del^{\op}}\Del^{\op}_{\mathbb{F}}}\pr{\cat C}\simeq\Alg_{\cat O}\pr{\SSeq_{\H}\pr{\cat C}}
\]
natural in the non-symmetric $\infty$-operad $\cat O^{\t}$. Moreover:
\begin{enumerate}
\item The construction is functorial in the variable $\cat C^{\t}$ and
symmetric monoidal functors preserving colimits indexed by $\infty$-groupoids.
\item There is an equivalence $\SSeq_{\H}\pr{\cat C}\simeq\Fun\pr{\FB,\cat C}$
which is functorial in the sense of (1).
\end{enumerate}
\end{thm}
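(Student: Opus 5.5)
This theorem is quoted from \cite[Corollary 4.2.9]{Haug22}, so the plan is to reconstruct Haugseng's argument rather than give a new one. The idea is to show that the functor
\[
\cat O^{\t}\mapsto\Alg^{\opd}_{\cat O^{\t}\times_{\Del^{\op}}\Del^{\op}_{\mathbb{F}}}\pr{\cat C}
\]
is representable in the $\infty$-category of non-symmetric $\infty$-operads. I will build the representing object explicitly as a generalized non-symmetric $\infty$-operad over $\Del^{\op}$. Then I will check that it is in fact a monoidal $\infty$-category, i.e.\ a cocartesian fibration, using the hypothesis that $\cat C^{\t}$ has tensor products compatible with groupoid-indexed colimits.

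First, consider the functor $\Del^{\op}_{\mathbb{F}}\to\Del^{\op}$ together with $V\from\Del^{\op}_{\mathbb{F}}\to\Fin_{\ast}$. Form the relative right adjoint (a relative Kan extension) of $\cat C^{\t}$ along this span. Concretely, define $\SSeq_{\H}\pr{\cat C}^{\circ}\to\Del^{\op}$ so that its fiber over $[n]$ is the $\infty$-category of functors from the fiber $\pr{\Del^{\op}_{\mathbb{F}}}_{[n]}$ to $\cat C^{\t}$ over $\Fin_{\ast}$ that send operadic inert maps to inert maps. By construction, maps $\cat O^{\t}\to\SSeq_{\H}\pr{\cat C}^{\circ}$ over $\Del^{\op}$ correspond to functors $\cat O^{\t}\times_{\Del^{\op}}\Del^{\op}_{\mathbb{F}}\to\cat C^{\t}$ over $\Fin_{\ast}$. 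The inertness conditions match on the two sides, which gives the displayed equivalence naturally in $\cat O^{\t}$.

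The fiber over $[1]$ consists of forests of height one with inert-compatible labels. These unwind to $\Fun\pr{\FB,\cat C}$: a corolla $S_{0}\to S_{1}=\underline{1}$ is labelled by $X\pr{S_{0}}$, and automorphisms of the corolla give the $\Sigma_{n}$-actions. This proves (2) at the level of underlying $\infty$-categories. The Segal condition over $[n]$ follows in the same way, by decomposing along inert maps.

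The main obstacle is showing that the resulting fibration over $\Del^{\op}$ is cocartesian over active maps, so that it is a genuine monoidal $\infty$-category and not merely an $\infty$-operad. For the active map $[1]\to[2]$, the candidate cocartesian lift is an operadic left Kan extension along the composition-of-forests map. Its value is a colimit over the groupoid of two-level forests with fixed leaves, so it is groupoid-indexed; this is where the hypothesis that $\otimes$ preserves groupoid-indexed colimits is used. I would verify this via Lurie's criterion for the existence of operadic left Kan extensions \cite[\S 3.1.3]{HA}, checking that the relevant slice categories are equivalent to $\infty$-groupoids. The resulting formula is exactly the composition product.

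Finally, for functoriality in (1): a symmetric monoidal functor $\cat C^{\t}\to\cat D^{\t}$ induces a map of representing objects by postcomposition. It preserves the cocartesian lifts because it preserves the groupoid-indexed colimits used above, so it is a monoidal functor. Naturality of the identification with $\Fun\pr{\FB,-}$ is then clear from the fiberwise description.
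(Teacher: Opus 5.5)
\textbf{The gap: your fibers are wrong, so the Segal condition fails.} You take the relative right adjoint along $\Del^{\op}_{\mathbb{F}}\to\Del^{\op}$ itself. The fiber of your object over $[n]$ is therefore a full subcategory of $\Fun_{\Fin_{\ast}}\pr{\pr{\Del^{\op}_{\mathbb{F}}}_{[n]},\cat C^{\t}}$. But in a morphism of $\Del_{\mathbb{F}}$ lying over $\id_{[n]}$ all squares are cartesian, so $S_{i}\cong T_{i}\times_{T_{n}}S_{n}$. Hence the only non-invertible maps in this fiber are restrictions of a forest to a union of some of its trees.

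A functor on $\pr{\Del^{\op}_{\mathbb{F}}}_{[n]}$ sending these maps to inert maps is then just an equivariant labelling of the vertices of each height-$n$ tree, with no locality. The maps relating $S_{0}\to\cdots\to S_{n}$ to its one-level slices $S_{i-1}\to S_{i}$ lie over the inert maps $[n]\to[i-1,i]$ of $\Del^{\op}$, so they are not in the fiber. Over $[1]$ you do get $\Fun\pr{\FB,\cat C}$. Over $[2]$, however, the label at a root may depend on the grandchildren, so the fiber is not $\Fun\pr{\FB,\cat C}^{2}$. Your claim that the Segal condition ``follows in the same way, by decomposing along inert maps'' is therefore false. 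Your object is not a non-symmetric $\infty$-operad and cannot be the representing object in that $\infty$-category. The adjunction only looks right at the level of marked simplicial sets because maps out of a genuine operad $\cat O^{\t}$ land in local labellings, since inert maps of $\cat O^{\t}$ have cocartesian lifts.

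\textbf{The missing idea and its cost.} The construction in Section~\ref{sec:haugseng} fixes exactly this by replacing $\Del^{\op}_{\mathbb{F}}$ with $\Sigma\Del^{\op}_{\mathbb{F}}=\Sigma\times_{\Del^{\op}}\Del^{\op}_{\mathbb{F}}$ (pulled back along $\ev_{1}$). Here $\Sigma\subset\Fun\pr{[1],\Del^{\op}}$ is spanned by the inert arrows and lies over $\Del^{\op}$ via $\ev_{0}$, a cartesian fibration whose fiber over $[n]$ is the poset of subintervals. The fiber over $[n]$ now contains the maps from $\pr{\id_{[n]},S_{0}\to\cdots\to S_{n}}$ to $\pr{[n]\to[i-1,i],S_{i-1}\to S_{i}}$. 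Requiring every morphism to go to an inert map then forces the locality you need. This is Proposition~\ref{prop:SSeq_opd}; identifying the cocartesian lifts over inert maps there needs \cite[Proposition B.4.9]{HA}, since \cite[Theorem B.4.2]{HA} does not apply.

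The price is that maps $\cat P^{\t}\to\SSeq_{\H}\pr{\cat C}^{\circ}$ now correspond to functors on $\cat P^{\t}\times_{\Del^{\op}}\Sigma\Del^{\op}_{\mathbb{F}}$, not on $\cat P^{\t}\times_{\Del^{\op}}\Del^{\op}_{\mathbb{F}}$. One therefore needs Proposition~\ref{prop:SSeq_univ}: restriction along the diagonal $\Del^{\op}\to\Sigma$ is a weak equivalence of $\infty$-preoperads. This uses that $\pr{\cat P^{\t,\natural}}^{[1]^{\sharp}}\to\cat P^{\t,\natural}\times_{\pr{\Del^{\op,\natural}}^{\{0\}^{\sharp}}}\pr{\Del^{\op,\natural}}^{[1]^{\sharp}}$ is a trivial fibration \cite[Proposition B.1.9]{HA}.

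Your monoidality step is in the right spirit. The paper likewise reduces to left Kan extensions over groupoids of factorizations, via $\cat G$ and $\cat F$ in Proposition~\ref{prop:SSeq_H_loc} and Corollary~\ref{cor:SSeq_Haug}. But you must treat local lifts over all active maps to $[1]$ and show that they compose; the single case $[2]\to[1]$ is not enough. (You wrote that case in the $\Del$ direction, as $[1]\to[2]$.)
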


\subsection{\label{subsec:main_result}Main Result}

We can now state the main result of this paper.
\begin{thm}
\label{thm:main}There is an equivalence of monoidal $\infty$-categories
\[
\SSeq_{\B}\pr{\cat C}^{\circ}\simeq\SSeq_{\H}\pr{\cat C}^{\circ}
\]
natural in $\cat C^{\otimes}\in\Pr\SM$.
\end{thm}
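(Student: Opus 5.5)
We follow the strategy announced in the introduction: both monoidal $\infty$-categories are obtained by localizing a single strict monoidal model category of symmetric sequences, and naturality is secured by the equivalence between combinatorial symmetric monoidal model categories and presentably symmetric monoidal $\infty$-categories of \cite{arakawa_pres_smcat_comb_smcat}. Concretely, the first step is to replace $\cat C^{\t}$, naturally in $\cat C^{\t}\in\Pr\SM$, by a combinatorial symmetric monoidal model category $\mathbf{M}$ with $\mathbf{M}[W^{-1}]\simeq\cat C$. One can take $\mathbf{M}$ tractable and, if needed, with cofibrant unit, using \cite{arakawa_pres_smcat_comb_smcat} to make this assignment a functor of $\infty$-categories $\Pr\SM\to\CSMMC[\Quilleq^{-1}]$, or at least an inverse to localization. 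On $\mathbf{M}$ we form the strict category $\SSeq(\mathbf{M})=\Fun(\FB,\mathbf{M})$ with the projective model structure, the Day convolution $\bigstar$, and the strict composition product $\circ$. Section \ref{sec:comp_prod_mod_cat} should prove three things here:
\begin{itemize}
\item $\bigstar$ is a left Quillen bifunctor and localizes to $\Fun(\FB,\cat C)^{\bigstar}$.
\item $\circ$ preserves weak equivalences between cofibrant objects, at least when the left variable is cofibrant and the $\Sigma_n$-orbit issues are controlled.
\item The strict composition product therefore induces a monoidal structure on the localization $\SSeq(\mathbf{M})^{c}[W^{-1}]$, via the monoidal localization results recalled in Section \ref{sec:review_on_loc}.
\end{itemize}

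The second step identifies Brantner's model with this localized monoidal category. We use the strict analogue of Proposition \ref{prop:carboni}: left Quillen symmetric monoidal endofunctors of $(\SSeq(\mathbf{M}),\bigstar)$ under $\mathbf{M}$ are equivalent, monoidally with respect to composition, to $(\SSeq(\mathbf{M}),\circ)$ via evaluation at $\mathfrak{X}$. We then show that localization sends the strict endomorphism monoidal category of $\mathbf{M}[\mathfrak{X}]$ in combinatorial symmetric monoidal model categories under $\mathbf{M}$ to $\cEnd_{\CAlg^{(2)}_{\cat C}}(\cat C[\mathfrak{X}]^{\bigstar})^{\circ}$, the localization functor being compatible with the $(\infty,2)$-structure. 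Underlying objects match by Proposition \ref{prop:SSeq_B_underlying}. The monoidal compatibility comes from functoriality of $\cEnd$ on $(\BiCat_\infty)_{[0]/}$ applied to the localization map of $\infty$-bicategories. The substantive input is that localization of left Quillen functors out of $\mathbf{M}[\mathfrak{X}]$ is fully faithful up to equivalence on hom $\infty$-categories, which follows from \cite{arakawa_pres_smcat_comb_smcat} together with the free generation statement.

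The third step identifies Haugseng's model with the same localization, using the universal property of Theorem \ref{thm:haug}. For a non-symmetric $\infty$-operad $\cat O^{\t}$, a lax monoidal functor into $\SSeq(\mathbf{M})^{c}[W^{-1}]$ should correspond to a map $\cat O^{\t}\times_{\Del^{\op}}\Del^{\op}_{\mathbb{F}}\to\cat C^{\t}$ preserving operadic inert maps. The strict comparison unwinds the iterated composition product $X_1\circ\cdots\circ X_n$ as a colimit over forests of height $n$ of tensor products of vertex labels; the vertex functor $V$ exactly encodes this. To pass to the $\infty$-categorical statement, we show that these strict colimits over groupoids of forests compute the corresponding $\infty$-categorical left Kan extension along $V$, which is the operadic left Kan extension Haugseng uses. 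This is plausible since $\circ$ on cofibrant objects is homotopically correct, and since the indexing categories are groupoids once the inert part is split off.

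The main obstacle is this third step: making Haugseng's universal property visible from the model-categorical side, coherently in $\cat O$. I would first try to construct a single natural monoidal functor and check it on underlying objects and binary products. I would construct the monoidal functor $\SSeq(\mathbf{M})^{c}[W^{-1}]\to\SSeq_{\H}(\cat C)$ directly from the universal property: take $\cat O=\SSeq(\mathbf{M})^{c}[W^{-1}]$ itself and produce an algebra over $\cat O^{\t}\times_{\Del^{\op}}\Del^{\op}_{\mathbb{F}}$ in $\cat C^{\t}$ by evaluating tautologically at forests. Then verify that it is an equivalence on underlying $\infty$-categories, using Theorem \ref{thm:haug}(2), and that it is strong monoidal. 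By conservativity, that comparison on underlying objects and binary products suffices. Naturality in $\cat C$ then follows from the naturality of both the model replacement and Haugseng's construction.
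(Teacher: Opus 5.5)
Your overall architecture matches the paper's. By Theorem~\ref{thm:delocPrSM} it suffices to compare $\SSeq_{\B}\pr{\pr -_{\infty}}^{\circ}$ and $\SSeq_{\H}\pr{\pr -_{\infty}}^{\circ}$ as functors $\TSMMC\to\Mon\hat{\Cat}_{\infty}$, and both are identified with $\SSeq\pr{\mathbf M}_{\cof}[\weq^{-1}]^{\circ}$. However, this reduction only works if each identification is a \emph{natural transformation} of functors on $\TSMMC$ that is objectwise a monoidal localization (Remark~\ref{rem:loc_unique}), and on Brantner's side you do not supply one. You construct, for a single $\mathbf M$, a monoidal functor from the strict endomorphism category of $\mathbf M[\mathfrak X]$ under $\mathbf M$ to $\cEnd_{\CAlg^{\pr 2}_{\mathbf M_{\infty}}}\pr{\mathbf M[\mathfrak X]_{\infty}}^{\circ}$. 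You then assert that naturality follows from naturality of the model replacement and of Haugseng's construction. This ignores the functoriality of $\SSeq_{\B}$ itself, which is not evident from its definition and is obtained by cocartesian transport, i.e.\ base change along pushouts in $\Pr\SM$ (Remark~\ref{rem:Brantner_functorial}). You must show that the strict functoriality $X\mapsto F_{\ast}X$ is carried to it. The paper does this in three steps:
\begin{enumerate}
\item It exhibits $\mathbf M\mapsto\pr{i_{\mathbf M}\from\mathbf M\to\mathbf M[\mathfrak X]}$ as a cocartesian section of a cocartesian fibration of $\infty$-bicategories over $\TSMMC_{\emptyset}$ that straightens to $B\SSeq\pr -^{\circ}_{\cof}$ (Proposition~\ref{prop:globalTrimble}).
\item It proves that $\pr -^{\t}_{\infty}$ sends the squares relating $\mathbf M\to\mathbf N$ and $\mathbf M[\mathfrak X]\to\mathbf N[\mathfrak X]$ to pushouts in $\Pr\SM$ (Proposition~\ref{prop:Day_model}(3)), so the induced functor to $\int^{\Pr\SM}\CAlg^{\pr 2}_{\bullet}$ preserves cocartesian edges.
\item It straightens the resulting map of fibrations.
\end{enumerate}
It also has to pass to $\TSMMC_{\emptyset}$, because $F_{\ast}\circ i_{\mathbf M}=i_{\mathbf N}\circ F$ is only guaranteed to hold strictly when $\mathbf N$ has a unique initial object. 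Without an argument of this kind you only get objectwise equivalences, which does not give the naturality in $\cat C$ that the theorem claims.

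Several other steps need correcting.
\begin{itemize}
\item Left Quillen symmetric monoidal functors $\mathbf M[\mathfrak X]\to\mathbf N$ under $\mathbf M$ correspond to $\mathbf N_{\cof}$, not to $\mathbf N$ (Proposition~\ref{prop:symseq_cof}). So the strict Carboni equivalence lands in $\SSeq\pr{\mathbf M}_{\cof}$.
\item The induced map on endomorphism categories is not fully faithful, and it is not deduced from Theorem~\ref{thm:delocPrSM}. By Propositions~\ref{prop:SSeq_B_underlying} and~\ref{prop:symseq_cof}, evaluation at $\mathfrak X$ identifies it with $\Fun\pr{\FB,\mathbf M}_{\cof}\to\Fun\pr{\FB,\mathbf M}_{\infty}$. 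That map is a localization at weak equivalences, which is exactly what Remark~\ref{rem:loc_unique} needs.
\item On Haugseng's side, there is no tautological evaluation at forests on the localized $\infty$-category $\SSeq\pr{\mathbf M}_{\cof}[\weq^{-1}]$. Work with the strict monoidal category $\SSeq\pr{\mathbf M}^{\circ}_{\cof}$ instead and obtain a lax monoidal functor to $\SSeq_{\H}\pr{\mathbf M_{\infty}}^{\circ}$, natural in $\mathbf M$. The paper gets it as the composite $\SSeq_{\H}\pr{\mathbf M}^{\circ}_{\cof}\to\SSeq_{\H}\pr{\mathbf M_{\cof}}^{\circ}\to\SSeq_{\H}\pr{\mathbf M_{\infty}}^{\circ}$, after extending $\SSeq_{\H}$ to arbitrary $\infty$-operads (Definition~\ref{def:SSeq_H_2}) and comparing with the classical product (Proposition~\ref{prop:Haug_vs_classical}). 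Then show it is strong monoidal and a localization on underlying $\infty$-categories, and only then descend.
\item Strong monoidality is where your ``colimits over forests'' heuristic has to become a theorem. One needs an explicit criterion for cocartesian edges of $\SSeq_{\H}$: a left Kan extension condition along $\pr{\mu_n}_{!}\from\cat F_{[n]}\to\cat F_{[1]}$ (Corollary~\ref{cor:SSeq_Haug}). Combined with Lemma~\ref{lem:comp_hocolim}, this gives the result. Your proposal does not say where such a criterion would come from.
\end{itemize}
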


In the next two sections (Sections \ref{sec:review_on_loc} and \ref{sec:comp_prod_mod_cat}),
we establish basic results on monoidal localizations and composition
product and Day's convolution product in the model-categorical setting.
We then take a closer look at Brantner and Haugseng's models in the
ensuing sections (Sections \ref{sec:brantner} and \ref{sec:haugseng}).
The proof of Theorem \ref{thm:main} will be given in Section \ref{sec:proof}.

\section{\label{sec:review_on_loc}Review on Localization}

In this section, we recall a few key results and constructions on
localization of symmetric or non-symmetric monoidal categories. They
will be used in an essential way in the rest of the paper.

First we recall monoidal relative $\infty$-categories and their localization.
\begin{defn}
\label{def:monoidalrelative}\cite{A25a} A \textbf{monoidal relative
$\infty$-category} is a pair $\pr{\cat M^{\t},\cat W}$, where $\cat M^{\t}$
is a monoidal $\infty$-category and $\cat W\subset\cat M$ is a subcategory
containing all equivalences and are stable under tensor products.
Morphisms in $\cat W$ are called \textbf{weak equivalences}. When
$\cat W$ is clear from the context, we often drop it from the notation
and say that $\cat M^{\t}$ is a monoidal relative $\infty$-category.

We let $\Mon\Rel\Cat^{\pr 2}_{\infty}$ denote the $\infty$-bicategory
of monoidal relative $\infty$-category; formally, it is the scaled
nerve of the $\SS^{+}$-enriched category whose:
\begin{itemize}
\item Objects are monoidal relative $\infty$-categories. 
\item Mapping object between a pair of objects $\cat C^{\t},\cat D^{\t}$
is given by the full subcategory $\Fun^{\t,\rel}\pr{\cat C,\cat D}\subset\Fun^{\t}\pr{\cat C,\cat D}$
spanned by the monoidal functors that preserve weak equivalences (with
equivalences marked). 
\end{itemize}
We think of $\Mon\Cat^{\pr 2}_{\infty}$ as a full sub $\infty$-bicategory
of $\Mon\Rel\Cat^{\pr 2}_{\infty}$ via the inclusion $\cat C^{\t}\mapsto\pr{\cat C^{\t},\cat C^{\simeq}}$.
The underlying $\infty$-category of $\Mon\Rel\Cat^{\pr 2}_{\infty}$
is denoted by $\Mon\Rel\Cat_{\infty}$.

We define symmetric monoidal relative $\infty$-categories and the
associated $\infty$-bicategory $\SM\Rel\Cat^{\pr 2}_{\infty}$ similarly.
\end{defn}

\begin{construction}
\label{const:monoidalloc}Let $\pr{\cat M^{\t},\cat W}$ be a monoidal
relative $\infty$-category. The\textbf{ monoidal localization} of
$\pr{\cat M^{\t},\cat W}$ is a symmetric monoidal $\infty$-category
$\cat N^{\t}$ equipped with a symmetric monoidal functor $\eta_{\pr{\cat M^{\t},\cat W}}\from\cat M^{\t}\to\cat N^{\t}$
satisfying the following equivalent conditions:
\begin{enumerate}
\item The underlying functor $\cat M\to\cat N$ is a localization at $\cat W$.
\item For every symmetric monoidal $\infty$-category $\cat P$, the functor
\[
\eta^{*}\from\Fun^{\t}\pr{\cat N,\cat P}\to\Fun^{\t}\pr{\cat M,\cat P}
\]
is fully faithful, and its essential image consists of the functors
$\cat M\to\cat P$ carrying weak equivalences to equivalences.
\end{enumerate}
(The equivalence of these conditions is proved in \cite[Proposition 4.1.7.4]{HA}.)
In this situation, we write $\cat N^{\t}=\cat M[\cat W^{-1}]^{\t}$.

By \cite[Corollary A.2.18]{BB24}, the assignment $\pr{\cat M^{\t},\cat W}\mapsto\cat M[\cat W^{-1}]^{\t}$
can be turned into a functor
\[
L\from\Mon\Rel\Cat^{\pr 2}_{\infty}\to\Mon\Cat^{\pr 2}_{\infty},
\]
and the maps $\eta_{\pr{\cat M^{\t},\cat W}}$ assemble to a natural
transformation $\eta\from\id\To\iota\circ L$. For an explicit construction,
see Lemma \ref{rem:simplicial_realization_of_loc} below. There is
a parallel construction in the symmetric monoidal setting too, which
we leave to the reader. 
\end{construction}

\begin{rem}
\label{rem:loc_unique}By definition, localization of monoidal relative
$\infty$-categories are unique up to equivalence. A stronger uniqueness
is also true: Let $\cat C$ be an $\infty$-category, and let $F\from\cat C\to\Mon\Rel\Cat_{\infty}$
be a functor denoted by $C\mapsto\pr{\cat M^{\t}_{C},\cat W_{C}}$.
Suppose we are given another functor $\cat N^{\t}_{\bullet}\from\cat C\to\Mon\Cat_{\infty}$
and a natural transformation $\alpha\from\cat M^{\t}_{\bullet}\to\cat N^{\t}_{\bullet}$
of functors $\cat C\to\Mon\Cat_{\infty}$. If for each $C\in\cat C$,
the map $\alpha_{C}$ exhibits $\cat N^{\t}_{C}$ as a monoidal localization
of $\cat M^{\t}_{C}$ at $\cat W_{C}$, then there is a natural equivalence
\[
L\pr{\cat M^{\t}_{\bullet},\cat W_{\bullet}}\simeq\cat N^{\t}_{\bullet}
\]
of functors $\cat C\to\Mon\Cat_{\infty}$. This follows from the fact
that the inclusion
\[
\Fun\pr{\cat C,\Mon\Cat_{\infty}}\hookrightarrow\Fun\pr{\cat C,\Mon\Rel\Cat_{\infty}}
\]
has a left adjoint given by postcomposition by $L$ and with unit
induced by $\eta$. (Alternatively, this follows from \cite[Proposition A.11]{Ram23}.)
In other words, the assignment $C\mapsto\cat M^{\t}_{C}[\mathcal{W}^{-1}_{C}]$
can be exteneded to a functor in an essentially unique way. Because
of this, we typically denote the functor $L\pr{\cat M^{\t}_{\bullet},\cat W_{\bullet}}$
by $\cat M^{\t}_{\bullet}[\mathcal{W}^{-1}_{\bullet}]$.
\end{rem}

We are particularly interested in the localization of symmetric monoidal
model categories. 
\begin{defn}
A \textbf{symmetric monoidal model category} is a model category $\mathbf{M}$
equipped with a closed symmetric monoidal structure, subject to the
following pair of conditions:
\begin{itemize}
\item (\textbf{Cofibrant unit}) The monoidal unit is cofibrant.
\item (\textbf{Pushout-product}) For every pair of cofibrations $i\from A\to B$
and $j\from X\to Y$ in $\mathbf{M}$, their pushout-product
\[
i\hat{\otimes}j\from\pr{A\otimes Y}\amalg_{A\otimes X}\pr{B\otimes X}\to B\otimes Y
\]
is a cofibration. If further $i$ is a trivial cofibration, so is
$i\hat{\otimes}j$.
\end{itemize}
We write $\TSMMC$ for the category of tractable symmetric monoidal
model categories and left Quillen symmetric monoidal functors. (Recall
that a model category is \textbf{tractable} if it is locally presentable
as a category and has a generating sets of cofibrations and trivial
cofibrations with cofibrant domains.)

Let $\mathbf{M}$ be a tractable symmetric monoidal model category.
The definition of symmetric monoidal model categories ensures that
the full subcategory $\mathbf{M}_{\cof}\subset\mathbf{M}$ of cofibrant
objects inherits a symmetric monoidal structure from $\mathbf{M}$
and is a relative symmetric monoidal category. The \textbf{underlying
symmetric monoidal $\infty$-category} of $\mathbf{M}$, denoted by
$\mathbf{M}^{\t}_{\infty}$, is the symmetric monoidal localization
of the symmetric monoidal relative category $\mathbf{M}_{\cof}$ at
weak equivalences. 
\end{defn}

Construction \ref{const:monoidalloc} gives us a functor
\[
\TSMMC^{\pr 2}\to\Pr\SM^{\pr 2},\,\mathbf{M}\mapsto\mathbf{M}^{\t}_{\infty}.
\]
The main theorem of \cite{arakawa_pres_smcat_comb_smcat} asserts
the following\footnote{We note that loc. cit. writes $\TSMMC^{\mathbf{1}}$ for the category
$\TSMMC$, because a weaker axiom for the unit object is adopted there. }:
\begin{thm}
\label{thm:delocPrSM}\cite[Theorem 6.2 (2)]{arakawa_pres_smcat_comb_smcat}
The functor
\[
\pr -^{\t}_{\infty}\from\TSMMC\to\Pr\SM
\]
is a localization.
\end{thm}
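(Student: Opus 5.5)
The plan is to show that the functor $\pr -^{\t}_{\infty}\from\TSMMC\to\Pr\SM$ inverts exactly the left Quillen symmetric monoidal equivalences, and that the induced functor
\[
\overline{\pr -^{\t}_{\infty}}\from\TSMMC[\Quilleq^{-1}]\to\Pr\SM
\]
is an equivalence of $\infty$-categories. This splits into three steps: identify the weak equivalences, prove essential surjectivity, and prove full faithfulness.

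The first step is routine. A left Quillen symmetric monoidal functor induces an equivalence on underlying symmetric monoidal $\infty$-categories if and only if it is a Quillen equivalence. This follows because the monoidal localization of Construction \ref{const:monoidalloc} has the same underlying $\infty$-category as the plain localization $\mathbf{M}_{\cof}[W^{-1}]$.

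Essential surjectivity is the analogue of the Nikolaus--Sagave rectification theorem.
\begin{itemize}
\item Given $\cat C^{\t}\in\Pr\SM$, choose a regular cardinal $\kappa$ such that $\cat C$ is $\kappa$-presentable and $\cat C^{\kappa}$ is a symmetric monoidal subcategory containing the unit.
\item Strictify $\cat C^{\kappa,\t}$ to a small simplicially enriched symmetric monoidal category $A$; for instance, take a strict symmetric monoidal model of an equivalent simplicial category.
\item Form simplicial presheaves on $A$ with the projective model structure and Day convolution. Then left Bousfield localize at the maps encoding the $\kappa$-small colimits that $\cat C^{\kappa}\to\cat C$ preserves, together with the pushout-products needed to keep the localization monoidal.
\item Check that the result is tractable with cofibrant unit, and that its underlying symmetric monoidal $\infty$-category is the Day convolution $\Ind_{\kappa}$ of $\cat C^{\kappa}$, localized appropriately, which is $\cat C^{\t}$.
\end{itemize}

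Full faithfulness is where I expect the main difficulty. The task is to compute mapping spaces in $\TSMMC[\Quilleq^{-1}]$. My first attempt would be a Dwyer--Kan style calculus of fractions. I would show that every object is Quillen equivalent to a \emph{cofibrant} presentation $\mathbf{P}$, meaning a localized Day convolution model on simplicial presheaves on a small symmetric monoidal simplicial category that is cofibrant as a symmetric monoidal simplicial category. For such $\mathbf{P}$ and any $\mathbf{N}$, I would then show that the mapping space $\Map(\mathbf{P},\mathbf{N})$ in the localization is computed by the nerve of the category of left Quillen symmetric monoidal functors $\mathbf{P}\to\mathbf{N}'$ and natural weak equivalences between them, as $\mathbf{N}'$ ranges over the Quillen equivalent replacements of $\mathbf{N}$.

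Next, using the universal property of Day convolution and of left Bousfield localization, I would identify this space with the space of symmetric monoidal functors from the strict category $A$ into $\mathbf{N}_{\cof}$ that send the chosen colimit diagrams to homotopy colimits, modulo weak equivalence. Rectification for cofibrant symmetric monoidal simplicial categories, which is the step that uses cofibrancy of $A$, identifies this in turn with $\Fun^{\t,L}\pr{\cat C,\mathbf{N}^{\t}_{\infty}}^{\simeq}$.

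The hardest part will be making the calculus of fractions precise: showing that replacements can be chosen functorially and that the relevant categories of zigzags are homotopy-equivalent to the naive ones. I would try to handle this through the cited results of Ramzi and Bhogale--Barkan on localization of relative categories, combined with a projective-cofibrancy argument for the strict symmetric monoidal categories $A$.
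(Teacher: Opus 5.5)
You have the right outer structure, and your first two steps are fine. The functor inverts exactly the symmetric monoidal left Quillen functors that are Quillen equivalences, and essential surjectivity is the Nikolaus--Sagave theorem. (The paper does not reprove this statement; it imports it from the cited work.)

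The gap is the full-faithfulness step. It carries the whole content of the theorem, it is only a sketch, and two of its load-bearing claims do not hold as stated.

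First, $\TSMMC$ is a $1$-category, so mapping spaces in $\TSMMC[\Quilleq^{-1}]$ are built from zigzags of symmetric monoidal left Quillen functors. Natural weak equivalences between such functors are not morphisms of $\TSMMC$. Your formula for $\Map\pr{\mathbf{P},\mathbf{N}}$ silently assumes that the localization identifies functors related by them.
\begin{itemize}
\item Even for monoidal natural isomorphisms this is a genuine theorem requiring path objects inside $\TSMMC$. In the cited work it is the statement that the $(2,1)$-categorical enhancement of $\TSMMC$ is itself a localization of $\TSMMC$ (cf.\ Remark \ref{rem:empty}). For natural weak equivalences you need still more.
\item Your formula also ranges over ``replacements $\mathbf{N}'$'', so it is a calculus-of-fractions formula. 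You supply no functorial replacements that would make Quillen equivalences admit such a calculus.
\item The results of Ramzi and Bhogale--Barkan do not fill this in. They only make $\pr{\cat M^{\t},\cat W}\mapsto\cat M[\cat W^{-1}]^{\t}$ functorial (Construction \ref{const:monoidalloc}, Remark \ref{rem:loc_unique}). They say nothing about mapping spaces in $\TSMMC[\Quilleq^{-1}]$.
\end{itemize}

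Second, the identification with $\Fun^{\t,L}\pr{\cat C,\mathbf{N}_{\infty}}^{\simeq}$ via ``rectification for cofibrant symmetric monoidal simplicial categories'' is not an available result, and for a fixed $\mathbf{N}$ it is false in general.
\begin{itemize}
\item Symmetric monoidal left Quillen functors out of $\mathbf{P}$ are governed by \emph{strong, strictly} symmetric monoidal functors out of $A$. Rectification theorems for operadic algebras produce lax functors, and they need hypotheses on $\mathbf{N}$ that a general tractable symmetric monoidal model category lacks.
\item Worse, your presentations are simplicial. Any such functor therefore restricts, along $K\mapsto\mathbf{1}_{\mathbf{P}}\otimes K$, to a symmetric monoidal left Quillen functor $\pr{\SS,\times}\to\mathbf{N}$, and these need not exist.
\item For example, take $\mathbf{N}=\mathrm{Ch}_{\mathbb{F}_2}$. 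The derived functor would have to be $C_\ast\pr{-;\mathbb{F}_2}$, since $\cat S^{\times}$ is initial in $\Pr\SM$. The diagonals would then give strictly cocommutative models of the $E_\infty$-coalgebras $C_\ast\pr{X;\mathbb{F}_2}$. That forces $\mathrm{Sq}^0=0$ on $H^1\pr{S^1;\mathbb{F}_2}$, whereas $\mathrm{Sq}^0=\id$ there.
\end{itemize}
So $\Hom_{\TSMMC}\pr{\mathbf{P},\mathbf{N}}$ can be empty for every one of your presentations $\mathbf{P}$. Everything is then pushed back onto the unproven replacement machinery of the first point.

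To close the gap you need a different mechanism. One route is to first show that $2$-cells are inverted in the localization. Then compare with $\Pr\SM$ using sources that map into \emph{every} object of $\TSMMC$, rather than simplicial presentations plus strict rectification. Symmetric cubical sets are such a source; the cited work provides maps out of them, as used in the proof of Proposition \ref{prop:Day_model}.
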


\section{\label{sec:comp_prod_mod_cat}Composition product and Day convolution
from model-categorical viewpoint}

In this section, we study composition product and Day convolution
from a model-categorical perspective. In Subsection \ref{subsec:Day},
we will show that Day's convolution product models $\infty$-categorical
convolution product in the model-categorical setting. In Subsection
\ref{subsec:comp}, we show that the composition product of cofibrant
symmetric sequences in a model category behaves well homotopically.
\begin{rem}
\label{rem:proj_convention}For the remainder of this paper, we will
adopt the following convention on functor categories of model categories:
Let $\mathbf{M}$ be a cofibrantly generated model category, and let
$\cat I$ be a small category. We will always equip $\Fun\pr{\cat I,\mathbf{M}}$
with the \textit{projective} model structure, whose fibrations and
weak equivalences are defined pointwise \cite[Theorem 11.6.1]{Hirschhorn}. 

Recall that if $I$ and $J$ are generating sets of cofibrations of
$\mathbf{M}$, then the maps $\{\cat I\pr{i,-}\otimes f\mid f\in I\}$
and $\{\cat I\pr{i,-}\otimes g\mid g\in J\}$ generate the cofibrations
and trivial cofibrations of the projective model structure. (Here
$\otimes$ denotes tensor by sets.)
\end{rem}

\subsection{\label{subsec:Day}Day convolution}

Let $\mathbf{M}$ be a combinatorial symmetric monoidal model category.
Given a small symmetric monoidal category $\mathcal{A}$, the category
$\Fun\pr{\cat A,\mathbf{M}}$ carries a symmetric monoidal structure,
given by the ordinary ($1$-categorical) Day convolution product $\bigstar_{1}$.
The goal of this subsection is to prove the following proposition,
which says that the Day convolution makes $\Fun\pr{\cat A,\mathbf{M}}$
into a symmetric monoidal model category whose underlying symmetric
monoidal $\infty$-category is what we would expect:
\begin{prop}
\label{prop:Day_model}Let $\mathbf{M}$ be a combinatorial symmetric
monoidal model category with a cofibrant unit, and let $\cat A$ be
a small symmetric monoidal category. Equip $\Fun\pr{\cat A,\mathbf{M}}$
with the projective model structure and Day's convolution product.
\begin{enumerate}
\item The category $\Fun\pr{\cat A,\mathbf{M}}$ is a symmetric monoidal
model category with cofibrant unit.
\item There is a coproduct cone
\[
\mathbf{M}^{\t}_{\infty}\xrightarrow{}\Fun\pr{\cat A,\mathbf{M}}^{\bigstar_{1}}_{\infty}\xleftarrow{g}\Fun\pr{\cat A,\cat S}^{\bigstar}
\]
in $\Pr\SM$, where the left arrow is induced by the symmetric monoidal
functor 
\[
\mathbf{M}\to\Fun\pr{\cat A,\mathbf{M}},\,M\mapsto\cat A\pr{\mathbf{1},-}\cdot M.
\]
\item For every left Quillen symmetric monoidal functor $\mathbf{M}\to\mathbf{N}$
of combinatorial symmetric monoidal model categories, the square 
\[\begin{tikzcd}
	{\mathbf{M}_\infty^{\otimes }} & {\mathbf{N}_\infty^{\otimes }} \\
	{\operatorname{Fun}(\mathcal{A},\mathbf{M})^{\bigstar_1}_{\infty}} & {\operatorname{Fun}(\mathcal{A},\mathbf{N})^{\bigstar_1}_{\infty}}
	\arrow[from=1-1, to=1-2]
	\arrow[from=1-1, to=2-1]
	\arrow[from=1-2, to=2-2]
	\arrow[from=2-1, to=2-2]
\end{tikzcd}\]is cocartesian in $\Pr\SM$.
\item We have an equivalence of symmetric monoidal $\infty$-categories
\[
\Fun\pr{\cat A,\mathbf{M}}^{\bigstar_{1}}_{\infty}\simeq\Fun\pr{\cat A,\mathbf{M}_{\infty}}^{\bigstar}.
\]
\end{enumerate}
\end{prop}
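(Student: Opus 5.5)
The plan is to prove (1) directly by a generator computation, and to obtain (2)--(4) from the universal properties of coproducts in $\Pr\SM$ and of Day convolution in \cite[Corollary 4.8.1.12]{HA}.

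For (1), fix generating sets $I,J$ of cofibrations and trivial cofibrations of $\mathbf{M}$. By Remark \ref{rem:proj_convention}, the projective cofibrations of $\Fun\pr{\cat A,\mathbf{M}}$ are generated by the maps $\cat A\pr{a,-}\otimes f$ with $f\in I$, and similarly for $J$. Day convolution is cocontinuous in each variable, and it satisfies
\[
\pr{\cat A\pr{a,-}\otimes X}\bigstar_{1}\pr{\cat A\pr{b,-}\otimes Y}\cong\cat A\pr{a\otimes b,-}\otimes\pr{X\otimes Y},
\]
since Day convolution of representables is representable. Hence the pushout-product of two generators is $\cat A\pr{a\otimes b,-}\otimes\pr{f\hat{\otimes}g}$. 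This is a projective cofibration, and it is trivial if $f$ or $g$ is trivial. The standard argument (pushout-products are compatible with pushouts, transfinite composites and retracts in each variable) then extends the pushout-product axiom from generators to all cofibrations. The unit is $\cat A\pr{\mathbf{1},-}\otimes\mathbf{1}_{\mathbf{M}}$, which is cofibrant because $\mathbf{1}_{\mathbf{M}}$ is. Closedness is standard for Day convolution.

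For (2), I would first construct the two maps.
\begin{itemize}
\item The left arrow is the localization of the symmetric monoidal left Quillen functor $M\mapsto\cat A\pr{\mathbf{1},-}\cdot M$.
\item For $g$, consider the functor $\cat A^{\op}\to\Fun\pr{\cat A,\mathbf{M}}_{\cof}$, $a\mapsto\cat A\pr{a,-}\otimes\mathbf{1}$. It is strong symmetric monoidal by the formula above, and it lands in cofibrant objects. Composing with the localization gives a symmetric monoidal functor $\cat A^{\op}\to\Fun\pr{\cat A,\mathbf{M}}^{\bigstar_{1}}_{\infty}$. The universal property of Day convolution \cite[Corollary 4.8.1.12]{HA} extends it uniquely to a colimit-preserving symmetric monoidal functor $g$.
\end{itemize}
These two maps induce a symmetric monoidal functor
\[
\Phi\from\mathbf{M}^{\t}_{\infty}\otimes\Fun\pr{\cat A,\cat S}^{\bigstar}\to\Fun\pr{\cat A,\mathbf{M}}^{\bigstar_{1}}_{\infty},
\]
and it suffices to show that $\Phi$ is an equivalence on underlying $\infty$-categories. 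The underlying $\infty$-category of a coproduct in $\Pr\SM$ is Lurie's tensor product of presentable $\infty$-categories. Therefore the source is $\mathbf{M}_{\infty}\otimes\Fun\pr{\cat A,\cat S}\simeq\Fun\pr{\cat A,\mathbf{M}_{\infty}}$, as in Lemma \ref{lem:Fun(A.-)}. The target is $\Fun\pr{\cat A,\mathbf{M}_{\infty}}$ by \cite[Proposition 4.2.4.4]{HTT}.

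Both functors preserve colimits separately in each variable and are determined by their values on pairs $\pr{M,\cat A\pr{a,-}}$. On such a pair, $\Phi$ gives $\cat A\pr{a,-}\otimes M$, since the symmetric monoidal structure of $\Phi$ tensors the two images. I expect the main obstacle to be identifying this with the left Kan extension of $M$ along $\{a\}\to\cat A$, computed in $\Fun\pr{\cat A,\mathbf{M}_{\infty}}$, compatibly with the equivalence of \cite[Proposition 4.2.4.4]{HTT}. The approach I would try first is to note that $\cat A\pr{a,-}\otimes\pr -$ is left Quillen, being left adjoint to evaluation at $a$, which preserves fibrations and weak equivalences. Its derived functor is therefore left adjoint to evaluation at $a$ on the $\infty$-categorical level, and so it is the left Kan extension. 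Once this is in place, both functors agree on generators and hence $\Phi$ is an equivalence.

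For (3), a left Quillen symmetric monoidal functor $F\from\mathbf{M}\to\mathbf{N}$ induces $\Fun\pr{\cat A,\mathbf{M}}\to\Fun\pr{\cat A,\mathbf{N}}$. This sends $\cat A\pr{a,-}\otimes\mathbf{1}_{\mathbf{M}}$ to $\cat A\pr{a,-}\otimes\mathbf{1}_{\mathbf{N}}$ compatibly with the monoidal structures, so it is compatible with the two maps $g$. By the uniqueness in the universal property of Day convolution, the resulting triangle under $\Fun\pr{\cat A,\cat S}^{\bigstar}$ commutes. The square is then the map of coproduct cones from (2), and the pasting identity
\[
\mathbf{N}_{\infty}\otimes_{\mathbf{M}_{\infty}}\pr{\mathbf{M}_{\infty}\otimes\Fun\pr{\cat A,\cat S}}\simeq\mathbf{N}_{\infty}\otimes\Fun\pr{\cat A,\cat S}
\]
shows that it is cocartesian. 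Finally, (4) is immediate. The object $\Fun\pr{\cat A,\mathbf{M}_{\infty}}^{\bigstar}$ is by definition the pushout of $\mathbf{M}^{\t}_{\infty}\ot\cat S^{\times}\to\Fun\pr{\cat A,\cat S}^{\bigstar}$. Since $\cat S^{\times}$ is initial in $\Pr\SM$, this pushout is the coproduct, which (2) identifies with $\Fun\pr{\cat A,\mathbf{M}}^{\bigstar_{1}}_{\infty}$.
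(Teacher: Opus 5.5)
Your argument is correct. For (1), (3) and (4) it is essentially the paper's; the real difference is in (2).

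The paper builds $g$ from the universal property of Day convolution only in the special case $\mathbf{M}^{\otimes}_{\infty}\simeq\cat S^{\times}$, where it checks that $g$ is an equivalence. For general $\mathbf{M}$ it imports an external input, a symmetric monoidal left Quillen functor $\Set^{\square^{\op}_{\Sigma}}\to\mathbf{M}$ from symmetric cubical sets. This makes $g$ the localization of the left Quillen functor $\psi\from\Fun\pr{\cat A,\Set^{\square^{\op}_{\Sigma}}}\to\Fun\pr{\cat A,\mathbf{M}}$. The payoff is that the bifunctor $\pr{M,X}\mapsto\pr{\cat A\pr{\mathbf{1},-}\cdot M}\bigstar_{1}\psi\pr X$ is pointwise in $\cat A$. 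Its identification with the functor of Lemma \ref{lem:Fun(A.-)} is then nearly immediate, and Lemma \ref{lem:coproduct_calg} concludes.

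You instead define $g$ by the universal property for every $\mathbf{M}$, and compare $\Phi$ with the equivalence of Lemma \ref{lem:Fun(A.-)} directly on $\mathbf{M}_{\infty}\times\cat A^{\op}$. This avoids the cubical-set input entirely. The cost is that you must do by hand the bookkeeping that the paper's Quillen model supplies for free.

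Two points need tightening.

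\begin{enumerate}
\item ``Agree on generators'' must mean a natural equivalence of the two restrictions to $\mathbf{M}_{\infty}\times\cat A^{\op}$, natural in $a$ as well as in $M$. Your left-adjoint-to-$\ev_a$ argument handles each $a$ separately. It is simpler to note that the restriction of $\Phi$ is the localization of the strict functor $\pr{M,a}\mapsto\pr{\cat A\pr{\mathbf{1},-}\cdot M}\bigstar_{1}\pr{\cat A\pr{a,-}\cdot\mathbf{1}}\cong\cat A\pr{a,-}\cdot M$. Its value at $c$ is $\coprod_{\cat A\pr{a,c}}M$, which is a homotopy coproduct since $M$ is cofibrant. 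The equivalence $\Fun\pr{\cat A,\mathbf{M}}_{\infty}\simeq\Fun\pr{\cat A,\mathbf{M}_{\infty}}$ is induced by postcomposition with the localization. Hence this matches $\pr{M,a}\mapsto\pr{c\mapsto\cat A\pr{a,c}\otimes M}$, naturally in both variables, which is exactly the restriction of the functor in Lemma \ref{lem:Fun(A.-)}.
\item \cite[Proposition 4.2.4.4]{HTT} is stated for simplicial model categories. For a general combinatorial $\mathbf{M}$, use \cite[Theorem 7.9.8]{HCHA}, as the paper does.
\end{enumerate}
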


Before turning to the proof of Proposition \ref{prop:Day_model},
let us prove one of its consequences.
\begin{cor}
\label{cor:ordinary_day}Let $\cat C$ be a symmetric monoidal category
such that $\cat C^{\t}$ is presentably symmetric monoidal, and let
$\cat A$ be a small symmetric monoidal category. There is a coproduct
cone
\[
\cat C^{\t}\xrightarrow{\pr{\{\mathbf{1}\}\hookrightarrow\cat A}_{!}}\Fun\pr{\cat A,\cat C}^{\bigstar_{1}}\ot\Fun\pr{\cat A,\cat S}^{\bigstar}
\]
in $\Pr\SM$. In particular, we have an equivalence of symmetric monoidal
$\infty$-categories
\[
\Fun\pr{\cat A,\cat C}^{\bigstar_{1}}\simeq\Fun\pr{\cat A,\cat C}^{\bigstar}.
\]
\end{cor}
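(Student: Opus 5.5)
We will deduce the corollary from Proposition \ref{prop:Day_model} by choosing a suitable model category $\mathbf{M}$ whose underlying symmetric monoidal $\infty$-category is $\cat C^{\t}$ and in which nothing is lost by localizing. Since $\cat C$ is an ordinary category (its nerve is a presentable $\infty$-category, hence $\cat C$ is a locally presentable $1$-category) with a closed symmetric monoidal structure, we equip $\mathbf{M}=\cat C$ with the \emph{trivial} model structure: weak equivalences are the isomorphisms, and all maps are cofibrations and fibrations. This model structure is combinatorial (generated by the empty set of trivial cofibrations together with a set of maps generating all maps under cellular closure; one can take, e.g., $I$ to be the set of maps between objects of a small generating set together with fold maps $A\amalg A\to A$, which generate all maps by local presentability). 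The pushout-product axiom holds trivially, and the unit is cofibrant. Because every object is cofibrant and the weak equivalences are isomorphisms, the localization $\mathbf{M}_{\cof}\to\mathbf{M}^{\t}_{\infty}$ is an equivalence, so $\mathbf{M}^{\t}_{\infty}\simeq\cat C^{\t}$.

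Next we examine the projective model structure on $\Fun\pr{\cat A,\mathbf{M}}$. Its weak equivalences are pointwise isomorphisms, i.e.\ isomorphisms, so again the localization of the cofibrant objects is just the inclusion of a full subcategory. We must check that every object of $\Fun\pr{\cat A,\mathbf{M}}$ is projectively cofibrant: a map is a projective cofibration iff it has the left lifting property against pointwise trivial fibrations, which are exactly isomorphisms, so every map is a cofibration. Hence $\Fun\pr{\cat A,\mathbf{M}}^{\bigstar_{1}}_{\infty}\simeq\Fun\pr{\cat A,\cat C}^{\bigstar_{1}}$, the ordinary Day convolution viewed as a symmetric monoidal $\infty$-category. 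The left arrow in Proposition \ref{prop:Day_model}(2) is induced by $M\mapsto\cat A\pr{\mathbf{1},-}\cdot M$, which is precisely the left Kan extension $\pr{\{\mathbf{1}\}\hookrightarrow\cat A}_{!}$.

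Applying Proposition \ref{prop:Day_model}(2) now yields the claimed coproduct cone $\cat C^{\t}\to\Fun\pr{\cat A,\cat C}^{\bigstar_{1}}\ot\Fun\pr{\cat A,\cat S}^{\bigstar}$ in $\Pr\SM$. Since $\Fun\pr{\cat A,\cat C}^{\bigstar}$ is by definition the coproduct of $\cat C^{\t}$ and $\Fun\pr{\cat A,\cat S}^{\bigstar}$ in $\Pr\SM$, uniqueness of coproducts gives the equivalence $\Fun\pr{\cat A,\cat C}^{\bigstar_{1}}\simeq\Fun\pr{\cat A,\cat C}^{\bigstar}$ (alternatively, invoke Proposition \ref{prop:Day_model}(4) directly). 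The only point requiring some care is the verification that the trivial model structure on a locally presentable category is combinatorial, i.e.\ cofibrantly generated by a \emph{set}; if this is awkward, we would instead note that Proposition \ref{prop:Day_model} only uses cofibrant generation to form the projective structure, which for the trivial model structure can be written down directly (all maps cofibrations, isomorphisms as weak equivalences), so the argument goes through regardless.
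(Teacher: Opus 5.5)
Your route is the paper's: put the trivial model structure on $\cat C$ (weak equivalences the isomorphisms, every map a cofibration and a fibration) and apply Proposition \ref{prop:Day_model}. The following identifications you make are all correct:
\begin{itemize}
\item $\mathbf{M}^{\t}_{\infty}\simeq\cat C^{\t}$;
\item $\Fun\pr{\cat A,\mathbf{M}}^{\bigstar_{1}}_{\infty}\simeq\Fun\pr{\cat A,\cat C}^{\bigstar_{1}}$, since every object is projectively cofibrant and the weak equivalences are the isomorphisms;
\item the left arrow is $\pr{\{\mathbf{1}\}\hookrightarrow\cat A}_{!}$.
\end{itemize}
Taking $J=\emptyset$ as generating trivial cofibrations is also fine.

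The paper regards combinatoriality as the only nontrivial point, and your sketch of it does not work as stated. You must show that every $f\colon X\to Y$ with the right lifting property against $I$ is an isomorphism. Lifting against the fold maps $A\amalg A\to A$ only gives injectivity of $f_{*}\colon\cat C\pr{A,X}\to\cat C\pr{A,Y}$. Surjectivity needs lifting against $\emptyset\to A$, and your $I$ need not contain these maps. For example, in $\Set$ with the dense generator $\{\ast\}$, your recipe gives $I=\{\id_{\ast},\ \ast\amalg\ast\to\ast\}$. The map $\emptyset\to\ast$ lifts against both vacuously but is not an isomorphism, so not every map is an $I$-cofibration.

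The fix is the paper's choice of $I$. Pick $\kappa$ with $\cat C$ $\kappa$-presentable, let $S$ be a set of representatives of the $\kappa$-compact objects, and let $I$ be all maps between objects of $S$. Since $S$ contains representatives of $\emptyset$ and of $A\amalg A$ for each $A\in S$, the lifting property makes $f_{*}$ bijective for $A\in S$. It is then bijective for every $A$, because every object is a colimit of objects of $S$. Hence $f$ is an isomorphism by Yoneda, and the $I$-cofibrations are all maps.

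You should also drop the fallback remark. Proposition \ref{prop:Day_model} is stated for combinatorial $\mathbf{M}$. The proof of its part (2) uses more than the existence of the projective model structure; notably, it uses a symmetric monoidal left Quillen functor from symmetric cubical sets into $\mathbf{M}$. Avoiding combinatoriality would therefore mean re-checking those inputs, not just writing down the projective structure by hand. Once $I$ is corrected, combinatoriality is a short check and no fallback is needed.
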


\begin{proof}
We will apply Proposition \ref{prop:Day_model} to the trivial model
structure on $\cat C$, whose weak equivalences are the isomorphisms
 and whose morphisms are all fibrations. The only nontrivial part
is that this model structure is combinatorial. 

To see the trivial model structure is combinatorial, take a small
regular cardinal $\kappa$ such that $\cat C$ is $\kappa$-presentable,
and let $S$ be a set of representatives of isomorphism classes of
$\kappa$-compact objects in $\cat C$. We then set $I=\{A\to B\}_{A,B\in S}$
and $J=\{\id_{A}\}_{A\in S}$. We claim that $I$ and $J$ generate
the classes of cofibrations and trivial cofibrations of $\cat C$.

It is obvious that every morphism of $\cat C$ has the right lifting
property for the maps in $J$, and that every isomorphism has the
right lifting property for the maps in $I$. It will therefore suffice
to show that if a map $f\from X\to Y$ has the right lifting property
for the maps in $I$, then $f$ is an isomorphism. For each $A\in S$,
the right lifting property for the maps $\emptyset\to A$ and $A\amalg A\to A$
implies that the map $f_{\ast}\from\cat C\pr{A,X}\to\cat C\pr{A,Y}$
is bijective. Since every object of $\cat C$ is a colimit of objects
in $S$, it follows that this map is bijective for any object $A\in\cat C$.
Thus $f$ is an isomorphism, as required.
\end{proof}

We now turn to the proof Proposition \ref{prop:Day_model}, which
needs a few preliminaries.

For the next lemma, recall from \cite[Corollary 5.5.3.4 and Theorem 5.5.3.18]{HTT}
that the $\infty$-category $\Pr^{L}$ of presentable $\infty$-categories
and left adjoints has small colimits. 
\begin{lem}
\label{lem:PrL_generation}The $\infty$-category $\Pr^{L}$ is generated
under small colimits by the presheaves on small $\infty$-categories.
\end{lem}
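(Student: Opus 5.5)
Every presentable $\infty$-category is a localization of a presheaf $\infty$-category, and I will turn such a localization into a colimit in $\Pr^{L}$ of presheaf $\infty$-categories. Let $\cat C$ be presentable. By \cite[Theorem 5.5.1.1]{HTT} there are a small $\infty$-category $\cat K$ and a small set $S$ of morphisms in $\cat P\pr{\cat K}=\Fun\pr{\cat K^{\op},\cat S}$ such that $\cat C\simeq S^{-1}\cat P\pr{\cat K}$, the accessible localization at $S$. By the universal property of presheaves, left adjoints out of $\cat P\pr{\cat K}$ correspond to functors out of $\cat K$.

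The plan is to write $S^{-1}\cat P\pr{\cat K}$ as a pushout in $\Pr^{L}$. Put $[1]=\Delta^{1}$ and $\cat P\pr{[1]}=\Fun\pr{[1]^{\op},\cat S}$. A small set of morphisms $S=\{s_{\alpha}\}_{\alpha\in A}$ in $\cat P\pr{\cat K}$ gives a left adjoint
\[
\textstyle\coprod_{\alpha\in A}\cat P\pr{[1]}\to\cat P\pr{\cat K},
\]
where coproducts in $\Pr^{L}$ of presheaf categories are again presheaf categories, namely $\cat P\pr{\coprod_{\alpha}[1]}$. Likewise, each $\cat P\pr{[0]}=\cat S$ receives a left adjoint from $\cat P\pr{[1]}$ induced by $[1]\to[0]$. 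I claim that the pushout
\[
\cat P\pr{\cat K}\amalg_{\coprod_{\alpha}\cat P\pr{[1]}}\coprod_{\alpha}\cat P\pr{[0]}
\]
in $\Pr^{L}$ is $S^{-1}\cat P\pr{\cat K}$.

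To check the claim, I will use the fact that $\Pr^{L}$ is equivalent to $\pr{\Pr^{R}}^{\op}$ \cite[Corollary 5.5.3.4]{HTT}, and that limits in $\Pr^{R}$ are computed in $\hat{\Cat}_{\infty}$ \cite[Theorem 5.5.3.18]{HTT}. For a presentable $\cat D$, I then compute
\[
\Fun^{L}\pr{\text{pushout},\cat D}\simeq\Fun\pr{\cat K,\cat D}\times_{\prod_{\alpha}\Fun\pr{[1],\cat D}}\prod_{\alpha}\Fun\pr{[0],\cat D}.
\]
The fibre product over a core is not what I want here, but mapping $\infty$-categories suffice because $\Fun^{L}$ is a functor into $\infty$-categories; this is the same trick as in Remark \ref{rem:po_oo.2}. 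The right-hand side is the full subcategory of $\Fun^{L}\pr{\cat P\pr{\cat K},\cat D}$ consisting of the functors that send each $s_{\alpha}$ to an equivalence. This uses the fact that $\Fun\pr{[0],\cat D}\to\Fun\pr{[1],\cat D}$ is fully faithful with essential image the equivalences, so the fibre product is a full subcategory. By \cite[Proposition 5.5.4.20]{HTT}, that full subcategory is exactly $\Fun^{L}\pr{S^{-1}\cat P\pr{\cat K},\cat D}$. The Yoneda lemma then identifies the pushout with $\cat C$, so $\cat C$ is a small colimit of presheaf $\infty$-categories.

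I expect the main obstacle to be justifying that the fibre product above is a homotopy pullback. I will handle it by noting that $\Fun\pr{[0],\cat D}\to\Fun\pr{[1],\cat D}$ is an inclusion of a full subcategory, hence a categorical fibration after replacement by its essential image, which is closed under equivalences. The other point to keep straight is the identification of coproducts, which I will also settle with the same $\Fun^{L}$ computation.
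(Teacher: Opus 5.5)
Your proposal is correct and follows the paper's proof. The paper likewise writes $\cat P\pr{\cat A}[S^{-1}]$ as the pushout in $\Pr^{L}$ of $\cat P\pr{\coprod_{S}[0]}\leftarrow\cat P\pr{\coprod_{f\in S}[1]}\to\cat P\pr{\cat A}$, and your computation of $\Fun^{L}\pr{-,\cat D}$ on this pushout via \cite[Proposition 5.5.4.20]{HTT} spells out the verification that the paper leaves implicit (the presentation $\cat C\simeq S^{-1}\cat P\pr{\cat K}$ really needs \cite[Theorem 5.5.1.1]{HTT} together with \cite[Proposition 5.5.4.15]{HTT}, but this is standard and the paper also takes it for granted).
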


\begin{proof}
Recall that every presentable $\infty$-category has the form $\cat P\pr{\cat A}[S^{-1}]$,
where $\cat A$ is a small $\infty$-category and $S$ is a small
set of morphisms of $\cat P\pr{\cat A}$. This localization can be
written as a pushout (in $\Pr^{L}$) of the span
\[
\cat P\pr{\coprod_{S}[0]}\xleftarrow{}\cat P\pr{\coprod_{f\in S}[1]}\xrightarrow{f}\cat P\pr{\cat A},
\]
where $f$ is the unique colimit preserving functor determined by
the tautological functor $\coprod_{f\in S}[1]\to\cat P\pr{\cat A}$.
The claim follows.
\end{proof}

\begin{lem}
\label{lem:Fun(A.-)}Let $\cat C$ be a presentable $\infty$-category,
and let $\cat A$ be a small $\infty$-category. Consider the functor
$\Fun\pr{\cat A^{\op},\cat S}\times\cat C\to\Fun\pr{\cat A^{\op},\cat C}$
adjoint to the composite
\[
\cat A^{\op}\times\Fun\pr{\cat A^{\op},\cat S}\times\cat C\to\cat S\times\cat C\to\cat S\otimes\cat C\simeq\cat C
\]
The functor $\Phi$ induces an equivalence $\alpha_{\cat C}\from\cat C\otimes\Fun\pr{\cat A^{\op},\cat S}\xrightarrow{\simeq}\Fun\pr{\cat A^{\op},\cat C}$
in $\Pr^{L}$.
\end{lem}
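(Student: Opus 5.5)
We plan to show that both sides of $\alpha_{\cat C}$ are colimit-preserving functors of $\cat C\in\Pr^{L}$, and then to reduce to generators using Lemma \ref{lem:PrL_generation}.

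\emph{Functoriality.} The target $\cat C\mapsto\Fun\pr{\cat A^{\op},\cat C}$ is a functor $\Pr^{L}\to\Pr^{L}$ via postcomposition. Colimits in $\Pr^{L}$ can be computed as limits in $\widehat{\Cat}_{\infty}$ of the diagram of right adjoints \cite[Theorem 5.5.3.18, Corollary 5.5.3.4]{HTT}, and $\Fun\pr{\cat A^{\op},-}$ preserves limits of $\infty$-categories. It also carries right adjoints to right adjoints (pointwise). Hence $\cat C\mapsto\Fun\pr{\cat A^{\op},\cat C}$ preserves small colimits in $\Pr^{L}$.

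The source $\cat C\mapsto\cat C\otimes\Fun\pr{\cat A^{\op},\cat S}$ preserves small colimits because the Lurie tensor product on $\Pr^{L}$ preserves colimits in each variable \cite[Remark 4.8.1.24]{HA}.

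The maps $\alpha_{\cat C}$ are natural in $\cat C$, since the defining composite $\cat A^{\op}\times\Fun\pr{\cat A^{\op},\cat S}\times\cat C\to\cat S\times\cat C\to\cat C$ is natural in $\cat C$ for left adjoints. Concretely, $\alpha$ is the transformation adjoint, under $-\otimes\Fun\pr{\cat A^{\op},\cat S}$, to the evident colimit-preserving action. The point is that $\alpha$ is then a natural transformation between colimit-preserving functors $\Pr^{L}\to\Pr^{L}$. Getting this naturality coherently, rather than objectwise, is the main technical obstacle. I would handle it by realizing $\alpha$ through the universal property: $\cat C\otimes\cat P\pr{\cat A}\simeq\Fun^{L}\pr{\cat P\pr{\cat A}^{\vee},\cat C}$ and $\Fun^{L}\pr{\cat P\pr{\cat A^{\op}},\cat C}\simeq\Fun\pr{\cat A^{\op},\cat C}$. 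Alternatively, I would cite \cite[Proposition 4.8.1.17]{HA}, which gives $\cat C\otimes\cat D\simeq\Fun^{R}\pr{\cat C^{\op},\cat D}$ naturally.

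\emph{Reduction.} By Lemma \ref{lem:PrL_generation}, every presentable $\cat C$ is an iterated colimit of $\infty$-categories $\cat P\pr{\cat B}=\Fun\pr{\cat B^{\op},\cat S}$. Since the full subcategory of $\Pr^{L}$ on those $\cat C$ for which $\alpha_{\cat C}$ is an equivalence is closed under colimits, it suffices to treat $\cat C=\cat P\pr{\cat B}$.

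\emph{Presheaf case.} Here \cite[Proposition 4.8.1.17]{HA} (or the universal property of $\cat P\pr{\cat B\times\cat A}$) gives
\[
\cat P\pr{\cat B}\otimes\cat P\pr{\cat A}\simeq\cat P\pr{\cat B\times\cat A}\simeq\Fun\pr{\cat A^{\op},\Fun\pr{\cat B^{\op},\cat S}}.
\]
It remains to check that this composite agrees with $\alpha_{\cat P\pr{\cat B}}$. Both functors preserve colimits, so it is enough to compare them on the generators $h_{b}\boxtimes h_{a}$. On these generators both give the functor $a'\mapsto\Map\pr{a',a}\times h_{b}$, so the two agree.
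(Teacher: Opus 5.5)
Your proposal is correct and follows the paper's proof step for step. Both show that $\cat C\mapsto\cat C\otimes\Fun\pr{\cat A^{\op},\cat S}$ and $\cat C\mapsto\Fun\pr{\cat A^{\op},\cat C}$ preserve small colimits in $\Pr^{L}$ (the latter by passing to right adjoints via \cite[Theorem 5.5.3.18]{HTT}), reduce via Lemma \ref{lem:PrL_generation} to $\cat C=\cat P\pr{\cat B}$, and there identify $\alpha_{\cat P\pr{\cat B}}$ with the universal-property equivalence $\cat P\pr{\cat B}\otimes\cat P\pr{\cat A}\simeq\cat P\pr{\cat B\times\cat A}$. You rightly flag that the reduction needs $\alpha$ to be natural in $\cat C$, which the paper leaves implicit; in the final comparison, you should also match the two restrictions to $\cat B\times\cat A$ as functors (both are the Yoneda embedding), not merely objectwise.
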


\begin{proof}
We first observe that the functor $\Fun\pr{\cat A^{\op},-}:\Pr^{L}\to\Pr^{L}$
preserves small colimits. Indeed, using the equivalence $\Pr^{L}\simeq\pr{\Pr^{R}}^{\op}$
of \cite[Corollary 5.5.3.4]{HTT}, we only have to show that the functor
$\Fun\pr{\cat A^{\op},-}\from\Pr^{R}\to\Pr^{R}$ preserves small limits.
This is immediate from \cite[Theorem 5.5.3.18]{HTT}.

Now since $\Pr^{L}$ is closed symmetric monoidal \cite[Remark 4.8.1.18]{HA},
the functor $\cat P\pr{\cat A}\otimes-\from\Pr^{L}\to\Pr^{L}$ also
preserves small colimits. Therefore, by Lemma \ref{lem:PrL_generation},
it will suffice to show that $\alpha_{\cat C}$ is an equivalence
when $\cat C=\cat P\pr{\cat B}$ for some small $\infty$-category
$\cat B$. 

Let $\cat D$ be a presentable $\infty$-category. Precomposing the
Yoneda embeddings, we get equivalences
\[
\Fun^{L}\pr{\cat P\pr{\cat A}\otimes\cat P\pr{\cat B},\cat D}\xrightarrow{\simeq}\Fun\pr{\cat A\times\cat B,\cat D}\xleftarrow{\simeq}\Fun\pr{\cat P\pr{\cat A\times\cat B},\cat D}.
\]
This gives us an equivalence $\cat P\pr{\cat A}\otimes\cat P\pr{\cat B}\xrightarrow{\simeq}\cat P\pr{\cat A\times\cat B}$.
Unwinding the definitions, this equivalence is exactly the functor
$\alpha_{\cat P\pr{\cat B}}$, and we are done.
\end{proof}

\begin{lem}
\label{lem:coproduct_calg}Let $\cat C^{\t}$ be a symmetric monoidal
$\infty$-category, and let $X\xrightarrow{f}Z\xleftarrow{g}Y$ be
morphisms of commutative algebra objects of $\cat C^{\t}$. The following
conditions are equivalent:
\begin{enumerate}
\item The maps $f$ and $g$ exhibit $Z$ as a coproduct of $X$ and $Y$
in $\CAlg\pr{\cat C}$.
\item The composite
\[
U\pr X\otimes U\pr Y\xrightarrow{Uf\otimes Ug}U\pr Z\otimes U\pr Z\xrightarrow{\mu}U\pr Z
\]
is an equivalence in $\cat C$, where $U:\CAlg\pr{\cat C}\to\cat C$
denotes the forgetful functor and $\mu$ denotes the multiplication
of $Z$.
\end{enumerate}
\end{lem}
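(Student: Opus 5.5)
The plan is to use the standard fact, from \cite[Proposition 3.2.4.7]{HA}, that the coproduct in $\CAlg\pr{\cat C}$ is computed by the tensor product. Write $X\sqcup Y$ for this coproduct, with underlying object $U\pr{X\sqcup Y}\simeq U\pr X\otimes U\pr Y$. The structure maps are $X\to X\otimes Y$ and $Y\to X\otimes Y$, given by $\id\otimes\eta_{Y}$ and $\eta_{X}\otimes\id$, where $\eta$ denotes the units.

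The maps $f,g$ induce an essentially unique map of commutative algebras $h\from X\sqcup Y\to Z$. Condition (1) says exactly that $h$ is an equivalence in $\CAlg\pr{\cat C}$. Since $U$ is conservative (\cite[Lemma 3.2.2.6]{HA}), this is equivalent to $U\pr h$ being an equivalence in $\cat C$.

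It then suffices to identify $U\pr h\from U\pr X\otimes U\pr Y\to U\pr Z$ with the composite $\mu\circ\pr{Uf\otimes Ug}$ of condition (2). The map $h$ is a map of commutative algebras, so it is compatible with multiplication. It restricts along $\id\otimes\eta_{Y}$ to $f$ and along $\eta_{X}\otimes\id$ to $g$. The multiplication of $X\otimes Y$ carries the pair of these two inclusions to the identity of $U\pr X\otimes U\pr Y$, because $x\otimes y=\pr{x\otimes1}\cdot\pr{1\otimes y}$. Therefore
\[
U\pr h\simeq\mu_{Z}\circ\pr{U\pr h\circ\pr{\id\otimes\eta_{Y}}\otimes U\pr h\circ\pr{\eta_{X}\otimes\id}}\simeq\mu_{Z}\circ\pr{Uf\otimes Ug}.
\]

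The main obstacle is making this identity homotopy-coherent rather than merely on the level of homotopy categories. Fortunately, we only need it as an identification of a single morphism in $\cat C$, up to homotopy, so passing to the homotopy category $\ho\cat C$ suffices. There $X\otimes Y$ is a commutative monoid, and the computation above is the classical one. The equivalence of (1) and (2) then follows from the conservativity of $U$.
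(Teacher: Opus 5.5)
Your proof is correct, but it is organized differently from the paper's.

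\textbf{The paper's route.} It never introduces the comparison map $h$. Since $\CAlg(\cat C)^{\otimes}$ is cocartesian, $Z$ acquires an essentially unique commutative algebra structure $\overline{Z}$ in $\CAlg(\cat C)^{\otimes}$. The multiplication $Z\otimes Z\to Z$ of $\overline{Z}$ is the codiagonal $Z\amalg Z\to Z$. Hence $\mu\circ(f\otimes g)$, viewed as a morphism of $\CAlg(\cat C)$, \emph{is} the map out of the coproduct induced by $f$ and $g$. Conservativity and symmetric monoidality of $U$ then finish the argument.

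\textbf{Your route.} You take the canonical map $h\colon X\otimes Y\to Z$ and identify $U(h)$ with $\mu\circ(Uf\otimes Ug)$ by an Eckmann--Hilton-type computation in $\ho\cat C$. This uses only that $h$ is multiplicative, together with the unit laws.

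\textbf{Comparison.} The paper's argument is fully coherent: it exhibits the multiplication of $Z$ as a map of commutative algebras equal to the fold map. The cost is that it leans on the structure theory of cocartesian symmetric monoidal $\infty$-categories. It also tacitly uses that $U$ carries the multiplication of $\overline{Z}$ to that of $Z$. Your argument needs only the fact that tensor products are coproducts in $\CAlg(\cat C)$, plus conservativity of $U$. Working in the homotopy category is legitimate because condition (2) depends only on the homotopy class of the composite.

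\textbf{Points to make explicit.} Two facts you use should be justified.
\begin{enumerate}
\item The image of $X\otimes Y$ under $\CAlg(\cat C)\to\CAlg(\ho\cat C)$ is the classical tensor product of commutative monoids, with multiplication $(\mu_X\otimes\mu_Y)\circ(\id\otimes\tau\otimes\id)$. This follows from functoriality of the symmetric monoidal structure on $\CAlg(-)$ along the symmetric monoidal functor $\cat C\to\ho\cat C$.
\item The coproduct inclusions are $\id\otimes\eta_Y$ and $\eta_X\otimes\id$, up to unitors. This holds because the unit of $\CAlg(\cat C)^{\otimes}$ is initial and $\CAlg(\cat C)^{\otimes}$ is cocartesian.
\end{enumerate}
Your final computation also silently uses the coherence fact that the symmetry $\tau_{\mathbf 1,\mathbf 1}$ is the identity.
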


\begin{proof}
Recall that the symmetric monoidal $\infty$-category $\CAlg\pr{\cat C}^{\t}$
is cocartesian \cite[Proposition 3.2.4.3]{HA}. Therefore, by \cite[Proposition 2.4.3.16]{HA},
we can lift $Z\in\CAlg\pr{\cat C}$ in an essentially unique way to
a commutative algebra object $\overline{Z}$ in $\CAlg\pr{\cat C}^{\t}$.
Using \cite[Remark 2.4.3.4]{HA}, the multiplication map
\[
Z\otimes Z\to Z
\]
of $\overline{Z}$ can be identified with the codiagonal map $Z\amalg Z\to Z$.
So the composite
\[
X\otimes Y\xrightarrow{f\otimes g}Z\otimes Z\xrightarrow{\mu}Z
\]
is just the map induced by the universal property of coproducts and
the maps $f$ and $g$. It follows that condition (1) is equivalent
to the condition that $\mu\circ\pr{f\otimes g}$ be an equivalence.
Since $U$ is conservative and symmetric monoidal \cite[Proposition 3.2.4.3]{HA},
this is equivalent to condition (2). The proof is now complete.
\end{proof}

We now arrive at the proof of Proposition \ref{prop:Day_model}.
\begin{proof}
[Proof of Proposition \ref{prop:Day_model}]Part (1) is well-known
(see, e.g., \cite[Theorem 4.1]{BB17}), but it is difficult to find
a reference stating this in its exact form, so we record a proof anyway.
Recall from Remark \ref{rem:proj_convention} that if $I$ and $J$
are generating sets of cofibrations and trivial cofibrations of $\mathbf{M}$,
then the sets $\{\mathcal{A}\pr{a,-}\cdot i\}_{a\in\cat A,\,i\in I}$
and $\{\mathcal{A}\pr{a,-}\cdot j\}_{a\in\cat A,\,j\in J}$ generate
the cofibrations and trivial cofibrations of $\Fun\pr{\cat A,\mathbf{M}}$.
Using the isomorphism in $\Fun\pr{\cat A,\mathbf{M}}$
\[
\pr{\cat A\pr{a,-}\cdot M}\star\pr{\cat A\pr{b,-}\cdot N}\cong\cat A\pr{a\otimes b,-}\cdot\pr{M\otimes N},
\]
natural in $M,N\in\mathbf{M}$, we deduce that the projective model
structure satisfies the pushout-product axiom. Also, the unit object
$\cat A\pr{\mathbf{1}_{\cat A},-}\cdot\mathbf{1}_{\mathbf{M}}$ for
the Day convolution product is cofibrant, because $\mathbf{1}_{\mathbf{M}}$
is cofibrant. This proves (1).

For part (2), suppose first that there is an equivalence of symmetric
monoidal $\infty$-categories $\cat S^{\times}\simeq\mathbf{M}^{\t}_{\infty}$.
The Yoneda embedding
\begin{align*}
\cat A^{\op} & \to\Fun\pr{\cat A,\mathbf{M}},\\
a & \mapsto\cat A\pr{a,-}\cdot\mathbf{1}
\end{align*}
is symmetric monoidal and takes values in the full subcategory of
cofibrant objects. Thus, composing it with the symmetric monoidal
localization $\Fun\pr{\cat A,\mathbf{M}}^{\bigstar_{1}}_{\cof}\to\Fun\pr{\cat A,\mathbf{M}}^{\bigstar_{1}}_{\infty}$,
we obtain a symmetric monoidal functor $\cat A^{\op}\to\Fun\pr{\cat A,\mathbf{M}}^{\bigstar_{1}}_{\infty}$.
The universal property of the Day convolution product on $\Fun\pr{\cat A,\cat S}$
\cite[$\S$ 4.8]{HA} now gives a symmetric monoidal functor $F$ indicated
by the dashed arrow:
\[\begin{tikzcd}
	& {\mathcal{A}^{\mathrm{op}}} \\
	{\operatorname{Fun}(\mathcal{A},\mathcal{S})^{{\bigstar}}} && {\operatorname{Fun}(\mathcal{A},\mathbf{M})^{\bigstar_1}_\infty.}
	\arrow[from=1-2, to=2-1]
	\arrow[from=1-2, to=2-3]
	\arrow["F", dashed, from=2-1, to=2-3]
\end{tikzcd}\]We wish to show that $F$ is an equivalence. Since it is symmetric
monoidal, it suffices to show that $F$ induces an equivalence between
the underlying $\infty$-categories. Using \cite[Theorem 7.9.8]{HCHA},
we can identify $\Fun\pr{\cat A,\mathbf{M}}_{\infty}$ with $\Fun\pr{\cat A,\mathbf{M}_{\infty}}$.
Under this identification, the dashed arrow is given by postcomposing
the equivalence $\cat S\simeq\mathbf{M}_{\infty}$. In particular,
it is an equivalence, as desired.

For the general case, use \cite[Corollary 5.11]{arakawa_pres_smcat_comb_smcat}
to find a symmetric monoidal left Quillen functor $\Set^{\square^{\op}_{\Sigma}}\to\mathbf{M}$,
where $\Set^{\square^{\op}_{\Sigma}}$ denotes the tractable symmetric
monoidal model category of symmetric cubical sets. This gives us symmetric
monoidal left Quillen functors
\[
\mathbf{M}\xrightarrow{\phi}\Fun\pr{\cat A,\mathbf{M}}\xleftarrow{\psi}\Fun\pr{\cat A,\Set^{\square^{\op}_{\Sigma}}},
\]
where $\phi$ is given by $\phi\pr M=\cat A\pr{\mathbf{1},-}\cdot M$.
Localizing at weak equivalences, we obtain symmetric monoidal functors
\[
\mathbf{M}^{\t}_{\infty}\xrightarrow{\phi'}\Fun\pr{\cat A,\mathbf{M}}^{\bigstar_{1}}_{\infty}\xleftarrow{\psi'}\Fun\pr{\cat A,\Set^{\square^{\op}_{\Sigma}}}^{\bigstar_{1}}_{\infty}.
\]
From the argument in the previous paragraph, we know that $\Fun\pr{\cat A,\Set^{\square^{\op}_{\Sigma}}}^{\bigstar_{1}}_{\infty}$
is equivalent to $\Fun\pr{\cat A,\cat S}^{\bigstar}$. Therefore,
it suffices to show that $\phi'$ and $\psi'$ form a coproduct cone
in $\Pr\SM$. 

According to Lemma \ref{lem:coproduct_calg}, we must show that the
composite
\begin{align*}
\theta\from\mathbf{M}_{\infty}\times\Fun\pr{\cat A,\Set^{\square^{\op}_{\Sigma}}}_{\infty} & \xrightarrow{\phi'\times\psi'}\Fun\pr{\cat A,\mathbf{M}}_{\infty}\times\Fun\pr{\cat A,\mathbf{M}}_{\infty}\\
 & \xrightarrow{\otimes_{1\Day}}\Fun\pr{\cat A,\mathbf{M}}_{\infty}
\end{align*}
exhibits $\Fun\pr{\cat A,\mathbf{M}}_{\infty}$ as a tensor product
of $\mathbf{M}_{\infty}$ and $\Fun\pr{\cat A,\Set^{\square^{\op}_{\Sigma}}}_{\infty}$
in $\Pr^{L}$. Using \cite[Theorem 7.9.8]{HCHA}, we can identify
$\Fun\pr{\cat A,\mathbf{M}}_{\infty}$ with $\Fun\pr{\cat A,\mathbf{M}_{\infty}}$,
and $\Fun\pr{\cat A,\Set^{\square^{\op}_{\Sigma}}}_{\infty}$ with
$\Fun\pr{\cat A,\cat S}$. Under these identifications, the map $\theta$
is adjoint to the composite
\begin{align*}
\mathbf{M}_{\infty}\times\Fun\pr{\cat A,\cat S}\times\cat A & \xrightarrow{\id\times\ev}\mathbf{M}_{\infty}\times\cat S\\
 & \to\mathbf{M}_{\infty}\otimes\cat S\\
 & \xrightarrow{\simeq}\mathbf{M}_{\infty}.
\end{align*}
So the claim follows from Lemma \ref{lem:Fun(A.-)}.

Part (3) follows from the argument in part (2) and the pasting law
of pushouts \cite[Lemma 4.4.2.1]{HTT}. Part (4) is a consequence
of (3). The proof is now complete.
\end{proof}

\subsection{\label{subsec:comp}Composition Product}

Let $\mathbf{M}$ be a symmetric monoidal model category. The composition
product monoidal structure on $\SSeq\pr{\mathbf{M}}$ is generally
\textit{not} closed, because composition product may not preserve
small colimits in the first variable. In particular, $\SSeq\pr{\mathbf{M}}$\textit{
}is generally not a monoidal model category. The goal of this subsection
is to show that the composition product still behaves homotopically
for cofibrant symmetric sequences. More precisely, we prove the following
proposition:
\begin{prop}
\label{prop:sseq_model}Let $\mathbf{M}$ be a cofibrantly generated
symmetric monoidal model category that admits generating sets of cofibrations
and trivial cofibrations whose domains are cofibrant. Then:
\begin{enumerate}
\item For every projectively cofibrant symmetric sequence $X$, the functor
\[
X\circ-\from\SSeq\pr{\mathbf{M}}\to\SSeq\pr{\mathbf{M}}
\]
is left Quillen.
\item For every projectively cofibrant symmetric sequence $Y$, the functor
\[
-\circ Y\from\SSeq\pr{\mathbf{M}}\to\SSeq\pr{\mathbf{M}}
\]
preserves weak equivalences of projectively cofibrant objects.
\end{enumerate}
\end{prop}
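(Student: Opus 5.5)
Both parts rest on the factorization of the composition product through Day convolution. With the paper's convention,
\[
X\circ Y\cong\int^{S\in\FB}X^{\bigstar_{1}S}\bigstar_{1}\pr{\FB\pr{S,-}\cdot Y\pr S}\quad\text{(up to the appropriate indexing)},
\]
so I would write $X\circ Y$ as the coend $\int^{S\in\FB}X^{\bigstar_{1}S}\otimes Y\pr S$: Day powers of $X$ (a $\Sigma_{n}$-equivariant object of $\SSeq\pr{\mathbf{M}}$) tensored over $\Sigma_{n}$ with $Y\pr n$. Since $\FB$ is a groupoid, this coend is a coproduct over $n$ of orbit constructions $\pr{X^{\bigstar_{1}n}\otimes Y\pr n}_{\Sigma_{n}}$. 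By Proposition \ref{prop:Day_model}(1), applied with $\cat A=\FB$, $\SSeq\pr{\mathbf{M}}$ with $\bigstar_{1}$ is a symmetric monoidal model category.

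For (1), fix a cofibrant $X$. The functor $X\circ-$ preserves colimits, because $Y$ enters the formula linearly and the tensor of $\mathbf{M}$ is cocontinuous in each variable. It therefore has a right adjoint, and it suffices to check that $X\circ-$ sends the generating (trivial) cofibrations $\FB\pr{\underline{n},-}\cdot f$, with $f\in I$ (resp.\ $J$), to (trivial) cofibrations. One computes
\[
X\circ\pr{\FB\pr{\underline{n},-}\cdot A}\cong X^{\bigstar_{1}n}\otimes A,
\]
with the $\Sigma_{n}$-orbits cancelling against the free action. So the image of a generator is $X^{\bigstar_{1}n}\otimes f$, the pushout-product of $\emptyset\to X^{\bigstar_{1}n}$ with $f$ under the action of $\mathbf{M}$ on $\SSeq\pr{\mathbf{M}}$ via $\phi$. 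The object $X^{\bigstar_{1}n}$ is cofibrant by the pushout-product axiom for $\bigstar_{1}$. The pushout-product axiom then shows that the image is a (trivial) cofibration.

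For (2), fix a cofibrant $Y$ and a weak equivalence $X\to X'$ between cofibrant objects. Each Day power $X^{\bigstar_{1}n}\to X'^{\bigstar_{1}n}$ is a weak equivalence between cofibrant objects (by Ken Brown's lemma and the pushout-product axiom). The issue is the $\Sigma_{n}$-orbits in $\pr{X^{\bigstar_{1}n}\otimes Y\pr n}_{\Sigma_{n}}$. Because $Y$ is projectively cofibrant, $Y\pr n$ is a retract of a cell complex built from cells $\Sigma_{n}\cdot f$ with $f\in I$. It therefore suffices to show that $Z\mapsto\pr{Z\otimes Y\pr n}_{\Sigma_{n}}$ preserves weak equivalences between cofibrant $\Sigma_{n}$-objects $Z$ (here $Z=X^{\bigstar_{1}n}$, whose $\Sigma_{n}$-action need not be free). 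I would induct over the cell filtration of $Y\pr n$. On a free cell $\Sigma_{n}\cdot A$ the orbits give $Z\otimes A$, and $-\otimes A$ with $A$ cofibrant preserves weak equivalences of cofibrant objects; this is where the hypothesis that the domains of the generators are cofibrant is used. The cell attachments are pushouts along cofibrations between cofibrant objects, hence homotopy pushouts, so the cube lemma handles them. Transfinite composites of cofibrations between cofibrant objects are handled similarly, and retracts are immediate. The resulting map on each arity is then a weak equivalence.

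The main obstacle is this last step. Orbits by $\Sigma_{n}$ are not homotopical in general, and one has to exploit carefully that the freeness comes from $Y$ rather than from $X$. A further subtlety is the need to know that each stage of the filtration of $\pr{Z\otimes Y\pr n}_{\Sigma_{n}}$ is built by cofibrations between cofibrant objects, so that the gluing lemma applies.
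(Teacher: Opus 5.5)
Your proposal is correct and rests on the same two facts as the paper. First, on a generator $\FB(\underline{n},-)\cdot f$ the functor $X\circ-$ becomes $X^{\bigstar n}\otimes f$. Second, in (2) the freeness that makes the $\Sigma_n$-orbits homotopical comes from $Y$, so $X$ only needs to be cofibrant. The packaging differs in both parts.

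For (1), the paper never invokes the Day convolution model structure. Lemma \ref{lem:F_r} computes $F_r(S_!\alpha)$ by hand, reduces to $(M_1,\dots,M_r,N)\mapsto\FB(\coprod_s T_s,A)\otimes(\bigotimes_s M_s\otimes N)$, and concludes with Lemma \ref{lem:tensor_cof}, which uses cofibrant domains. Your use of the pushout-product axiom from Proposition \ref{prop:Day_model}(1) is shorter and needs nothing about domains. (That proposition is stated for combinatorial $\mathbf{M}$, but its proof of (1) only needs cofibrant generation.)

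For (2), the paper's Lemma \ref{lem:comp_hocolim} replaces your cell induction with one observation. The functor $Y\mapsto\coprod_{f\in\Fin(A,B)}\bigotimes_b X(f^{-1}(b))\otimes Y(B)$, i.e.\ $W\mapsto Z(A)\otimes W$ in your notation, is left Quillen into projective $\Sigma_B$-objects. This holds because on a free cell the diagonal action untwists: $Z\otimes(\Sigma_n\cdot C)\cong\Sigma_n\cdot(Z\otimes C)$. Hence $Z\otimes Y(n)$ is projectively cofibrant, its orbits are homotopy orbits, and (2) follows at once. The ``main obstacle'' you flag disappears, with no gluing-lemma bookkeeping and in fact no need for cofibrant domains. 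This formulation also gives a stronger fact: the colimit over $\FB\times_{\Fin}\Fin_{A/}$ defining $X\circ Y(A)$ is a homotopy colimit with cofibrant value. The paper reuses this in the proof of Theorem \ref{thm:Haugseng_univ}, via Proposition \ref{prop:SSeq_H_loc}, to see that $\theta_{\mathbf{M}}$ is monoidal. Your cube-lemma induction is valid, just longer.

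One slip: in your first displayed formula the second factor should be $\FB(\emptyset,-)\cdot Y(S)$, not $\FB(S,-)\cdot Y(S)$. The coend $\int^{S}X^{\bigstar S}\otimes Y(S)$ you actually use is right.
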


For the proof of Proposition \ref{prop:sseq_model}, we need some
preliminaries.
\begin{lem}
\label{lem:tensor_cof}Let $\mathbf{M}$ be a symmetric monoidal model
category, and let $\{f_{i}\from X_{i}\to Y_{i}\}_{i=1,2}$ be cofibrations
of $\mathbf{M}$. If $X_{1}$ and $X_{2}$ are cofibrant, the map
\[
f_{1}\otimes f_{2}\from X_{1}\otimes X_{2}\to Y_{1}\otimes Y_{2}
\]
is a cofibration. If further one of $f_{1}$ or $f_{2}$ is a weak
equivalence, so is $f_{1}\otimes f_{2}$.
\end{lem}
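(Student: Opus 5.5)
The plan is to factor $f_{1}\otimes f_{2}$ as a composite of two maps, each obtained by tensoring a cofibration with a cofibrant object, and to handle each of these using the pushout-product axiom applied to the cofibration $\emptyset\to X$ for a cofibrant object $X$. Concretely, I would write
\[
f_{1}\otimes f_{2}\from X_{1}\otimes X_{2}\xrightarrow{f_{1}\otimes\id}Y_{1}\otimes X_{2}\xrightarrow{\id\otimes f_{2}}Y_{1}\otimes Y_{2}.
\]
Since cofibrations and trivial cofibrations are closed under composition, it suffices to check that each factor is a cofibration, and that a factor is a trivial cofibration whenever the corresponding $f_{i}$ is.

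For the first factor, I would observe that $f_{1}\otimes\id_{X_{2}}$ is exactly the pushout-product $f_{1}\hat{\otimes}\pr{\emptyset\to X_{2}}$. Indeed, the domain of the pushout-product is $\pr{X_{1}\otimes X_{2}}\amalg_{X_{1}\otimes\emptyset}\pr{Y_{1}\otimes\emptyset}$. Because the monoidal structure is closed, $-\otimes-$ preserves colimits in each variable, so $A\otimes\emptyset\cong\emptyset$ and this domain is just $X_{1}\otimes X_{2}$. Since $X_{2}$ is cofibrant, $\emptyset\to X_{2}$ is a cofibration, and the pushout-product axiom shows that $f_{1}\otimes\id_{X_{2}}$ is a cofibration, and a trivial one if $f_{1}$ is.

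For the second factor, I would use that $Y_{1}$ is cofibrant: $\emptyset\to X_{1}\to Y_{1}$ is a composite of cofibrations. Then $\id_{Y_{1}}\otimes f_{2}\cong\pr{\emptyset\to Y_{1}}\hat{\otimes}f_{2}$, and by the same argument together with the symmetry of the monoidal structure, this is a cofibration, and trivial if $f_{2}$ is.

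There is no real obstacle here. The only point needing care is the identification of the pushout-product with the tensor map, which relies on $\emptyset$ being absorbing for $\otimes$; this follows from closedness. The same factorization handles whichever of $f_{1}$ or $f_{2}$ is a weak equivalence, since the other factor is then still a cofibration.
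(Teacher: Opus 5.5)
Your argument for the first assertion is correct and is the paper's argument. You use the same factorization $f_{1}\otimes f_{2}=(\id_{Y_{1}}\otimes f_{2})\circ(f_{1}\otimes\id_{X_{2}})$, and you recognize each factor as a pushout-product against $\emptyset\to X_{2}$ or $\emptyset\to Y_{1}$. Your checks that $\otimes$ preserves the initial object in each variable and that $Y_{1}$ is cofibrant fill in details the paper leaves implicit.

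The weak-equivalence part has a genuine gap, located in your last sentence.

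\begin{itemize}
\item \textbf{The failing step.} When only $f_{1}$ is a weak equivalence, $f_{1}\otimes\id_{X_{2}}$ is a trivial cofibration, but $\id_{Y_{1}}\otimes f_{2}$ is merely a cofibration. A trivial cofibration followed by a cofibration need not be a weak equivalence, so ``the other factor is then still a cofibration'' proves nothing about the composite.
\item \textbf{The claim is false as stated.} This cannot be repaired. Take simplicial sets with the Kan--Quillen model structure and the cartesian product, and let $f_{1}=\id_{\Delta^{0}}$ (a trivial cofibration with cofibrant domain) and $f_{2}\colon\emptyset\to\Delta^{0}$. Then $f_{1}\otimes f_{2}\cong f_{2}$ is not a weak equivalence.
\item \textbf{What your argument does prove.} Your factorization gives the corrected statement: if \emph{both} $f_{1}$ and $f_{2}$ are trivial cofibrations, then $f_{1}\otimes f_{2}$ is a trivial cofibration. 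This includes the case where one of them is the identity of a cofibrant object. That special case is what is relevant to the trivial-cofibration half of Lemma \ref{lem:F_r}, where $X$ is held fixed and only the $\mathbf{M}$-variable moves.
\end{itemize}

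The paper disposes of this claim with ``can be proved similarly,'' so it does not supply the missing step either. You should state and prove the corrected version rather than attempt the literal one.
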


\begin{proof}
We will show that $f_{1}\otimes f_{2}$ is a cofibration; the latter
claim can be proved similarly. We can factor $f_{1}\otimes f_{2}$
as 
\[
X_{1}\otimes X_{2}\xrightarrow{f_{1}\otimes\id}Y_{1}\otimes X_{2}\xrightarrow{\id\otimes f_{2}}Y_{1}\otimes Y_{2}.
\]
The map $f_{1}\otimes\id$ is a cofibration by the pushout-product
axiom (applied to the cofibrations $f_{1}$ and $\emptyset\to X_{2}$).
Likewise, $\id\otimes f_{2}$ is a cofibration. Hence $f_{1}\otimes f_{2}$
is the composite of two cofibrations, and we are done.
\end{proof}

To state our next lemma, we need to introduce some notation.
\begin{notation}
For each $A\in\FB$, we let $\Sigma_{A}$ denote the automorphism
group of $A$, and write $B\Sigma_{A}\subset\FB$ for the full subcategory
spanned by $A$. 
\end{notation}

It will also be useful to have the following alternative notation
for $B\Sigma_{A}$:
\begin{notation}
For each $r\geq0$, we let $\FB\pr r\subset\FB$ denote the full subgroupoid
spanned by the (unique) object of cardinality $r$. 
\end{notation}

\begin{lem}
\label{lem:F_r}Let $\mathbf{M}$ be a tractable symmetric monoidal
model category, let $X$ be a projectively cofibrant symmetric sequence
in $\mathbf{M}$, and let $A\in\FB$ be a finite set. For each $r\geq0$,
the functor
\begin{align*}
F_{r}\from\SSeq\pr{\mathbf{M}} & \to\Fun\pr{B\Sigma_{A},\mathbf{M}}\\
Y & \mapsto\colim_{f:A\to B\in\FB\pr r\times_{\Fin}\Fin_{A/}}\pr{\bigotimes_{b\in B}X\pr{f^{-1}\pr b}}\otimes Y\pr B
\end{align*}
is left Quillen.
\end{lem}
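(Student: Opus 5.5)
The plan is to factor $F_{r}$ as a composite of two colimit-preserving functors, each of which is left Quillen. The first is evaluation at $\underline{r}$; the second is a ``coinvariants against a fixed bimodule'' functor.

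\textbf{Step 1 (reduction to $Y(\underline r)$).} Since $\FB\pr r$ is the one-object groupoid $B\Sigma_{r}$, the colimit defining $F_{r}$ only involves $Y(\underline{r})$. Concretely, define a functor $Z_{r}\from B\Sigma_{A}\times B\Sigma_{r}^{\op}\to\mathbf{M}$ by
\[
Z_{r}=\coprod_{f\from A\to\underline{r}}\bigotimes_{b\in\underline{r}}X\pr{f^{-1}\pr b},
\]
with $\Sigma_{A}$ and $\Sigma_{r}$ acting by pre- and post-composition on $f$. Here fibers may be empty, contributing copies of $X\pr{\emptyset}$. Unwinding the colimit gives a natural isomorphism
\[
F_{r}\pr Y\cong Z_{r}\otimes_{\Sigma_{r}}Y\pr{\underline{r}}.
\]
So $F_{r}=G\circ\ev_{\underline{r}}$, where $G\pr W=Z_{r}\otimes_{\Sigma_{r}}W$.

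\textbf{Step 2 (the two factors are left adjoints preserving generators).}
\begin{itemize}
\item The functor $\ev_{\underline{r}}\from\SSeq\pr{\mathbf{M}}\to\Fun\pr{B\Sigma_{r},\mathbf{M}}$ has a right adjoint given by right Kan extension along $B\Sigma_{r}\hookrightarrow\FB$. It carries each generating (trivial) cofibration $\FB\pr{a,-}\cdot i$ either to $\Sigma_{r}\cdot i$ (when $\abs a=r$) or to the identity of the initial object. Hence it is left Quillen by Remark~\ref{rem:proj_convention}.
\item The functor $G$ preserves colimits and has a right adjoint, an internal hom out of $Z_{r}$ taken over $\Sigma_{A}$. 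So it suffices to check that $G\pr{\Sigma_{r}\cdot i}\cong Z_{r}\otimes i$ is a projective (trivial) cofibration in $\Fun\pr{B\Sigma_{A},\mathbf{M}}$ for every generating (trivial) cofibration $i$ of $\mathbf{M}$. Here $Z_{r}\otimes i$ means the underlying $\Sigma_{A}$-object of $Z_{r}$ tensored with $i$.
\end{itemize}

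\textbf{Step 3 (cofibrancy of $Z_{r}$ as a $\Sigma_{A}$-object).} Decompose the indexing set of maps $f$ into $\Sigma_{A}$-orbits. For a representative $f$, the stabilizer is $H_{f}=\prod_{b}\Sigma_{f^{-1}\pr b}$. The corresponding summand of $Z_{r}$ is
\[
\Sigma_{A}\times_{H_{f}}\bigotimes_{b}X\pr{f^{-1}\pr b},
\]
an induction along $H_{f}\subset\Sigma_{A}$. Induction is left Quillen for projective structures, so it suffices to prove an equivariant claim: if $W_{j}$ is projectively cofibrant in $\Fun\pr{B G_{j},\mathbf{M}}$ for $j=1,\dots,k$, then $\bigotimes_{j}W_{j}$ is projectively cofibrant in $\Fun\pr{B\prod_{j}G_{j},\mathbf{M}}$. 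I expect this to be the main technical point. I would prove it by the standard cell-induction argument.
\begin{itemize}
\item An equivariant pushout-product axiom holds on generators:
\[
\pr{G_{1}\cdot i_{1}}\hat{\otimes}\pr{G_{2}\cdot i_{2}}\cong\pr{G_{1}\times G_{2}}\cdot\pr{i_{1}\hat{\otimes}i_{2}},
\]
and this extends to all projective cofibrations because $\hat{\otimes}$ commutes with cell attachments.
\item Applying this with the cofibrations $\emptyset\to W_{j}$ gives the claim. Tractability (cofibrant domains) together with Lemma~\ref{lem:tensor_cof} handles the retract and transfinite steps.
\end{itemize}

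\textbf{Step 4 (conclusion).} Once $Z_{r}$ is projectively cofibrant as a $\Sigma_{A}$-object, the same equivariant pushout-product, applied to $\emptyset\to Z_{r}$ and a generating cofibration $i$ of $\mathbf{M}$ (trivial cofibration in the trivial case), shows that $Z_{r}\otimes i$ is a projective (trivial) cofibration. This uses the second half of the pushout-product axiom in $\mathbf{M}$. Hence $G$, and therefore $F_{r}=G\circ\ev_{\underline{r}}$, is left Quillen.
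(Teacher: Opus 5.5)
Your proof is correct, but the key step takes a different route from the paper's.

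Your Steps 1--2 amount to the paper's opening move. The paper checks $F_r$ on the generators $S_!\alpha$, notes that only $\abs S=r$ contributes, and identifies $F_r(S_!\alpha)$ with $\coprod_{f\in\Fin(A,S)}\bigl(\bigotimes_{s}X(f^{-1}(s))\bigr)\otimes\alpha$. That is exactly your $Z_r\otimes\alpha$.

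The two arguments part ways in proving that this map is a cofibration of $\Sigma_A$-objects.

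\begin{itemize}
\item \emph{Your route.} You decompose $\Fin(A,\underline r)$ into $\Sigma_A$-orbits and induce from the Young subgroups $\prod_b\Sigma_{f^{-1}(b)}$. You then prove a pushout-product lemma for the external tensor product $\Fun(BG_1,\mathbf M)\times\Fun(BG_2,\mathbf M)\to\Fun(B(G_1\times G_2),\mathbf M)$.
\item \emph{The paper's route.} The paper avoids stabilizers and induction by polarizing. It replaces the $r$ copies of $X$ by independent variables $X_s$ ($s\in S$), giving a functor $\Phi\colon\SSeq(\mathbf M)^S\times\mathbf M\to\Fun(B\Sigma_A,\mathbf M)$ that is cocontinuous in each variable separately. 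It then evaluates $\Phi$ on free generators $(T_s)_!M_s$, where it becomes $\FB(\coprod_s T_s,A)\otimes\bigl(\bigotimes_s M_s\otimes N\bigr)$. Since $\FB(\coprod_s T_s,A)$ is a free $\Sigma_A$-set, freeness appears in one stroke, and what remains is the non-equivariant Lemma~\ref{lem:tensor_cof}.
\end{itemize}

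What each buys:
\begin{itemize}
\item The paper's argument is shorter and needs no equivariant bookkeeping.
\item Yours is more modular and yields the stronger intermediate fact that $Z_r$ is projectively cofibrant as a $\Sigma_A$-object.
\item Yours also makes explicit the pushout-product cell induction that the paper's multivariable reduction (``as before, we can check this on the level of generating cofibrations'') tacitly relies on.
\item Your argument does not need cofibrant domains for the generators, since the standard pushout-product extension from generators to all cofibrations handles retracts and transfinite composites. So your appeal to tractability and Lemma~\ref{lem:tensor_cof} in Step 3 is superfluous. The paper, by contrast, genuinely uses tractability so that Lemma~\ref{lem:tensor_cof} applies to the generating cofibrations.
\end{itemize}
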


\begin{proof}
Since $\mathbf{M}$ is tractable, there are generating sets $I$ and
$J$ of cofibrations and trivial cofibrations with cofibrant domains.
We define sets $I_{\Sigma}$ and $J_{\Sigma}$ of morphisms of $\SSeq\pr{\mathbf{M}}$
by
\begin{align*}
I_{\Sigma} & =\{S_{!}\alpha\mid S\in\FB,\alpha\in I\},\\
J_{\Sigma} & =\{S_{!}\alpha\mid S\in\FB,\alpha\in J\},
\end{align*}
where $S_{!}=\FB\pr{S,-}\otimes-:\mathbf{M}\to\SSeq\pr{\mathbf{M}}$
denotes the left adjoint to the evaluation at $S$. As we saw in Remark
\ref{rem:proj_convention}, the sets $I_{\Sigma}$ and $J_{\Sigma}$
form generating sets of cofibrations and trivial cofibrations of $\SSeq\pr{\mathbf{M}}$.
Since $F_{r}$ preserves small colimits, it suffices to show that
$F_{r}$ carries morphisms in $I_{\Sigma}$ to cofibrations and morphisms
in $J_{\Sigma}$ to trivial cofibrations. In what follows, we will
focus on the case of $I_{\Sigma}$; the proof for $J_{\Sigma}$ can
be treated similarly.

Let $S$ be a finite set, and let $\alpha:P\to P'$ be a morphism
in $I$. We wish to show that the morphism $F_{r}\pr{S_{!}\pr{\alpha}}$
is a cofibration. If the cardinality of $S$ is not equal to $r$,
then $F_{r}\pr{S_{!}\pr{\alpha}}$ is an isomorphism, and we are done.
If $S$ has exactly $r$ elements, then we can identify $F_{r}\pr{S_{!}\pr{\alpha}}$
with the map 
\[
\coprod_{f\in\Fin\pr{A,S}}\pr{\bigotimes_{s\in S}X\pr{f^{-1}\pr s}}\otimes P\to\coprod_{f\in\Fin\pr{A,S}}\pr{\bigotimes_{s\in S}X\pr{f^{-1}\pr s}}\otimes P',
\]
Therefore, it suffices to show that the functor
\begin{align*}
\Phi\from\SSeq\pr{\mathbf{M}}^{S}\times\mathbf{M} & \to\Fun\pr{B\Sigma_{A},\mathbf{M}},\\
\pr{\pr{X_{s}}_{s\in S},N} & \mapsto\coprod_{f\in\Fin\pr{A,S}}\pr{\bigotimes_{s\in S}X_{s}\pr{f^{-1}\pr s}}\otimes N
\end{align*}
carries cofibrations to cofibrations.\textbf{ }As before, we can check
this on the level of generating cofibrations. In other words, it suffices
to show that, for each collection $\pr{T_{s}}_{s\in S}$ of objects
in $\FB$, the composite
\[
\Psi\from\mathbf{M}^{S}\times\mathbf{M}\xrightarrow{\pr{\prod_{s\in S}\pr{T_{s}}_{!}}\times\id_{\mathbf{M}}}\SSeq\pr{\mathbf{M}}^{S}\times\mathbf{M}\xrightarrow{\Phi}\Fun\pr{B\Sigma_{A},\mathbf{M}}
\]
preserves cofibrations. Unwinding the definitions, the functor $\Psi$
is given by
\[
\Psi\pr{M_{1},\dots,M_{r},N}=\FB\pr{\coprod_{s\in S}T_{s},A}\otimes\pr{\pr{\bigotimes_{s\in S}M_{s}}\otimes N}.
\]
Since $\FB\pr{\coprod_{s\in S}T_{s},A}$ is a free $\Sigma_{A}$-set,
the functor $\FB\pr{\coprod_{s\in S}T_{s},A}\otimes-:\mathbf{M}\to\Fun\pr{B\Sigma_{A},\mathbf{M}}$
is left Quillen. Thus, we are reduced to showing that the functor
\[
\bigotimes\from\mathbf{M}^{S}\times\mathbf{M}\to\mathbf{M}
\]
carries cofibrations of cofibrant objects to cofibrations. But this
is the content of Lemma \ref{lem:tensor_cof}, and we are done.
\end{proof}

\begin{lem}
\label{lem:comp_hocolim}Let $\mathbf{M}$ be a tractable symmetric
monoidal model category, and let $X$ and $Y$ be projectively cofibrant
symmetric sequence in $\mathbf{M}$. For each finite set $A$ and
each $r\geq0$, the colimit cone for
\[
\colim_{f:A\to B\in\FB\times_{\Fin}\Fin_{A/}}\pr{\bigotimes_{b\in B}X\pr{f^{-1}\pr b}}\otimes Y\pr B
\]
is a homotopy colimit cone (i.e., its image in $\mathbf{M}[\mathrm{weq}^{-1}]$
is a colimit cone). Moreover, this colimit is cofibrant.
\end{lem}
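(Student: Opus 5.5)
The plan is to split the colimit by the cardinality of $B$ and to use Lemma \ref{lem:F_r} for each piece. The indexing category $\FB\times_{\Fin}\Fin_{A/}$ is a groupoid, and it decomposes as the disjoint union over $r\geq0$ of $\FB\pr r\times_{\Fin}\Fin_{A/}$. Only finitely many $r$ contribute a nonempty piece: a map $A\to B$ in $\FB$ forces $r\le\abs A$ unless $A$ is empty, and even for $A=\emptyset$ the summands are indexed by all $r$ but form a coproduct. Accordingly, the colimit in question, taken in $\Fun\pr{B\Sigma_{A},\mathbf{M}}$ or after forgetting to $\mathbf{M}$, is the coproduct $\coprod_{r\geq0}F_{r}\pr Y$, where $F_r$ is the functor of Lemma \ref{lem:F_r}.

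\textbf{Cofibrancy.} By Lemma \ref{lem:F_r}, each $F_{r}$ is left Quillen, so $F_{r}\pr Y$ is cofibrant in $\Fun\pr{B\Sigma_{A},\mathbf{M}}$. Projectively cofibrant objects there are cofibrant in $\mathbf{M}$, because evaluation is left Quillen: the group is finite and the restriction has a right adjoint given by coinduction, which preserves fibrations and trivial fibrations. A coproduct of cofibrant objects is cofibrant, so the colimit is cofibrant.

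\textbf{Homotopy colimit.} The coproduct over $r$ is a homotopy coproduct because each summand is cofibrant. It therefore suffices to handle each piece $\FB\pr r\times_{\Fin}\Fin_{A/}$ separately. This piece is a groupoid whose connected components are indexed by the orbits of $\Sigma_B$ acting on maps $A\to B$, and each component is equivalent to $B\Sigma_{B,f}$, the stabilizer of $f$. The colimit over a component is the orbit object $\pr{\bigotimes_{b}X\pr{f^{-1}\pr b}\otimes Y\pr B}_{\mathrm{Stab}\pr f}$.

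To see that this strict orbit object computes the homotopy orbits, I would show that the diagram indexed by the groupoid is projectively cofibrant in $\Fun\pr{\FB\pr r\times_{\Fin}\Fin_{A/},\mathbf{M}}$. For a groupoid, the colimit functor is left Quillen from the projective structure, so it sends projectively cofibrant diagrams to homotopy colimits.

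Projective cofibrancy is the key point. I would verify it as in the proof of Lemma \ref{lem:F_r}, reducing to generating cofibrations $S_{!}\alpha$ for $Y$, and to the analogous generating cofibrations for $X$, using a cell induction. On generators, the diagram becomes $\Fin\pr{A,B}$-indexed free pieces tensored with $\bigotimes M_{s}\otimes N$. The $\Sigma_{B}$-action coming from the free $Y\pr B=\Sigma_{B}\otimes P$ makes the diagram free on the groupoid, hence projectively cofibrant. Lemma \ref{lem:tensor_cof} provides the cofibrancy of the tensor factors.

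\textbf{Main obstacle.} The hardest part is the cell induction: showing that projective cofibrancy of the groupoid-indexed diagram is preserved along pushouts and transfinite composites as $X$ and $Y$ are built from cells. The difficulty is that the dependence on $X$ is not linear. To deal with this, I would fix $X$ and induct only on $Y$, where the dependence is linear and colimit-preserving. Since the diagram-valued functor $Y\mapsto\pr{f\mapsto\bigotimes X\pr{f^{-1}b}\otimes Y\pr B}$ preserves colimits, I would show it is left Quillen into the projective diagram category. Its value on the generators $S_{!}\alpha$ is $\Sigma_{B}$-free with cofibrant coefficient $\bigotimes X\pr{\cdot}\otimes P$, using that $X$ is levelwise cofibrant, and this freeness gives projective cofibrancy.
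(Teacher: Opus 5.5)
Your argument is correct, but it follows a different route from the paper's.

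\textbf{How the routes differ.} The paper first left Kan extends $G_{X,Y}$ along the projection $p\colon\FB\times_{\Fin}\Fin_{A/}\to\FB$. It observes that $\Lan_pG_{X,Y}$ is pointwise a coproduct of cofibrant objects, hence a homotopy left Kan extension, and rewrites the colimit as $\colim_{\FB}\Lan_pG_{X,Y}$. It then shows that $Y\mapsto\Lan_pG_{X,Y}$ is left Quillen into $\Fun(\FB,\mathbf{M})$.

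You instead prove that $G_{X,Y}$ itself is projectively cofibrant on the groupoid $\mathcal{G}=\FB\times_{\Fin}\Fin_{A/}$. You do this by showing that $Y\mapsto G_{X,Y}$, which preserves colimits, sends generating cofibrations to projective cofibrations. This is sound. For $\alpha\colon P\to P'$ one has $G_{X,S_!P}\cong\coprod_{g\in\Fin(A,S)}\mathcal{G}(g,-)\otimes(X_g\otimes P)$, where $X_g=\bigotimes_{s\in S}X(g^{-1}(s))$. Here the free factor $\FB(S,B)$ absorbs the stabilizer's action on $\bigotimes_{b}X(f^{-1}(b))$; this is the precise content of your ``free with cofibrant coefficient.'' Moreover, $X_g\otimes\alpha$ is a cofibration by the pushout-product axiom, because $X$ is levelwise cofibrant.

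\textbf{What each route buys.} Your conclusion is slightly stronger than the paper's intermediate claim. It implies that claim, since $p_!$ is left Quillen between projective structures. It also makes the homotopy-Kan-extension step unnecessary. The paper's route has the advantage of staying in $\Fun(\FB,\mathbf{M})$, whose generating cofibrations are already recorded in Remark~\ref{rem:proj_convention}. The key computation, the freeness of $\FB(S,B)$, is the same in both.

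\textbf{Smaller points.}
\begin{enumerate}
\item Splitting by $r$ and into components is unnecessary. Projective cofibrancy on all of $\mathcal{G}$ already yields the homotopy colimit, since $\colim$ is left Quillen from the projective structure for any small indexing category.
\item Your assertion that only finitely many $r$ contribute is false. For $A\neq\emptyset$, every nonempty $B$ receives maps from $A$, with factors $X(\emptyset)$ at points outside the image. You never use finiteness, though: arbitrary coproducts of cofibrant objects are cofibrant and are homotopy coproducts.
\item No cell induction on $X$ is needed, only levelwise cofibrancy of $X$, as you eventually note.
\end{enumerate}
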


\begin{proof}
The final claim is a consequence of Lemma \ref{lem:F_r}, because
cofibrant objects are stable under coproducts. For the former, let
$G_{X,Y}\from\FB\times_{\Fin}\Fin_{A/}\to\mathbf{M}$ denote the diagram
\[
\pr{f\from A\to B}\mapsto\pr{\bigotimes_{b\in B}X\pr{f^{-1}\pr b}}\otimes Y\pr B,
\]
and let $p\from\FB\times_{\Fin}\Fin_{A/}\to\FB$ denote the projection.
The left Kan extension $\Lan_{p}G_{X,Y}$ is given by the formula
\[
\Lan_{p}G_{X,Y}\pr B=\coprod_{f\in\Fin\pr{A,B}}\bigotimes_{b\in B}X\pr{f^{-1}\pr b}\otimes Y\pr B.
\]
Since coproducts of cofibrant objects are homotopy coproducts, \cite[\href{https://kerodon.net/tag/02ZM}{Tag 02ZM}]{kerodon}
shows that this is a homotopy left Kan extension. (Note that $X$
and $Y$ are objectwise cofibrant, being projectively cofibrant).
By the transitivity of Kan extensions, we have $\colim_{\FB{}_{A/}}\cong\colim_{\FB}\circ\Lan_{p}$.
Therefore, it suffices to show that the colimit cone for $\colim_{\FB}\Lan_{p}G_{X,Y}$
is a homotopy colimit diagram. We prove this by showing that $\Lan_{p}G_{X,Y}\in\Fun\pr{\FB,\mathbf{M}}$
is projectively cofibrant. In fact, we will prove more strongly that
the functor

\begin{align*}
\SSeq\pr{\mathbf{M}} & \to\Fun\pr{\FB,\mathbf{M}},\\
Y & \mapsto\pr{\Lan_{p}G_{X,Y}}
\end{align*}
is left Quillen. 

Let $B\in\FB$ be an arbitrary object. We must show that the functor
\begin{align*}
\Phi\from\SSeq\pr{\mathbf{M}} & \to\Fun\pr{B\Sigma_{B},\mathbf{M}},\\
Y & \mapsto\coprod_{f\in\Fin\pr{A,B}}\bigotimes_{b\in B}X\pr{f^{-1}\pr b}\otimes Y\pr B
\end{align*}
is left Quillen. By Remark \ref{rem:proj_convention}, it suffices
to show that for each cofibration (resp. trivial cofibration) $\alpha\from M\to M'$
in $\mathbf{M}$ and each $S\in\FB$, the map $\Phi\pr{\FB\pr{S,-}\otimes\alpha}$
is a cofibration (resp. trivial cofibration). We will focus on the
case of cofibrations, because the case of trivial cofibrations can
be dealt with similarly.

If $\abs S\neq\abs B$, then $\Phi\pr{\FB\pr{S,-}\otimes\alpha}$
is an isomorphism, and we are done. If $\abs S=\abs B$, let $\Pi\pr{A,B}$
denote the set of connected components of $B\Sigma_{B}\times_{\Fin}\Fin_{A/}$.
For each $\pi\in\Pi\pr{A,B}$, we will write $\Fin^{\pi}\pr{A,B}\subset\Fin\pr{A,B}$
for the set of maps $A\to B$ lying in $\pi$. We also choose a representative
$f_{\pi}$ of each $\pi\in\Pi\pr{A,B}$ and set $X_{\pi}=\bigotimes_{b\in B}X\pr{f^{-1}_{\pi}\pr b}$.
We then have a $\Sigma_{B}$-equivariant isomorphism
\[
\Phi\pr{\FB\pr{S,-}\otimes M}\cong\coprod_{\pi\in\Pi\pr{A,B}}\pr{\pr{\Fin^{\pi}\pr{A,B}\times\FB\pr{S,B}}\otimes X_{\pi}\otimes M}
\]
natural in $M$. Since the $\Sigma_{B}$-set $\Fin^{\pi}\pr{A,B}\times\FB\pr{S,B}$
is free, this isomorphism tells us that $\Phi\pr{\FB\pr{S,-}\otimes\alpha}$
can be identified with the image of a cofibration of $\mathbf{M}$
under the left Kan extension functor $\mathbf{M}\to\Fun\pr{B\Sigma_{B},\mathbf{M}}$.
Since the latter is a left Quillen functor, we have shown that $\Phi\pr{\FB\pr{S,-}\otimes\alpha}$,
as desired.
\end{proof}

We now arrive at the proof of Proposition \ref{prop:sseq_model}.
\begin{proof}
[Proof of Proposition \ref{prop:sseq_model}]For (1), we must show
that, for each finite set $A$, the composite
\[
F\from\SSeq\pr{\mathbf{M}}\xrightarrow{X\circ-}\SSeq\pr{\mathbf{M}}\xrightarrow{\text{restriction}}\Fun\pr{B\Sigma_{A},\mathbf{M}}
\]
is left Quillen. This follows from Lemma \ref{lem:F_r}, because $F$
is the coproduct of the functors $\{F_{r}\}_{r\geq0}$. Part (2) follows
from \ref{lem:comp_hocolim}, and we are done.
\end{proof}

\section{\label{sec:brantner}Closer look at Brantner's model}

The goal of this section is to characterize Brantner's model by a
universal property. As we saw in Proposition \ref{prop:sseq_model},
for every tractable symmetric monoidal model category $\mathbf{M}$,
the full subcategory $\SSeq\pr{\mathbf{M}}_{\cof}\subset\SSeq\pr{\mathbf{M}}$
of projectively cofibrant objects is a monoidal category itself, and
its tensor product preserves weak equivalences in each variable. In
particular, it admits a monoidal localization. Our main theorem asserts
that Brantner's model is naturally equivalent to this monoidal localization:
\begin{thm}
\label{thm:Brantner_univ}There is a natural equivalence 
\[
\SSeq\pr{\mathbf{M}}_{\cof}[\weq^{-1}]^{\circ}\simeq\SSeq_{\B}\pr{\mathbf{M}_{\infty}}^{\circ}
\]
of functors $\TSMMC\to\Mon\hat{\Cat}_{\infty}$.
\end{thm}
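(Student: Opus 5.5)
We will realize the classical Carboni--Trimble picture of Proposition \ref{prop:carboni} at the level of model categories, then localize. For a tractable symmetric monoidal model category $\mathbf{M}$, write $\mathbf{M}[\mathfrak{X}]=\Fun\pr{\FB,\mathbf{M}}$ with the projective model structure and the Day convolution $\bigstar_{1}$. Proposition \ref{prop:Day_model} applies with $\cat A=\FB^{\op}\cong\FB$, and parts (2)--(4) give natural equivalences $\mathbf{M}[\mathfrak{X}]^{\bigstar_{1}}_{\infty}\simeq\mathbf{M}_{\infty}[\mathfrak{X}]^{\bigstar}$ under $\mathbf{M}^{\t}_{\infty}$. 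These equivalences are functorial in left Quillen symmetric monoidal functors $\mathbf{M}\to\mathbf{N}$, by the cocartesian squares of part (3). So the target functor $\mathbf{M}\mapsto\SSeq_{\B}\pr{\mathbf{M}_{\infty}}^{\circ}$ is the composite of $\pr -^{\t}_{\infty}$ with the functor of Remark \ref{rem:Brantner_functorial}, and it can be computed as $\cEnd$ of $\mathbf{M}_{\infty}[\mathfrak{X}]^{\bigstar}$ in $\CAlg^{\pr 2}_{\mathbf{M}_{\infty}}$.

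\textbf{Step 1: a strict monoidal functor.} For $Y\in\SSeq\pr{\mathbf{M}}_{\cof}$, let $\overline{Y}\from\mathbf{M}[\mathfrak{X}]\to\mathbf{M}[\mathfrak{X}]$ be the unique cocontinuous symmetric monoidal functor under $\mathbf{M}$ sending $\mathfrak{X}$ to $Y$. Explicitly, $\overline{Y}\pr Z=Z\circ Y$, in the paper's ordering convention; this is the computation in the proof of Proposition \ref{prop:carboni}, carried out $\mathbf{M}$-linearly. By Proposition \ref{prop:sseq_model}(1), $\overline{Y}$ is left Quillen, and it is symmetric monoidal with respect to $\bigstar_{1}$. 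The assignment $Y\mapsto\overline{Y}$ therefore defines a functor of ordinary monoidal categories
\[
\SSeq\pr{\mathbf{M}}_{\cof}^{\circ}\to\cEnd_{\TSMMC_{\mathbf{M}/}}\pr{\mathbf{M}[\mathfrak{X}]}
\]
into the strict endomorphism monoid of left Quillen symmetric monoidal functors under $\mathbf{M}$. The monoidal structure constraints are the natural isomorphisms $\overline{X}\circ\overline{Y}\cong\overline{X\circ Y}$ from Proposition \ref{prop:carboni}. Applying $\pr -_{\infty}$ to $1$-morphisms, which is functorial on the $2$-categorical level via Construction \ref{const:monoidalloc} and the functor $\TSMMC^{\pr 2}\to\Pr\SM^{\pr 2}$, then gives a monoidal functor
\[
\SSeq\pr{\mathbf{M}}_{\cof}^{\circ}\to\cEnd_{\CAlg^{\pr 2}_{\mathbf{M}_{\infty}}}\pr{\mathbf{M}_{\infty}[\mathfrak{X}]^{\bigstar}}^{\circ}=\SSeq_{\B}\pr{\mathbf{M}_{\infty}}^{\circ}.
\]
This functor is natural in $\mathbf{M}\in\TSMMC$ because every step is.

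\textbf{Step 2: it is a localization.} The functor of Step 1 sends weak equivalences to equivalences: if $Y\to Y'$ is a weak equivalence of cofibrant sequences, then $\overline{Y}_{\infty}(\mathfrak{X})\to\overline{Y'}_{\infty}(\mathfrak{X})$ is the image of that map in $\mathbf{M}_{\infty}[\mathfrak{X}]$, and Proposition \ref{prop:SSeq_B_underlying} says evaluation at $\mathfrak{X}$ is conservative, indeed an equivalence. By condition (1) of Construction \ref{const:monoidalloc}, it then suffices to check that the underlying functor exhibits $\SSeq_{\B}\pr{\mathbf{M}_{\infty}}$ as the localization of $\SSeq\pr{\mathbf{M}}_{\cof}$ at weak equivalences. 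Composing with the evaluation equivalence of Proposition \ref{prop:SSeq_B_underlying}, the underlying functor becomes $\SSeq\pr{\mathbf{M}}_{\cof}\to\Fun\pr{\FB,\mathbf{M}}_{\infty}\simeq\Fun\pr{\FB,\mathbf{M}_{\infty}}$, which is the projective-model localization (\cite[Theorem 7.9.8]{HCHA}). Naturality of the resulting equivalence of functors $\TSMMC\to\Mon\hat{\Cat}_{\infty}$ then follows from Remark \ref{rem:loc_unique}.

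\textbf{Main obstacle.} The hard part is Step 1's passage from the strict $2$-categorical world to $\infty$-bicategories. We need a map of $\infty$-bicategories, natural in $\mathbf{M}$, from the $(2,1)$-category of tractable symmetric monoidal model categories under $\mathbf{M}$ with left Quillen functors to $\CAlg^{\pr 2}_{\mathbf{M}_{\infty}}$, and it must be compatible with the slice structure and with $\cEnd$ (Definition \ref{def:end}). The first approach to try is to build it from the functor $L$ of Construction \ref{const:monoidalloc} restricted to cofibrant objects, applied mapping-space-wise via the simplicial realization in Lemma \ref{rem:simplicial_realization_of_loc}. A second issue is that $\cEnd$ of a strict object should receive the strict endomorphism monoid; this should follow from the functoriality of $\cEnd$ on $\pr{\BiCat_{\infty}}_{[0]/}$.
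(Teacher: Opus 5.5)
Your pointwise strategy is the paper's. You realize Carboni's description at the model level, push it along $(-)^{\otimes}_{\infty}$, and check that the underlying functor is the projective localization by evaluating at $\mathfrak{X}$, using Proposition \ref{prop:SSeq_B_underlying} and the enriched naturality of $\eta$ from Lemma \ref{rem:simplicial_realization_of_loc}. Your Step~2 is essentially the paper's final diagram chase.

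The gap is the sentence ``this functor is natural in $\mathbf{M}$ because every step is.''
\begin{itemize}
\item The theorem asserts a morphism in $\Fun(\TSMMC,\Mon\hat{\Cat}_{\infty})$. Remark \ref{rem:loc_unique}, which you invoke at the end, takes as \emph{input} a coherent natural transformation $\SSeq(-)^{\circ}_{\cof}\to\SSeq_{\B}((-)_{\infty})^{\circ}$. A family of monoidal functors whose naturality squares commute up to equivalence is not such a transformation.
\item The functoriality of the target is not slice functoriality. By Remark \ref{rem:Brantner_functorial} it is cocartesian transport in $\int^{\Pr\SM}\CAlg^{(2)}_{\bullet}$, i.e.\ pushout along $\mathbf{M}_{\infty}\to\mathbf{N}_{\infty}$. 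So you must show that the model-level transition $F_{\ast}$ is carried to this transport, coherently in $F$.
\item Your suggested remedy, applying the localization functor mapping-category-wise to get $\Ar(\TSMMC^{(2)})\to\Ar(\Pr\SM^{(2)})$, does not address this. Nothing on the source side is shown to be a cocartesian fibration that could be straightened.
\item For a general left Quillen $F\colon\mathbf{M}\to\mathbf{N}$ one has $F_{\ast}(i_{\mathbf{M}}M)=(FM,F\emptyset,F\emptyset,\dots)\neq i_{\mathbf{N}}(FM)$ on the nose. So your strict constructions ``under $\mathbf{M}$'' are not even strictly functorial in $\mathbf{M}$.
\end{itemize}

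The paper supplies exactly this missing step, which is the coherent, global form of your Step~1.
\begin{itemize}
\item It replaces $\TSMMC$ by $\TSMMC_{\emptyset}$ (Remark \ref{rem:empty}), where $F_{\ast}\circ i_{\mathbf{M}}=i_{\mathbf{N}}\circ F$ holds strictly.
\item It takes the full sub-$\infty$-bicategory of $\Fun^{\sc}([1],\TSMMC^{(2)}_{\emptyset})\times_{\Fun^{\sc}(\{0\},\TSMMC^{(2)}_{\emptyset})}\TSMMC_{\emptyset}$ on the objects $i_{\mathbf{M}}$.
\item It shows the squares $(F,F_{\ast})$ are cocartesian, using the strict universal property $\Fun^{\otimes,LQ}_{\mathbf{M}/}(\mathbf{M}[\mathfrak{X}],\mathbf{N})\simeq\mathbf{N}_{\cof}$ (Proposition \ref{prop:symseq_cof}) and Proposition \ref{prop:mappingcat_arrow}(2). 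Hence this is a cocartesian fibration over $\TSMMC_{\emptyset}$ that straightens to $B\SSeq(-)^{\circ}_{\cof}$ (Proposition \ref{prop:globalTrimble}).
\item Postcomposition with $(-)^{\otimes}_{\infty}$ maps it to $\int^{\Pr\SM}\CAlg^{(2)}_{\bullet}$. Proposition \ref{prop:Day_model}(3) is used precisely to show that this map preserves cocartesian edges. The section $\mathbf{M}\mapsto i_{\mathbf{M}}$ is therefore carried to a section equivalent to $\sigma\circ(-)^{\otimes}_{\infty}$.
\item Straightening then yields the natural transformation, and only after that does your localization check apply.
\end{itemize}

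A minor slip: with the paper's ordering, $\overline{Y}(Z)=Y\circ Z$, not $Z\circ Y$. Proposition \ref{prop:sseq_model}(1) gives a left Quillen functor in the variable of $Y\circ -$, whereas $-\circ Y$ is not cocontinuous.
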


The remainder of this section is devoted to the proof of Theorem \ref{thm:Brantner_univ}. 
\begin{notation}
Let $\mathbf{M}$ be a combinatorial symmetric monoidal model category.
We write $\mathbf{M}[\mathfrak{X}]=\Fun\pr{\FB,\mathbf{M}}$, and
write $\bigstar$ for the (ordinary) Day convolution product in $\mathbf{M}[\mathfrak{X}]$.
We also write $\mathfrak{X}=\pr{\emptyset,\mathbf{1},\emptyset,\emptyset,\dots}\in\mathbf{M}[\mathfrak{X}]$
for the unit symmetric sequence, and $i_{\mathbf{M}}\from\mathbf{M}\to\mathbf{M}[\mathfrak{X}]$
for the functor $M\mapsto\pr{M,\emptyset,\emptyset,\dots}$. (This
set of notation is justified by Corollary \ref{cor:ordinary_day}.)
\end{notation}

\begin{defn}
\label{def:empty}We let $\TSMMC_{\emptyset}\subset\TSMMC$ denote
the full subcategory spanned by the objects with \textit{exactly one}
initial object, denoted by $\emptyset$. 
\end{defn}

\begin{rem}
Here is the motivation for Definition \ref{def:empty}. In the rest
of this section, we will frequently be considering diagrams of the
form 
\[\begin{tikzcd}
	{\mathbf{M}} & {\mathbf{N}} \\
	{\mathbf{M}[\mathfrak{X}]} & {\mathbf{N}[\mathfrak{X}],}
	\arrow["F", from=1-1, to=1-2]
	\arrow["{i_{\mathbf{M}}}"', hook, from=1-1, to=2-1]
	\arrow["{i_{\mathbf{N}}}", hook, from=1-2, to=2-2]
	\arrow["{F_\ast}"', from=2-1, to=2-2]
\end{tikzcd}\]where $F$ is a left Quillen symmetric monoidal functor and $i_{\mathbf{M}},i_{\mathbf{N}}$
are defined in Notation \ref{nota:symseq_day}. The diagram may not
commute on the nose, but it does when $\mathbf{N}$ has only one initial
object.
\end{rem}

\begin{rem}
\label{rem:empty}Theorem \ref{thm:delocPrSM} remains valid if we
replace $\TSMMC$ by $\TSMMC_{\emptyset}$. Indeed, the argument of
\cite[Proposition 1.4]{arakawa_pres_smcat_comb_smcat} shows that
the $\pr{2,1}$-categorical enhancements of $\TSMMC_{\emptyset}$
and $\TSMMC$ are localizations of $\TSMMC$ and $\TSMMC_{\emptyset}$,
and these $\pr{2,1}$-categorical enhancements are equivalent. Since
the maps inverted by these localizations are contained in the class
of symmetric monoidal left Quillen equivalences, this implies that
the inclusion $\TSMMC_{\emptyset}\hookrightarrow\TSMMC$ induces an
equivalence upon localizing at symmetric monoidal left Quillen equivalences.
\end{rem}

\begin{construction}
The category $\TSMMC_{\emptyset}$ admits a natural $2$-categorical
enhancement $\TSMMC^{\pr 2}_{\emptyset}$. Explicitly, given a pair
of objects $\mathbf{M}$ and $\mathbf{N}$, the mapping categories
from $\mathbf{M}$ to $\mathbf{N}$ are given by the full subcategory
$\Fun^{\t,LQ}\pr{\mathbf{M},\mathbf{N}}\subset\Fun^{\t}\pr{\mathbf{M},\mathbf{N}}$
of left Quillen symmetric monoidal functors. 

If $\mathbf{M}$ and $\mathbf{N}$ are equipped with a left Quillen
symmetric monoidal functors $F\from\mathbf{A}\to\mathbf{M}$ and $G\from\mathbf{A}\to\mathbf{N}$,
we define another category $\Fun^{\t,LQ}_{\mathbf{A}/}\pr{\mathbf{M},\mathbf{N}}$
by the (strict) pullback
\[\begin{tikzcd}
	{\operatorname{Fun}^{\otimes, LQ}_{\mathbf{A}/}(\mathbf{M},\mathbf{N})} & {\operatorname{Fun}^{\otimes, LQ}(\mathbf{M},\mathbf{N})} \\
	{\{\ast\}} & {\operatorname{Fun}^{\otimes, LQ}(\mathbf{A},\mathbf{N}).}
	\arrow[from=1-1, to=1-2]
	\arrow[from=1-1, to=2-1]
	\arrow[from=1-2, to=2-2]
	\arrow[from=2-1, to=2-2]
\end{tikzcd}\]We will often consider the category $\Fun^{\t,LQ}_{\mathbf{A}/}\pr{\mathbf{M},\mathbf{N}}$
when $F$ is injective on objects and morphisms; in this case, the
above square is homotopy cartesian.
\end{construction}

\begin{prop}
\label{prop:symseq_cof}For every morphism $i\from\mathbf{M}\to\mathbf{N}$
in $\TSMMC_{\emptyset}$, the evaluation at the unit symmetric sequence
$\mathfrak{X}\in\mathbf{M}[\mathfrak{X}]$ induces a categorical equivalence
\[
\theta\from\Fun^{\t,LQ}_{\mathbf{M}/}\pr{\mathbf{M}[\mathfrak{X}],\mathbf{N}}\xrightarrow{\simeq}\mathbf{N}_{\cof}.
\]
\end{prop}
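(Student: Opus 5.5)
The plan is to reduce the statement to the ordinary $1$-categorical universal property of Day convolution, and then check separately that the left Quillen condition corresponds exactly to cofibrancy of the value at $\mathfrak{X}$.

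\emph{Step 1: the underlying $1$-categorical equivalence.} Since $\FB^{\amalg}$ is the free symmetric monoidal category on $\underline{1}$, the category $\mathbf{M}[\mathfrak{X}]^{\bigstar}$ is the free cocomplete closed symmetric monoidal category over $\mathbf{M}$ on the object $\mathfrak{X}$. This is the $\mathbf{M}$-linear version of the Carboni/Im--Kelly statement used in Proposition \ref{prop:carboni}. Concretely, the co-Yoneda argument in the proof of Proposition \ref{prop:carboni} gives an isomorphism
\[
Z\cong\int^{S\in\FB}i_{\mathbf{M}}\pr{Z\pr S}\bigstar\mathfrak{X}^{\bigstar S},
\]
natural in $Z\in\mathbf{M}[\mathfrak{X}]$. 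Hence a cocontinuous symmetric monoidal functor $F\from\mathbf{M}[\mathfrak{X}]\to\mathbf{N}$ with $F\circ i_{\mathbf{M}}=i$ is determined up to unique isomorphism by $N_{0}=F\pr{\mathfrak{X}}$, via
\[
F\pr Z\cong\int^{S\in\FB}i\pr{Z\pr S}\otimes N_{0}^{\otimes S}.
\]
Conversely, for every $N_{0}\in\mathbf{N}$ this formula defines a cocontinuous symmetric monoidal functor $F_{N_{0}}$. Its monoidal structure comes from the isomorphism $\mathfrak{X}^{\bigstar S}\bigstar\mathfrak{X}^{\bigstar T}\cong\mathfrak{X}^{\bigstar\pr{S\amalg T}}$ together with the symmetric monoidal structure of $i$. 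Since $\mathbf{M}[\mathfrak{X}]$ and $\mathbf{N}$ are locally presentable, $F_{N_{0}}$ is a left adjoint.

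For full faithfulness, a monoidal natural transformation $F\To F'$ whose restriction along $i_{\mathbf{M}}$ is the identity is determined by its component at $\mathfrak{X}$. This holds because every object is a colimit of the objects $i_{\mathbf{M}}\pr P\bigstar\mathfrak{X}^{\bigstar S}$, and on these the component is forced to be $\id\otimes\beta^{\otimes S}$ for $\beta\from N_{0}\to N_{0}'$. Conversely, any $\beta$ extends to such a transformation.

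\emph{Step 2: strictness.} The category $\Fun^{\t,LQ}_{\mathbf{M}/}$ is a strict fiber, so we need $F_{N_{0}}\circ i_{\mathbf{M}}=i$ on the nose. Here the hypothesis $\mathbf{N}\in\TSMMC_{\emptyset}$ enters. The object $i_{\mathbf{M}}\pr P$ is $\pr{P,\emptyset,\emptyset,\dots}$, and the coend formula applied to it is $i\pr P$ coproduct with terms of the form $\emptyset\otimes N_{0}^{\otimes n}$. These terms are initial, hence literally equal to $\emptyset$ because $\mathbf{N}$ has exactly one initial object. We can therefore choose the colimits defining $F_{N_{0}}$ so that $F_{N_{0}}\pr{i_{\mathbf{M}}\pr P}=i\pr P$, and similarly for the structure maps.

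The corresponding rigidification for an arbitrary $F$ in the fiber is handled by the essential uniqueness in Step 1. The comparison isomorphism $F\cong F_{F\pr{\mathfrak{X}}}$ restricts to the identity on $i_{\mathbf{M}}\pr{\mathbf{M}}$, so it is an isomorphism in the strict fiber. I expect this bookkeeping between strict commutation under $\mathbf{M}$ and the usual pseudo-universal property to be the main obstacle. I would try to handle it by constructing $F_{N_{0}}$ explicitly as $\coprod_{n}i\pr{Z\pr n}\otimes_{\Sigma_{n}}N_{0}^{\otimes n}$, with the summand $n=0$ taken to be literally $i\pr{Z\pr 0}$.

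\emph{Step 3: the left Quillen condition.} It remains to show that $F_{N_{0}}$ is left Quillen if and only if $N_{0}$ is cofibrant.

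\begin{itemize}
\item If $F_{N_{0}}$ is left Quillen, then $N_{0}=F_{N_{0}}\pr{\mathfrak{X}}$ is cofibrant. Indeed, $\mathfrak{X}=\FB\pr{\underline{1},-}\otimes\mathbf{1}$ is projectively cofibrant because the unit of $\mathbf{M}$ is cofibrant.
\item Conversely, suppose $N_{0}$ is cofibrant. By Remark \ref{rem:proj_convention} it suffices to check the generating (trivial) cofibrations $\FB\pr{S,-}\otimes\alpha$ with $\alpha\in I$ (resp. $J$). We have $\FB\pr{S,-}\otimes\alpha=i_{\mathbf{M}}\pr{\alpha}\bigstar\mathfrak{X}^{\bigstar S}$, and $F_{N_{0}}$ sends this to $i\pr{\alpha}\otimes N_{0}^{\otimes S}$.
\item The object $N_{0}^{\otimes S}$ is cofibrant, by Lemma \ref{lem:tensor_cof} for $S$ nonempty and by the cofibrant unit for $S=\emptyset$.
\item Tensoring with a cofibrant object preserves cofibrations and trivial cofibrations, by the pushout-product axiom applied with $\emptyset\to N_{0}^{\otimes S}$. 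Hence $i\pr{\alpha}\otimes N_{0}^{\otimes S}$ is a (trivial) cofibration, since $i$ is left Quillen.
\end{itemize}

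Combining Steps 1--3, $\theta$ is essentially surjective onto $\mathbf{N}_{\cof}$ and fully faithful, hence a categorical equivalence.
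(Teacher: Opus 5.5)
Your proposal is correct. Step 3 is essentially the paper's argument: the generating cofibrations $\FB(S,-)\cdot\alpha=\mathfrak{X}^{\bigstar S}\bigstar i_{\mathbf{M}}(\alpha)$ are sent to $F(\mathfrak{X})^{\otimes S}\otimes i(\alpha)$, and $\mathfrak{X}$ is cofibrant because the unit is.

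Where you differ is in how you obtain the underlying equivalence $\Fun^{\t,L}_{\mathbf{M}/}(\mathbf{M}[\mathfrak{X}],\mathbf{N})\simeq\mathbf{N}$. The paper does not prove this by hand. It deduces it from the $\infty$-categorical free-generation statement (Proposition \ref{prop:SSeq_B_underlying}) applied to $\mathbf{M}$ and $\mathbf{N}$. It then uses Corollary \ref{cor:ordinary_day} to identify the ordinary Day convolution $\mathbf{M}[\mathfrak{X}]^{\bigstar}$, together with $i_{\mathbf{M}}$, with the pushout $\mathbf{M}\otimes\cat S[\mathfrak{X}]$ in $\Pr\SM$.

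Your coend argument proves the classical universal property of Day convolution directly. It is elementary and self-contained, and it avoids the model-categorical input behind Corollary \ref{cor:ordinary_day}, namely Proposition \ref{prop:Day_model} applied to the trivial model structure. The price is that you must check the coherence of the monoidal structure on $F_{N_0}$ yourself. The paper's route is shorter and ties the statement directly to the $\infty$-categorical definition of $\SSeq_{\B}$. However, it leaves implicit the passage from the homotopy fibre, which is what Proposition \ref{prop:SSeq_B_underlying} computes, to the strict fibre $\Fun^{\t,LQ}_{\mathbf{M}/}$. The paper handles this only through its remark that the defining square is homotopy cartesian when $\mathbf{M}\to\mathbf{M}[\mathfrak{X}]$ is injective on objects and morphisms.

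Your Step 2 makes this point explicit, and it works. It can also be done more cheaply, without using that $\mathbf{N}$ has a unique initial object. Since $i_{\mathbf{M}}$ is injective on objects, take any $F$ with a monoidal isomorphism $\phi\colon F\circ i_{\mathbf{M}}\cong i$. Transport the values, morphisms and structure maps of $F$ along $\phi$ on the image of $i_{\mathbf{M}}$. This yields an isomorphic $F'$ with $F'\circ i_{\mathbf{M}}=i$ strictly as symmetric monoidal functors. So restriction along $i_{\mathbf{M}}$ is an isofibration, and the strict fibre agrees with the pseudo-fibre.
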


\begin{proof}
Using Proposition \ref{prop:SSeq_B_underlying} and Corollary \ref{cor:ordinary_day},
we deduce that the evaluation at $\mathfrak{X}$ gives an equivalence
of categories
\[
\theta'\from\Fun^{\t,L}_{\mathbf{M}/}\pr{\mathbf{M}[\mathfrak{X}],\mathbf{N}}\xrightarrow{\simeq}\mathbf{N}.
\]
Therefore, it suffices to show that a functor $F\in\Fun^{\t,L}_{\mathbf{M}/}\pr{\mathbf{M}[\mathfrak{X}],\mathbf{N}}$
is left Quillen if and only if $\theta'\pr F$ is cofibrant. 

Necessity is obvious, because $E$ is cofibrant. For sufficiency,
suppose that $\theta'\pr F$ is cofibrant. We must show that $F$
preserves cofibrations and trivial cofibrations. We will show that
it preserves cofibrations; the argument for trivial cofibrations will
be similar. Recall from Remark \ref{rem:proj_convention} that the
class of cofibrations of $\Fun\pr{\FB,\mathbf{M}}$ is generated by
the maps of the form
\[
f\from\FB\pr{S,-}\cdot M\to\FB\pr{S,-}\cdot M',
\]
where $S\in\FB$ is a finite set and $M\to M'$ is a cofibration of
$\mathbf{M}$. We can rewrite $f$ as
\[
\mathfrak{X}^{\bigstar S}\star i_{\mathbf{M}}\pr M\to\mathfrak{X}^{\bigstar S}\star i_{\mathbf{M}}\pr{M'}.
\]
Since $F$ is symmetric monoidal and is compatible with the restrictions
to $\mathbf{M}$, the morphism $F\pr f$ can be identified with the
map
\[
F\pr{\mathfrak{X}}^{\otimes S}\otimes i\pr M\to F\pr{\mathfrak{X}}^{\otimes S}\otimes i\pr{M'}.
\]
This is a cofibration because $i$ is left Quillen and $\theta'\pr F=F\pr{\mathfrak{X}}$
is cofibrant. Hence $F$ carries generating cofibrations to cofibrations.
It follows that $F$ carries all cofibrations to cofibrations, as
claimed.
\end{proof}

\begin{notation}
We write:
\begin{itemize}
\item $\mathsf{Mon}\hat{\mathsf{Cat}}$ for the category of large monoidal
categories and monoidal functors;
\item $\mathsf{Bi}\hat{\mathsf{Cat}}$ for the category of large bicategories; 
\item $B\from\mathsf{Mon}\hat{\mathsf{Cat}}\hookrightarrow\mathsf{Bi}\hat{\mathsf{Cat}}$
for the inclusion; and
\item $\mathcal{B}\mathsf{i}\widehat{\Cat}_{\infty}$ for the $\infty$-category
of large $\infty$-bicategories.
\end{itemize}
\label{not:X}We write $\int^{\TSMMC_{\emptyset}}B\SSeq\pr -^{\circ}_{\cof}$
for the full sub $\infty$-bicategory of the fiber product
\[
\Fun^{\sc}\pr{[1],\TSMMC^{\pr 2}_{\emptyset}}\times_{\Fun\pr{\{0\},\TSMMC^{\pr 2}_{\emptyset}}}\TSMMC_{\emptyset}
\]
spanned by the objects $\{i_{\mathbf{M}}\from\mathbf{M}\to\mathbf{M}[\mathfrak{X}]\}_{\mathbf{M}\in\TSMMC_{\emptyset}}$.
\end{notation}

Notation \ref{not:X} is justified by the following proposition:
\begin{prop}
\label{prop:globalTrimble}Let $p\from\int^{\TSMMC_{\emptyset}}B\SSeq\pr -^{\circ}_{\cof}\to\TSMMC_{\emptyset}$
be as in Notation \ref{not:X}.
\begin{enumerate}
\item For every map $F\from\mathbf{M}\to\mathbf{N}$ in $\TSMMC_{\emptyset}$,
the square
\[\begin{tikzcd}
	{\mathbf{M}} & {\mathbf{N}} \\
	{\mathbf{M}[\mathfrak{X}]} & {\mathbf{N}[\mathfrak{X}]}
	\arrow["F", from=1-1, to=1-2]
	\arrow["{i_{\mathbf{M}}}"', hook, from=1-1, to=2-1]
	\arrow["{i_{\mathbf{N}}}", hook, from=1-2, to=2-2]
	\arrow["{F_\ast}"', from=2-1, to=2-2]
\end{tikzcd}\]is $p$-cocartesian.
\item The functor $p$ is a cocartesian fibration of $\infty$-bicategories
which straightens to the composite
\[
\Phi\from\TSMMC_{\emptyset}\xrightarrow{\pr{\SSeq\pr -_{\cof},\circ}}\mathsf{Mon}\hat{\mathsf{Cat}}\xrightarrow{B}\mathsf{Bi}\widehat{\mathsf{Cat}}\hookrightarrow\mathcal{B}\mathsf{i}\widehat{\Cat}_{\infty}.
\]
\end{enumerate}
\end{prop}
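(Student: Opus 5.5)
We verify (1) by unwinding what a $p$-cocartesian morphism is in the fiber product $\Fun^{\sc}\pr{[1],\TSMMC^{\pr 2}_{\emptyset}}\times_{\Fun\pr{\{0\},\TSMMC^{\pr 2}_{\emptyset}}}\TSMMC_{\emptyset}$, and then deduce (2) by identifying fibers and the induced transport functors. Since $\mathbf{N}$ has exactly one initial object, the square in (1) commutes strictly, so it defines a morphism $\pr{i_{\mathbf{M}}}\to\pr{i_{\mathbf{N}}}$ over $F$. We must show that for every object $i_{\mathbf{P}}\from\mathbf{P}\to\mathbf{P}[\mathfrak{X}]$ and every map $G\from\mathbf{N}\to\mathbf{P}$, precomposition with this square induces a categorical equivalence
\[
\Fun^{\t,LQ}_{\mathbf{N}/}\pr{\mathbf{N}[\mathfrak{X}],\mathbf{P}[\mathfrak{X}]}\to\Fun^{\t,LQ}_{\mathbf{M}/}\pr{\mathbf{M}[\mathfrak{X}],\mathbf{P}[\mathfrak{X}]}.
\]
Here the first category is taken with respect to $i_{\mathbf{P}}\circ G$, the second with respect to $i_{\mathbf{P}}\circ G\circ F$, and we use that these strict fibers are homotopy fibers because $i_{\mathbf{M}}$ and $i_{\mathbf{N}}$ are injective on objects and morphisms. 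Composing with evaluation at $\mathfrak{X}$, Proposition \ref{prop:symseq_cof} identifies both sides with $\mathbf{P}[\mathfrak{X}]_{\cof}$. Since $F_{\ast}\pr{\mathfrak{X}}=\mathfrak{X}$, the triangle formed by these identifications commutes, and two-out-of-three gives the equivalence. The same argument with a general (non-identity) $1$-morphism in the target handles the full mapping-category condition in Definition \ref{def:cart}, since mapping categories in the fiber product are exactly these homotopy pullbacks.

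For (2), every morphism $F$ in the base has the cocartesian lift from (1), so $p$ is a cocartesian fibration; we check the remaining $\infty$-bicategorical conditions, such as local cocartesianness, against Definition \ref{def:cart}. The fiber over $\mathbf{M}$ is a one-object $\infty$-bicategory, whose endomorphism category is $\Fun^{\t,LQ}_{\mathbf{M}/}\pr{\mathbf{M}[\mathfrak{X}],\mathbf{M}[\mathfrak{X}]}$ with composition as monoidal structure. By Proposition \ref{prop:symseq_cof} with $i=i_{\mathbf{M}}$, evaluation at $\mathfrak{X}$ identifies this category with $\SSeq\pr{\mathbf{M}}_{\cof}$. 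We then upgrade this identification to a monoidal equivalence with the composition product, as in Proposition \ref{prop:carboni}. The same co-Yoneda computation applies: a symmetric monoidal cocontinuous $\overline{G}$ under $\mathbf{M}$ satisfies $\overline{G}\pr{F}\cong\int^{S}G^{\bigstar S}\otimes F\pr S$, and the right-hand side is $F\circ G$ in the paper's ordering convention. The fiber is therefore $B\pr{\SSeq\pr{\mathbf{M}}_{\cof},\circ}$. The transport functor along $F$ is $\pr{F_{\ast}}_{\ast}$, and under evaluation at $\mathfrak{X}$ it becomes $F_{\ast}\from\SSeq\pr{\mathbf{M}}_{\cof}\to\SSeq\pr{\mathbf{N}}_{\cof}$, which is strictly monoidal for $\circ$.

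The main obstacle is assembling these fiberwise identifications into an equivalence of straightenings, rather than only checking it object- and morphism-wise. To handle this, we write down directly a strict (Grothendieck-construction) model of $\Phi$: a $2$-category over $\TSMMC_{\emptyset}$ whose objects are the $\mathbf{M}$ and whose $1$-morphisms over $F$ are cofibrant symmetric sequences in $\mathbf{N}$. We then produce a strict functor over the base from this model into $\int^{\TSMMC_{\emptyset}}B\SSeq\pr -^{\circ}_{\cof}$, sending a $1$-morphism to the unique symmetric monoidal functor extending the given data. This functor preserves cocartesian morphisms by (1) and is a fiberwise equivalence by the previous paragraph. Theorem \ref{thm:st} then shows it is an equivalence of cocartesian fibrations, identifying the straightening of $p$ with $\Phi$.
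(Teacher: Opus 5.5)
Your proposal follows the paper's proof: part (1) is the same reduction of $p$-cocartesianness to Proposition \ref{prop:symseq_cof} through the mapping categories $\Fun^{\t,LQ}_{\mathbf{N}/}\pr{\mathbf{N}[\mathfrak{X}],\mathbf{P}[\mathfrak{X}]}$. The paper justifies that mapping-category computation by passing to the strict $2$-categorical model $\cat X$ via Proposition \ref{prop:mappingcat_arrow}(2), and part (2) is the same comparison with the bicategorical Grothendieck construction $\int\Phi$ using the Carboni-style co-Yoneda argument. The only real difference is the direction of the comparison: the paper uses the choice-free evaluation-at-$\mathfrak{X}$ functor $\cat X\to\int\Phi$ and gets an equivalence on every mapping category directly from Proposition \ref{prop:symseq_cof}, whereas your functor $\int\Phi\to\int^{\TSMMC_{\emptyset}}B\SSeq\pr -^{\circ}_{\cof}$ needs chosen extensions, which are unique only up to unique isomorphism, so it is a strictly unitary pseudofunctor into $\cat X$ rather than a strict functor, which still suffices together with your fiberwise-equivalence argument.
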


\begin{proof}
We start with (1). Let $\cat X$ denote the full sub $2$-category
of 
\[
\widetilde{\cat X}=2\Fun\pr{[1],\TSMMC^{\pr 2}_{\emptyset}}\times_{2\Fun\pr{\{0\},\TSMMC^{\pr 2}_{\emptyset}}}\TSMMC_{\emptyset}
\]
spanned by the objects $\{i_{\mathbf{M}}\from\mathbf{M}\to\mathbf{M}[\mathfrak{X}]\}_{\mathbf{M}\in\TSMMC_{\emptyset}}$,
where $2\Fun\pr{-,-}$ denotes the category of $2$-functors and $2$-natural
transformations (i.e., $\mathsf{Cat}$-enriched natural transformations).
By part (2) of Proposition \ref{prop:mappingcat_arrow}, the functor
$\cat X\to\int^{\TSMMC_{\emptyset}}B\SSeq\pr -^{\circ}_{\cof}$ is
an equivalence. It will therefore suffice to show that the square
in the statement is cocartesian for the projection $\widetilde{\cat X}\to\TSMMC_{\emptyset}$.

Given a left Quillen symmetric monoidal functor $G\from\mathbf{P}\to\mathbf{Q}$,
the mapping category of $\widetilde{\cat X}$ is given by
\begin{align*}
\cat X\pr{i_{0,\mathbf{M}},G} & =\coprod_{\mathbf{M}\to\mathbf{P}\in\Fun^{\t,LQ}\pr{\mathbf{M},\mathbf{P}}}\Fun^{\t,LQ}_{\mathbf{M}/}\pr{\mathbf{M}[\mathfrak{X}],\mathbf{Q}}.
\end{align*}
Thus, our goal is to show that, for each left Quillen symmetric monoidal
functor $\mathbf{N}\to\mathbf{P}$, the map
\[
\Fun^{\t,LQ}_{\mathbf{N}/}\pr{\mathbf{N}[\mathfrak{X}],\mathbf{Q}}\to\Fun^{\t,LQ}_{\mathbf{M}/}\pr{\mathbf{M}[\mathfrak{X}],\mathbf{Q}}
\]
is an equivalence. This follows from Proposition \ref{prop:symseq_cof},
which identifies this map with $\id\from\mathbf{Q}_{\cof}\to\mathbf{Q}_{\cof}$.

We next turn to (2). Part (1) implies that $p$ is a cocartesian fibration.
To identify its straightening, let $\int\Phi\to\TSMMC_{\emptyset}$
denote the bicategorical Grothendieck construction (Variant \ref{var:bicategoricalgroth})
of $\Phi$. Explicitly, objects of $\int\Phi$ are the objects of
$\TSMMC_{\emptyset}$, and the mapping categories are defined by
\[
\pr{\int\Phi}\pr{\mathbf{M},\mathbf{N}}=\coprod_{\mathbf{M}\to\mathbf{N}\in\Fun^{\t,LQ}\pr{\mathbf{M},\mathbf{N}}}\SSeq\pr{\mathbf{N}}_{\cof}.
\]
So a $1$-morphism $\mathbf{M}\to\mathbf{N}$ is a pair $\pr{F,X}$,
where $F\from\mathbf{M}\to\mathbf{N}$ is a left Quillen symmetric
monoidal functor, and $X$ is a projectively cofibrant symmetric sequence
in $\mathbf{N}$. Composition of such are defined by
\[
\pr{G,Y}\circ\pr{F,X}=\pr{GF,Y\circ\pr{F_{\ast}X}}.
\]
$2$-morphisms are defined and composed in a similar way. By Variant
\ref{var:bicategoricalgroth}, it suffices to produce a strictly unitary
equivalence $\cat X\xrightarrow{\simeq}\int\Phi$ of bicategories.
Such an equivalence is obtained from Proposition \ref{prop:symseq_cof}
by arguing as in Proposition \ref{prop:carboni}.
\end{proof}

\begin{lem}
\label{rem:simplicial_realization_of_loc}We can construct the symmetric
monoidal localization functor $L\from\SM\Rel\Cat^{\pr 2}_{\infty}\to\SM\Cat^{\pr 2}_{\infty}$
and the natural transformation $\eta=\{\pr{\cat C^{\t},\cat W}\to\cat C[\cat W^{-1}]^{\t}\}_{\pr{\cat C^{\t},\cat W}\in\SM\Rel\Cat^{\pr 2}_{\infty}}$
of Construction \ref{const:monoidalloc} so that they satisfy the
following properties:
\begin{enumerate}
\item $L$ is the scaled nerve of an $\SS^{+}$-enriched functor.
\item $\eta$ is the scaled nerve of an $\SS^{+}$-enriched natural transformation.
\item If $\pr{\cat C^{\t},\cat W_{\cat C}}\to\pr{\cat D^{\t},\cat W_{\cat D}}$
is a morphism of symmetric monoidal relative categories such that
$\cat C^{\t}\to\cat D^{\t}$ is a monomorphism of simplicial sets,
then the map $\cat C[\cat W^{-1}_{\cat C}]^{\t}\to\cat D[\cat W^{-1}_{\cat D}]^{\t}$
is a monomorphism of simplicial sets.
\end{enumerate}
\end{lem}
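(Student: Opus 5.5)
The plan is to realize $L$ by an explicit fibrant replacement in a simplicial (marked) model category whose fibrant objects are the symmetric monoidal $\infty$-categories. I will choose the replacement by an \emph{enriched} small object argument. Enrichment then gives (1) and (2) directly. Property (3) will come from the fact that the replacement is built by iterated pushouts along monomorphisms, indexed by sets of lifting problems.

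\textbf{Step 1: reduce to marked simplicial sets.} Send a symmetric monoidal relative $\infty$-category $\pr{\cat C^{\t},\cat W}$ to the marked simplicial set $\overline{\cat C}^{\t}$ over $\Fin_{\ast}^{\natural}$. Its underlying simplicial set is $\cat C^{\t}$. Its marked edges are those factoring as a cocartesian edge followed by an edge over an identity of $\Fin_{\ast}$ whose components lie in $\cat W$. This assignment is an $\SS^{+}$-enriched functor: a simplex $\Delta^{n}\to\Fun^{\t,\rel}\pr{\cat C,\cat D}$ (with equivalences marked) is the same as a marked map $\pr{\Delta^{n}}^{\flat}\times\overline{\cat C}^{\t}\to\overline{\cat D}^{\t}$ over $\Fin_{\ast}$ preserving cocartesian edges. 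I then work in the cocartesian model structure on $\SS^{+}_{/\Fin_{\ast}^{\natural}}$ of \cite{HTT}. This structure is $\SS^{+}$-enriched, combinatorial, and its fibrant objects are cocartesian fibrations with the cocartesian edges marked. The argument of \cite[Proposition 4.1.7.4]{HA} shows that a fibrant replacement of $\overline{\cat C}^{\t}$ is a symmetric monoidal $\infty$-category $\cat C[\cat W^{-1}]^{\t}$ satisfying the conditions of Construction \ref{const:monoidalloc}.

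\textbf{Step 2: enriched fibrant replacement.} Apply the enriched small object argument to a generating set of marked anodyne maps over $\Fin_{\ast}^{\natural}$ (Riehl, \emph{Categorical homotopy theory}, Chapter 13). This gives a fibrant replacement $R$ and a unit $\overline{\cat C}^{\t}\to R\overline{\cat C}^{\t}$ that are an $\SS^{+}$-enriched functor and an enriched natural transformation. Concretely, at each stage one attaches cells along the \emph{enriched} hom-objects $\Map\pr{A,X}\otimes A\to X$, not along the mere sets of maps. This is exactly what makes the construction compatible with tensoring by a marked simplicial set $K$, and it removes the usual obstruction to enriching the classical small object argument. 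Set $L=R\circ\overline{\pr -}$ and let $\eta$ be the unit. I then check two things. First, the enriched structure maps land in symmetric monoidal functors. This holds because $R$ preserves marked edges and the targets are fibrant. Second, $R$ followed by the scaled nerve gives (1) and (2).

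\textbf{Step 3: monomorphisms.} Each stage of $R$ is a pushout of $\coprod\Map\pr{A,X}\otimes A\to\coprod\Map\pr{A,X}\otimes B$, where $A\to B$ ranges over monomorphisms, followed by a transfinite composite. Suppose $X\to Y$ is a monomorphism. Then $\Map\pr{A,X}\to\Map\pr{A,Y}$ is a monomorphism, so the induced map on the coproducts of cells is a monomorphism. Moreover, the new cells of $Y$ that do not come from $X$ are attached disjointly. By induction, a gluing lemma for pushouts along monomorphisms in presheaf categories then shows that each stage map, and hence the colimit, is a monomorphism. Applied to $\overline{\cat C}^{\t}\to\overline{\cat D}^{\t}$, this gives (3).

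\textbf{Main obstacle.} I expect the difficulty to be Step 2: making sure the enriched small object argument applies over the base $\Fin_{\ast}^{\natural}$, and that the result is a genuine $\SS^{+}$-functor compatible with the scaled-nerve conventions of Definition \ref{def:monoidalrelative}. If the enriched version is awkward, my fallback is to check enrichment by hand, via natural maps $K\times R\pr X\to R\pr{K\times X}$ that are associative and unital.
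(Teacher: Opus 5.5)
Your proposal is correct and is essentially the paper's argument. Both use an enriched small object argument (\cite[Theorem 13.2.1]{cathtpy}) in a combinatorial $\SS^{+}$-enriched model structure on $\SS^{+}_{/\Fin_{\ast}}$ whose cofibrations are the monomorphisms, precomposed with the enriched functor that marks the cocartesian edges and the weak equivalences, and your cell-by-cell check for (3) spells out the paper's one-line remark that $[A,-]$ preserves monomorphisms. The one difference is the model structure: you use the cocartesian model structure of \cite{HTT}, whose base should be $\Fin_{\ast}^{\sharp}$ rather than $\Fin_{\ast}^{\natural}$. That choice forces you to mark $\cat W$-tuples in every fiber and to invoke the argument of \cite[Proposition 4.1.7.4]{HA} (localization commutes with finite products) to get the Segal condition. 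The paper instead uses the categorical-pattern model structure of \cite[Variant 2.1.4.13]{HA}, whose fibrant objects are already exactly the symmetric monoidal $\infty$-categories.
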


\begin{proof}
The key ingredient is the enriched version of Quillen's small object
argument \cite[Theorem 13.2.1]{cathtpy}, which we now recall. Let
$\Fin^{\sharp}_{\ast}$ denote the marked simplicial set obtained
from (the nerve of) $\Fin_{\ast}$ by marking all of its morphisms.
According to \cite[Variant 2.1.4.13, Remark B.2.5]{HA}, the category
$\SS^{+}_{/\Fin^{\sharp}_{\ast}}$ admits a combinatorial $\SS^{+}$-enriched
model structure with the following properties:
\begin{itemize}
\item Its cofibrations are the monomorphisms; and 
\item Its fibrant objects are the symmetric monoidal $\infty$-categories
with cocartesian edges marked.
\end{itemize}

Now fix a generating set $J$ of trivial cofibrations of $\SS^{+}_{/\Fin_{\ast}}$,
and choose a regular cardinal $\kappa$ such that the domain and codomain
of the maps in $J$ are all $\kappa$-compact. We define a $\kappa$-sequence
$\id=L_{0}\To L_{1}\To\cdots$ of enriched endofunctors of $\SS^{+}_{/\Fin_{\ast}}$
and enriched natural transformations inductively as follows: For each
limit ordinal $\lambda$, we set $L_{\lambda}=\colim_{\alpha<\lambda}L_{\alpha}$.
Assuming that $L_{\alpha}$ has been defined for some $\alpha<\kappa$,
we define $L_{\alpha+1}$ by the pushout 
\[\begin{tikzcd}
	{\coprod_{(j\colon A_j \to B_j)\in J}[A_j,L_{\alpha}(X)]\times A_j} & {L_{\alpha }(X)} \\
	{\coprod_{(j\colon A_j \to B_j)\in J}[A_j,L_{\alpha}(X)]\times B_j} & {L_{\alpha+1}(X),}
	\arrow[from=1-1, to=1-2]
	\arrow[from=1-1, to=2-1]
	\arrow[from=1-2, to=2-2]
	\arrow[from=2-1, to=2-2]
\end{tikzcd}\]where $[-,-]$ denotes the mapping objects of the $\SS^{+}$-enriched
category $\SS^{+}_{/\Fin_{\ast}}$. We then set $L_{\kappa}=\colim_{\alpha<\kappa}L_{\alpha}$
and write $\eta_{\kappa}\from\id_{\SS^{+}_{/\Fin_{\ast}}}\To L_{\kappa}$
for the resulting enriched natural transformation. By construction,
the components of $\eta_{\kappa}$ are trivial cofibrations of $\SS^{+}_{/\Fin_{\ast}}$,
and $L$ takes values in the full subcategory of fibrant objects.
Moreover, $L$ preserves monomorphisms because the functor $[A,-]$
preserves monomorphisms for all $A\in\SS^{+}_{/\Fin_{\ast}}$.

To relate this construction to the setting at hand, let $\cat X$
and $\cat Y$ denote the $\SS^{+}$-enriched categories defining $\SM\Rel\Cat^{\pr 2}_{\infty}$
and $\SM\Cat^{\pr 2}_{\infty}$. For each $\pr{\cat C^{\t},\cat W_{\cat C}}\in\cat X$,
let $\cat C^{\t,\cat W_{\cat C}}$ denote the marked simplicial set
obtained from $\cat C^{\t}$ by marking the cocartesian edges and
the equivalences in $\cat W_{\cat C}$. The assignment $\pr{\cat C^{\t},\cat W_{\cat C}}\mapsto\cat C^{\t,\cat W_{\cat C}}$
determines an enriched functor
\[
\cat X\to\SS^{+}_{/\Fin_{\ast}}.
\]
In general, given a pair of objects $\pr{\cat C^{\t},\cat W_{\cat C}},\pr{\cat D^{\t},\cat W_{\cat D}}\in\cat X$
the map 
\[
\cat X\pr{\pr{\cat C^{\t},\cat W_{\cat C}},\pr{\cat D^{\t},\cat W_{\cat D}}}\to[\cat C^{\t,\cat W_{\cat C}},\cat D^{\t,\cat W_{\cat D}}]
\]
is \textit{not} an isomorphism, but it is an isomorphism when $\pr{\cat D^{\t},\cat W_{\cat D}}\in\cat Y$
(i.e., $\cat W_{\cat D}$ are the equivalences of $\cat D$). Therefore,
$L_{\kappa}$ restricts to a functor $L\from\cat X\to\cat Y$, and
$\eta$ restricts to an enriched natural transformation $\eta\from\id_{\cat X}\To L$.
The desired functor and natural transformation is given by taking
the scaled nerve of $L$ and $\eta$.
\end{proof}

\begin{proof}
[Proof of Theorem \ref{thm:Brantner_univ}]By Remark \ref{rem:empty},
we may replace $\TSMMC$ by $\TSMMC_{\emptyset}$. We will first construct
a natural transformation $\theta\from\SSeq\pr -^{\circ}\to\SSeq_{B}\pr{\pr -_{\infty}}^{\circ}$
of functors $\TSMMC_{\emptyset}\to\Mon\hat{\Cat}_{\infty}$. Define
a functor $\tau\from\TSMMC_{\emptyset}\to\int^{\TSMMC_{\emptyset}}B\SSeq\pr -^{\circ}$
by
\[
\tau\pr{\mathbf{M}}=\pr{i_{\mathbf{M}}\from\mathbf{M}\hookrightarrow\mathbf{M}[\mathfrak{X}]}.
\]
By Proposition \ref{prop:globalTrimble}, $\tau$ is a cocartesian
section of the projection $\int^{\TSMMC_{\emptyset}}B\SSeq\pr -^{\circ}\to\TSMMC_{\emptyset}$.
Postcompositon with the functor $\pr -^{\t}_{\infty}\from\TSMMC^{\pr 2}_{\emptyset}\to\Pr\SM^{\pr 2}$
determines a functor
\[
F\from\int^{\TSMMC_{\emptyset}}B\SSeq\pr -^{\circ}\to\int^{\Pr\SM}\CAlg_{\bullet}
\]
Part (3) of Proposition \ref{prop:Day_model} shows that $F$ preserves
cocartesian edges. It follows from that the diagram 
\[\begin{tikzcd}
	{\int^{\mathsf{TractSMMC}_{\emptyset}}B\Sigma\mathrm{Seq}(-)^\circ} & {\int^{\mathcal{P}\mathsf{r}\mathcal{SM}}\operatorname{CAlg}^{(2)}_{\bullet}} \\
	{\mathsf{TractSMMC}_{\emptyset}} & {\mathcal{P}\mathsf{r}\mathcal{SM}}
	\arrow["F", from=1-1, to=1-2]
	\arrow["\tau", from=2-1, to=1-1]
	\arrow["{(-)_{\infty}^\otimes}"', from=2-1, to=2-2]
	\arrow["\sigma"', from=2-2, to=1-2]
\end{tikzcd}\]commutes up to natural equivalence, where $\sigma$ is the section
defined in Remark \ref{rem:Brantner_functorial}. By straightening,
this gives rise to a natural transformation
\[
\theta\from\SSeq\pr -^{\circ}\to\SSeq_{B}\pr{\pr -_{\infty}}^{\circ}.
\]

To complete the proof, we must show that for each $\mathbf{M}\in\TSMMC_{\emptyset}$,
the map
\[
\theta_{\mathbf{M}}\from\SSeq\pr{\mathbf{M}}\to\SSeq_{B}\pr{\mathbf{M}_{\infty}}
\]
is a localization at weak equivalences. (See Remark \ref{rem:loc_unique}.)
For this, we will construct the functor $\pr -^{\t}_{\infty}$ by
using Lemma \ref{rem:simplicial_realization_of_loc}; this will ensure
that $\mathbf{M}^{\t}_{\infty}\to\Fun\pr{\FB,\mathbf{M}}^{\t}_{\infty}$
is a monomorphism of simplicial sets for all $\mathbf{M}\in\TSMMC_{\emptyset}$.
By part (2) of Proposition \ref{prop:mappingcat_arrow}, we can identify
$\theta_{\mathbf{M}}$ with the dashed arrow in the diagram 
\[\begin{tikzcd}[column sep = small, scale cd = .8]
	& {\operatorname{Fun}^{\otimes,L}_{\mathbf{M}_\infty/}(\mathbf{M}[\mathfrak{X}]_\infty,\mathbf{M}[\mathfrak{X}]_\infty)} &&& {\{(i_{\mathbf{M}})_\infty^{\otimes}\}} \\
	{\operatorname{Fun}^{\otimes,LQ}_{\mathbf{M}/}(\mathbf{M}[\mathfrak{X}],\mathbf{M}[\mathfrak{X}])} && {\{i_{\mathbf{M}}\}} \\
	& {\operatorname{Fun}^{\otimes,L}(\mathbf{M}[\mathfrak{X}]_\infty,\mathbf{M}[\mathfrak{X}]_\infty)} &&& {\operatorname{Fun}^{\otimes,L}(\mathbf{M}_\infty,\mathbf{M}[\mathfrak{X}]_\infty)} \\
	{\operatorname{Fun}^{\otimes,LQ}(\mathbf{M}[\mathfrak{X}],\mathbf{M}[\mathfrak{X}])} && {\operatorname{Fun}^{\otimes,LQ}(\mathbf{M},\mathbf{M}[\mathfrak{X}]),}
	\arrow[from=1-2, to=1-5]
	\arrow[from=1-2, to=3-2]
	\arrow[from=1-5, to=3-5]
	\arrow["{\theta_{\mathbf{M}}}", dashed, from=2-1, to=1-2]
	\arrow[from=2-1, to=2-3]
	\arrow[from=2-1, to=4-1]
	\arrow[from=2-3, to=1-5]
	\arrow[from=2-3, to=4-3]
	\arrow[from=3-2, to=3-5]
	\arrow[from=4-1, to=3-2]
	\arrow[from=4-1, to=4-3]
	\arrow[from=4-3, to=3-5]
\end{tikzcd}\]where the front and back faces are defined by strict pullback (but
which happen to be homotopy pullback), and the slanted arrows of the
bottom face is induced by the functor $\SS^{+}$-enriched functor
$\pr -^{\t}_{\infty}$. We now consider the following diagram:
\[\begin{tikzcd}
	{\operatorname{Fun}^{\otimes,LQ}_{\mathbf{M}/}(\mathbf{M}[\mathfrak{X}],\mathbf{M}[\mathfrak{X}])} & {\operatorname{Fun}^{\otimes,L}_{\mathbf{M}_\infty/}(\mathbf{M}[\mathfrak{X}]_\infty,\mathbf{M}[\mathfrak{X}]_\infty)} \\
	{\operatorname{Fun}^{\otimes,LQ}(\mathbf{M}[\mathfrak{X}],\mathbf{M}[\mathfrak{X}])} & {\operatorname{Fun}^{\otimes,L}(\mathbf{M}[\mathfrak{X}]_\infty,\mathbf{M}[\mathfrak{X}]_\infty)} \\
	{\operatorname{Fun}^{\otimes,\mathrm{rel}}(\mathbf{M}[\mathfrak{X}]_{\mathrm{cof}},\mathbf{M}[\mathfrak{X}]_{\mathrm{cof}})} & {\operatorname{Fun}^{\otimes}(\mathbf{M}[\mathfrak{X}]_\infty,\mathbf{M}[\mathfrak{X}]_\infty)} \\
	{\operatorname{Fun}^{\otimes,\mathrm{rel}}(\mathsf{FB},\mathbf{M}[\mathfrak{X}]_{\mathrm{cof}})} & {\operatorname{Fun}^{\otimes}(L(\mathsf{FB}),\mathbf{M}[\mathfrak{X}]_{\infty})} \\
	& {\operatorname{Fun}^{\otimes}(\mathsf{FB},\mathbf{M}[\mathfrak{X}]_{\infty})} \\
	{\operatorname{Fun}(\mathsf{FB},\mathbf{M})_{\mathrm{cof}}} & {\operatorname{Fun}(\mathsf{FB},\mathbf{M})_{\infty}.}
	\arrow["{\theta_{\mathbf{M}}}", dashed, from=1-1, to=1-2]
	\arrow[from=1-1, to=2-1]
	\arrow[from=1-2, to=2-2]
	\arrow["{(-)_\infty^{\otimes}}", from=2-1, to=2-2]
	\arrow[from=2-1, to=3-1]
	\arrow[from=2-2, to=3-2]
	\arrow["L", from=3-1, to=3-2]
	\arrow[from=3-1, to=4-1]
	\arrow[from=3-2, to=4-2]
	\arrow["L", from=4-1, to=4-2]
	\arrow["{(\eta_{\operatorname{Fun}(\mathsf{FB},\mathbf{M})_{\mathrm{cof}}})_\ast}"', from=4-1, to=5-2]
	\arrow["\epsilon"', from=4-1, to=6-1]
	\arrow["{\eta_{\mathsf{FB}}^*}", from=4-2, to=5-2]
	\arrow["\epsilon", from=5-2, to=6-2]
	\arrow["{\eta_{\operatorname{Fun}(\mathsf{FB},\mathbf{M})_{\mathrm{cof}}}}"', from=6-1, to=6-2]
\end{tikzcd}\]Here $\FB$ is regarded as a symmetric monoidal relative $\infty$-category
whose weak equivalences are exactly the isomorphisms, the symmetric
monoidal functor $\FB\to\Fun\pr{\FB,\mathbf{M}}$ carries the singleton
$\underline{1}$ to the unit symmetric sequence $\pr{\emptyset,\mathbf{1},\emptyset,\emptyset,\dots}$,
and $\epsilon$ is the evaluation at the singleton. The triangle commutes
by the enriched naturality of $\eta$, and the remaining squares commute
trivially. The vertical composites are equivalences by Propositions
\ref{prop:SSeq_B_underlying} and \ref{prop:symseq_cof}. Therefore,
we are reduced to showing that the bottom horizontal arrow is a localization,
which it is by construction. The proof is now complete.
\end{proof}

\section{\label{sec:haugseng}Closer look at Haugseng's model}

In this section, we prove the following analog of Theorem \ref{thm:Brantner_univ}
for Haugseng's composition product monoidal structure:
\begin{thm}
\label{thm:Haugseng_univ}There is a natural equivalence 
\[
\SSeq\pr{\mathbf{M}}^{\circ}_{\cof}[\weq^{-1}]\simeq\SSeq_{\H}\pr{\mathbf{M}_{\infty}}^{\circ}
\]
of functors $\TSMMC\to\Mon\hat{\Cat}_{\infty}$.
\end{thm}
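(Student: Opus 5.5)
The plan is to use the universal property of Theorem \ref{thm:haug}, applied naturally in $\mathbf{M}$, to produce a comparison map, and then to check that it is a localization. By Remark \ref{rem:loc_unique}, it suffices to construct a natural transformation $\theta\from\SSeq\pr{\mathbf{M}}^{\circ}_{\cof}\to\SSeq_{\H}\pr{\mathbf{M}_{\infty}}^{\circ}$ of functors $\TSMMC\to\Mon\hat{\Cat}_{\infty}$ such that each component $\theta_{\mathbf{M}}$ is a localization at weak equivalences on underlying $\infty$-categories. Theorem \ref{thm:haug}(2) identifies the underlying $\infty$-category of the target with $\Fun\pr{\FB,\mathbf{M}_{\infty}}$, and \cite[Theorem 7.9.8]{HCHA} identifies the latter with $\Fun\pr{\FB,\mathbf{M}}_{\infty}$. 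So the component check reduces to verifying that, under these identifications, the underlying functor of $\theta_{\mathbf{M}}$ is the canonical localization $\Fun\pr{\FB,\mathbf{M}}_{\cof}\to\Fun\pr{\FB,\mathbf{M}}_{\infty}$.

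To construct $\theta$, I will invoke the universal property in Theorem \ref{thm:haug} with $\cat O^{\t}$ taken to be the non-symmetric $\infty$-operad underlying the monoidal $\infty$-category $\SSeq\pr{\mathbf{M}}^{\circ}_{\cof}$. Under that equivalence, a monoidal functor (equivalently, an $\cat O$-algebra in $\SSeq_{\H}\pr{\mathbf{M}_{\infty}}$) corresponds to an object of
\[
\Alg^{\opd}_{\cat O^{\t}\times_{\Del^{\op}}\Del^{\op}_{\mathbb{F}}}\pr{\mathbf{M}_{\infty}}.
\]
I will build this object at the point-set level, as a functor $\cat O^{\t}\times_{\Del^{\op}}\Del^{\op}_{\mathbb{F}}\to\pr{\mathbf{M}_{\cof}}^{\t}$ over $\Fin_{\ast}$ followed by the localization $\mathbf{M}_{\cof}^{\t}\to\mathbf{M}^{\t}_{\infty}$. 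Concretely, an object of the source is a list $\pr{X_{1},\dots,X_{n}}$ of cofibrant symmetric sequences together with a forest $S_{0}\to\cdots\to S_{n}$. It is sent to the tuple $\pr{X_{i}\pr{\text{fiber of }S_{i-1}\to S_{i}\text{ over }s}}_{0<i\leq n,\,s\in S_{i}}$ lying over $V\pr{S_{\bullet}}$. Active maps are sent to the tensor products and colimit-insertion maps that define iterated composition products. The key points are as follows.
\begin{itemize}
\item The values are cofibrant, by projective cofibrancy of the $X_{i}$.
\item Operadic inert maps go to inert maps; this is essentially by construction.
\item The rule is strictly functorial, because $\circ$ on $\SSeq\pr{\mathbf{M}}$ is computed by honest colimits, so the associativity constraints are coherent at the $1$-categorical level.
\end{itemize}
Naturality in $\mathbf{M}$ then holds because left Quillen symmetric monoidal functors preserve cofibrant objects, tensors and the relevant colimits. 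Moreover, the construction is strictly functorial in $\TSMMC$ after replacing $\TSMMC$ by $\TSMMC_{\emptyset}$ as in Remark \ref{rem:empty}. The naturality of the equivalence in Theorem \ref{thm:haug}(1) in both $\cat O$ and $\cat C$ then assembles these algebras into the transformation $\theta$.

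Next I must check that $\theta_{\mathbf{M}}$ is really monoidal, i.e.\ that it preserves $\circ$ up to equivalence rather than being merely lax. Haugseng's composition product is characterized by the free/colimit formula: $X\circ Y$ is computed as the colimit over $\FB\times_{\Fin}\Fin_{A/}$ of the same expression, now taken in $\mathbf{M}_{\infty}$. Lemma \ref{lem:comp_hocolim} says exactly that, for projectively cofibrant $X$ and $Y$, the strict colimit cone is a homotopy colimit cone. Hence the comparison maps $\theta\pr X\circ_{\H}\theta\pr Y\to\theta\pr{X\circ Y}$ are equivalences, so the lax structure is strong. The unit is handled the same way, since the unit symmetric sequence is cofibrant. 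Finally, the underlying functor is just evaluation, i.e.\ the localization of $\Fun\pr{\FB,\mathbf{M}}_{\cof}$, which completes the component check.

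The main obstacle is the middle step. It requires writing down the functor out of $\cat O^{\t}\times_{\Del^{\op}}\Del^{\op}_{\mathbb{F}}$ honestly, including how morphisms of forests that discard subtrees or use injections act, and requires matching Haugseng's algebra description with the strict composition product. It also requires verifying that the identification of Theorem \ref{thm:haug}(2) agrees with evaluation. I would first do this for $\cat O^{\t}$ the operad of a strict monoidal category, where everything is a $1$-category, and then pass to $\mathbf{M}_{\infty}$ by postcomposing with the symmetric monoidal localization $\mathbf{M}_{\cof}\to\mathbf{M}_{\infty}$.
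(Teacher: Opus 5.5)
Your proposal is correct and follows essentially the paper's route. Your point-set functor out of $\cat O^{\t}\times_{\Del^{\op}}\Del^{\op}_{\mathbb{F}}$ is, via Proposition \ref{prop:SSeq_univ}, the adjoint of the paper's composite $\SSeq(\mathbf{M})^{\circ}_{\cof}\to\SSeq_{\H}(\mathbf{M}_{\cof})^{\circ}\to\SSeq_{\H}(\mathbf{M}_{\infty})^{\circ}$ built from Proposition \ref{prop:Haug_vs_classical}; the lax-to-strong upgrade rests on Lemma \ref{lem:comp_hocolim}; and the localization check uses \cite[Theorem 7.9.8]{HCHA} together with the projective/injective comparison. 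The one step you take as known is that the lax structure maps are the canonical colimit comparison maps for Haugseng's product; the paper actually proves this, through its explicit model (Definition \ref{def:SSeq_H_2}) and the Kan-extension criterion of Proposition \ref{prop:SSeq_H_loc}.
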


The remainder of this section is devoted to the proof of Theorem \ref{thm:Haugseng_univ}. 

Our proof will rely on an alternative construction of Haugseng's model,
which can take arbitrary symmetric monoidal $\infty$-categories as
its input and has $\infty$-operads as its output. (In the $1$-categorical
setting, Ching developed a closely related idea in \cite{Ching12}.)
To describe the construction, we need to introduce a bit of notation.
\begin{notation}
\hfill
\begin{itemize}
\item We write $\Sigma\subset\Fun\pr{[1],\Del^{\op}}$ for the full subcategory
spanned by the inert maps. The evaluation at $0\in[1]$ determines
a cartesian fibration $\ev_{0}\from\Sigma\to\Del^{\op}$, whose fiber
over $[n]\in\Del^{\op}$ can be identified with the poset of subintervals
of $[n]$ ordered by reverse inclusion. 
\item We then define a category $\Sigma\Del^{\op}_{\mathbb{F}}$ and functors
$\pi_{\Del^{\op}},\pi_{\Fin_{\ast}}$ by the commutative diagram
\[\begin{tikzcd}
	{\Sigma \mathbf{\Delta}_{\mathbb{F}}^{\mathrm{op}}} & \Sigma & {\mathbf{\Delta}^{\mathrm{op}}} \\
	{\mathbf{\Delta}^{\mathrm{op}}_{\mathbb{F}}} & {\mathbf{\Delta}^{\mathrm{op}}} \\
	{\mathsf{Fin}_\ast}
	\arrow[from=1-1, to=1-2]
	\arrow["{\pi_{\mathbf{\Delta}^{\mathrm{op}}}}", curve={height=-18pt}, from=1-1, to=1-3]
	\arrow[from=1-1, to=2-1]
	\arrow["\lrcorner"{description, pos=0}, draw=none, from=1-1, to=2-2]
	\arrow["{\pi_{\mathsf{Fin}_\ast}}"', curve={height=24pt}, from=1-1, to=3-1]
	\arrow["{\mathrm{ev}_0}", from=1-2, to=1-3]
	\arrow["{\mathrm{ev}_1}", from=1-2, to=2-2]
	\arrow[from=2-1, to=2-2]
	\arrow["V", from=2-1, to=3-1]
\end{tikzcd}\]whose top square is a pullback. We will think of $\Sigma\Del^{\op}_{\mathbb{F}}$
as lying over $\Fin_{\ast}$ and $\Del^{\op}$ by these maps.
\end{itemize}

We typically denote an object $\iota\from[n]\to[m]$ of $\Sigma$
by $[n]\to[a,b]$, where $a=\iota\pr 0$ and $b=\iota\pr m$. With
this notation, a typical object of $\Sigma\Del^{\op}_{\mathbb{F}}$
can be written as a pair
\[
\pr{[n]\to[a,b],S_{a}\to\cdots\to S_{b}}
\]
where $S_{a}\to\cdots\to S_{b}$ is a sequence of morphsims in $\Fin$.
\end{notation}

\begin{rem}
\label{rem:flat_cfib}The functor $\pi_{\Del^{\op}}$ is a flat categorical
fibration, since it is the composite of a cocartesian fibration and
a cartesian fibration. 
\end{rem}

\begin{notation}
Given a simplicial set $X\in\SS_{/\Del^{\op}}$ and a map $f\from K\to\Del^{\op}$
of simplicial sets, we write $X_{f}=X_{K}=K\times_{\Del^{\op}}X$.
\end{notation}

\begin{defn}
\label{def:SSeq_H_2}Let $p\from\cat O^{\t}\to\Fin_{\ast}$ be an
$\infty$-operad. We define a functor $\widetilde{\SSeq_{\H}}\pr{\cat O}^{\circ}\to\Del^{\op}$
to be the image of $\cat O^{\t}\in\SS_{/\Fin_{\ast}}$ under the composite
\[
\SS_{/\Fin_{\ast}}\xrightarrow{\pr{\pi_{\Fin_{\ast}}}^{*}}\SS_{/\Sigma\Del^{\op}_{\mathbb{F}}}\xrightarrow{\pr{\pi_{\Del^{\op}}}_{\ast}}\SS_{/\Del^{\op}}.
\]
In other words, $\widetilde{\SSeq_{\H}}\pr{\cat O}^{\circ}$ is characterized
by the universal property
\[
\Fun_{\Del^{\op}}\pr{K,\widetilde{\SSeq_{\H}}\pr{\cat O}^{\circ}}\cong\Fun_{\Fin_{\ast}}\pr{\pr{\Sigma\Del^{\op}_{\mathbb{F}}}_{K},\cat O^{\otimes}}.
\]
Note that the projection $\widetilde{\SSeq_{\H}}\pr{\cat O}^{\circ}\to\Del^{\op}$
is a categorical fibration by Remark \ref{rem:flat_cfib}. We write
$\SSeq_{\H}\pr{\cat O}^{\circ}\subset\widetilde{\SSeq_{\H}}\pr{\cat O}^{\circ}$
for the full subcategory spanned by the functors $\{[n]\}\times_{\Del^{\op}}\Sigma\Del^{\op}_{\mathbb{F}}\to\cat O^{\t}$
carrying every morphism to an inert map.
\end{defn}

\begin{rem}
\label{rem:SSeq_H_und}In the situation of Definition \ref{def:SSeq_H_2},
the functor 
\[
\FB\to\{[1]\}\times_{\Del^{\op}}\Sigma\Del^{\op}_{\mathbb{F}},\,S\mapsto\pr{S\to\underline{1}}
\]
induces a categorical equivalence
\[
\SSeq_{\H}\pr{\cat O}\xrightarrow{\simeq}\Fun\pr{\FB,\cat O}.
\]
Indeed, we can identify $\SSeq\pr{\cat O}$ with the full subcategory
of $\Fun_{\Fin_{\ast}}\pr{\{[1]\}\times_{\Del^{\op}}\Sigma\Del^{\op}_{\mathbb{F}},\cat O^{\t}}$
spanned by the functors that are $p$-right Kan extended from $\FB$.
\end{rem}

The next two propositions assert that $\SSeq_{\H}\pr{\cat O}^{\circ}$
is an $\infty$-operad characterized by a certain universal property,
and that the notation $\SSeq_{\H}\pr -^{\circ}$ does not conflict
with Theorem \ref{subsec:Haug}.
\begin{prop}
\label{prop:SSeq_opd}Let $p\from\cat O^{\t}\to\Fin_{\ast}$ be an
$\infty$-operad. The projection $q\from\SSeq_{\H}\pr{\cat O}^{\circ}\to\Del^{\op}$
is a non-symmetric $\infty$-operad. Moreover, if $\alpha\from X\to Y$
is a morphism of $\SSeq_{\H}\pr{\cat O}^{\circ}$ lying over an inert
map $\underline{\alpha}\from[n]\to[m]$, the following conditions
are equivalent:

\begin{enumerate}[label=(\alph*)]

\item The morphism $\alpha$ is $q$-cocartesian.

\item The functor $\pr{\Sigma\Del^{\op}_{\mathbb{F}}}_{\underline{\alpha}}\to\cat O^{\t}$
adjoint to $\alpha$ carries every morphism lying over an inert map
of $\Fin_{\ast}$ to an inert map of $\cat O^{\t}$.

\end{enumerate}
\end{prop}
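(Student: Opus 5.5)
The plan is to analyze $q$ through the right-adjoint construction $(\pi_{\Del^{\op}})_{\ast}$ and the standard description of cocartesian lifts for pushforwards along flat categorical fibrations, as in \cite[Section B.4]{HA} or Lurie's treatment of $\Alg$-fibrations. First, since $\pi_{\Del^{\op}}$ is flat (Remark \ref{rem:flat_cfib}), the map $\widetilde{\SSeq_{\H}}(\cat O)^{\circ}\to\Del^{\op}$ is a categorical fibration. For a morphism $\underline{\alpha}\colon[n]\to[m]$ and an object $X$ over $[n]$, i.e. a functor $X\colon(\Sigma\Del^{\op}_{\mathbb{F}})_{[n]}\to\cat O^{\t}$ over $\Fin_{\ast}$, a lift $\alpha$ is a functor on $(\Sigma\Del^{\op}_{\mathbb{F}})_{\underline{\alpha}}$ extending $X$. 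I expect $\alpha$ to be $q$-cocartesian if (and, among lifts in the full subcategory, only if) this functor is a $p$-right Kan extension of its restriction along $(\Sigma\Del^{\op}_{\mathbb{F}})_{[n]}\hookrightarrow(\Sigma\Del^{\op}_{\mathbb{F}})_{\underline{\alpha}}$ relative to $\Fin_{\ast}$. This would follow from the criterion of \cite[Proposition B.4.14 / Corollary B.4.?]{HA}.

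Second, I will compute these relative right Kan extensions when $\underline{\alpha}$ is inert. An object over $[m]$ is a pair $([m]\to[a,b],S_{a}\to\cdots\to S_{b})$. The maps in $(\Sigma\Del^{\op}_{\mathbb{F}})_{\underline{\alpha}}$ from it into the fiber over $[n]$ go via the composite interval $[n]\to[m]\to[a,b]$, and I aim to show that this comma category has an initial object. Since $\underline{\alpha}$ is inert, this is the same forest $S_\bullet$ restricted to the interval $[a',b']$ corresponding to the image of $[n]$. Its image in $\Fin_{\ast}$ is an inert map (restriction of vertex sets), and inert maps in $\cat O^{\t}$ are $p$-cocartesian. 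Hence the $p$-right Kan extension exists and has the form prescribed by (b): every morphism over an inert map of $\Fin_{\ast}$ goes to an inert map. Here I use the cartesian-fibration structure of $\ev_{0}\colon\Sigma\to\Del^{\op}$: the fiber over $[n]$ is the poset of subintervals, and pulling back along inert maps corresponds to intersecting intervals. Conversely, a lift satisfying (b) agrees with this Kan extension, since all its values are determined by inert pushforwards, so (b) implies (a). The converse (a)$\Rightarrow$(b) then follows from uniqueness of cocartesian lifts together with existence.

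Third, I will check the non-symmetric $\infty$-operad axioms. (i) Cocartesian lifts of inert maps exist inside $\SSeq_{\H}(\cat O)^{\circ}$: the Kan extension constructed above sends all morphisms of the fiber over $[m]$ to inerts, because composites of inerts are inert. (ii) The Segal condition: the fiber over $[n]$, namely the functors on $(\Sigma\Del^{\op}_{\mathbb{F}})_{[n]}$ sending everything to inerts, should be equivalent via the inclusions $\rho_i\colon[1]\to[n]$ to the $n$-fold product of fibers over $[1]$. I plan to show that such a functor is a $p$-right Kan extension from the full subcategory of objects whose interval is some $\{i-1,i\}$. Concretely, an object with interval $[a,b]$ and forest $S_{a}\to\cdots\to S_{b}$ has vertex set $\coprod_{a<i\le b}S_i$, so $\cat O$'s Segal condition decomposes its value as a product of values on the one-step pieces, restricted along the fibers $S_{i}$. (iii) The mapping-space condition for multimorphisms follows from the same relative Kan extension description combined with the corresponding condition for $\cat O^{\t}$.

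The main obstacle is the initial-object/Kan-extension computation in the second step. It needs a careful bookkeeping of morphisms in $\Del_{\mathbb{F}}$: the injective maps with cartesian squares must decompose trees into subtrees correctly. I will first handle the elementary inert maps $[1]\to[n]$ and intervals, then reduce general inert maps to these using the cartesianness of $\ev_{0}$.
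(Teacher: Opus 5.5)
There is a genuine error in your second step: the Kan-extension criterion has the wrong variance.

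In $\pr{\Sigma\Del^{\op}_{\mathbb{F}}}_{\underline{\alpha}}=\Delta^{1}\times_{\Del^{\op}}\Sigma\Del^{\op}_{\mathbb{F}}$, every morphism lies over $0\to0$, $0\to1$ or $1\to1$. So there are no maps from an object over $[m]$ into the fiber over $[n]$, and the under-category in which you seek an initial object is empty. A $p$-right Kan extension from the fiber over $[n]$ would then only say that each value of $Y$ is $p$-final. That has nothing to do with cocartesianness and typically fails. Your appeal to ``inert maps are $p$-cocartesian'' is also the input for \emph{left}, not right, Kan extensions, so the step is internally inconsistent.

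The correct criterion is the one the paper uses. Apply [HA, Proposition B.4.9] to $\cat O^{\t}\times_{\Fin_{\ast}}\Sigma\Del^{\op}_{\mathbb{F}}\to\Sigma\Del^{\op}_{\mathbb{F}}\to\Del^{\op}$ with empty auxiliary subcategory. It says: the functor $F$ adjoint to $\alpha$ must be a $p$-\emph{left} Kan extension of its restriction to the fiber over $0$. This is forced because maps out of $Y$ must be determined by maps out of $X$.

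With that correction, the relevant comma category is the over-category $\pr{\pr{\Sigma\Del^{\op}_{\mathbb{F}}}_{[n]}}_{/y}$. It has a terminal object because $\pr{\Sigma\Del^{\op}_{\mathbb{F}}}_{\underline{\alpha}}\to[1]$ is a cartesian fibration (coming from $\ev_{0}$). For $y=\pr{[m]\xrightarrow{\iota}[a,b],S_{a}\to\cdots\to S_{b}}$, the terminal object is the edge $\rho\pr y\to y$, where $\rho\pr y=\pr{[n]\xrightarrow{\iota\underline{\alpha}}[a,b],S_{a}\to\cdots\to S_{b}}$.

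This edge keeps the same interval and the same forest. It is $[m]$ that sits inside $[n]$ in $\Del$; nothing is ``restricted to the image of $[n]$''. So the edge lies over an identity of $\Fin_{\ast}$, not over a proper inert map. Consequently:
\begin{itemize}
\item $F$ is a $p$-left Kan extension if and only if it sends these edges to equivalences.
\item Condition (b) guarantees exactly this, giving (b)$\Rightarrow$(a).
\item $X\circ\rho$ is an explicit special lift, and it lands in $\SSeq_{\H}\pr{\cat O}^{\circ}$ because $X$ sends every morphism to an inert map.
\end{itemize}
This is exactly the paper's steps (1)--(2).

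Once this is fixed, the rest of your plan goes through. Your deduction of (a)$\Rightarrow$(b) from existence plus uniqueness of cocartesian lifts is fine. Your Segal argument by a $p$-right Kan extension from the one-step intervals uses the correct direction there, in the spirit of the paper's steps (3)--(4).
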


\begin{prop}
\label{prop:SSeq_univ}Let $\cat O^{\t}$ be an $\infty$-operad,
and let $\cat P^{\t}$ be a non-symmetric $\infty$-operad. The restriction
along the diagonal map $\Del^{\op}\to\Sigma$ induces a categorical
equivalence
\[
\Alg_{\cat P}\pr{\SSeq_{\H}\pr{\cat O}}\xrightarrow{\simeq}\Alg^{\opd}_{\cat P^{\t}\times_{\Del^{\op}}\Del^{\op}_{\mathbb{F}}}\pr{\cat O}.
\]
\end{prop}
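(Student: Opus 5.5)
The equivalence will come out of the universal property in Definition \ref{def:SSeq_H_2}, combined with the description of cocartesian morphisms in Proposition \ref{prop:SSeq_opd}.

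\textbf{Step 1: unwind the left-hand side.} By definition, $\Alg_{\cat P}\pr{\SSeq_{\H}\pr{\cat O}}$ is the full subcategory of $\Fun_{\Del^{\op}}\pr{\cat P^{\t},\SSeq_{\H}\pr{\cat O}^{\circ}}$ spanned by the maps that preserve inert morphisms. Since $\SSeq_{\H}\pr{\cat O}^{\circ}\subset\widetilde{\SSeq_{\H}}\pr{\cat O}^{\circ}$ is a full subcategory, the universal property identifies this with a full subcategory of
\[
\Fun_{\Fin_{\ast}}\pr{\pr{\Sigma\Del^{\op}_{\mathbb{F}}}_{\cat P^{\t}},\cat O^{\t}}.
\]
This subcategory consists of the functors $G$ satisfying two conditions. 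First, $G$ carries fiberwise morphisms over each object $P\in\cat P^{\t}$ to inert maps. Second, for every inert $\alpha\from P\to P'$ of $\cat P^{\t}$, $G$ carries morphisms of $\pr{\Sigma\Del^{\op}_{\mathbb{F}}}_{\alpha}$ lying over inert maps of $\Fin_{\ast}$ to inert maps; this is Proposition \ref{prop:SSeq_opd}(b).

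\textbf{Step 2: express this as a Kan extension.} The diagonal $\Del^{\op}\to\Sigma$, $[n]\mapsto\pr{[n]=[0,n]}$, induces an inclusion
\[
j\from\cat P^{\t}\times_{\Del^{\op}}\Del^{\op}_{\mathbb{F}}\hookrightarrow\pr{\Sigma\Del^{\op}_{\mathbb{F}}}_{\cat P^{\t}}.
\]
I would show that, for each object $\pr{P,[n]\to[a,b],S_{a}\to\cdots\to S_{b}}$, the diagonal object $\pr{P,S_{0}\to\cdots\to S_{n}}$ admits an essentially unique morphism to it lying over an inert map of $\Fin_{\ast}$. Concretely, the inclusion of the subinterval $[a,b]$ gives the projection $\pr{\coprod_{i>0}S_{i}}_{\ast}\to\pr{\coprod_{a<i\le b}S_{i}}_{\ast}$.

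From this I would argue that the functors $G$ in the subcategory of Step 1 are exactly the $p$-right Kan extensions along $j$ of functors that carry operadic inert maps to inert maps. The first condition of Step 1 says precisely that $G$ is $p$-right Kan extended from the diagonal, because the relevant comma category has an initial object given by this inert map, and inert maps are $p$-cocartesian, so the lifting is unique. The second condition then restricts, along the diagonal, to the operadic-inert condition of Haugseng's definition. Conversely, if the diagonal restriction sends operadic inert maps to inert maps, the Kan extension satisfies both conditions, since composites of inert maps are inert. \cite[Proposition 4.3.2.15]{HTT} then shows that restriction along $j$ is a trivial fibration onto $\Alg^{\opd}_{\cat P^{\t}\times_{\Del^{\op}}\Del^{\op}_{\mathbb{F}}}\pr{\cat O}$.

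\textbf{Main obstacle.} The hard step is the combinatorial verification that morphisms in the fibers $\{[n]\}\times_{\Del^{\op}}\Sigma\Del^{\op}_{\mathbb{F}}$ and over inert maps of $\cat P^{\t}$ are generated, up to 2-out-of-3, by maps from diagonal objects and images of operadic inert maps. This is what makes the "inert on all morphisms" conditions equivalent to the Kan-extension-plus-operadic-inert conditions. I would settle it by explicitly factoring a morphism of $\Sigma\Del^{\op}_{\mathbb{F}}$ over $\Fin_{\ast}$ as a map in $\Sigma$ (shrinking a subinterval) followed by a map of forests, and checking each piece separately using the explicit formula for $V$.
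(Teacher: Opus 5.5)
Your Step~1 is correct, and a relative right Kan extension along the diagonal $j$ can be made to work. However, the comma-category analysis in Step~2, on which everything rests, is wrong. Write $\cat X=\cat P^{\t}\times_{\Del^{\op}}\Sigma\Del^{\op}_{\mathbb{F}}$ and $\cat X'=\cat P^{\t}\times_{\Del^{\op}}\Del^{\op}_{\mathbb{F}}$.

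An object $x=(P,[n]\to[a,b],S_a\to\cdots\to S_b)$ of $\cat X$ records only the forest over $[a,b]$. So there is no ``diagonal object $(P,S_0\to\cdots\to S_n)$'' attached to $x$. Maps from diagonal objects over $P$ into $x$ are cartesian embeddings of $S_\bullet$ into arbitrary forests over $[n]$, and none of them is essentially unique or terminal.

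These maps also point the wrong way. A $p$-right Kan extension at $x$ is governed by $\cat X'_{x/}=\cat X'\times_{\cat X}\cat X_{x/}$, i.e.\ by maps \emph{out of} $x$. Maps into $x$, combined with $p$-cocartesianness of inert maps, is the language of left Kan extensions. The fiber of $\ev_0$ over $[n]$ is the poset of subintervals under reverse inclusion, with $[0,n]$ as its minimum. Hence there is no map from $x$ to a diagonal object with $\cat P^{\t}$-component $\id_P$ unless $[a,b]=[0,n]$. So your claim that the first (fiberwise) condition ``says precisely'' that $G$ is right Kan extended from the diagonal is false. That condition only involves morphisms over $\id_P$, whereas the Kan extension condition must relate $G(x)$ to values of $G$ over a different object of $\cat P^{\t}$.

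The missing idea is to move along an inert map of $\cat P^{\t}$. Choose an inert lift $c\from P\to P_{[a,b]}$ of $[n]\to[a,b]$, and put $x_0=(P_{[a,b]},S_a\to\cdots\to S_b)$. Consider the morphism $x\to j(x_0)$ with $\cat P^{\t}$-component $c$, $\Sigma$-component the square from $[n]\to[a,b]$ to $\id_{[a,b]}$, and identity on $S_\bullet$.

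By cocartesianness of $c$, this is an initial object of $\cat X'_{x/}$, and it lies over an identity of $\Fin_{\ast}$. Hence $G$ is $p$-right Kan extended from $\cat X'$ iff it sends these maps to equivalences, and this follows from your \emph{second} condition.

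For the converse, take a morphism $x\to y$ with inert $\cat P^{\t}$-component. Factor the composite $x\to y\to j(y_0)$ through $x\to j(x_0)$. The resulting map $j(x_0)\to j(y_0)$ has inert $\cat P^{\t}$-component by two facts: left cancellation of cocartesian morphisms, and the observation that a morphism of $\Sigma$ with inert $\ev_0$-component has inert $\ev_1$-component. Closure of inerts under composition alone does not give this. With this correction no factorization of forest morphisms is needed, so your ``main obstacle'' is not where the work lies.

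The corrected argument is a pointwise version of the paper's proof. The paper factors $\theta$ through $(\cat P^{\t,\natural})^{[1]^{\sharp}}\times_{\Del^{\op}}\Del^{\op,\natural}_{\mathbb{F}}$. The homotopy inverse there, evaluation at $1\in[1]$, is exactly your retraction $x\mapsto x_0$. The other factor is a pullback of a trivial fibration, so the paper obtains the equivalence of $\infty$-preoperads globally without examining comma categories.
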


\begin{proof}
[Proof of Proposition \ref{prop:SSeq_opd}]It is tempting to use \cite[Theorem B.4.2]{HA},
but condition (5) of loc. cit. is not satisfied. So we must take a
bit of a detour. We will say that a morphism $\alpha$ of $\SSeq\pr{\cat O}^{\circ}$
that lies over an inert map is \textit{special} if it satisfies condition
(b) in the statement. We must prove the following:
\begin{enumerate}
\item For each inert map $\underline{\alpha}:[n]\to[m]$ of $\Del^{\op}$
and object $X\in\SSeq\pr{\cat O}^{\circ}_{[n]}$, there is a special
morphism $\alpha:X\to Y$ lying over $\underline{\alpha}$.
\item Every special morphism of $\SSeq\pr{\cat O}^{\circ}$ is $q$-cocartesian.
\item For each $n\geq0$ and each collection of objects $X_{1},\dots,X_{n}\in\SSeq\pr{\cat O}$,
there is an object $X\in\SSeq\pr{\cat O}$ that admits special maps
$\{X\to X_{i}\}_{1\leq i\leq n}$ lying over the inert maps $\{[n]\to[i-1,i]\cong[1]\}_{1\leq i\leq n}$.
\item Any collection of maps $\{X\to X_{i}\}_{1\leq i\leq n}$ as in (3)
determines a $q$-limit diagram in $\SSeq\pr{\cat O}$.
\end{enumerate}

We start with (1). There is a retraction $\rho\from\pr{\Sigma\Del^{\op}_{\mathbb{F}}}_{\underline{\alpha}}\to\pr{\Sigma\Del^{\op}_{\mathbb{F}}}_{[n]}$
carrying each object $\pr{\iota\from[m]\to[i,j],\,S_{i}\to\cdots\to S_{j}}\in\pr{\Sigma\Del^{\op}_{\mathbb{F}}}_{[m]}$
to $\pr{\iota\underline{\alpha}\from[n]\to[i,j],\,S_{i}\to\cdots\to S_{j}}\in\pr{\Sigma\Del^{\op}_{\mathbb{F}}}_{[n]}$.
Precomposing this retraction to the map $\pr{\Sigma\Del^{\op}_{\mathbb{F}}}_{[n]}\to\cat O^{\t}$
adjoint to $X$, we get the desired special morphism.

Next, we prove (2). Suppose we are given a special morphism $\alpha$
of $\SSeq\pr{\cat O}^{\circ}$ described as in (b). We wish to show
that $\alpha$ is $p$-cocartesian. Using \cite[Proposition B.4.9]{HA}
to the functors $\cat O^{\t}\times_{\Fin_{\ast}}\Sigma\Del^{\op}_{\mathbb{F}}\to\Sigma\Del^{\op}_{\mathbb{F}}\xrightarrow{\ev_{0}}\Del^{\op}$
and the full subcategory $\emptyset\subset\Sigma\Del^{\op}_{\mathbb{F}}$,
we are reduced to showing that the functor
\[
F\from\pr{\Sigma\Del^{\op}_{\mathbb{F}}}_{\alpha}\to\cat O^{\t}
\]
adjoint to $\alpha$ is $p$-left Kan extended from $\pr{\Sigma\Del^{\op}_{\mathbb{F}}}_{0}$.
Since the projection $\pi\from\pr{\Sigma\Del^{\op}_{\mathbb{F}}}_{\alpha}\to[1]$
is a cartesian fibration, this is equivalent to the condition that
$F$ carries $\pi$-cartesian morphisms to inert maps. But $\pi$-cartesian
morphisms lie over isomorphisms of $\Fin_{\ast}$, so the claim follows
from the specialty of $F$.

We next turn to (3). Consider the discrete set $\underline{n}=\{1,\dots,n\}$
and the poset $\underline{n}^{\lcone}$ obtained from $\underline{n}$
by adjoining a minimal element $-\infty$. We let $\underline{n}^{\lcone}\to\Del^{\op}$
denote the functor carrying the map $-\infty\to i$ to the inert map
$[n]\to[i-1,i]\cong[1]$. We can identify the objects $X_{1},\dots,X_{n}$
with a functor $F\from\pr{\Sigma\Del^{\op}_{\mathbb{F}}}_{\underline{n}}\to\cat O^{\t}$,
and we must find a filler as indicated below
\[\begin{tikzcd}
	{(\Sigma \mathbf{\Delta}^{\mathrm{op}}_{\mathbb{F}})_{\underline{n}}} & {\mathcal{O}^\otimes } \\
	{(\Sigma \mathbf{\Delta}^{\mathrm{op}}_{\mathbb{F}})_{\underline{n}^\triangleleft}} & {\mathsf{Fin}_\ast,}
	\arrow["F", from=1-1, to=1-2]
	\arrow[hook, from=1-1, to=2-1]
	\arrow["p", from=1-2, to=2-2]
	\arrow[dashed, from=2-1, to=1-2]
	\arrow[from=2-1, to=2-2]
\end{tikzcd}\]which determines special maps of $\SSeq\pr{\cat O}^{\circ}$. We will
construct the filler as a $p$-right Kan extension of $F$, and then
show that this corresponds to $n$ special maps of $\SSeq\pr{\cat O}^{\circ}$.

Consider an object $\xi=\pr{[n]\to[i,j],S_{i}\to\cdots\to S_{j}}\in\pr{\Sigma\Del^{\op}_{\mathbb{F}}}_{[n]}$.
We will assume that $i<j$, because the argument for the case where
$i=j$ is similar. The inert maps $\{[i,j]\to[k-1,k]\cap[i,j]\}_{i\leq k\leq j+1}$
and the identity maps of the $S_{k}$'s determine an initial map
\[
\{i,\dots,j+1\}\hookrightarrow\pr{\pr{\Sigma\Del^{\op}_{\mathbb{F}}}_{\underline{n}}}_{\xi/}.
\]
Therefore, to prove the existence of a $p$-right Kan extension, it
will suffice to show that there is a $p$-limit diagram $\{i,\dots,j+1\}^{\lcone}\to\cat O^{\t}$
lying over the composite $\{i,\dots,j+1\}^{\lcone}\to\pr{\Sigma\Del^{\op}_{\mathbb{F}}}_{\underline{n}^{\lcone}}\to\Fin_{\ast}$.
This follows from the definition of $\infty$-operads. Moreover, this
argument shows that the $p$-right Kan extension $\overline{F}:\u n^{\lcone}\times_{\Del^{\op}}\Sigma\Del^{\op}_{\mathbb{F}}\to\cat O^{\t}$
determines special maps of $\SSeq\pr{\cat O}^{\circ}$. 

The proof of part (4) is similar to that of part (3), using \cite[Proposition B.4.9]{HA}
again. The proof is now complete.
\end{proof}

To facilitate the proof of Proposition \ref{prop:SSeq_univ}, we will
use \textit{marked simplicial sets}, which is a useful gadget to keep
track of a designated class of morphisms (such as inert maps). 
\begin{recollection}
A \textbf{marked simplicial set} is a pair $\pr{S,M}$, where $S$
is a simplicial set and $M$ is a set of edges of $S$ containing
all degenerate edges. If $\pr{X,E}$ and $\pr{X',E'}$ are marked
simplicial sets equipped with maps $p\from X\to S$ and $p'\from X'\to S$
of simplicial sets, we write $\Fun_{S}\pr{\pr{X,E},\pr{X',E'}}$ for
the full simplicial subset of $\Fun_{S}\pr{X,X'}=\Fun\pr{X,X'}\times_{\Fun\pr{X',S}}\{p'\}$
spanned by the maps $X\to X'$ carrying $E$ into $E'$. 
\end{recollection}

If $\cat O^{\t}$ is a symmetric or non-symmetric $\infty$-operad,
we write $\cat O^{\t,\natural}$ for the marked simplicial set obtained
from $\cat O^{\t}$ by marking the inert edges. If $X$ is a simplicial
set, we write $X^{\sharp}$ for the marked simplicial set obtained
from $X$ by marking \textit{all} edges.
\begin{proof}
[Proof of Proposition \ref{prop:SSeq_univ}]Write $\Del^{\op,\natural}_{\mathbb{F}}$
for the marked simplicial set obtained from $\Del^{\op}$ by marking
the edges lying over inert maps of $\Del^{\op}$. By Proposition \ref{prop:SSeq_opd},
the marked simplicial set $\SSeq\pr{\cat O}^{\circ,\natural}\in\SS^{+}_{/\Del^{\op,\natural}}$
is characterized by the isomorphism of simplicial sets 
\[
\Fun_{\Del^{\op}}\pr{\overline{K},\SSeq_{\H}\pr{\cat O}^{\natural}}\cong\Fun_{\Fin_{\ast}}\pr{\overline{K}\times_{\pr{\Del^{\op,\natural}}^{\{0\}^{\sharp}}}\pr{\Del^{\op,\natural}}^{[1]^{\sharp}}\times_{\pr{\Del^{\op,\natural}}^{\{1\}^{\sharp}}}\Del^{\op,\natural}_{\mathbb{F}},\mathcal{O}^{\t,\natural}}
\]
natural in $\overline{K}\in\SS^{+}_{/\Del^{\op,\natural}}$. In light
of this, it suffices to show that the map
\[
\theta\from\cat P^{\t,\natural}\times_{\Del^{\op,\natural}}\Del^{\op,\natural}_{\mathbb{F}}\to\cat P^{\t,\natural}\times_{\pr{\Del^{\op,\natural}}^{\{0\}^{\sharp}}}\pr{\Del^{\op,\natural}}^{[1]^{\sharp}}\times_{\pr{\Del^{\op,\natural}}^{\{1\}^{\sharp}}}\Del^{\op,\natural}_{\mathbb{F}}
\]
is a weak equivalence in the model category of $\infty$-preoperads
\cite[Proposition 2.1.4.6]{HA}. 

We can factor $\theta$ as
\begin{align*}
\cat P^{\t,\natural}\times_{\Del^{\op,\natural}}\Del^{\op,\natural}_{\mathbb{F}} & \xrightarrow{\theta'}\pr{\cat P^{\t,\natural}}^{[1]^{\sharp}}\times_{\pr{\Del^{\op,\natural}}^{\{1\}^{\sharp}}}\Del^{\op,\natural}_{\mathbb{F}}\\
 & \xrightarrow{\theta''}\cat P^{\t,\natural}\times_{\pr{\Del^{\op,\natural}}^{\{0\}^{\sharp}}}\pr{\Del^{\op,\natural}}^{[1]^{\sharp}}\times_{\pr{\Del^{\op,\natural}}^{\{1\}^{\sharp}}}\Del^{\op,\natural}_{\mathbb{F}}.
\end{align*}
The map $\theta'$ is a weak equivalence because it has a homotopy
inverse, given by the evaluation at $1\in[1]$. The second map is
a pullback of the map $\pr{\cat P^{\t,\natural}}^{[1]^{\sharp}}\to\cat P^{\t,\natural}\times_{\pr{\Del^{\op,\natural}}^{\{0\}^{\sharp}}}\pr{\Del^{\op,\natural}}^{[1]^{\sharp}}$,
which is a trivial fibration by \cite[Proposition B.1.9]{HA}. Hence
$\theta$ is a weak equivalence, as claimed.
\end{proof}

We now focus on the $\infty$-operad $\SSeq_{\H}\pr{\cat C}^{\circ}$
in the case where $\cat C^{\t}$ is a symmetric monoidal $\infty$-category.
We will show that it is a monoidal $\infty$-category when $\cat C^{\t}$
is compatible with colimits indexed by countable groupoids (Corollary
\ref{cor:SSeq_Haug}), and that it is compatible with the classical
composition product (Proposition \ref{prop:Haug_vs_classical}).
\begin{construction}
\label{const:env}Every $\infty$-operad $\cat O^{\t}$ can be freely
made into a symmetric monoidal $\infty$-category $\Env\pr{\cat O}^{\t}$,
called the \textbf{symmetric monoidal envelop} \cite[Construction 2.2.4.1, Proposition 2.2.4.9]{HA}.
Concretely, $\Env\pr{\cat O}^{\t}$ is given by the fiber product
\[\begin{tikzcd}
	{\operatorname{Env}(\mathcal{O})^\otimes } & {\mathcal{O}^\otimes} \\
	{\operatorname{Fun}^{\mathrm{act}}([1],\mathsf{Fin}_\ast)} & {\mathsf{Fin}_\ast}
	\arrow["{p_{\mathcal{O}}}", from=1-1, to=1-2]
	\arrow[from=1-1, to=2-1]
	\arrow["\lrcorner"{description, pos=0}, draw=none, from=1-1, to=2-2]
	\arrow[from=1-2, to=2-2]
	\arrow["{\mathrm{ev}_0}"', from=2-1, to=2-2]
\end{tikzcd}\]with structure map $\Env\pr{\cat O}^{\t}\to\Fin_{\ast}$ given by
the evaluation at $1\in[1]$, where $\Fun^{\act}\pr{[1],\Fin_{\ast}}\subset\Fun\pr{[1],\Fin_{\ast}}$
denotes the full subcategory spanned by the active maps. The diagonal
$\Fin_{\ast}\to\Fun^{\act}\pr{[1],\Fin_{\ast}}$ determines a map
of $\infty$-operads $\eta\from\cat O^{\t}\to\Env\pr{\cat O}^{\t}$,
and for every symmetric monoidal $\infty$-category $\cat C^{\t}$,
pulling back along $\eta$ induces a categorical equivalence
\[
\Fun^{\t}\pr{\Env\pr{\cat O},\cat C}\xrightarrow{\simeq}\Alg_{\cat O}\pr{\cat C}.
\]

Note that the underlying $\infty$-category $\Env\pr{\cat O}$ can
be identified with the subcategory $\cat O^{\t}_{\act}\subset\cat O^{\t}$
of active maps. We write $\oplus\from\cat O^{\t}_{\act}\times\cat O^{\t}_{\act}\to\cat O^{\t}_{\act}$
for the tensor bifunctor of $\Env\pr{\cat O}$.

Applying the above construction to $\cat O^{\t}=\cat C^{\t}$, we
obtain an essentially unique symmetric monoidal functor $\Env\pr{\cat C}^{\t}\to\cat C^{\t}$
that extends the identity of $\cat C^{\t}$ up to equivalence. (One
way to construct it is to choose a cocartesian natural transformation
rendering the diagram 
\[\begin{tikzcd}
	{\operatorname{Env}(\mathcal{C})^\otimes \times \{0\}} && {\mathcal{C}^\otimes} \\
	{\operatorname{Env}(\mathcal{C})^\otimes \times [1]} & {\operatorname{Fun}^{\mathrm{act}}([1],\mathsf{Fin}_\ast)\times [1]} & {\mathsf{Fin}_\ast}
	\arrow["{p_{\mathcal{C}}}", from=1-1, to=1-3]
	\arrow[from=1-1, to=2-1]
	\arrow[from=1-3, to=2-3]
	\arrow[dashed, from=2-1, to=1-3]
	\arrow[from=2-1, to=2-2]
	\arrow["{\mathrm{ev}}"', from=2-2, to=2-3]
\end{tikzcd}\]commutative, and then restricting it along the inclusion $\{1\}\subset[1]$.)
We denote its underlying functor by $\bigotimes\from\cat C^{\t}_{\act}\to\cat C$. 
\end{construction}

\begin{notation}
We let $\Del^{\op}_{\act}\subset\Del^{\op}$ denote the subcategory
of active morphisms. For each $n\ge1$, we will denote the unique
active map $[n]\to[1]$ by $\mu_{n}$.
\end{notation}

\begin{notation}
Let $\cat F\subset\Del^{\op}_{\mathbb{F}}$ denote the subcategory
spanned by the morphisms $\pr{\phi,\{u_{i}\}_{i}}\from\pr{[n],S_{0}\to\cdots\to S_{n}}\to\pr{[m],T_{0}\to\cdots\to T_{m}}$
such that the maps $u_{i}$ are all \textit{bijective}. In other words,
the map $\cat F\to\Del^{\op}$ is the cocartesian fibration corresponding
to the functor 
\[
\Del^{\op}\to\mathsf{Cat},\,[n]\mapsto\pr{\Fun\pr{[n],\Fin}^{\op}}^{\simeq}.
\]
We write $\cat G\subset\cat F$ for the full subcategory spanned by
the objects $\pr{[n],S_{0}\to\cdots\to S_{n}}$ such that $S_{n}=\underline{1}$
is a singleton. 
\end{notation}

\begin{construction}
\label{const:associated_nat}Let $\cat C^{\t}$ be a symmetric monoidal
$\infty$-category. Suppose we are given a morphism $f$ of $\SSeq_{\H}\pr{\cat C}^{\circ}$
lying over the active map $\mu_{n}\from[n]\to[1]$, which we can identify
with a functor $\pr{\Sigma\Del^{\op}_{\mathbb{F}}}_{\mu_{n}}\to\cat C^{\t}$.
We define a natural transformation $\alpha_{f}$ of functors $\cat F_{[n]}\to\cat C$
as follows: Consider the composite
\[
[1]\times\cat F_{[n]}\xrightarrow{h}\cat F_{\mu_{n}}\xrightarrow{g}\pr{\Sigma\Del^{\op}_{\mathbb{F}}}_{\mu_{n}}\xrightarrow{f}\cat C^{\t},
\]
where $h$ is a cocartesian natural transformation, and where $g$
is induced by the diagonal map $\Del^{\op}\to\Fun\pr{[1],\Del^{\op}}$.
The above composite takes values in $\cat C^{\t}_{\act}$, so it can
further be composed with the functor $\bigotimes\from\cat C^{\t}_{\act}\to\cat C$
of Construction \ref{const:env}. We define $\alpha_{f}=\bigotimes\circ f\circ g\circ h$.
\end{construction}

\begin{rem}
\label{rem:informal}In the situation of Construction \ref{const:associated_nat},
write $f\from X\to Y$ and $X=X_{1}\oplus\cdots\oplus X_{n}$, where
$X_{i}\in\SSeq_{\H}\pr{\cat C}$. By Remark \ref{rem:SSeq_H_und},
we can identify $X_{i}$ and $Y$ with symmetric sequences $\overline{X}_{i}$
and $\overline{Y}$ in $\cat C$. The component of the natural transformation
$\alpha_{f}$ at an object $\pr{S_{0}\to\cdots\to S_{n}}\in\cat F_{[n]}$
is given by
\[
\overline{X}_{1}\pr{S_{0}\to S_{1}}\otimes\cdots\otimes\overline{X}_{n}\pr{S_{n-1}\to S_{n}}\to\overline{Y}\pr{S_{0}\to S_{n}},
\]
where $\overline{Y}\pr{A\to B}$ is a shorthand for $\bigotimes_{b\in B}\overline{Y}\pr{A_{b}}$
and we used similar abbreviations for $\overline{X}_{i}\pr{S_{i-1}\to S_{i}}$.
\end{rem}

\begin{prop}
\label{prop:SSeq_H_loc}Let $p\from\cat C^{\t}\to\Fin_{\ast}$ be
a symmetric monoidal $\infty$-category, and let $q\from\SSeq\pr{\cat C}^{\circ}\to\Del^{\op}$
be the associated non-symmetric $\infty$-operad. 
\begin{enumerate}
\item Let $f$ be an active map of $\SSeq\pr{\cat C}^{\circ}$ lying over
$\mu_{n}\from[n]\to[1]$, which we identify with a functor $\pr{\Sigma\Del^{\op}_{\mathbb{F}}}_{\mu_{n}}\to\cat C$.
Suppose that the following condition is satisfied:
\begin{itemize}
\item [($\ast_{\mathcal{G}}$)]The natural transformation $\alpha_{f}\from\cat G_{[n]}\times[1]\to\cat C$
exhibits $f\vert\cat G_{[1]}$ as a left Kan extension of $\alpha_{f}\vert\cat G_{[n]}\times\{0\}$
along the functor $\pr{\mu_{n}}_{!}\from\cat G_{[n]}\to\cat G_{[1]}$.
\end{itemize}
Then $f$ is locally $q$-cocartesian. 
\item Let $X\in\SSeq\pr{\cat C}^{\circ}_{[n]}$. If the composite
\[
\theta_{X}\from\cat G_{[n]}\to\pr{\Sigma\Delta^{\op}_{\mathbb{F}}}_{[n]}\to\cat C^{\t}_{\act}\xrightarrow{\bigotimes}\cat C
\]
admits a left Kan extension along $\pr{\mu_{n}}_{!}\theta_{X}\from\cat G_{[n]}\to\cat G_{[1]}$,
then there is a locally $q$-cocartesian morphism $X\to\pr{\mu_{n}}_{!}X$
lying over $\mu_{n}$.
\end{enumerate}
\end{prop}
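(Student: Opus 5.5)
The plan is to compute, for a morphism $f\from X\to Y_{0}$ over $\mu_{n}$, the mapping spaces that appear in the definition of local $q$-cocartesianness, and to show that condition ($\ast_{\cat G}$) is exactly what makes the comparison map an equivalence. By definition, $f$ is locally $q$-cocartesian if, for every $Y\in\SSeq\pr{\cat C}_{[1]}$, composition with $f$ induces a homotopy equivalence
\[
\Map_{\SSeq\pr{\cat C}_{[1]}}\pr{Y_{0},Y}\to\Map^{\mu_{n}}_{\SSeq\pr{\cat C}^{\circ}}\pr{X,Y}.
\]
Here the right-hand side is the space of morphisms lying over $\mu_{n}$.

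By Definition \ref{def:SSeq_H_2}, the target space is the space of functors $\pr{\Sigma\Del^{\op}_{\mathbb{F}}}_{\mu_{n}}\to\cat C^{\t}$ over $\Fin_{\ast}$ whose restrictions to the two fibres are $X$ and $Y$. The projection $\pr{\Sigma\Del^{\op}_{\mathbb{F}}}_{\mu_{n}}\to[1]$ is a cartesian fibration, as was used in the proof of Proposition \ref{prop:SSeq_opd}. Such a functor is therefore the same as a natural transformation from $X$ restricted along the cartesian pullback functor to $Y$, taken in $\Fun_{\Fin_{\ast}}$.

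First I reduce to $\cat G$. The object $Y$ is $p$-right Kan extended from $\cat G_{[1]}\simeq\FB$ (Remark \ref{rem:SSeq_H_und}), and its values are products of inert pieces. Using this together with the $p$-right Kan extension property of inert-preserving functors, maps into $Y$ over $\Fin_{\ast}$ are determined by their components landing over $\cat G_{[1]}$. The components over active edges then become maps in $\cat C$ after applying $\bigotimes$, via the cocartesian lifts used in Construction \ref{const:associated_nat}. This yields an identification
\[
\Map^{\mu_{n}}\pr{X,Y}\simeq\Map_{\Fun\pr{\cat G_{[n]},\cat C}}\pr{\theta_{X},\pr{\mu_{n}}^{*}\pr{Y\vert\cat G_{[1]}}}.
\]
Under this identification, composition with $f$ corresponds to precomposition with $\alpha_{f}\vert\cat G_{[n]}$; this matches the informal description in Remark \ref{rem:informal}. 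Similarly, the left-hand side is $\Map_{\Fun\pr{\cat G_{[1]},\cat C}}\pr{Y_{0}\vert\cat G_{[1]},Y\vert\cat G_{[1]}}$. Condition ($\ast_{\cat G}$) says that $\alpha_{f}$ is a unit for the adjunction $\pr{\mu_{n}}_{!}\dashv\pr{\mu_{n}}^{*}$ at $\theta_{X}$. Hence the comparison map is an equivalence, which proves (1).

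For (2), I first construct the would-be target. Let $\overline{Y}=\pr{\mu_{n}}_{!}\theta_{X}\in\Fun\pr{\cat G_{[1]},\cat C}$, and extend it to an object $\pr{\mu_{n}}_{!}X$ of $\SSeq\pr{\cat C}_{[1]}$ by $p$-right Kan extension, as in Remark \ref{rem:SSeq_H_und}. The unit transformation $\theta_{X}\to\pr{\mu_{n}}^{*}\overline{Y}$ then corresponds, under the identification above, to a morphism $f\from X\to\pr{\mu_{n}}_{!}X$ over $\mu_{n}$. By construction $f$ satisfies ($\ast_{\cat G}$), so part (1) shows it is locally $q$-cocartesian.

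The main obstacle is the mapping-space identification above, that is, making rigorous that a functor on $\pr{\Sigma\Del^{\op}_{\mathbb{F}}}_{\mu_{n}}$ is freely determined by its restriction to $\cat G_{[n]}\times[1]$ after applying $\bigotimes$. I would first try to exhibit the relevant functors as iterated relative Kan extensions: a $p$-right Kan extension from the $\cat G$-parts (using inertness), and a $p$-left Kan extension along the active edges. I would then invoke \cite[Proposition 4.3.2.15]{HTT} to see that restriction to these parts is a trivial fibration onto the space of natural transformations.
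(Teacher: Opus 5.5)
Your overall strategy matches the paper's, recast in terms of mapping spaces. You restrict from $\pr{\Sigma\Del^{\op}_{\mathbb F}}_{\mu_n}$ to $\cat G_{\mu_n}$ using $p$-right Kan extension and inertness, push the active components into $\cat C$ with $\bigotimes$, and read ($\ast_{\cat G}$) as the universal property of the unit; part (2) is then the paper's construction. The mapping-space phrasing is legitimate, since it only tests against targets over $\inp 1$, where active mapping spaces are corepresented by $\bigotimes$. The cost is that you must check separately that your identification intertwines composition with $f$ and precomposition with $\alpha_f$. The paper avoids this by working with whole fibres over $X$ and initial objects.

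The reduction step, however, is justified incorrectly, and the method you propose for the ``main obstacle'' would fail as stated.

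First, $\pr{\Sigma\Del^{\op}_{\mathbb F}}_{\mu_n}\to[1]$ is not a cartesian fibration; Proposition \ref{prop:SSeq_opd} used this only over inert maps. Take $n\geq 2$ and a target $([1]\to[0,1],T_0\to T_1)$ over the active map $\mu_n$. A cartesian lift would be a diagonal object through which both $T_0=\cdots=T_0\to T_1$ and $T_0\to T_1=\cdots=T_1$ factor by levelwise bijections. This is impossible when $\abs{T_0}\neq\abs{T_1}$. The structure you actually use later is the correct one: $\cat G_{\mu_n}\to[1]$ is the cocartesian fibration classified by $(\mu_n)_!$.

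Second, and more seriously, functors on $\pr{\Sigma\Del^{\op}_{\mathbb F}}_{\mu_n}$ are not $p$-right Kan extended from the $\cat G$-parts. Consider an object $([n]\to[i,j],S_i\to\cdots\to S_j)$ with $[i,j]\neq[0,n]$.
\begin{itemize}
\item It admits no morphism into $\cat G_{[n]}$, because morphisms in the fibre only shrink the interval.
\item It admits no morphism into $\cat G_{[1]}$, because that would require $\{0,n\}\subseteq[i,j]$.
\item Yet $X$ is not $p$-terminal there: these objects carry the Segal components of $X$.
\end{itemize}
So ``restriction to the $\cat G$-parts is a trivial fibration'' is false for the whole functor category, and HTT 4.3.2.15 cannot be applied as you intend.

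The missing ingredient is exactly Lemma \ref{lem:SSeq_Hom}. You must work in the fibre over the fixed source $X$. Non-diagonal objects over $0$ map only to the endpoint objects $(\{0\},T_0)$ and $(\{1\},T_1)$ over $1$, which lie over $\inp 0$. Hence they contribute contractible data, and the extra morphisms are automatically inert. Consequently, morphisms out of $X$ over $\mu_n$ depend only on $X\vert\pr{\Del^{\op}_{\mathbb F}}_{[n]}$. Only inside this diagonal fibre is $X$ $p$-right Kan extended from $\cat G_{[n]}$.

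Once you add this step, your identification $\Map^{\mu_n}(X,Y)\simeq\Map(\theta_X,(\mu_n)^{*}(Y\vert\cat G_{[1]}))$ and the rest of your argument go through.
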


The proof of Proposition \ref{prop:SSeq_H_loc} relies on the following
lemma.
\begin{lem}
\label{lem:SSeq_Hom}Let $p\from\cat O^{\t}\to\Fin_{\ast}$ be an
$\infty$-operad. For each $n\geq1$, the active map $[n]\to[1]$
determines a homotopy cartesian square of $\infty$-categories 
\[\begin{tikzcd}
	{\operatorname{Fun}_{\mathbf{\Delta}^{\mathrm{op}}}([1],\Sigma\mathrm{Seq}(\mathcal{O})^\circ)} & {\Sigma\mathrm{Seq}(\mathcal{O})^\circ_{[n]}} \\
	{\operatorname{Fun}'_{\mathsf{Fin}_\ast}((\mathbf{\Delta}^{\mathrm{op}}_{\mathbb{F}})_{\mu_n},\mathcal{O}^\otimes )} & {\operatorname{Fun}'_{\mathsf{Fin}_\ast}((\mathbf{\Delta}^{\mathrm{op}}_{\mathbb{F}})_{[n]},\mathcal{O}^\otimes ).}
	\arrow[from=1-1, to=1-2]
	\arrow[from=1-1, to=2-1]
	\arrow[from=1-2, to=2-2]
	\arrow[from=2-1, to=2-2]
\end{tikzcd}\]Here $\Fun'_{\Fin_{\ast}}\pr{-,-}$ denotes the full subcategory of
$\Fun_{\Fin_{\ast}}\pr{-,-}$ spanned by the functors carrying each
morphism in $\pr{\Del^{\op}_{\mathbb{F}}}_{[n]}$ and $\pr{\Del^{\op}_{\mathbb{F}}}_{[1]}$
to inert maps of $\cat O^{\t}$.
\end{lem}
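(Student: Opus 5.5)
The plan is to make both rows of the square explicit as restriction maps between functor $\infty$-categories, and then show that the vertical maps are trivial fibrations, or at least equivalences compatible with the horizontal maps. By Definition \ref{def:SSeq_H_2} we have isomorphisms
\[
\Fun_{\Del^{\op}}\pr{[1],\widetilde{\SSeq_{\H}}\pr{\cat O}^{\circ}}\cong\Fun_{\Fin_{\ast}}\pr{\pr{\Sigma\Del^{\op}_{\mathbb{F}}}_{\mu_{n}},\cat O^{\t}},\qquad\widetilde{\SSeq_{\H}}\pr{\cat O}^{\circ}_{[n]}\cong\Fun_{\Fin_{\ast}}\pr{\pr{\Sigma\Del^{\op}_{\mathbb{F}}}_{[n]},\cat O^{\t}}.
\]
Under these isomorphisms, $\Fun_{\Del^{\op}}\pr{[1],\SSeq\pr{\cat O}^{\circ}}$ is the full subcategory of functors whose restrictions to the two fibers $\pr{\Sigma\Del^{\op}_{\mathbb{F}}}_{[n]}$ and $\pr{\Sigma\Del^{\op}_{\mathbb{F}}}_{[1]}$ carry every morphism to an inert map. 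The vertical maps of the square are then restriction along the diagonal embeddings $\pr{\Del^{\op}_{\mathbb{F}}}_{\mu_{n}}\hookrightarrow\pr{\Sigma\Del^{\op}_{\mathbb{F}}}_{\mu_{n}}$ and $\pr{\Del^{\op}_{\mathbb{F}}}_{[n]}\hookrightarrow\pr{\Sigma\Del^{\op}_{\mathbb{F}}}_{[n]}$, whose images are the objects whose interval is the whole of $[n]$, respectively $[1]$. The horizontal maps are restriction to the fiber over $0\in[1]$. So the square strictly commutes, and it suffices to prove the following: the map from the upper left corner to the fiber product of the other three corners is a categorical equivalence.

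The key step is a right Kan extension statement. Let $F\from\pr{\Sigma\Del^{\op}_{\mathbb{F}}}_{K}\to\cat O^{\t}$ with $K=[n]$, $[1]$ or $\mu_{n}$, and suppose $F$ carries the fiberwise morphisms to inert maps. I claim that $F$ is then a $p$-right Kan extension of its restriction to the diagonal part $\pr{\Del^{\op}_{\mathbb{F}}}_{K}$. Conversely, every functor on the diagonal part satisfying the inertness condition of $\Fun'$ admits such a $p$-right Kan extension, and the extension again satisfies the condition.

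To verify this, fix an object $\xi=\pr{[n]\to[a,b],S_{a}\to\cdots\to S_{b}}$ with a proper subinterval. I will show that the comma category of diagonal objects under $\xi$ has an initial part on which $F$ takes values given by inert maps. Here I expect to imitate the proof of part (3) of Proposition \ref{prop:SSeq_opd}: there one reduces, via an initial subcategory, to a $p$-limit diagram over $\Fin_{\ast}$ that exists by the definition of $\infty$-operads. The value at $\xi$ should be the inert restriction of the value at a full forest extending $S_{a}\to\cdots\to S_{b}$. Granting the claim, \cite[Proposition 4.3.2.15]{HTT} shows that the restriction maps in both columns are trivial Kan fibrations onto the $\Fun'$-subcategories.

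It remains to treat the fiber product. Along the vertical trivial fibrations, the square becomes the restriction square for $\pr{\Del^{\op}_{\mathbb{F}}}_{[n]}\subset\pr{\Del^{\op}_{\mathbb{F}}}_{\mu_{n}}$ and its $\Sigma$-analogue. A square of this kind, in which both vertical maps are trivial fibrations and the bottom map is a categorical fibration (it is restriction along a monomorphism into a $p$-fibered target, so \cite[Corollary 2.4.6.5]{HTT}-type arguments apply), is homotopy cartesian. The main obstacle is the Kan extension claim for the mixed object $\pr{\Sigma\Del^{\op}_{\mathbb{F}}}_{\mu_{n}}$. Over the vertex $0$, an object with a subinterval of $[n]$ must be compared with diagonal objects over both $0$ and $1$. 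The subtle points are to identify a cofinal subcategory of the comma category, making sure that the cartesian-square condition in Definition \ref{def:Haug22_4.1} does not obstruct the extension of forests (one may pad by empty sets), and to check that only inert edges are involved there.
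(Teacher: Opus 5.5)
Your proposal has a genuine gap. Its key claim is that a functor on $\pr{\Sigma\Del^{\op}_{\mathbb{F}}}_{K}$ that is inert on fiberwise morphisms is a $p$-right Kan extension of its restriction to the diagonal part, so that both vertical maps are trivial fibrations. This is false in general.

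In $\pr{\Sigma\Del^{\op}_{\mathbb{F}}}_{[n]}$, morphisms go from larger subintervals to smaller ones. This is the direction of the initial maps used in the proof of Proposition \ref{prop:SSeq_opd}(3). Hence a non-diagonal object $\xi=\pr{[n]\to[a,b],S_a\to\cdots\to S_b}$ admits no morphism to a diagonal object. The comma category governing the $p$-right Kan extension is therefore empty, and the condition would force $F\pr{\xi}$ to be $p$-terminal. That fails, for instance, for $\cat O^{\t}=\cat S^{\times}$ as soon as $\coprod_{a<i\leq b}S_i\neq\emptyset$. Your heuristic that $F\pr{\xi}$ is an inert restriction of the value at a full forest points the other way, and it does not determine $F\pr{\xi}$ either.

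In fact the right vertical map is not an equivalence for $n\geq2$. Its source is $\SSeq_{\H}\pr{\cat O}^{\circ}_{[n]}\simeq\Fun\pr{\FB,\cat O}^{n}$. Its target is equivalent to $\Fun_{\Fin_{\ast}}\pr{\cat G_{[n]},\cat O^{\t}}$, as used in the proof of Proposition \ref{prop:SSeq_H_loc}. That is, the target consists of vertex-indexed families on level trees of height $n$, where the value at a vertex may depend on the whole tree. For $\cat O^{\t}=\cat S^{\times}$ and $n=2$, a family whose root value depends on $\abs{S_0}$ is not in the essential image. So the square is a genuine pullback, not one whose vertical sides are invertible.

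The missing idea is a decomposition of the mixed category $\cat X=\pr{\Sigma\Del^{\op}_{\mathbb{F}}}_{\mu_n}$ relative to $\cat X'=\pr{\Del^{\op}_{\mathbb{F}}}_{\mu_n}$, and this is how the paper argues.

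First, a morphism from an object over $0$ to a diagonal object over $1$ lies over $\mu_n$. So its source interval must contain both $0$ and $n$, which means the source is diagonal. Hence $\cat X_0\cup\cat X'$ is a full subcategory whose nerve is $\cat X_0\amalg_{\cat X'_0}\cat X'$, and functors out of it form the desired pullback.

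Second, the remaining objects of $\cat X$ are the non-diagonal objects over $1$, with intervals $[0,0]$ and $[1,1]$ of $[1]$. They lie over $\langle 0\rangle$ and admit no morphisms into $\cat X_0\cup\cat X'$. So every functor sends them to $p$-terminal objects, and restriction to $\cat X_0\cup\cat X'$ is an equivalence.

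Finally, the marked edges of $\cat X_1$ not in $\cat X'_1$ have targets over $\langle 0\rangle$, so they are automatically sent to inert maps. The inertness conditions on the two sides therefore agree.
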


\begin{proof}
Set $\cat X=\pr{\Sigma\Del^{\op}_{\mathbb{F}}}_{\mu_{n}}$ and $\cat X'=\pr{\Del^{\op}_{\mathbb{F}}}_{\mu_{n}}$.
We will identify $\cat X'$ with the full subcategory of $\cat X$
via the diagonal embedding $\Del^{\op}\to\Sigma$. For each $i\in[1]$,
let $M_{i}$ and $M'_{i}$ denote the set of morphisms of $\cat X$
and $\cat X'$ lying over $i\in[1]$. Our goal is to show that the
square 
\[\begin{tikzcd}
	{\operatorname{Fun}_{\mathsf{Fin}_\ast}((\mathcal{X},M_0\cup M_1),\mathcal{O}^{\otimes ,\natural} )} & {\operatorname{Fun}_{\mathsf{Fin}_\ast}((\mathcal{X}_0,M_0),\mathcal{O}^{\otimes ,\natural} )} \\
	{\operatorname{Fun}_{\mathsf{Fin}_\ast}((\mathcal{X}',M'_0\cup M'_1),\mathcal{O}^{\otimes ,\natural} )} & {\operatorname{Fun}_{\mathsf{Fin}_\ast}((\mathcal{X}'_0,M'_0),\mathcal{O}^{\otimes ,\natural} )}
	\arrow[from=1-1, to=1-2]
	\arrow[from=1-1, to=2-1]
	\arrow[from=1-2, to=2-2]
	\arrow[from=2-1, to=2-2]
\end{tikzcd}\]is homotopy cartesian.

Notice that if $X\to Y$ is a map of $\cat X$ such that $X\in\cat X_{0}$
and $Y\in\cat X'_{1}$, then $X$ necessarily lies in $\cat X'_{0}$.
It follows that $\cat X_{0}\cup\cat X'$ is a full subcategory of
$\cat X$. Moreover, for each $X\in\cat X$ lying outside of $\cat X_{0}\cup\cat X'$,
the category $\pr{\cat X_{0}\cup\cat X'}\times_{\cat X}\cat X_{X/}$
is empty. Since every functor $F\from\cat X\to\cat O^{\t}$ carries
objects outside of $\cat X_{0}\cup\cat X'$ to a $p$-terminal object
(as they lies over $\inp 0\in\Fin_{\ast}$), this means that the functor
\[
\Fun_{\Fin_{\ast}}\pr{\cat X,\cat O^{\t}}\to\Fun_{\Fin_{\ast}}\pr{\cat X_{0}\cup\cat X',\cat O^{\t}}
\]
is an equivalence. In particular, the square 
\[\begin{tikzcd}
	{\operatorname{Fun}_{\mathsf{Fin}_\ast}((\mathcal{X},M_0\cup M'_1),\mathcal{O}^{\otimes ,\natural} )} & {\operatorname{Fun}_{\mathsf{Fin}_\ast}((\mathcal{X}_0,M_0),\mathcal{O}^{\otimes ,\natural} )} \\
	{\operatorname{Fun}_{\mathsf{Fin}_\ast}((\mathcal{X}',M'_0\cup M'_1),\mathcal{O}^{\otimes ,\natural} )} & {\operatorname{Fun}_{\mathsf{Fin}_\ast}((\mathcal{X}'_0,M'_0),\mathcal{O}^{\otimes ,\natural} )}
	\arrow[from=1-1, to=1-2]
	\arrow[from=1-1, to=2-1]
	\arrow[from=1-2, to=2-2]
	\arrow[from=2-1, to=2-2]
\end{tikzcd}\]is homotopy cartesian. The claim now follows from the observation
that every functor $\cat X\to\cat O^{\t,\natural}$ carries every
morphism in $M_{1}\setminus M'_{1}$ to a $p$-cocartesian morphism
(because its codomain lies over $\inp 0\in\Fin_{\ast}$).
\end{proof}

\begin{proof}
[Proof of Proposition \ref{prop:SSeq_H_loc}]We start with (1). Consider
the commutative diagram
\[\begin{tikzcd}
	{\operatorname{Fun}_{\mathbf{\Delta}^{\mathrm{op}}}([1],\Sigma \mathrm{Seq}(\mathcal{C})^\circ)} & {\Sigma \mathrm{Seq}(\mathcal{C})^\circ_{[n]}} \\
	{\operatorname{Fun}'_{\mathsf{Fin}_\ast}((\mathbf{\Delta}^{\mathrm{op}}_{\mathbb{F}})_{\mu_n},\mathcal{C}^\otimes )} & {\operatorname{Fun}'_{\mathsf{Fin}_\ast}((\mathbf{\Delta}^{\mathrm{op}}_{\mathbb{F}})_{[n]},\mathcal{C}^\otimes )} \\
	{\operatorname{Fun}_{\mathsf{Fin}_\ast}(\mathcal{G}_{\mu_n},\mathcal{C}^\otimes )} & {\operatorname{Fun}_{\mathsf{Fin}_\ast}( \mathcal{G}_{[n]},\mathcal{C}^\otimes ).}
	\arrow[from=1-1, to=1-2]
	\arrow[from=1-1, to=2-1]
	\arrow[from=1-2, to=2-2]
	\arrow[from=2-1, to=2-2]
	\arrow["\simeq"', from=2-1, to=3-1]
	\arrow["\simeq", from=2-2, to=3-2]
	\arrow[from=3-1, to=3-2]
\end{tikzcd}\]The top square is homotopy cartesian by Lemma \ref{lem:SSeq_Hom}.
The right bottom vertical arrows is an equivalence because functors
in $\Fun'_{\Fin_{\ast}}\pr{\pr{\Del^{\op}_{\mathbb{F}}}_{[n]},\cat O^{\t}}$
are exactly those that are $p$-right Kan extended from $\cat G_{[n]}$.
Similarly, the left bottom vertical arrow is an equivalence. It follows
that the outer square is homotopy cartesian. In particular, the map
\begin{align*}
 & \Fun_{\Del^{\op}}\pr{[1],\SSeq\pr{\cat C}^{\circ}}\times_{\SSeq\pr{\cat C}^{\circ}_{[n]}}\{f\vert\SSeq\pr{\cat C}^{\circ}_{[n]}\}\\
\to & \Fun_{\Fin_{\ast}}\pr{\cat G_{\mu_{n}},\cat C^{\t}}\times_{\Fun_{\Fin_{\ast}}\pr{\cat G_{[n]},\cat C^{\t}}}\{f\vert\cat G_{[n]}\}
\end{align*}
is a categorical equivalence. Therefore, it suffices to show that
$f\vert\cat G_{\mu_{n}}$ is a $p$-left Kan extension of $f\vert\cat G_{[n]}$.
According to \cite[Propositions 4.3.1.9, 4.3.1.10, and 4.3.1.15]{HTT},
this is equivalent to the condition that the composite
\[
\cat G_{\mu_{n}}\xrightarrow{f\vert\cat G_{\mu_{n}}}\cat C^{\t}_{\act}\xrightarrow{\bigotimes}\cat C
\]
be left Kan extended from $\cat G_{[n]}$. But this is true by ($\ast_{\cat G}$),
and we are done.

For (2), note that the above argument shows that the image of $X$
in $\Fun_{\Fin_{\ast}}\pr{\cat G_{[n]},\cat C^{\t}}$ admits a $p$-left
Kan extension. Since the outer rectangle of the diagram is homotopy
cartesian, the $p$-left Kan extension gives rise to a morphism $f\from X\to Y$
in $\SSeq\pr{\cat C}^{\circ}$ satisfying condition ($\ast_{\cat G}$).
The map $f$ is locally $q$-cocartesian by (1), and this proves (2).
\end{proof}

\begin{cor}
\label{cor:SSeq_Haug}Let $p\from\cat C^{\t}\to\Fin_{\ast}$ be a
symmetric monoidal $\infty$-category compatible with colimits indexed
by countable groupoids. Then $q\from\SSeq\pr{\cat C}^{\circ}\to\Del^{\op}$
is a monoidal $\infty$-category. Moreover, an active map $f$ of
$\SSeq\pr{\cat C}^{\circ}$ lying over $\mu_{n}\from[n]\to[1]$ is
$q$-cocartesian if and only if it satisfies the following condition:
\begin{itemize}
\item [($\ast_{\mathcal{F}}$)]The natural transformation $\alpha_{f}\from\cat F_{[n]}\times[1]\to\cat C$
exhibits $f\vert\cat F_{[1]}$ as a left Kan extension of $\alpha_{f}\vert\cat F_{[n]}\times\{0\}$
along the functor $\pr{\mu_{n}}_{!}\from\cat F_{[n]}\to\cat F_{[1]}$.
\end{itemize}
\end{cor}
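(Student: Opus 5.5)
To show that $q$ is a monoidal $\infty$-category, I would check two things. First, $q$ is already a non-symmetric $\infty$-operad by Proposition \ref{prop:SSeq_opd}. Second, I would show that every active morphism admits $q$-cocartesian lifts. By \cite[Lemma 2.4.4.4]{HA} (in its non-symmetric analogue), it suffices to produce locally $q$-cocartesian lifts of the active maps $\mu_n\colon[n]\to[1]$ for $n\geq 0$, and then to show these compose. The lifts come from Proposition \ref{prop:SSeq_H_loc}(2): for $X\in\SSeq\pr{\cat C}^{\circ}_{[n]}$ we need the left Kan extension of $\theta_X$ along $\pr{\mu_n}_!\colon\cat G_{[n]}\to\cat G_{[1]}$. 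Both $\cat G_{[n]}$ and $\cat G_{[1]}$ are groupoids, since all morphisms of $\cat F$ are bijections, and their objects (chains of finite sets ending in a point) are countable up to isomorphism. Consequently the comma categories of $\pr{\mu_n}_!$ are countable groupoids, the Kan extension exists by hypothesis, and it is computed pointwise by colimits over these groupoids.

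Next I would compare $(\ast_{\cat F})$ with $(\ast_{\cat G})$. An object $S_0\to\cdots\to S_n$ of $\cat F_{[n]}$ decomposes over $S_n$ into its fibers, which are objects of $\cat G_{[n]}$. Using the formula of Remark \ref{rem:informal}, $\alpha_f$ on $\cat F$ is the $\bigotimes$ over $s\in S_n$ of its restriction to these fibers. Colimits over the groupoid $\cat F_{[n]}$ lying over a fixed $S_0\to S_n$ then factor as products over $S_n$ of colimits over $\cat G$-type groupoids. Since $\otimes$ preserves colimits indexed by countable groupoids in each variable, a product of groupoid colimits is again a groupoid colimit. Hence $(\ast_{\cat G})$ for $f$ implies $(\ast_{\cat F})$, and conversely, because $\cat G\subset\cat F$ is the case $S_n=\underline 1$.

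The main step is upgrading locally cocartesian to cocartesian. I would show that the maps satisfying $(\ast_{\cat F})$ are closed under composition with inert-compatible active maps. Given $[m]\to[n]\to[1]$ active with the first step decomposed as $\oplus$ of maps $\mu_{k_i}$, the left Kan extension along a composite equals the composite of Kan extensions. The needed identification is that the Kan extension along $[m]\to[n]$ is computed fiberwise by $(\ast_{\cat F})$ for each component, followed by tensoring. This again uses compatibility of $\otimes$ with groupoid colimits, so that tensoring the pieces commutes with the colimit over $\cat F_{[n]}$. With this in hand, \cite[Proposition 2.4.2.8]{HTT} turns locally cocartesian into cocartesian. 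The equivalence in the statement then follows: cocartesian lifts are unique up to equivalence and the constructed lift satisfies $(\ast_{\cat F})$, so cocartesian implies $(\ast_{\cat F})$; conversely $(\ast_{\cat F})$ implies $(\ast_{\cat G})$, hence locally cocartesian, hence cocartesian. I expect the main difficulty to be making the composition compatibility precise at the level of the $\infty$-categorical Kan extensions rather than their pointwise formulas.
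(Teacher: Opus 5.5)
Your proposal is correct and takes essentially the paper's route: you show $(\ast_{\mathcal{G}})\Leftrightarrow(\ast_{\mathcal{F}})$ by splitting the colimit over the fibers of $S_n$ and using that $\otimes$ preserves countable-groupoid colimits in each variable, you obtain locally cocartesian lifts of $\mu_n$ from Proposition \ref{prop:SSeq_H_loc}, and you prove closure under composition by transitivity of Kan extensions after splitting the first active map along inert projections. The only difference is that the paper cites Haugseng's Lemma 2.1.25 (not a lemma from HA) both for local cocartesianness and to reduce the composition check to $\phi\colon[n]\to[2]$ followed by $\mu_2$, whereas you handle a general active $[m]\to[n]$ followed by $\mu_n$, which is the same computation.
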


\begin{proof}
We first show that an active map $f\from X\to Y$ of $\SSeq\pr{\cat C}^{\circ}$
lying over $\mu_{n}\from[n]\to[1]$ is locally $q$-cocartesian if
and only if it satisfies condition ($\ast_{\cat F}$). By Proposition
\ref{prop:SSeq_H_loc}, it suffices to show that condition ($\ast_{\cat F}$)
is equivalent to condition ($\ast_{\cat G}$) there. Clearly ($\ast_{\cat F}$)
implies ($\ast_{\cat G}$), so we only have to prove the converse. 

Suppose that condition ($\ast_{\cat G}$) is satisfied. We must show
that, for each $S\to T\in\cat F_{[1]}$, the map 
\begin{equation}
\colim_{S_{0}\to\cdots\to S_{n}\in\cat F_{[n]/S\to T}}\bigotimes\circ X\pr{\id_{[n]},S_{0}\to\cdots\to S_{n}}\to\bigotimes\circ Y\pr{\id_{[1]},S\to T}\label{eq:informal}
\end{equation}
is an equivalence. More formally, we must show that the top horizontal
composite in the diagram below is a colimit diagram:
\[\begin{tikzcd}[scale cd = .7]
	{(\mathcal{F}_{[n]/S\to T})^{\triangleright}} & {\mathcal{F}_{\mu_n}} & {(\mathbf{\Delta}_{\mathbb{F}}^{\mathrm{op}})_{\mu_n}} & {\mathcal{C}^{\otimes}_{\mathrm{act}}} & {\mathcal{C}.} \\
	{(\prod_{t\in T}\mathcal{F}_{[n]/S\to \{t\}})^{\triangleright}} & {\prod_{t\in T}((\mathcal{F}_{[n]/S\to \{t\}})^{\triangleright})} & {\prod_{t\in T}(\mathbf{\Delta}_{\mathbb{F}}^{\mathrm{op}})_{\mu_n}} & {\prod_{t\in T}\mathcal{C}^{\otimes}_{\mathrm{act}}} & {\prod_{t\in T}\mathcal{C}}
	\arrow[from=1-1, to=1-2]
	\arrow["\simeq"', from=1-1, to=2-1]
	\arrow[from=1-2, to=1-3]
	\arrow[from=1-3, to=1-4]
	\arrow["\bigotimes", from=1-4, to=1-5]
	\arrow[from=2-1, to=2-2]
	\arrow[from=2-2, to=2-3]
	\arrow[from=2-3, to=2-4]
	\arrow["{\bigoplus_{t\in T}}", from=2-4, to=1-4]
	\arrow["{\prod_{t\in T}\bigotimes}"', from=2-4, to=2-5]
	\arrow["{\bigotimes_{t\in T}}"', from=2-5, to=1-5]
\end{tikzcd}\]The left hand rectangle commutes by \cite[Corollary 4.5]{MEFO}, and
the right hand rectangle commutes because $\bigotimes\from\cat C^{\t}_{\act}\to\cat C$
is symmetric monoidal. In other words, the map (\ref{eq:informal})
can be identified with 
\[
\colim_{S_{0}\to\cdots\to S_{n}\in\cat F_{[n]/S\to T}}\bigotimes_{t\in T}\pr{\bigotimes\circ X\pr{\id_{[n]},S_{0,t}\to\cdots\to S_{n,t}}\xrightarrow{\simeq}\bigotimes\circ Y\pr{\id_{[1]},S\to\{t\}}}.
\]
Since $\cat C^{\t}$ is compatible with colimits indexed by countable
groupoids, we are therefore reduced to showing to that
each of the maps $\{\pr{\cat F_{[n]/S\to\{t\}}}^{\rcone}\to\cat C\}_{t\in T}$
is a colimit diagram. This follows from ($\ast_{\cat G}$). 

Next, we show that $\SSeq\pr{\cat C}^{\circ}$ is a monoidal $\infty$-category.
According to \cite[Lemma 2.1.25]{Haug22}, the map $q$ is a locally
cocartesian fibration, and we only have to show that the composite
of two locally $q$-cocartesian maps lying over active maps $\phi\from[n]\to[2]$
and $\mu_{2}\from[2]\to[1]$ is again locally cocartesian. So take
such maps $u\from X\to Y$ and $v\from Y\to Z$, and let $w\from X\to Z$
denote a composite of $v$ and $u$: 
\[\begin{tikzcd}
	& Y &&& {[2]} \\
	X && Z & {[n]} && {[1]}
	\arrow["v", from=1-2, to=2-3]
	\arrow["{\mu_2}", from=1-5, to=2-6]
	\arrow["u", from=2-1, to=1-2]
	\arrow["w"', from=2-1, to=2-3]
	\arrow["\phi", from=2-4, to=1-5]
	\arrow["{\mu_n}"', from=2-4, to=2-6]
\end{tikzcd}\]We must show that $w$ is also locally $q$-cocartesian. Using ($\ast_{\cat F}$)
and the transitivity of Kan extensions, we are reduced to showing
that the composite
\[
\cat F_{\phi}\to\cat C^{\t}_{\act}\xrightarrow{\bigotimes}\cat C
\]
is left Kan extended from $\cat F_{[n]}$. So take an arbitrary object
$\pr{T_{0}\to T_{1}\to T_{2}}\in\cat F_{[2]}$. We must show that
the composite
\[
\pr{\cat F_{[n]/T_{0}\to T_{1}\to T_{2}}}^{\rcone}\to\cat F_{\phi}\to\cat C^{\t}_{\act}\xrightarrow{\bigotimes}\cat C
\]
is a colimit diagram. For this, find a diagram in $\SSeq\pr{\cat C}^{\circ}$
lying over the diagram on the right:
\[\begin{tikzcd}
	{X_{1}} & X & {X_2} & {[0,\phi(1)]} & {[n]} & {[\phi(1),n]} \\
	{Y_1} & Y & {Y_2} & {[0,1]} & {[2]} & {[1,2]}
	\arrow["{u_1}"', from=1-1, to=2-1]
	\arrow[tail, from=1-2, to=1-1]
	\arrow[tail, from=1-2, to=1-3]
	\arrow["u"', from=1-2, to=2-2]
	\arrow["{u_2}", from=1-3, to=2-3]
	\arrow["{\phi_1}"', from=1-4, to=2-4]
	\arrow[tail, from=1-5, to=1-4]
	\arrow[tail, from=1-5, to=1-6]
	\arrow["\phi", from=1-5, to=2-5]
	\arrow["{\phi_2}", from=1-6, to=2-6]
	\arrow[tail, from=2-2, to=2-1]
	\arrow[tail, from=2-2, to=2-3]
	\arrow[tail, from=2-5, to=2-4]
	\arrow[tail, from=2-5, to=2-6]
\end{tikzcd}\]Here the arrows with tails ``$\rightarrowtail$'' are inert, and
the remaining arrows are active. The maps $u_{1}$ and $u_{2}$ gives
rise to maps $\cat F_{\mu_{\phi\pr 1}}\to\cat C^{\t}_{\act}$ and
$\cat F_{\mu_{n-\phi\pr 1}}\to\cat C^{\t}_{\act}$, which fits into
the following diagram:
\[\begin{tikzcd}[column sep = small, scale cd = .6]
	{(\mathcal{F}_{[n]/T_0\to T_1 \to T_2})^\triangleright} & {\mathcal{F}_{\phi}} && {\mathcal{C}^{\otimes}_{\mathrm{act}}} & {\mathcal{C}} \\
	{(\mathcal{F}_{[0,\phi(1)]/T_0\to T_1}\times \mathcal{F}_{[\phi(1),n]/T_1\to T_2})^\triangleright} & {(\mathcal{F}_{[0,\phi(1)]/T_0\to T_1})^\triangleright\times (\mathcal{F}_{[\phi(1),n]/T_1\to T_2})^\triangleright} & {\mathcal{F}_{\mu_{\phi(1)}}\times \mathcal{F}_{n-\phi(1)}} & {\mathcal{C}^\otimes _{\mathrm{act}}\times \mathcal{C}^\otimes _{\mathrm{act}}} & {\mathcal{C}\times \mathcal{C}}
	\arrow[from=1-1, to=1-2]
	\arrow["\simeq"', from=1-1, to=2-1]
	\arrow[from=1-2, to=1-4]
	\arrow["\bigotimes", from=1-4, to=1-5]
	\arrow[from=2-1, to=2-2]
	\arrow[from=2-2, to=2-3]
	\arrow[from=2-3, to=2-4]
	\arrow["\bigoplus", from=2-4, to=1-4]
	\arrow["{\bigotimes\times \bigotimes}"', from=2-4, to=2-5]
	\arrow["\otimes"', from=2-5, to=1-5]
\end{tikzcd}\]As before, the diagram commutes up to equivalence. Now $u_{1}$ and
$u_{2}$ are locally $q$-cocartesian (see the argument of \cite[Lemma 2.1.25]{Haug22}),
so by ($\ast_{\cat F}$), the maps 
\[
\pr{\cat F_{[0,\phi\pr 1]/T_{0}\to T_{1}}}^{\rcone}\to\cat C,\,\pr{\cat F_{[\phi\pr 1,n]/T_{1}\to T_{2}}}^{\rcone}\to\cat C
\]
are colimit diagrams. Therefore, the claim follows from the compatibility
of $\cat C^{\t}$ with colimits indexed by countable groupoids. The
proof is now complete.
\end{proof}

\begin{prop}
\label{prop:Haug_vs_classical}Let $\cat C$ be a symmetric monoidal
category compatible with countable groupoids. There is an equivalence
of monoidal $\infty$-categories
\[
\SSeq\pr{\cat C}^{\circ}\xrightarrow{\simeq}\SSeq_{\H}\pr{\cat C}^{\circ}
\]
which is natural with respect to monoidal functors preserving colimits
indexed by countable groupoids.
\end{prop}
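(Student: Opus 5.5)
We will build a map of non-symmetric $\infty$-operads $\Phi\from\SSeq\pr{\cat C}^{\circ,\t}\to\SSeq_{\H}\pr{\cat C}^{\circ}$ over $\Del^{\op}$. We then check that it is an equivalence on underlying $\infty$-categories and that it preserves cocartesian morphisms over the active maps $\mu_{n}$. Here $\SSeq\pr{\cat C}^{\circ,\t}\to\Del^{\op}$ denotes the (nerve of the) monoidal category attached to the classical composition product. A map of $\infty$-operads between monoidal $\infty$-categories that is an underlying equivalence and carries cocartesian edges over $\mu_{n}$ to cocartesian edges is a monoidal equivalence, so these two checks suffice.

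\textbf{Step 1: constructing $\Phi$.} By the universal property in Definition \ref{def:SSeq_H_2}, a map $\SSeq\pr{\cat C}^{\circ,\t}\to\widetilde{\SSeq_{\H}}\pr{\cat C}^{\circ}$ over $\Del^{\op}$ is the same as a functor
\[
\SSeq\pr{\cat C}^{\circ,\t}\times_{\Del^{\op}}\Sigma\Del^{\op}_{\mathbb{F}}\to\cat C^{\t}
\]
over $\Fin_{\ast}$. Since everything on the left is an ordinary category, we can write this down strictly, using the $1$-categorical model of $\cat C^{\t}$ from \cite[Construction 2.0.0.1]{HA}.

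An object consists of a tuple $\pr{X_{1},\dots,X_{n}}$ of symmetric sequences, an inert $[n]\to[a,b]$, and a forest $S_{a}\to\cdots\to S_{b}$. We send it to the family indexed by the vertices $s\in S_{k}$ with $a<k\le b$, whose entries are the iterated composites
\[
\pr{X_{a+1}\circ\cdots\circ X_{k}}\pr{\pr{S_{a}}_{s}},
\]
where $\pr{S_{a}}_{s}$ denotes the preimage of $s$ in $S_{a}$. The action on morphisms comes from three sources: the structure maps of the colimits defining $\circ$, the inclusions into these colimits induced by the forest maps $u_{i}$, and the associativity isomorphisms of $\circ$.

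Inert maps of forests give projections and identity components, so the resulting functor lands in $\SSeq_{\H}\pr{\cat C}^{\circ}$. In the same way, Proposition \ref{prop:SSeq_opd} shows that $\Phi$ preserves inert morphisms. Naturality in $\cat C$ along monoidal functors that preserve groupoid-indexed colimits holds because every ingredient above is built from $\t$ and such colimits.

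\textbf{Step 2: the underlying functor.} Over $[1]$, Remark \ref{rem:SSeq_H_und} identifies $\SSeq_{\H}\pr{\cat C}$ with $\Fun\pr{\FB,\cat C}$ via $S\mapsto\pr{S\to\underline{1}}$. Under this identification, $\Phi_{[1]}$ is the identity of $\Fun\pr{\FB,\cat C}$, since evaluating on $\pr{S\to\underline{1}}$ just returns $X\pr S$. Hence $\Phi$ is an equivalence on underlying $\infty$-categories.

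\textbf{Step 3: cocartesian edges (main obstacle).} By Corollary \ref{cor:SSeq_Haug}, it remains to show that the image under $\Phi$ of the canonical active map $\pr{X_{1},\dots,X_{n}}\to X_{1}\circ\cdots\circ X_{n}$ satisfies condition $(\ast_{\cat F})$. Concretely, for each $S\to T$ in $\cat F_{[1]}$ we need the canonical map
\[
\colim_{\cat F_{[n]/S\to T}}\overline{X}_{1}\pr{S_{0}\to S_{1}}\otimes\cdots\otimes\overline{X}_{n}\pr{S_{n-1}\to S_{n}}\to\pr{X_{1}\circ\cdots\circ X_{n}}\pr{S\to T}
\]
to be an equivalence, with the notation of Remark \ref{rem:informal}. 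Two facts must be matched here.

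First, by the product decomposition used in the proof of Corollary \ref{cor:SSeq_Haug}, it suffices to treat the case $T=\underline{1}$.

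Second, the category $\cat F_{[n]/S\to\underline{1}}$ is a groupoid. Its components are the iterated factorizations $S\to S_{1}\to\cdots\to S_{n-1}\to\underline{1}$, taken up to bijections of the intermediate sets. This is exactly the indexing groupoid that appears when the classical iterated composition product is written out by iterating the colimit formula over $\FB\times_{\Fin}\Fin_{A/}$.

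The hard part is making this identification precise. In the $\infty$-categorical setting, a groupoid-indexed colimit is a homotopy quotient, whereas the $1$-categorical composition product takes strict orbits. These agree here because the groupoids involved have trivial automorphism groups relative to the fixed source $S$: the actions are free, so the strict quotient computes the homotopy quotient.

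To set this up, I would first treat $n=2$. There I would show directly that $\cat F_{[2]/S\to\underline{1}}\simeq\FB\times_{\Fin}\Fin_{S/}$, which turns the condition into the defining formula for $X_{1}\circ X_{2}$. For general $n$, I would induct, using the composability of cocartesian maps that was already established in the proof of Corollary \ref{cor:SSeq_Haug}.
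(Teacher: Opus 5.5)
Your overall plan (build $\Phi$ strictly through the universal property of Definition \ref{def:SSeq_H_2}, compare fibers over $[1]$ via Remark \ref{rem:SSeq_H_und}, then check cocartesian edges over $\mu_n$ with Corollary \ref{cor:SSeq_Haug}) is a legitimate way to carry out what the paper only calls ``straightforward to check''. However, the object assignment in Step~1 is wrong, so as written $\Phi$ does not land in $\SSeq_{\H}\pr{\cat C}^{\circ}$.

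In the fiber over $[n]$ there is a morphism $\pr{[n]\to[a,b],S_a\to\cdots\to S_b}\to\pr{[n]\to[k-1,k],S_{k-1}\to S_k}$, given by restriction to a subinterval with all $u_i$ identities. It lies over the inert map of $\Fin_{\ast}$ that retains exactly the vertices at level $k$. Membership in $\SSeq_{\H}\pr{\cat C}^{\circ}$ forces its image to be inert. So its component at $s\in S_k$ must be an isomorphism from the decoration of $\pr{k,s}$ to $X_k\pr{\pr{S_{k-1}}_s}$. With your decoration $\pr{X_{a+1}\circ\cdots\circ X_k}\pr{\pr{S_a}_s}$ this fails for $k>a+1$, and there is not even a canonical map available.

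The correct choice, which is the paper's, decorates each vertex by its own corolla: $\pr{k,s}\mapsto X_k\pr{\pr{S_{k-1}}_s}$. Then objects over $[n]$ carry no composite data. Iterated composites enter only through the components of $\Phi\pr f$ lying over $0\to1$ in $[1]$ for active $f$, namely colimit inclusions into the relevant composite followed by $f$. Note that your Step~3 already uses the corolla decorations $\overline{X}_k\pr{S_{k-1}\to S_k}$, so it is inconsistent with your own Step~1.

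Second, the discussion of homotopy versus strict quotients in Step~3 is both unnecessary and false. It is unnecessary because $\cat C$ is an ordinary category and $\SSeq_{\H}\pr{\cat C}^{\circ}$ is built from its nerve. The left Kan extensions in $(\ast_{\cat F})$ are therefore ordinary colimits in $\cat C$, and there is nothing to compare.

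It is false because these groupoids are not free. An automorphism of $S\to S_1\to\cdots\to S_{n-1}\to\underline{1}$ in $\cat F_{[n]/S\to\underline{1}}$ fixing both ends may permute the elements of $S_1$ outside the image of $S$. For instance, for $n=2$ and $S=\emptyset$ the groupoid is $\coprod_k B\Sigma_k$, and the colimit is $\coprod_k\pr{X_1\pr{\emptyset}^{\t k}\t X_2\pr{\underline{k}}}/\Sigma_k$ with non-free actions. In a homotopical setting this would be a genuine problem; it is exactly what cofibrancy handles in Lemma \ref{lem:comp_hocolim}.

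Once you drop that argument and fix Step~1, the remainder of Step~3 is sound. That remainder is the identification $\cat F_{[2]/S\to\underline{1}}\simeq\FB\times_{\Fin}\Fin_{S/}$, followed by induction via the composability argument in Corollary \ref{cor:SSeq_Haug} (or directly iterating the colimit formula, using that $\t$ preserves countable groupoid colimits). With both corrections, your argument becomes the paper's construction with the verification spelled out.
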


\begin{proof}
We will construct a map $\Phi\from\SSeq\pr{\cat C}^{\circ}\to\SSeq_{\H}\pr{\cat C}^{\circ}$
as follows: By construction, $\SSeq_{\H}\pr{\cat C}^{\circ}$ is isomorphic
to the nerve of a category, so it suffices to specify $\Phi$ on objects
and morphisms. An object of $\SSeq\pr{\cat C}^{\circ}$ is a finite
sequence $\pr{X_{1},\dots,X_{n}}$ of symmetric sequences in $\cat C$.
Its image $\Phi\pr{X_{1},\dots,X_{n}}\in\SSeq_{\H}\pr{\cat C}^{\circ}$
is given by the functor
\begin{align*}
\pr{\Sigma\Del^{\op}_{\mathbb{F}}}_{[n]} & \to\cat C^{\t}\\
\pr{[n]\to[i,j],S_{i}\to\cdots\to S_{j}} & \mapsto\pr{X_{p}\pr{S_{p-1,s}}}_{\pr{p,s}\in\coprod_{i<p\leq j}S_{p}}.
\end{align*}
Next, given a morphism $f\from\pr{X_{1},\dots,X_{n}}\to\pr{Y_{1},\dots,Y_{m}}$
in $\SSeq\pr{\cat C}^{\circ}$ lying over a map $\underline{f}\from[n]\to[m]$
in $\Del^{\op}$, the functor
\begin{align*}
\Phi\pr f\from\pr{\Sigma\Del^{\op}_{\mathbb{F}}}_{\underline{f}} & \to\cat C^{\t}
\end{align*}
is defined as follows: The functor $\Phi\pr f$ extends $\Phi\pr{X_{1},\dots,X_{n}}$
and $\Phi\pr{Y_{1},\dots,Y_{m}}$. If $\pr{[n]\to[i,j],S_{i}\to\cdots\to S_{j}}\to\pr{[m]\to[k,l],T_{k}\to\cdots\to T_{l}}$
is a morphism of $\pr{\Sigma\Del^{\op}_{\mathbb{F}}}_{\overline{f}}$
lying over the map $0\to1$ in $[1]$, then its image $\pr{X_{p}\pr{S_{p-1,s}}}_{\pr{p,s}\in\coprod_{i<p\leq j}S_{p}}\to\pr{Y_{q}\pr{T_{q-1,t}}}_{\pr{q,t}\in\coprod_{k<q\leq l}T_{q}}$
is determined by the maps
\[
\bigotimes_{\phi\pr{q-1}<p\leq\phi\pr q}\bigotimes_{s\in S_{p,t}}X_{p}\pr{S_{p,s}}\to Y_{q}\pr{S_{\phi\pr{q-1},t}}\cong Y_{q}\pr{T_{q-1,t}},
\]
which in turn is determined by $f$. It is straightforward to check
that $\Phi$ is a categorical equivalence and has the stated naturality.
The claim follows.
\end{proof}

We now arrive at the proof of Theorem \ref{thm:Haugseng_univ}.
\begin{proof}
[Proof of Theorem \ref{thm:Haugseng_univ}]Proposition \ref{prop:Haug_vs_classical}
gives us a natural equivalence 
\[
\SSeq\pr{\mathbf{M}}^{\circ}\xrightarrow{\simeq}\SSeq_{\H}\pr{\mathbf{M}}^{\circ}.
\]
Write $\SSeq_{\H}\pr{\mathbf{M}}^{\circ}_{\cof}\subset\SSeq_{\H}\pr{\mathbf{M}}^{\circ}$
for the essential image of $\SSeq\pr{\mathbf{M}}^{\circ}_{\cof}$,
which is a monoidal $\infty$-category. Since every projectively cofibrant
symmetric sequence is injectively cofibrant (i.e., its values are
cofibrant), we may consider the composite natural transformation
\[
\theta_{\mathbf{M}}\from\SSeq_{\H}\pr{\mathbf{M}}^{\circ}_{\cof}\hookrightarrow\SSeq_{\H}\pr{\mathbf{M}_{\cof}}^{\circ}\to\SSeq_{\H}\pr{\mathbf{M}_{\infty}}^{\circ}.
\]
\textit{A priori}, the map $\theta_{\mathbf{M}}$ is merely lax monoidal,
i.e., a map of non-symmetric $\infty$-operads. (In fact, $\SSeq_{\H}\pr{\mathbf{M}_{\cof}}^{\circ}$
is generally not a monoidal $\infty$-category.) However, by Lemma
\ref{lem:comp_hocolim} and Proposition \ref{prop:SSeq_H_loc}, it
is in fact \textit{monoidal}. 

To complete the proof, it suffices to show that $\theta_{\mathbf{M}}$
is a monoidal localization (Remark \ref{rem:loc_unique}). By Remark
\ref{rem:SSeq_H_und}, the underlying functor of $\theta_{\mathbf{M}}$
can be identified with the composite
\[
\Fun\pr{\FB,\mathbf{M}}_{\cof}\xrightarrow{i}\Fun\pr{\FB,\mathbf{M}_{\cof}}\xrightarrow{j}\Fun\pr{\FB,\mathbf{M}_{\infty}}.
\]
The map $i$ induces an equivalence upon localizing at weak equivalences,
being the restriction of the Quillen equivalence between projective
and injective model structures to the full subcategories of cofibrant
objects. The map $j$ is a localization by \cite[Theorem 7.9.8]{HCHA}.
Hence $\theta_{\mathbf{M}}$ is a monoidal localization, and we are
done.
\end{proof}

\section{\label{sec:proof}Proof of the main result}

We can finally give a proof of Theorem \ref{thm:main}. 
\begin{proof}
[Proof of Theorem \ref{thm:main}]By Theorem \ref{thm:delocPrSM},
it suffices to show that the functors
\[
\SSeq_{\B}\pr{\pr -_{\infty}}^{\circ},\SSeq_{\H}\pr{\pr -_{\infty}}^{\circ}\from\TSMMC\to\Mon\hat{\Cat}_{\infty}
\]
are naturally equivalent. This follows from Theorems \ref{thm:Brantner_univ}
and \ref{thm:Haugseng_univ}.
\end{proof}

\appendix

\section{\label{sec:oo2}Results on $\protect\pr{\infty,2}$-categories}

In this section, we summarize basic definitions and terminology related
to $\pr{\infty,2}$-categories that we will need in the paper.

\subsection{Definitions}

We will use $\infty$-bicategories as our preferred model of $\pr{\infty,2}$-categories.
In this subsection, we recall the definitions and constructions related
to this model.
\begin{recollection}
\cite[$\S3$]{GC} A \textbf{scaled simplicial set} is a pair $\pr{X,T_{X}}$,
where $X$ is a simplicial set and $T_{X}$ is a set of $2$-simplices
of $X$ containing all the degenerate ones, whose elements are called
\textbf{thin triangles.} There is a model structure on the category
$\SS^{\sc}$ of scaled simplicial sets, called the bicategorical model
structure \cite[Theorem 4.2.7]{GC}. The bifibrant objects of this
model structure are called\textbf{ $\infty$-bicategories}. A morphism
of scaled simplicial sets that are $\infty$-bicategories are called
\textbf{functors} of $\infty$-bicategories. 
\end{recollection}

\begin{rem}
\label{rem:weakbicat}By \cite[Theorem 5.1]{GHL22}, the fibrant objects
of the bicategorical model structure are nothing but weak $\infty$-bicategories
in the sense of \cite[Definition 4.1.1]{GC}. It follows immediately
that:
\begin{enumerate}
\item If $\cat C$ is an $\infty$-category, then $\pr{\cat C,\cat C_{2}}$
is an $\infty$-bicategory. 
\item If $\cat C$ is an $\infty$-bicategory, then the simplicial subset
$\Und\pr{\cat C}$ of the underlying simplicial set of $\cat C$ consisting
of the simplices whose $2$-simplices are all thin is an $\infty$-category.
We refer to $\Und\pr{\cat C}$ as the \textbf{underlying $\infty$-category}
of $\cat C$, and call the objects and morphisms of $\Und\pr{\cat C}$
as \textbf{objects} and \textbf{morphisms} of $\cat C$. A morphism
of $\cat C$ is called an \textbf{equivalence} if it is an equivalence
in $\Und\pr{\cat C}$.
\item If $\pr{X,T_{X}}$ is an $\infty$-bicategory, then so is $\pr{X,T_{X}}^{\op}=\pr{X^{\op},T_{X}}$
\cite[Corollary 5.5]{GHL22}. 
\end{enumerate}
\end{rem}

\begin{rem}
\cite[Remark 1.31]{GHL22}\label{rem:bicat_cartesian} The $\infty$-bicategorical
model structure is cartesian. In other words, if $A\to B$ is a cofibration
and $X\to Y$ is a fibration of the bicategorical model structure,
then the induced map
\[
\Fun^{\sc}\pr{B,X}\to\Fun^{\sc}\pr{A,X}\times_{\Fun^{\sc}\pr{A,Y}}\Fun^{\sc}\pr{B,Y}
\]
is again a fibration. Here $\Fun^{\sc}\pr{-,-}$ denotes the internal
hom of $\SS^{\sc}$.
\end{rem}

\begin{example}
\label{exa:slice}Let $\cat C$ be an $\infty$-bicategory. For each
object $X\in\cat C$, we define a scaled simplicial set $\cat C^{X/}$
by the fiber product 
\[
\cat C^{X/}=\Fun^{\sc}\pr{[1],\cat C}\times_{\Fun^{\sc}\pr{\{0\},\cat C}}\{X\}.
\]
This is an $\infty$-bicategory by Remark \ref{rem:bicat_cartesian}.
\end{example}

\begin{rem}
For an $\infty$-category $\cat C$ and an object $X\in\cat C$, there
is another $\infty$-category $\cat C_{X/}$ equivalent to $\cat C^{X/}$.
The former usually goes under the name ``slice,'' while the latter
goes by the ``fat slice'' \cite[Definition 2.5.21]{Landoo-cat}.
As they are equivalent, there is no essential need to distinguish
between them, but we will adhere to this notational convention.
\end{rem}

\begin{rem}
\label{rem:bicatfib}A functor $p\from\cat C\to\cat D$ of $\infty$-bicategories
is a bicategorical fibration (i.e., fibration in the bicategorical
model structure) if and only if it has the right lifting property
for the following class of maps:
\begin{enumerate}
\item The class of scaled anodyne extensions \cite[Definition 3.1.3]{GC}.
Recall that this is generated by the following maps:
\begin{enumerate}
\item For $0<i<n$, the inclusion 
\[
\pr{\Lambda^{n}_{i},\pr{\Lambda^{n}_{i}}_{2}\cap T}\to\pr{\Delta^{n},T},
\]
where $T$ denotes the set of degenerate $2$-simplices of $\Delta^{n}$
and the simplex $\Delta^{\{i-2,i,i+1\}}$.
\item The map $\pr{\Delta^{4},T}\to\pr{\Delta^{4},T\cup\Delta^{\{0,3,4\}}\cup\Delta^{\{0,1,4\}}}$,
where $T$ is the union of the degenerate $2$-simplices and the simplices
$\Delta^{\{0,2,4\}}$, $\Delta^{\{1,2,3\}}$, $\Delta^{\{0,1,3\}}$,
$\Delta^{\{1,3,4\}}$, $\Delta^{\{0,1,2\}}$.\footnote{The original reference by Lurie says $T\cup\Delta^{\{0,3,4\}}\cup\Delta^{\{1,3,4\}}$
instead of $T\cup\Delta^{\{0,3,4\}}\cup\Delta^{\{0,1,4\}}$, but this
is a typo, as corrected in \cite[Definition 1.17]{GHL22}.}
\item For $n>2$, the inclusion
\[
\pr{\Lambda^{n}_{0}\amalg_{\Delta^{\{0,1\}}}\Delta^{0},T}\to\pr{\Delta^{n}\amalg_{\Delta^{\{0,1\}}}\Delta^{0},T},
\]
where $T$ is the set of degenerate $2$-simplices of $\Delta^{n}\amalg_{\Delta^{\{0,1\}}}\Delta^{0}$
and the image of the $2$-simplex $\Delta^{\{0,1,n\}}$.
\end{enumerate}
\item The inclusion $\pr{\{\varepsilon\},\{\varepsilon\}}\to\pr{J,J_{2}}$
for $\varepsilon\in\{0,1\}$.
\end{enumerate}
This follows from Remark \ref{rem:bicat_cartesian} and the characterization
of the bicategorical model structure given in \cite[Corollary 6.4.]{GHL22}.

We remark that condition (1) is automatic if $\cat D$ is the scaled
nerve of an ordinary category. This is because such a scaled simplicial
set has a unique filler for inner horns, and at most one filler for
outer horns in dimensions higher than $2$.
\end{rem}

We will often identify $\infty$-bicategories with their underlying
simplicial sets. This is justified by the following proposition:
\begin{prop}
Let $\overline{X}=\pr{X,T_{X}}$ be a scaled simplicial set. If $\overline{X}$
is an $\infty$-bicategory, then the set $T_{X}$ of thin triangles
is completely determined by the underlying simplicial set $X$.
\end{prop}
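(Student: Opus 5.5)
The plan is to find a description of the thin triangles that refers only to the underlying simplicial set $X$. Degenerate simplices are thin in every scaling, so the idea is to reduce to objects built from degenerate data. Concretely, I will show that a $2$-simplex $\sigma$ of $X$ with edges $f=d_{2}\sigma\from x\to y$, $g=d_{0}\sigma\from y\to z$, $h=d_{1}\sigma\from x\to z$ lies in $T_{X}$ if and only if the following holds:
\begin{itemize}
\item[($\dagger$)] there are $\rho\in X_{2}$ with $d_{0}\rho=g$ and $d_{2}\rho=f$, and $\tau\in X_{3}$ such that $\tau$ witnesses $\rho$ and $\sigma$ as differing by a morphism $d_{1}\rho\to h$ that is invertible in the mapping $\infty$-category $\Hom_{X}(x,z)$.
\end{itemize}
The mapping $\infty$-category has to be one whose definition does not mention thinness. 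For this I will use the right-pinched model $\Hom^{R}_{X}(x,z)$ of \cite{GC}. Its $n$-simplices are $(n+1)$-simplices of $X$ with first vertex $x$ and last face collapsed to $z$. Every $2$-simplex appearing in it has a degenerate edge, and the lowest-dimensional ones are exactly the triangles $\sigma$ with $d_{0}\sigma=\id_{z}$.

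I would take the following steps in order.
\begin{enumerate}
\item Using the characterization of weak $\infty$-bicategories in \cite[Definition 4.1.1]{GC}, recalled in Remark \ref{rem:weakbicat}, prove that a triangle of the form $\sigma\from h\To h'$ with $d_{0}\sigma$ degenerate is thin if and only if it is an equivalence in $\Hom^{R}_{X}(x,z)$. Here I would use two facts: $\Hom^{R}_{X}(x,z)$ is an $\infty$-category as a bare simplicial set, and equivalences in a quasicategory are detected by the simplicial set alone. The direction ``thin implies invertible'' should come from the scaled horn fillers (c) of Remark \ref{rem:bicatfib}, together with the $4$-simplex filler (b) to show that thin triangles compose. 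The converse should come from the $2$-out-of-$3$-type property of thin triangles in \cite{GC}.
\item Reduce an arbitrary triangle to this pinched case. Choose a thin filler $\rho$ of the horn $(f,g)$ using the fillers (a). Then fill a $3$-dimensional inner horn whose faces are $\rho$, a degenerate triangle on $g$, and $\sigma$. The remaining face is a pinched triangle $d_{1}\rho\To h$. Using the thinness of $\rho$ and the $4$-simplex axiom, show that $\sigma$ is thin if and only if this pinched triangle is thin. By step 1, that happens if and only if it is invertible. Hence ($\dagger$) holds exactly for thin $\sigma$.
\item Conclude. If $T$ and $T'$ both make $X$ an $\infty$-bicategory, then they pick out the same triangles through ($\dagger$). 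The point is that ($\dagger$) does not depend on the scaling: existence of the fillers is guaranteed in either scaling, and invertibility is a property of the underlying simplicial set. Comparing the two scalings directly, instead of trying to make ($\dagger$) independent of the choice of $\rho$, avoids a coherence check.
\end{enumerate}

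The main obstacle is step 2. The fillers in step 2 come from $T$, so one has to check that the resulting criterion holds for $\sigma\in T$ if and only if $\sigma\in T'$. I would handle this by a symmetric argument. Starting from $\sigma\in T'$, choose its fillers using $T'$ and transport along $\Hom^{R}$, which is the same simplicial set for both scalings. Applying step 1 in each scaling then shows that the same pinched triangles are thin. If this bookkeeping becomes unwieldy, the fallback is to cite \cite[Theorem 5.1]{GHL22}, which already shows that the fibrant scaled simplicial sets are the weak $\infty$-bicategories. Combined with Lurie's result that in a weak $\infty$-bicategory the thin triangles are exactly those whose associated $2$-cell is invertible, this gives the claim directly.
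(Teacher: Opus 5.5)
\textbf{There is a genuine gap in step 3.} Your steps 1 and 2 are sound. Step 1 is essentially \cite[Corollary 2.26]{GHL22}, which the paper quotes, applied to $X^{\op}$ because you pinch at the target rather than the source. Step 2 is the three-out-of-four property applied to a 3-simplex whose $\{0,1,2\}$- and $\{1,2,3\}$-faces are thin.

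The problem is criterion ($\dagger$): as written, \emph{every} triangle satisfies it. Take $\rho=\sigma$ and $\tau=s_{2}\sigma$; its faces are $\sigma,\sigma,s_{1}h,s_{1}g$, exhibiting the identity of $h$. What step 2 actually proves is ``$\sigma\in T$ iff $\pi$ is invertible'', for a $\pi$ built from a $T$-\emph{thin} $\rho$. The $T$-thinness of $\rho$ is exactly the scaling-dependent input.

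Your symmetric argument does not remove it. From $\sigma\in T'$ and a $T'$-thin $\rho'$ you get a pinched $\pi'$ that is $T'$-thin, hence $T$-thin by step 1. But to run three-out-of-four in $T$ on that 3-simplex you need $\rho'\in T$, which is an instance of the claim itself. Agreement on pinched triangles does not by itself propagate to general ones.

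The fallback has the same defect. \cite[Theorem 5.1]{GHL22} compares two notions for a \emph{fixed} scaling. And ``the 2-cell associated to a triangle'' only makes sense after a thin composite has been chosen.

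\textbf{The missing idea is to compare thin fillers of the same horn across the two scalings.}
\begin{enumerate}
\item Given $\rho'\in T'$ with boundary $(f,g,k')$, pick $\rho\in T$ with boundary $(f,g,k)$.
\item Filling $\Lambda^{3}_{1}$ in $T$ gives $\tau_{1}$ with $d_{3}=\rho$, $d_{2}=\rho'$, $d_{0}=s_{1}g$, and a pinched face $d_{1}=\pi_{1}\colon k\to k'$.
\item Filling in $T'$ gives $\tau_{2}$ with $d_{3}=\rho'$, $d_{2}=\rho$, $d_{0}=s_{1}g$, and $d_{1}=\pi_{2}\colon k'\to k$.
\item Consider the $\Lambda^{4}_{1}$-horn with $d_{4}=\tau_{1}$, $d_{3}=s_{2}\rho$, $d_{2}=\tau_{2}$, and $d_{0}$ the degenerate 3-simplex on $g$. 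Its $\{0,1,2\}$-face is $\rho\in T$, so it extends. The resulting $d_{1}$-face is a 2-simplex of the pinched mapping $\infty$-category witnessing $\pi_{2}\pi_{1}\simeq\id_{k}$.
\item The mirror horn ($d_{4}=\tau_{2}$, $d_{3}=s_{2}\rho'$, $d_{2}=\tau_{1}$), filled in $T'$ using $\rho'$, gives $\pi_{1}\pi_{2}\simeq\id_{k'}$.
\item Hence $\pi_{1}$ is invertible, so $\pi_{1}\in T$ by step 1. Three-out-of-four applied to $\tau_{1}$ then gives $\rho'\in T$, and by symmetry $T=T'$. Note that $\sigma$ is never needed.
\end{enumerate}

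\textbf{Comparison with the paper.} The paper avoids both the two-scaling comparison and mapping $\infty$-categories. It gives an intrinsic criterion: $\sigma$ is thin iff $\Lambda^{3}_{1}$- and $\Lambda^{4}_{1}$-horns with $\sigma$ in the $\{0,1,2\}$-position extend, i.e.\ $\sigma$ has the universal property of a composite. It then proves the nontrivial inclusion with a single 4-simplex and the saturation axiom in $X^{\op}$.
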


\begin{proof}
Consider the set $T'_{X}$ of $2$-simplices $\sigma\in X_{2}$ with
the following property:
\begin{itemize}
\item [($\ast$)] Let $n\in\{3,4\}$, and let $\phi\from\Lambda^{n}_{1}\to X$
be a map of simplicial sets. Suppose that $\phi\vert\Delta^{\{n-2,n-1,n\}}$
factors through $\Delta^{1}$ via the map carrying $n-2$ to $0$
and $n-1$ and $n$ to $1$. Then $\phi$ extends to a map $\Delta^{n}\to X$.
\end{itemize}
We will show that $T'_{X}=T_{X}$. Since $T'_{X}$ depends only on
$X$, this will prove the claim.

By Remark \ref{rem:weakbicat} and the definition of weak $\infty$-bicategories,
we have $T_{X}\subset T'_{X}$. To prove the reverse inclusion, suppose
we are given a $2$-simplex $\sigma\in T'_{X}$ depicted as 
\[\begin{tikzcd}
	& y \\
	x && z.
	\arrow["g", from=1-2, to=2-3]
	\arrow["f", from=2-1, to=1-2]
	\arrow["h"', from=2-1, to=2-3]
\end{tikzcd}\]We must show that $\sigma$ is thin. For this, we use the fact that
$X$ is an $\infty$-bicategory to find a thin $2$-simplex $\tau$
depicted as
\[\begin{tikzcd}
	& y \\
	x && z.
	\arrow["g", from=1-2, to=2-3]
	\arrow["f", from=2-1, to=1-2]
	\arrow["gf"', from=2-1, to=2-3]
\end{tikzcd}\]We then use condition ($\ast$) for $n=3$ (applied to $\Delta^{\{0,1,2,3\}}$
and $\Delta^{\{0,1,3,4\}}$) to construct a map 
\[
\psi\from\Lambda^{4}_{1}=\Delta^{\{0,1,2,3\}}\cup\Delta^{\{0,1,2,4\}}\cup\Delta^{\{0,1,3,4\}}\cup\Delta^{\{1,2,3,4\}}\to X
\]
whose restrictions to the $2$-dimensional faces are given by
\begin{align*}
\psi\vert\Delta^{\{0,1,2\}} & =\sigma,\,\psi\vert\Delta^{\{0,1,3\}}=\tau,\,\psi\vert\Delta^{\{1,2,3\}}=1_{g},\\
\psi\vert\Delta^{\{0,1,3\}} & =\tau,\,\psi\vert\Delta^{\{0,1,4\}}=\sigma,\,\psi\vert\Delta^{\{1,3,4\}}=1_{g},
\end{align*}
and such that $\psi\vert\Delta^{\{0,1,2,4\}}$ is a degeneration of
$\sigma$ and $\psi\vert\Delta^{\{1,2,3,4\}}$ is a degeneration of
$g$. Below is a picture of $\psi$:
\[\begin{tikzcd}
	&& x \\
	\\
	y && z \\
	& z && z.
	\arrow["f"', from=1-3, to=3-1]
	\arrow["h"{description}, from=1-3, to=3-3]
	\arrow["gf"{description}, from=1-3, to=4-2]
	\arrow["h"{description}, from=1-3, to=4-4]
	\arrow["g", from=3-1, to=3-3]
	\arrow["g"', from=3-1, to=4-2]
	\arrow[equal, from=3-3, to=4-4]
	\arrow[equal, from=4-2, to=3-3]
	\arrow[equal, from=4-2, to=4-4]
\end{tikzcd}\] 

Using ($\ast$), we can extend $\psi$ to a map $\overline{\psi}\from\Delta^{4}\to X$.
Since $\pr{X^{\op},T_{X}}$ is an $\infty$-bicategory and the restriction
of $\psi$ to $\Delta^{\{0,2,4\}},\Delta^{\{1,2,3\}},\Delta^{\{1,3,4\}},\Delta^{\{0,1,3\}},\Delta^{\{2,3,4\}}$
are thin ($\psi\vert\Delta^{\{0,1,3\}}=\tau$ is thin, and the rest
are all degenerate), the definition of $\infty$-bicategory shows
that $\overline{\psi}\vert\Delta^{\{0,1,4\}}=\sigma$ is thin, as
required.
\end{proof}

\begin{rem}
In \cite[01W9]{kerodon}, Lurie introduces defines an $\pr{\infty,2}$-category
to be a simplicial set satisfying some conditions. The underlying
simplicial set of an $\infty$-bicategory is an $\pr{\infty,2}$-bicategory
in this sense. The converse is expected to be true, but a proof seems
to be missing in the literature \cite{MO461820}.
\end{rem}

Many $\infty$-bicategories in this paper arise from the following
``nerve'' constructions:
\begin{example}
\label{exa:Duskin}Let $\cat C$ be a bicategory. The \textbf{Duskin
nerve} \cite[009U]{kerodon} of $\cat C$ is an $\infty$-bicategory.
The Duskin nerve determines a fully faithful functor from the category
of bicategories and strictly unitary lax functors into the category
of simplicial sets \cite[00AU]{kerodon}. Because of this, we often
identify bicategories with their Duskin nerve.
\end{example}

\begin{recollection}
\cite[Theorem 4.2.7]{GC}\label{recall:Bergner} Let $\SS^{+}$ denote
the category of marked simplicial sets (i.e., pairs $\pr{X,E_{X}}$,
where $X$ is a simplicial set and $E_{X}$ is a set of edges of $X$
containing all the degenerate ones. The category $\SS^{+}$ admits
a model structure whose bifibrant objects are the $\infty$-categories
(quasicategories) with equivalences marked. The category $\mathsf{Cat}_{\mathsf{sSet}^{+}}$
of $\SS^{+}$-enriched categories admits an induced model structure,
and there is a Quillen equivalence
\[
\mathfrak{C}^{\sc}\from\mathsf{sSet}^{\mathrm{sc}}\adj\mathsf{Cat}_{\mathsf{sSet}^{+}}\from N^{\sc}.
\]
The right adjoint of this adjunction is called the \textbf{scaled
nerve} functor.
\end{recollection}

\begin{defn}
We define the $\infty$-bicategory $\Cat^{\pr 2}_{\infty}$ of $\infty$-categories
to be the scaled nerve of the $\SS^{+}$-enriched category of $\infty$-categories,
with mapping objects given by $\pr{\Fun\pr{\cat C,\cat D},\{\text{equivalences}\}}$.
Likewise, we define the $\infty$-bicategory $\BiCat^{\pr 2}_{\infty}$
of $\infty$-categories as the scaled nerve of the $\SS^{+}$-enriched
category of $\infty$-bicategories, with mapping objects given by
$\pr{\Und\pr{\Fun^{\sc}\pr{\cat C,\cat D}},\{\text{equivalences}\}}$.
Their underlying $\infty$-categories are denoted by $\Cat_{\infty}$
and $\BiCat_{\infty}$.
\end{defn}

\subsection{Mapping $\infty$-categories}

To each $\infty$-bicategory $\cat C$ and each pair of objects $X,Y\in\cat C$,
we can associate an $\infty$-category $\cat C\pr{X,Y}$, called the
\textbf{mapping $\infty$-category}. In this subsection, we define
them and give a formula of the mapping $\infty$-categories of the
arrow $\infty$-bicategory of $\infty$-bicategories.
\begin{defn}
\label{def:map_cat}Let $\cat C$ be an $\infty$-bicategory. The
mapping category functor $\cat C\pr{-,-}\from\cat C^{\op}\times\cat C\to\Cat^{\pr 2}_{\infty}$
is defined as the composite
\[
\cat C^{\op}\times\cat C\to N^{\sc}\pr{\cat C_{+}}^{\op}\times N^{\sc}\pr{\cat C_{+}}\cong N^{\sc}\pr{\cat C^{\op}_{+}\times\cat C_{+}}\xrightarrow{\cat C_{+}\pr{-,-}}\Cat^{\pr 2}_{\infty},
\]
where the functor $\cat C_{+}$ is a fibrant $\SS^{+}$-enriched category
equipped with an equivalence $\cat C\to N^{\sc}\pr{\cat C_{+}}$ of
$\infty$-bicategories, and $\cat C_{+}\pr{-,-}$ denotes the hom-functor
of $\cat C_{+}$ (with markings dropped).

If $f\from\cat C\to\cat D$ is a functor of $\infty$-bicategories,
then we define a natural transformation $\alpha\from\cat C\pr{-,-}\to\cat D\pr{f-,f-}$
as follows: Choose $\cat C_{+}$ and $\cat D_{+}$ so that there is
a $\SS^{+}$-enriched functor $f_{+}\from\cat C_{+}\to\cat D_{+}$
rendering the diagram
\[\begin{tikzcd}
	{\mathcal{C}} & {N^{\mathrm{sc}}(\mathcal{C}_+)} \\
	{\mathcal{D}} & {N^{\mathrm{sc}}(\mathcal{D}_+)}
	\arrow[from=1-1, to=1-2]
	\arrow[from=1-1, to=2-1]
	\arrow[from=1-2, to=2-2]
	\arrow[from=2-1, to=2-2]
\end{tikzcd}\]commutative. Then $\alpha$ is induced by $f_{+}$. 
\end{defn}

\begin{defn}
Let $f\from\cat C\to\cat D$ be a functor of $\infty$-bicategories.
We say that $f$ is:
\begin{itemize}
\item \textbf{fully faithful} if for each pair of objects $X,Y\in\cat C$,
the map
\[
\cat C\pr{X,Y}\to\cat D\pr{fX,fY}
\]
is an equivalence of $\infty$-categories. 
\item \textbf{essentially surjective} if it is essentially surjective on
the level of underlying $\infty$-categories.
\end{itemize}
\end{defn}

\begin{rem}
\cite[Remark 4.2.1]{GC} Let $\cat C$ be an $\infty$-bicategory
and let $\cat C'\subset\cat C$ be a full sub $\infty$-bicategory
(i.e., a full simplicial subset). Then the inclusion $\cat C'\hookrightarrow\cat C$
is fully faithful.
\end{rem}

\begin{rem}
\cite[(Argument of) Lemma 4.2.4]{GC} A functor of $\infty$-bicategories
is a weak equivalence in $\SS^{+}$ if and only if it fully faithful
and essentially surjective. We call such a map a \textbf{bicategorical
equivalence} or an \textbf{equivalence of $\infty$-bicategories}.
\end{rem}

In the main body of the paper, we will often need to identify the
mapping spaces of arrow $\infty$-bicategories. The following proposition
will be useful for this. We refer the readers to \cite[Theorem 4.1]{AS23}
and \cite[Example A.2.6]{BB24} for different flavors of similar results.
\begin{prop}
\label{prop:mappingcat_arrow}\hfill 
\begin{enumerate}
\item Let $\cat C$ be an $\infty$-bicategory, and let $f\from X_{0}\to X_{1}$
and $g\from Y_{0}\to Y_{1}$ be morphisms in $\cat C$. The mapping
category of $\Ar\pr{\cat C}=\Fun^{\sc}\pr{[1],\cat C}$ fits into
the pullback square 
\[\begin{tikzcd}
	{\operatorname{Ar}(\mathcal{C})(f,g)} & {\mathcal{C}(X_1,Y_1)} \\
	{\mathcal{C}(X_0,Y_0)} & {\mathcal{C}(X_0,Y_1)}
	\arrow["{\operatorname{ev}_1}", from=1-1, to=1-2]
	\arrow["{\operatorname{ev}_0}"', from=1-1, to=2-1]
	\arrow["{a^*}", from=1-2, to=2-2]
	\arrow["{b_\ast}"', from=2-1, to=2-2]
\end{tikzcd}\]which is natural in $f,g\in\Ar\pr{\cat C}$.
\item Let $\mathsf{C}$ be a fibrant $\SS^{+}$-enriched category. Call
a morphism $f\from X_{0}\to X_{1}$ a \textbf{quasi-fibration} if
for each object $C\in\mathsf{C}$, the map
\[
f_{\ast}\from\mathsf{C}\pr{C,X_{0}}\to\mathsf{C}\pr{C,X_{1}}
\]
is a fibration in $\SS^{+}$. The full subcategory $\mathsf{C}^{[1]}_{\mathrm{qfib}}\subset\mathsf{C}^{[1]}$
spanned by quasi-fibrations is a fibrant $\SS^{+}$-enriched category,
and the functor
\[
\phi\from N^{\sc}\pr{\mathsf{C}^{[1]}_{\mathrm{qfib}}}\to\Fun^{\sc}\pr{[1],N^{\sc}\pr{\mathsf{C}}}
\]
is fully faithful.
\end{enumerate}
\end{prop}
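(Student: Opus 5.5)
The plan is to prove part (2) first and then deduce part (1) from it.

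For part (2), fibrancy is immediate. For quasi-fibrations $f\from X_{0}\to X_{1}$ and $g\from Y_{0}\to Y_{1}$, the strict hom-object of $\mathsf{C}^{[1]}$ is
\[
\mathsf{C}^{[1]}(f,g)=\mathsf{C}(X_{1},Y_{1})\times_{\mathsf{C}(X_{0},Y_{1})}\mathsf{C}(X_{0},Y_{0}).
\]
This is a pullback of the fibration $g_{\ast}\from\mathsf{C}(X_{0},Y_{0})\to\mathsf{C}(X_{0},Y_{1})$ between fibrant marked simplicial sets. It is therefore fibrant, and it is also a homotopy pullback.

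For full faithfulness, I would pass through enriched diagrams. The first step is to identify $\Fun^{\sc}([1],N^{\sc}(\mathsf{C}))$, up to bicategorical equivalence, with the scaled nerve of the fibrant–cofibrant part of a model structure on $\SS^{+}$-enriched functors $\mathfrak{C}^{\sc}[1]\to\mathsf{C}$. This is the marked/scaled analogue of \cite[Proposition A.3.4.13]{HTT}, obtained from the Quillen equivalence $\mathfrak{C}^{\sc}\dashv N^{\sc}$ of Recollection \ref{recall:Bergner} together with the cartesianness of Remark \ref{rem:bicat_cartesian}. One has to be careful here because $\mathsf{C}$ need not be presentable. So I would first embed $\mathsf{C}$ fully faithfully, via the enriched Yoneda embedding, into the projective model category $\mathsf{P}=\Fun(\mathsf{C}^{\op},\SS^{+})$, and do the diagram argument there.

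In $\mathsf{P}$, the derived hom-object between two arrows is the homotopy pullback above, i.e.\ the homotopy end over $[1]$. When the target is a quasi-fibration, this derived hom agrees with the strict hom. Hence $\phi$ induces equivalences on mapping $\infty$-categories, which is the full faithfulness claim.

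I expect the comparison $\Fun^{\sc}([1],N^{\sc}\mathsf{C})\simeq N^{\sc}(\text{enriched diagrams})$ to be the main obstacle. If a clean reference is not available, my first attempt would be a direct computation. Mapping categories in $\Fun^{\sc}([1],-)$ can be computed from $\Fun^{\sc}([1]\times[1],-)$, restricted to squares with fixed edges. Using that $\mathfrak{C}^{\sc}([1]\times[1])$ has explicit hom-objects, with a single nontrivial one $\Delta^{1}\cup\Delta^{1}$ glued along the diagonal, I would reduce the statement to the homotopy-pullback description above.

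For part (1), choose a fibrant $\SS^{+}$-enriched model $\mathsf{C}_{+}$ with $\cat C\simeq N^{\sc}(\mathsf{C}_{+})$, and embed it into $\mathsf{P}$ as above. Since the mapping category in $\Ar(\cat C)$ is invariant under equivalence of its arguments, we may replace $g$ by an equivalent arrow. Factoring $g$ in $\mathsf{P}$ as a trivial cofibration followed by a fibration produces a fibration between fibrant objects, which is automatically a quasi-fibration. Its domain is equivalent to a representable, so it lies in the essential image of $\mathsf{C}_{+}$ and we stay in the relevant full subcategory. The source $f$ imposes no condition because only $g_{\ast}$ enters the homotopy pullback; to place $f$ in $\mathsf{C}^{[1]}_{\mathrm{qfib}}$ as well, replace it in the same way. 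Part (2) then identifies $\Ar(\cat C)(f,g)$ with the strict pullback, which is a homotopy pullback, giving the stated square. Naturality in $f$ and $g$ follows because every map in the construction is induced by enriched functoriality of the hom-functor $\mathsf{C}_{+}(-,-)$, as in Definition \ref{def:map_cat}.
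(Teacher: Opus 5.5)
The gap is in part (2), at ``Hence $\phi$ induces equivalences on mapping $\infty$-categories.'' The comparison theorem you invoke (the paper uses \cite[Proposition 3.89]{ASgroth2}) identifies $\Fun^{\sc}([1],N^{\sc}(\mathsf{P}^{\circ}))$ with the scaled nerve of the \emph{bifibrant} objects of $\mathsf{P}^{[1]}$. For the standard projective structure, these are cofibrations between bifibrant presheaves. An object of $\mathsf{C}^{[1]}_{\mathrm{qfib}}$ is the Yoneda image $y(X_{0})\to y(X_{1})$ of an arbitrary morphism of $\mathsf{C}$. This is generally not a projective cofibration, so it is not in that subcategory, and the comparison says nothing about the specific map $\mathsf{C}^{[1]}(f,g)\to\Ar(N^{\sc}\mathsf{C})(\phi f,\phi g)$. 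Knowing that $\mathsf{C}^{[1]}(f,g)$ is abstractly weakly equivalent to the derived mapping object does not show that \emph{this} map is an equivalence. To conclude, you have to connect $f$ to a cofibrant replacement $f'\to f$ (and $g$ to $g'$) inside a fibrant $\SS^{+}$-category on which $\phi$ is a functor of $\infty$-bicategories. You cannot just take the full subcategory of $\mathsf{P}^{[1]}$ on $f,f',g,g'$, since hom-objects like $\mathsf{P}^{[1]}(f,f')$ need not be fibrant. This is precisely what the paper's proof of (2) does:
it builds an auxiliary $\SS^{+}$-category $\mathbf{X}$ containing the quasi-fibrations together with $f',g'$, and declares all homs from the former to the latter empty;
it checks that $\mathbf{X}(f,g)\to\mathbf{X}(f',g)\leftarrow\mathbf{X}(f',g')$ are weak equivalences (each is a homotopy pullback because $g_{\ast}$, resp.\ $(f')^{\ast}$, is a fibration);
it uses that $f'\to f$ and $g'\to g$ become equivalences in $\Fun^{\sc}([1],N^{\sc}(\mathsf{P}^{\circ}))$;
and only then it applies the comparison at $(f',g')$.

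A cleaner repair, in line with your remark that only the target matters, is to use the injective (here equal to Reedy) model structure on $\mathsf{P}^{[1]}$. Its cofibrant objects are the pointwise cofibrant arrows, its fibrant objects are the fibrations between fibrant objects, and $g$ is a quasi-fibration exactly when $y(g)$ is a projective fibration. So $\mathsf{C}^{[1]}_{\mathrm{qfib}}$ is a full $\SS^{+}$-subcategory of the injectively bifibrant arrows, and full faithfulness follows at once. This works only once you have the comparison theorem for the injective structure, which you would need to state and justify; deducing it from the projective version requires a zig-zag of the same kind.

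A smaller point in your deduction of (1): the replaced arrow $Y_{0}'\to Y_{1}$ has a non-representable domain. So it is not an object of $(\mathsf{C}_{+})^{[1]}_{\mathrm{qfib}}$, essential image notwithstanding. Instead, apply (2) to $\mathsf{P}^{\circ}$ (or to a small full subcategory containing the representables and the replacements), and use that $N^{\sc}(\mathsf{C}_{+})\to N^{\sc}(\mathsf{P}^{\circ})$ is fully faithful. With that change the reduction of (1) to (2) works. The paper instead proves (1) directly: in the projective structure every arrow is equivalent to a bifibrant one, and the square is a homotopy pullback because $f^{\ast}$ is a fibration.
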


\begin{proof}
For (1), we may assume that $\cat C=N^{\sc}\pr{\mathsf{C}}$ for some
fibrant $\SS^{+}$-category $\mathsf{C}$. Let $\mathbf{A}=\pr{\SS^{+}}^{\mathsf{C}^{\op}}$
denote the $\SS^{+}$-enriched category of $\SS^{+}$-enriched functors
$\mathsf{C}^{\op}\to\SS^{+}$, equipped with the projective model
structure. By the enriched Yoneda embedding, the functor $\mathsf{C}\to\mathbf{A}$
is fully faithful (i.e., induces isomorphisms between hom objects).
Moreover, since $\mathsf{C}$ is fibrant, it takes values in the full
subcategory $\mathbf{A}^{\circ}\subset\mathbf{A}$ of bifibrant objects.
Therefore, it will suffice to prove the assertion for $N^{\sc}\pr{\mathbf{A}^{\circ}}$.
In this case, \cite[Proposition 3.89]{ASgroth2} gives an equivalence
\[
N^{\sc}\pr{\pr{\mathbf{A}^{\circ}}^{[1]}}\xrightarrow{\simeq}\Fun^{\sc}\pr{[1],N^{\sc}\pr{\mathbf{A}^{\circ}}}.
\]
The mapping space of $\pr{\mathbf{A}^{\circ}}^{[1]}$ from $f\from A_{0}\to A_{1}$
to $g\from B_{0}\to B_{1}$ is given by the pullback 
\[\begin{tikzcd}
	{\mathbf{A}^{[1]}(f,g)} & {\mathbf{A}(A_1,B_1)} \\
	{\mathbf{A}(A_0,B_0)} & {\mathbf{A}(A_0,B_1),}
	\arrow["{\operatorname{ev}_1}", from=1-1, to=1-2]
	\arrow["{\operatorname{ev}_0}"', from=1-1, to=2-1]
	\arrow["{f^*}", from=1-2, to=2-2]
	\arrow["{g_\ast}"', from=2-1, to=2-2]
\end{tikzcd}\]which is a homotopy pullback because $f^{*}$ is a fibration. The
claim readily follows.

The proof of (2) is similar to that of (1) but is more elaborate.
The fibrancy of $\mathsf{C}^{[1]}_{\mathrm{qfib}}$ is clear from
the definition of quasi-fibrations. To show that $\phi$ is fully
faithful, let $\mathbf{A}=\Fun^{+}\pr{\mathsf{C}^{\op},\SS^{+}}$
be as above, and identify $\mathsf{C}$ with a full sub $\SS^{+}$-category
via the enriched Yoneda embedding. We will also write $\mathbf{A}^{\circ}\subset\mathbf{A}$
for the full $\SS^{+}$-category spanned by the fibrant-cofibrant
objects.

Let us say that an object $A\in\mathbf{A}^{\circ}$ is \textbf{good}
if for each quasi-fibration $X_{0}\to X_{1}$ in $\mathsf{C}$, the
induced map $\mathbf{A}\pr{A,X_{0}}\to\mathbf{A}\pr{A,X_{1}}$ is
a fibration. We consider the following categories:
\begin{itemize}
\item The full sub $\SS^{+}$-category $\mathbf{A}_{\mathrm{good}}\subset\mathbf{A}^{\circ}$
spanned by the good objects.
\item The full sub $\SS^{+}$-category $\pr{\mathbf{A}_{\mathrm{good}}}^{[1]}_{\mathrm{qfib}}\subset\pr{\mathbf{A}_{\mathrm{good}}}^{[1]}$
spanned by the quasi-fibrations in $\mathbf{A}_{\mathrm{good}}$. 
\end{itemize}
Since every object in $\mathsf{C}$ is good, and since every quasi-fibration
in $\mathsf{C}$ is a quasi-fibration in $\mathbf{A}_{\mathrm{good}}$,
we have the following commutative diagram:
\[\begin{tikzcd}
	{N^{\mathrm{sc}}(\mathsf{C}^{[1]}_{\mathrm{qfib}})} & {\operatorname{Fun}^{\mathrm{sc}}([1],N^{\mathrm{sc}}(\mathsf{C}))} \\
	{N^{\mathrm{sc}}((\mathbf{A}_{\mathrm{good}})^{[1]}_{\mathrm{qfib}})} & {\operatorname{Fun}^{\mathrm{sc}}([1],N^{\mathrm{sc}}(\mathbf{A}^\circ))}
	\arrow["\phi", from=1-1, to=1-2]
	\arrow[hook, from=1-1, to=2-1]
	\arrow[hook, from=1-2, to=2-2]
	\arrow["\psi", from=2-1, to=2-2]
\end{tikzcd}\]Our goal is to show that $\phi$ is fully faithful. Since the vertical
arrows are fully faithful, it will suffice to show that $\psi$ is
fully faithful.

Let $f\from A_{0}\to A_{1}$ and $g\from B_{0}\to B_{1}$ be arbitrary
objects in $\pr{\mathbf{A}_{\mathrm{good}}}^{[1]}_{\mathrm{qfib}}$.
We wish to show that the map
\[
N^{\sc}\pr{\pr{\mathbf{A}_{\mathrm{good}}}^{[1]}_{\mathrm{qfib}}}\pr{f,g}\to\Fun^{\sc}\pr{[1],N^{\sc}\pr{\mathbf{A}^{\circ}}}\pr{f,g}
\]
is an equivalence. For this, find cofibrant replacements $\alpha\from f'\xrightarrow{\simeq}f$
and $\beta\from g'\xrightarrow{\simeq}g$ in $\mathbf{A}^{[1]}$,
where $\mathbf{A}^{[1]}$ is equipped with the projective model structure.
(This means $f'$ and $g'$ are cofibrations of cofibrant objects.)
Consider the following $\SS^{+}$-category $\mathbf{X}$:
\begin{itemize}
\item The collection of objects of $\mathbf{X}$ is $\ob\pr{\mathbf{A}_{\mathrm{good}}}^{[1]}_{\mathrm{qfib}}\amalg\{f'\}\amalg\{g'\}$.
\item Mapping categories are given by 
\[
\mathbf{X}\pr{a,b}=\begin{cases}
\emptyset & \text{if }a\in\ob\pr{\mathbf{A}_{\mathrm{good}}}^{[1]}_{\mathrm{qfib}}\text{ and }b\in\{f'\}\amalg\{g'\},\\
\mathbf{A}^{[1]}\pr{a,b} & \text{otherwise.}
\end{cases}
\]
\end{itemize}
The maps
\[
\mathbf{X}\pr{f,g}\to\mathbf{X}\pr{f',g}\ot\mathbf{X}\pr{f',g'}
\]
are equivalences in $\SS^{+}$. Also, the images of $\alpha$ and
$\beta$ in $\Fun^{\sc}\pr{[1],N^{\sc}\pr{\mathbf{A}^{\circ}}}$ are
equivalences, too. Therefore, it suffices to show that the map
\[
N^{\sc}\pr{\mathbf{X}}\pr{f',g'}\to\Fun^{\sc}\pr{[1],N^{\sc}\pr{\mathbf{X}^{\circ}}}\pr{f',g'}
\]
is an equivalence. The full sub $\SS^{+}$-category of $\mathbf{X}$
spanned by $f'$ and $g'$ maps fully faithfully into $\pr{\mathbf{A}^{\circ}}^{[1]}$,
so we are reduced to showing that the map
\[
N^{\sc}\pr{\pr{\mathbf{A}^{\circ}}^{[1]}}\pr{f',g'}\to\Fun^{\sc}\pr{[1],N^{\sc}\pr{\mathbf{A}^{\circ}}}\pr{f',g'}
\]
is an equivalence. This follows from \cite[Proposition 3.89]{ASgroth2}.
\end{proof}

\subsection{Cartesian fibrations}

The straightening--unstraightening equivalence is a fundamental construction
in $\infty$-category theory. Briefly, it gives an alternative presentation
of $\Cat_{\infty}$-valued functors in terms of (co)cartesian fibrations,
which are often easier to handle than $\Cat_{\infty}$-valued functors
themselves. In this subsection, we revisit this equivalence in the
$\infty$-bicategorical setting.

\begin{defn}
\label{def:cart}Let $p\from\cat E\to\cat B$ be a functor of $\infty$-bicategories.
A morphism $f\from X\to Y$ of $\cat E$ is said to be \textbf{$p$-cartesian}
if for each object $E\in\cat E$, the square 
\[\begin{tikzcd}
	{\mathcal{E}(E,X)} & {\mathcal{E}(E,Y)} \\
	{\mathcal{B}(p(E),p(X))} & {\mathcal{B}(p(E),p(Y))}
	\arrow["{f_\ast}", from=1-1, to=1-2]
	\arrow[from=1-1, to=2-1]
	\arrow[from=1-2, to=2-2]
	\arrow["{p(f)_\ast}"', from=2-1, to=2-2]
\end{tikzcd}\]is cartesian in $\Cat_{\infty}$. Dually, we say that $f$ is $p$-cocartesian
if it is $p^{\op}$-cocartesian.

Assume now that $\cat B$ is an \textit{$\infty$-category}. We say
that $p$ is a \textbf{cartesian fibration} if it satisfies the following
pair of conditions:
\begin{enumerate}
\item $p$ is a fibration in the bicategorical model structure; and
\item For each object $X\in\cat E$ and each morphism $f\from p\pr X\to B$
in $\cat B$, there is a $p$-cartesian morphism $X\to Y$ lying over
$f$.
\end{enumerate}
\textbf{Cocartesian fibrations} are defined dually.
\end{defn}

\begin{rem}
In order to maximize efficiency, our treatment of cartesian fibrations
of $\infty$-bicategories is deliberately\textit{ }incomplete. For
example, we can define cartesian fibrations over an arbitrary $\infty$-bicategory,
but we have decided not to include the definitions because there are
four flavors (inner and outer (co)cartesian fibrations) instead of
two. 
\end{rem}

\begin{rem}
In the setting of scaled simplicial sets, Definition \ref{def:cart}
is not a standard definition of variations of cartesian fibrations.
Proving the equivalence between our definition and a more established
definition is somewhat technical and is deferred to Subsection \ref{subsec:more_cart}.
\end{rem}

\begin{example}
\label{exa:cc_po}Let $\cat C$ be an $\infty$-bicategory, and let
\[\begin{tikzcd}
	{X_0} & {X'_0} \\
	{X_1} & {X'_1}
	\arrow[from=1-1, to=1-2]
	\arrow["f"', from=1-1, to=2-1]
	\arrow["{f'}", from=1-2, to=2-2]
	\arrow[from=2-1, to=2-2]
\end{tikzcd}\]be a diagram in $\Und\pr{\cat C}$. Suppose that the square is cocartesian
in $\cat C$ (i.e., for each $C\in\cat C$, the functor $\cat C\pr{-,C}$
carries the square into a pullback square of $\infty$-categories).
Then the morphism $f\to f'$ in $\Ar\pr{\cat C}=\Fun^{\bi}\pr{[1],\cat C}$
is cocartesian for the projection $\Ar\pr{\cat C}\to\Fun^{\bi}\pr{\{0\},\cat C}\cong\cat C$.
This follows from Proposition \ref{prop:mappingcat_arrow}.
\end{example}

Just like cartesian fibrations of $\infty$-categories, there is a
straightening--unstraightening equivalence for $\infty$-bicategorical
cartesian fibrations. To state this, we introduce the following notation:
\begin{notation}
Let $\cat B$ be an $\infty$-category. We let $\mathsf{Cart}\pr{\cat B}$
denote the following category:
\begin{itemize}
\item Objects are cartesian fibrations $\cat E\to\cat B$ of $\infty$-bicategories.
\item Morphisms are commutative diagrams
\[\begin{tikzcd}
	{\mathcal{E}} && {\mathcal{E}'} \\
	& {\mathcal{B}}
	\arrow["f", from=1-1, to=1-3]
	\arrow[from=1-1, to=2-2]
	\arrow[from=1-3, to=2-2]
\end{tikzcd}\]of scaled simplicial sets, where $f$ preserves cartesian edges.
\end{itemize}
We let $\Cart\pr{\cat B}$ denote the $\infty$-categorical localization
of $\mathsf{Cart}\pr{\cat B}$ at the maps whose underlying maps are
bicategorical equivalences. 
\end{notation}

We then have the following $\infty$-bicategorical straightening--unstraightening
equivalence:
\begin{thm}
\cite[Theorem 3.85]{ASgroth2}\label{thm:st} For every $\infty$-category
$\cat B$, there is an equivalence of $\infty$-categories
\[
\Fun\pr{\cat B^{\op},\BiCat_{\infty}}\simeq\Cart\pr{\cat B}.
\]
\end{thm}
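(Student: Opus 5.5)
The plan is to deduce the statement from the model-categorical $\infty$-bicategorical Grothendieck construction of Abell\'an--Stern, rather than build a straightening functor from scratch. Their theory works with \emph{outer (2-)cartesian fibrations} of scaled simplicial sets over a base, encoded as marked-biscaled simplicial sets over $\cat B$. For a scaled simplicial set $\cat B$ it gives a model structure on $\mathsf{sSet}^{\mathrm{mb}}_{/\cat B}$ together with a Quillen equivalence
\[
\mathsf{sSet}^{\mathrm{mb}}_{/\cat B}\adj\Fun\pr{\mathfrak{C}^{\sc}[\cat B]^{\op},\mathsf{sSet}^{\sc}}_{\mathrm{proj}}.
\]
Since $\cat B$ is an $\infty$-category, we regard it as the scaled simplicial set $\pr{\cat B,\cat B_{2}}$ with all triangles thin, as in Remark \ref{rem:weakbicat}. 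The proof then proceeds in three steps.

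\emph{Step 1: identify the fibrations.} We show that a functor $p\from\cat E\to\cat B$ is a cartesian fibration in the sense of Definition \ref{def:cart} if and only if, after suitable markings, it is a fibrant object of the Abell\'an--Stern model structure over $\cat B$. Concretely, we mark the $p$-cartesian edges, declare thin the triangles of $\cat E$, and declare lean the triangles lying over thin triangles of $\cat B$, which here means all of them.

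\begin{itemize}
\item On the lifting side, the relevant conditions are that $p$ has the right lifting property against scaled anodyne maps, which is our fibrancy condition (Remark \ref{rem:bicatfib}), together with the existence of cartesian lifts of edges.
\item The genuine content is that our mapping-category definition of $p$-cartesian edges agrees with the lifting-property definition of Abell\'an--Stern. For this we express $\cat E\pr{E,X}\to\cat B\pr{pE,pX}$ through fibers of $\Fun^{\sc}\pr{[1],\cat E}\to\Fun^{\sc}\pr{\{1\},\cat E}$, using Proposition \ref{prop:mappingcat_arrow}(1).
\item We then check that the pullback square in Definition \ref{def:cart} is equivalent to unique fillability of the outer horns $\Lambda^{n}_{n}\to\Delta^{n}$ whose last edge is the given one.
\item Fullness of $\cat B$ in thin triangles guarantees that no extra ``inner cartesian'' conditions on 2-morphisms appear. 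This is exactly where the hypothesis that $\cat B$ is an $\infty$-category is used.
\end{itemize}

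This is the comparison deferred to Subsection \ref{subsec:more_cart}.

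\emph{Step 2: match morphisms and weak equivalences.} Maps in $\mathsf{Cart}\pr{\cat B}$, i.e.\ maps preserving cartesian edges, are exactly the maps between fibrant objects of the Abell\'an--Stern model structure. Weak equivalences between fibrant objects are detected fiberwise. A map of cartesian fibrations over an $\infty$-category is a fiberwise bicategorical equivalence if and only if it is a bicategorical equivalence of total spaces. We prove this by comparing mapping categories: the pullback squares of Definition \ref{def:cart} reduce both full faithfulness and essential surjectivity to the fibers. Consequently $\Cart\pr{\cat B}$ is the underlying $\infty$-category of the model structure, since localizing the fibrant objects at weak equivalences computes it.

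\emph{Step 3: identify the other side.} The Quillen equivalence induces an equivalence of underlying $\infty$-categories. It remains to identify the underlying $\infty$-category of $\Fun\pr{\mathfrak{C}^{\sc}[\cat B]^{\op},\mathsf{sSet}^{\sc}}_{\mathrm{proj}}$ with $\Fun\pr{\cat B^{\op},\BiCat_{\infty}}$. This is the rectification theorem for enriched diagrams: homotopy-coherent diagrams are modeled by strict enriched diagrams, in the form of \cite[Proposition 4.2.4.4]{HTT}, applied to the combinatorial simplicial model category $\mathsf{sSet}^{\sc}$. Because $\cat B$ is an $\infty$-category, $\mathfrak{C}^{\sc}[\cat B]$ is equivalent to $\mathfrak{C}[\cat B]$ with its mapping spaces regarded as marked by equivalences.

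The main obstacle is Step 1, the comparison of the two notions of cartesian edge. I would attack it first. The strategy is to translate the condition ``$p$-cartesian'' into the statement that a certain restriction map of fat slices is a trivial fibration of $\infty$-bicategories. The horn-filling characterization then follows from Remark \ref{rem:bicat_cartesian} by a standard pushout-product argument.
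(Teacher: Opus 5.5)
Your overall reduction matches the paper's. The statement is imported from Abell\'an--Stern, and the real work is identifying Definition \ref{def:cart} with their $\mathbf{O2C}$-fibrations (Proposition \ref{prop:fakeO2C}). The agreement of the two notions of cartesian edge is quoted from Gagna--Harpaz--Lanari (Remark \ref{rem:fakecart}), rather than reproved via fat slices as you propose.

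However, Step 1 as you formulate it is false, because you mark the wrong triangles as lean. In a fibrant object of the Abell\'an--Stern model structure, the lean triangles are not dictated by the base. They are the $p$-cocartesian triangles, i.e.\ those whose left degenerations are cocartesian edges of $\Hom^{R}_{\cat E}\pr{X,Y}\to\Hom^{R}_{\cat B}\pr{pX,pY}$. Three facts pin these down:
\begin{enumerate}
\item Since $\cat B$ is an $\infty$-category, every edge of the target is an equivalence.
\item A cocartesian edge over an equivalence is an equivalence.
\item The equivalences of $\Hom^{R}_{\cat E}\pr{X,Y}$ are exactly the thin triangles (Remark \ref{rem:Hom_innfib}).
\end{enumerate}
Combined with three-out-of-four for thin triangles, this shows that over $\cat B$ the lean triangles of a fibrant object are precisely the thin ones; this is the last assertion of Proposition \ref{prop:O2C-1}. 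So declaring every triangle of $\cat E$ lean yields a non-fibrant object as soon as $\cat E$ has a non-thin triangle, and the ``iff'' of Step 1 fails. The correct marking is lean $=$ thin. That is also what your own remark that no extra conditions on $2$-morphisms appear actually requires.

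Even after this correction, your check is incomplete in both directions. Starting from Definition \ref{def:cart}, you must verify (O2C-1)--(O2C-3) of Definition \ref{def:O2C}, not just scaled-anodyne lifting and cartesian lifts:
\begin{itemize}
\item (O2C-1) holds because each local map has a Kan complex as target, and equivalences lift with given source by weak $\mathbf{S}$-fibrancy.
\item (O2C-2) follows from three-out-of-four.
\item (O2C-3) holds because, once lean $=$ thin, strongly cartesian means cartesian, and degenerate edges are cartesian.
\end{itemize}
Conversely, an $\mathbf{O2C}$-fibration is a priori only a weak $\mathbf{S}$-fibration. Definition \ref{def:cart} demands a bicategorical fibration, i.e.\ also lifting against $\{\varepsilon\}\to J$ (Remark \ref{rem:bicatfib}). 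The paper gets this from $\Und\pr p$ being a cartesian fibration of $\infty$-categories together with Lemma \ref{lem:bicat_fib}.

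Finally, two cautions if you reprove Remark \ref{rem:fakecart} yourself. The Abell\'an--Stern condition is \emph{existence} of fillers for $\Lambda^{n}_{n}\subset\Delta^{n}$ with $\Delta^{\{0,n-1,n\}}$ thin, not unique fillability. Also, these horns are not scaled anodyne, so your pushout-product argument must mediate between fat slices (built from products with $\Delta^{1}$) and joins. That is exactly the input the paper outsources. Steps 2 and 3 are bookkeeping the paper leaves inside the citation, and they are fine in outline.
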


\begin{rem}
As in \cite[Appendix A]{GHN17}, the equivalence of Theorem \ref{thm:st}
is natural in $\cat B\in\Cat_{\infty}$.
\end{rem}

\begin{rem}
\cite[Theorem 4.21]{ASgroth2}\label{rem:strict_st} The equivalence
of Theorem \ref{thm:st} is a refinement of the classical $2$-categorical
Grothendieck construction. More precisely, let $2\mathsf{Cat}$ denote
the category of $2$-categories (i.e., $\mathsf{Cat}$-enriched categories)
and $2$-functors. Given a category $\mathbf{B}$ and a functor $F\from\mathbf{B}^{\op}\to2\mathsf{Cat}$,
we can form its Grothendieck construction $\pi\from\int F\to\mathbf{B}$.
Explicitly:
\begin{itemize}
\item Objects of $\int F$ are pairs $\pr{B,X}$.
\item The mapping categories are given by
\[
\pr{\int F}\pr{\pr{B_{0},X_{0}},\pr{B_{1},X_{1}}}=\coprod_{f\in\mathbf{B}\pr{B_{0},B_{1}}}\pr{FB_{1}}\pr{Ff\pr{X_{0}},X_{1}}.
\]
\end{itemize}
The Duskin nerve of $\pi$ is a cartesian fibration, and it corresponds
to the composite
\[
\mathbf{B}^{\op}\to2\mathsf{Cat}\xrightarrow{N^{D}}\BiCat_{\infty}.
\]
\end{rem}

\begin{variant}
\label{var:bicategoricalgroth}Let $\mathbf{B}$ be a category and
$F\from\mathbf{B}^{\op}\to\mathsf{BiCat}$ be a functor, where $\mathsf{BiCat}$
denotes the category of bicategories and pseudofunctors between them.
We can form the Grothendieck construction $\int F$ as in Remark \ref{rem:strict_st},
which is a bicategory. The Duskin nerve of the projection $\pi\from\int F\to\mathbf{B}$
is a cartesian fibration, and it corresponds to the composite
\[
\mathbf{B}^{\op}\to\mathsf{BiCat}\xrightarrow{N^{D}}\BiCat_{\infty}.
\]
To see this, note that the strictification functor $\mathrm{st}\from\mathsf{BiCat}\to2\mathsf{Cat}$
of \cite[$\S$ 4.10]{GPS95} gives an equivalence of bicategories
\[
\int F\xrightarrow{\simeq}\int\mathrm{st}\circ F.
\]
The claim then follows from this equivalence and Remark \ref{rem:strict_st}
Remark \ref{rem:bicatfib}.
\end{variant}

\subsection{\label{subsec:more_cart}More on cartesian fibrations}

In this subsection, we will prove that cartesian fibrations in the
sense of Definition \ref{def:cart} are nothing but $\mathbf{O2C}$-fibrations
(or outer $2$-cartesian fibrations) in the sense of \cite[Definition 4.22]{AS22_I}
(Corollary \ref{prop:O2C-1}).
\begin{convention}
For disambiguation, we make the following convention throughout this
subsection: We will refer to cartesian morphisms and cartesian fibrations
in the sense of Definition \ref{def:cart} as \textbf{fake cartesian
morphisms} and fake cartesian fibrations.
\end{convention}

With this convention, the main result of this subsection can be stated
as follows:
\begin{prop}
\label{prop:fakeO2C}Let $p\from\cat E\to\cat B$ be a map of $\infty$-bicategories,
where $\cat B$ is an $\infty$-category. Then $p$ is a fake cartesian
fibration if and only if it is an $\mathbf{O2C}$-fibration.
\end{prop}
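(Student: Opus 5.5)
The plan is to compare the two notions one ingredient at a time: first the fibration conditions, then the cartesian morphisms. Recall that an $\mathbf{O2C}$-fibration in the sense of \cite[Definition 4.22]{AS22_I} is an outer (co)fibration of scaled simplicial sets, i.e.\ it has the right lifting property against a class of scaled anodyne extensions together with certain outer horns, such that every morphism of $\cat B$ with a specified target admits a lift which is cartesian in their combinatorial sense. So I would prove two auxiliary statements:
\begin{enumerate}
\item When $\cat B$ is an $\infty$-category and $\cat E$ is an $\infty$-bicategory, $p$ is a bicategorical fibration if and only if it is an outer fibration in the sense of loc.\ cit.
\item For such a $p$, a morphism of $\cat E$ is fake $p$-cartesian if and only if it is $p$-cartesian in the sense of \cite{AS22_I}.
\end{enumerate}
Together these give the proposition immediately, since both notions of fibration then ask for the same fibrancy condition and the same supply of cartesian lifts.

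For (1) I would use the lifting characterization of Remark~\ref{rem:bicatfib}: a bicategorical fibration is a map with the right lifting property against the scaled anodyne extensions and against $\{\varepsilon\}\to J$. Since $\cat B$ is an $\infty$-category, every triangle of $\cat B$ is thin. Hence the lifting problems defining outer fibrations which involve thin-triangle conditions downstairs reduce to the generating scaled anodyne maps (a)--(c). The remaining outer horns $\Lambda^n_0\amalg_{\Delta^{\{0,1\}}}\Delta^0$ are, up to the $J$-lifting condition, the lifting properties against equivalences. Concretely, I would check each generator of one class against the other, using the fact that both classes are stable under the cartesian closed structure of Remark~\ref{rem:bicat_cartesian}. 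I would also use that the fibrant objects, $\infty$-bicategories, are weak $\infty$-bicategories by Remark~\ref{rem:weakbicat}.

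For (2) I would follow the strategy of the mapping-category criterion for cartesian edges, much as in the proof of Proposition~\ref{prop:mappingcat_arrow}. First, rectify: replace $p$ by the scaled nerve of a fibrant $\SS^{+}$-enriched functor $\mathsf{E}\to\mathsf{B}$, via the Quillen equivalence of Recollection~\ref{recall:Bergner}, so that the mapping categories of Definition~\ref{def:map_cat} become actual hom objects. Second, translate the combinatorial cartesianness of an edge $f\colon X\to Y$ of \cite{AS22_I} into a lifting property against the horns $\Lambda^n_n\to\Delta^n$ with last edge $f$, and recognize these lifting problems as precisely the statement that the map
\[
\cat E(E,X)\to\cat E(E,Y)\times_{\cat B(pE,pY)}\cat B(pE,pX)
\]
is a trivial fibration of marked simplicial sets, i.e.\ that the square of Definition~\ref{def:cart} is a homotopy pullback. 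The fibration hypothesis from (1) ensures that this strict pullback is already a homotopy pullback, which I expect to make the comparison straightforward. I would first try to cite the corresponding characterization in \cite{AS22_I} (or \cite{ASgroth2}) directly, if it is stated there in this form.

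The main obstacle is this last translation. Outer horn fillers in scaled simplicial sets encode both the $1$-dimensional and the $2$-dimensional universal properties, with thinness bookkeeping, while the fake definition only sees mapping $\infty$-categories, which requires a careful comparison of the relevant mapping-space models. To handle it I would reduce to the case where $\cat B$ is the nerve of $[1]$ by pulling back along $p(f)$, and then compare mapping $\infty$-categories over a two-object base, where both conditions become statements about a single functor of $\infty$-categories.
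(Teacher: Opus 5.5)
Your step (2) is exactly the input the paper uses: Remark~\ref{rem:fakecart}, which the paper quotes from \cite[Lemma 2.4.2 and Proposition 4.2.7]{GHL24_cart} rather than proving it. Step (1), however, rests on a misremembered definition, and that is where the content of the proposition lies.

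By Definition~\ref{def:O2C}, an $\mathbf{O2C}$-fibration is a weak $\mathbf{S}$-fibration satisfying (O2C-1)--(O2C-4). Conditions (O2C-1)--(O2C-3) are not lifting properties against a fixed list of horns. They are statements about $p$-cocartesian triangles, which are defined through cocartesian edges of the maps $p_{X,Y}\from\Hom^{R}_{\cat E}\pr{X,Y}\to\Hom^{R}_{\cat B}\pr{p\pr X,p\pr Y}$, and about \emph{strongly} $p$-cartesian edges. So there is no class of ``outer fibrations'' you can compare generator by generator with the scaled anodyne maps and $\{\varepsilon\}\to J$. Even granting (1) and (2) in some form, you would still owe two things. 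First, that a bicategorical fibration over $\cat B$ with enough cartesian lifts is locally fibered, functorially fibered, and has strongly cartesian degenerate edges. Second, conversely, that an $\mathbf{O2C}$-fibration lifts equivalences.

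The idea you are missing is this: when $\cat B$ is an $\infty$-category, a triangle of $\cat E$ is $p$-cocartesian if and only if it is thin. This follows by unwinding the definition and using the three-out-of-four property of thin triangles. Given it:
\begin{itemize}
\item (O2C-2) is immediate.
\item Strongly cartesian and cartesian edges coincide, so (O2C-3) follows from Remark~\ref{rem:fakecart}, since identities are trivially fake cartesian.
\item (O2C-1) reduces to lifting morphisms to equivalences with prescribed source, because $p_{X,Y}$ is an inner fibration and $\Hom^{R}_{\cat B}\pr{p\pr X,p\pr Y}$ is a Kan complex. The weak $\mathbf{S}$-fibration property supplies these lifts.
\end{itemize}
For the converse, Lemma~\ref{lem:bicat_fib} says a weak $\mathbf{S}$-fibration of $\infty$-bicategories is a bicategorical fibration as soon as $\Und\pr p$ lifts equivalences. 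This holds because $\Und\pr p$ is a cartesian fibration of $\infty$-categories. All of this is Proposition~\ref{prop:O2C-1}.

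Finally, your fallback plan of proving (2) by base change along $p\pr f\from[1]\to\cat B$ would not work. Both notions of cartesianness quantify over all objects $E\in\cat E$. After pulling back to $[1]$ you only detect \emph{locally} cartesian edges, which is strictly weaker already for $\infty$-categories.
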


The rest of this subsection is devoted to the proof of Proposition
\ref{prop:fakeO2C}. 

We start by recalling the definition of $\mathbf{O2C}$-fibrations.
\begin{defn}
\cite[Definitions 2.3 and 4.7]{AS22_I} Let $X$ be a scaled simplicial
set, and let $\sigma$ be a $2$-simplex of $X$, depicted as
\[\begin{tikzcd}
	& Y \\
	X && Z.
	\arrow["g", from=1-2, to=2-3]
	\arrow["f", from=2-1, to=1-2]
	\arrow[""{name=0, anchor=center, inner sep=0}, "h"', from=2-1, to=2-3]
	\arrow["\sigma", between={0}{0.8}, Rightarrow, from=1-2, to=0]
\end{tikzcd}\]
\begin{itemize}
\item We say that $\sigma$ is \textbf{left degenerate} if $\sigma\vert\Delta^{\{0,1\}}$
is degenerate. 
\item A \textbf{left degeneration} of $\sigma$ is a $2$-simplex $\tau$
which admits a $3$-simplex $\rho\from\Delta^{3}\to X$ depicted as
\[\begin{tikzcd}
	& X && Y &&& X && Y \\
	\\
	X &&&& Z & X &&&& {Z,}
	\arrow["f", from=1-2, to=1-4]
	\arrow[""{name=0, anchor=center, inner sep=0}, from=1-2, to=3-5]
	\arrow["g", from=1-4, to=3-5]
	\arrow["f", from=1-7, to=1-9]
	\arrow["g", from=1-9, to=3-10]
	\arrow["{1_X}", from=3-1, to=1-2]
	\arrow[""{name=1, anchor=center, inner sep=0}, "h"', from=3-1, to=3-5]
	\arrow["{1_X}", from=3-6, to=1-7]
	\arrow[""{name=2, anchor=center, inner sep=0}, "f"', from=3-6, to=1-9]
	\arrow[""{name=3, anchor=center, inner sep=0}, "h"', from=3-6, to=3-10]
	\arrow["\tau"', between={0}{0.8}, Rightarrow, from=1-2, to=1]
	\arrow["\alpha", between={0}{0.8}, Rightarrow, from=1-4, to=0]
	\arrow["{1_f}", between={0}{0.8}, Rightarrow, from=1-7, to=2]
	\arrow["\sigma", between={0}{0.8}, Rightarrow, from=1-9, to=3]
\end{tikzcd}\]where $\alpha$ is thin.
\end{itemize}
\end{defn}

\begin{defn}
\cite[Definition 2.14]{AS22_I} A map of scaled simplicial sets is
called a \textbf{weak $\mathbf{S}$-fibration} if it has the right
lifting property for the scaled anodyne maps.
\end{defn}

\begin{defn}
\cite[Definitions 4.14]{AS22_I} Let $p\from\mathcal{E}\to\mathcal{B}$
be a weak $\mathbf{S}$-fibration, where $\mathcal{B}$ is an $\infty$-bicategory.
We say that an edge $e\from\Delta^{1}\to\mathcal{E}$ is \textbf{$p$-cartesian}
(resp. \textbf{strongly $p$-cartesian}) if it satisfies the following
conditions:
\begin{itemize}
\item Let $n\geq2$, and consider a lifting problem 
\[\begin{tikzcd}
	{\Delta^{\{n-1,n\}}} & {\Lambda^n_n} & {\mathcal{E}} \\
	& {\Delta^n} & {\mathcal{B}}
	\arrow[from=1-1, to=1-2]
	\arrow["e", curve={height=-18pt}, from=1-1, to=1-3]
	\arrow["f", from=1-2, to=1-3]
	\arrow[from=1-2, to=2-2]
	\arrow["p", from=1-3, to=2-3]
	\arrow["{\widehat{f}}"{description}, dashed, from=2-2, to=1-3]
	\arrow[from=2-2, to=2-3]
\end{tikzcd}\]Suppose further that $f\vert\Delta^{\{0,n-1,n\}}$ is thin (resp.
$p$-cocartesian) when $n\geq3$. Then there is a dashed filler $\hat f$
such that $\hat f\vert\Delta^{\{0,n-1,n\}}$ is thin (resp. $p$-cocartesian).
\end{itemize}
\end{defn}

\begin{rem}
\label{rem:fakecart}Let $p\from\cat E\to\cat B$ be a bicateogrical
fibration of $\infty$-bicategories. Then a morphism of $\cat E$
is $p$-cartesian if and only if it is fake $p$-cartesian. This follows
from \cite[Lemma 2.4.2 and Proposition 4.2.7]{GHL24_cart}. 
\end{rem}

\begin{defn}
Let $\cat C$ be an $\infty$-bicategory, and let $X,Y\in\cat C$
be objects of $\cat C$. We let $\Hom^{R}_{\cat C}\pr{X,Y}$ denote
the simplicial set whose $n$-simplices are the maps $\phi\from\Delta^{n}\star\Delta^{0}\to\cat C$
such that $\phi\vert\Delta^{n}$ and $\phi\vert\Delta^{0}$ are the
constant maps at $X$ and $Y$. (This is the underlying simplicial
set of $\Hom^{\rcone}_{\cat C}\pr{X,Y}$ defined in \cite[$\S$ 2.3]{GHL22}.)
\end{defn}

\begin{rem}
\cite[Corollary 2.26]{GHL22}\label{rem:Hom_innfib} Let $p\from\mathcal{C}\to\mathcal{D}$
be a weak $\mathbf{S}$-fibration of $\infty$-bicategories. For every
pair of objects $X,Y\in\cat C$, the induced map
\[
\Hom^{R}_{\cat C}\pr{X,Y}\to\Hom^{R}_{\cat D}\pr{p\pr X,p\pr Y}
\]
is an inner fibration of $\infty$-categories. In particular, $\Hom^{R}_{\cat C}\pr{X,Y}$
is an $\infty$-category. Moreover, its equivalences are precisely
the thin triangles.
\end{rem}

\begin{defn}
\cite[Definition 4.1, Proposition 4.4]{AS22_I} Let $p\from\mathcal{E}\to\mathcal{B}$
be a weak $\mathbf{S}$-fibration, where $\mathcal{B}$ is an $\infty$-bicategory.
We say that a left degenerate $2$-simplex $\sigma$ of $X$ is \textbf{$p$-cocartesian}
if it is cocartesian for the map
\[
\Hom^{R}_{\cat C}\pr{X,Y}\to\Hom^{R}_{\cat D}\pr{p\pr X,p\pr Y}.
\]
A $2$-simplex of $X$ is said to be \textbf{$p$-cocartesian} if
its left degenerations are $p$-cocartesian. We denote the collection
of cocartesian triangles by $C_{X}$. 
\end{defn}

\begin{defn}
\cite[Definitions 4.24]{AS22_I}\label{def:O2C} Let $p\from\cat E\to\mathcal{B}$
be a weak $\mathbf{S}$-fibration, where $\cat B$ is an $\infty$-bicategory.
We say that $p$ is an\textbf{ outer $2$-cartesian fibration}, or
an \textbf{$\mathbf{O2C}$-fibration}, if it satisfies the following
conditions:

\begin{enumerate}[label=(O2C-\arabic*)]

\item The map $p$ is \textbf{locally fibered} in the following sense:
For each pair of objects $X,Y\in\cat E$, the functor
\[
\Hom^{R}_{\cat E}\pr{X,Y}\to\Hom^{R}_{\cat B}\pr{p\pr X,p\pr Y}
\]
is a cocartesian fibration of $\infty$-categories.

\item The map $p$ is \textbf{functorially fibered} in the following
sense: Let $0<i<3$, and let $\rho\from\Delta^{3}\to\cat E$ be a
simplex such that $\rho\vert\Delta^{\{i-1,i,i+1\}}$ is thin. If $\rho\vert\Lambda^{3}_{i}$
carries all triangles to $p$-cocartesian triangles, then $\rho\vert\Delta^{[3]\setminus\{i\}}$
is $p$-cocartesian. (Somewhat informally, this says cocartesian $2$-cells
are stable under pasting.)

\item Every degenerate edge of $\cat E$ is strongly $p$-cartesian.

\item The map $p$ has \textbf{enough cartesian morphisms} in the
following sense: Every morphism admits a $p$-cartesian lift with
a given target.

\end{enumerate}
\end{defn}

Having recalled the definition of $\mathbf{O2C}$-fibrations, we get
down to the proof of Proposition \ref{prop:fakeO2C}. We need a few
preliminary results.
\begin{lem}
\label{lem:bicat_fib}Let $p\from\mathcal{C}\to\mathcal{D}$ be a
weak $\mathbf{S}$-fibration of $\infty$-bicategories. The following
conditions are equivalent.
\begin{enumerate}
\item The map $p$ is a bicategorical fibration.
\item The map $p$ induces a categorical fibration of underlying $\infty$-categories.
Equivalently (by \cite[Corollary 2.4.6.5]{HTT} and \cite[Remark 3.1.5]{GC}),
for every object $C\in\cat C$ and every morphism $f\from p\pr C\xrightarrow{\simeq}D'$
in $\mathcal{D}$, there is an equivalence $\widetilde{f}\from C\xrightarrow{\simeq}C'$
of $\cat C$ lying over $f$.
\end{enumerate}
\end{lem}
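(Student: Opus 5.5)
The implication (1)$\Rightarrow$(2) is formal. Apply the right lifting property of $p$ against the generating trivial cofibration $\pr{\{0\},\{0\}}\to\pr{J,J_{2}}$ of Remark \ref{rem:bicatfib}(2). Given $C\in\cat C$ and an equivalence $f\from p\pr C\xrightarrow{\simeq}D'$ in $\Und\pr{\cat D}$, choose an extension of $f$ to a map $J\to\Und\pr{\cat D}\subset\cat D$. This is possible because $\Und\pr{\cat D}$ is an $\infty$-category, so every equivalence extends along $\Delta^{1}\hookrightarrow J$; and since $J$ has all triangles thin, the extension lands in the thin part. The lift $J\to\cat C$ restricts to the required equivalence $\widetilde{f}\from C\xrightarrow{\simeq}C'$. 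By the cited \cite[Corollary 2.4.6.5]{HTT}, this lifting property together with the inner fibration property of $\Und\pr{\cat C}\to\Und\pr{\cat D}$ (which is inherited from the scaled inner horns with all triangles thin) says exactly that $\Und\pr p$ is a categorical fibration.

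For (2)$\Rightarrow$(1) I use the characterization in Remark \ref{rem:bicatfib}: a map of $\infty$-bicategories is a bicategorical fibration iff it lifts against the scaled anodyne maps and against $\pr{\{\varepsilon\},\{\varepsilon\}}\to\pr{J,J_{2}}$ for $\varepsilon\in\{0,1\}$. The first class is handled by hypothesis, since $p$ is a weak $\mathbf{S}$-fibration. For the second, a square with $J\to\cat D$ has all triangles thin, so it factors through $\Und\pr{\cat D}$. Since a thin simplex in $\cat C$ lying over... (the lift we construct will live in $\Und\pr{\cat C}$), the lifting problem reduces to lifting $\{\varepsilon\}\to J$ against $\Und\pr p$. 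That map is a trivial cofibration in the Joyal model structure, and $\Und\pr p$ is a categorical fibration by (2), so a lift exists. The lift $J\to\Und\pr{\cat C}\subset\cat C$ is a map of scaled simplicial sets, because every triangle of $\Und\pr{\cat C}$ is thin. The case $\varepsilon=1$ follows from the case $\varepsilon=0$ by the symmetry of $J$; alternatively, the "equivalently" clause of (2) is already symmetric via inverses.

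Finally, I need to justify the equivalence of the two formulations inside (2). The point to check is that $\Und\pr p$ is an inner fibration. This follows by lifting the scaled inner horns of type (1a) with all triangles thin. The one subtle point is that a lift produced by the weak $\mathbf{S}$-fibration property might have a non-thin triangle. However, the horn $\pr{\Lambda^{n}_{i},\text{all thin}}\to\pr{\Delta^{n},\text{all thin}}$ is itself scaled anodyne, being generated by (1a) together with the thinness-propagation maps, so we may lift it directly and land in $\Und\pr{\cat C}$. I expect this bookkeeping about thin triangles to be the main point needing care; everything else is a direct appeal to \cite[Corollary 2.4.6.5]{HTT} and Remark \ref{rem:bicatfib}.
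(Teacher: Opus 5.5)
Your argument is correct and is essentially the paper's: both directions go through the lifting characterization in Remark \ref{rem:bicatfib}. For (2)$\Rightarrow$(1) you lift $\{\varepsilon\}\hookrightarrow J$, a Joyal trivial cofibration, against the categorical fibration $\Und(p)$; for (1)$\Rightarrow$(2) you extend equivalences along $\Delta^{1}\hookrightarrow J$, which the paper justifies by passing to the core and noting that $\Delta^{1}\hookrightarrow J$ is anodyne. Your extra check that $(\Lambda^{n}_{i},\text{all thin})\to(\Delta^{n},\text{all thin})$ is scaled anodyne, so that $\Und(p)$ is an inner fibration, is also valid. For $n=3$ it follows from a pushout of the (1a) generator followed by a pushout of the $\Delta^{4}$ generator along a degenerate map $\Delta^{4}\to\Delta^{3}$; this simply makes explicit what the paper delegates to the cited references.
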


\begin{proof}
The implication (2)$\implies$(1) follows from Remark \ref{rem:bicatfib},
because the inclusion $\{\varepsilon\}\subset J$ is a trivial cofibration
in the Joyal model structure for $\varepsilon\in\{0,1\}$.

For the converse, suppose condition (1) is satisfied. We must show
that condition (2) is satisfied. By Remark \ref{rem:bicatfib}, it
will suffice to prove the following: If $f\from\Delta^{1}\to\cat D$
is an equivalence in an $\infty$-category, then $f$ extends to a
map $J\to\cat D$. To see this, we note that our assumption ensures
that $f$ factors through the maximal sub Kan complex $\cat D^{\simeq}$.
So we may assume that $\cat D$ is a Kan complex. In this case, the
claim is trivial because $\Delta^{1}\hookrightarrow J$ is an anodyne
extension of simplicial sets (as it is a monomorphism and both $\Delta^{1}$
and $J$ are weakly contractible). Hence (1)$\implies$(2), as claimed.
\end{proof}

\begin{prop}
\label{prop:O2C}Let $\mathcal{B}$ be an $\infty$-bicategory, and
let $p\from\mathcal{E}\to\mathcal{B}$ be a weak $\mathbf{S}$-fibration.
If $p$ is an $\mathbf{O2C}$-fibration, then it is a bicategorical
fibration.
\end{prop}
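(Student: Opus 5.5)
By Lemma \ref{lem:bicat_fib}, it suffices to show that $p$ induces a categorical fibration on underlying $\infty$-categories. Since $p$ is a weak $\mathbf{S}$-fibration, $\Und\pr p$ is already an inner fibration. So the only thing to check is the lifting of equivalences: given $C\in\cat E$ and an equivalence $f\from D'\xrightarrow{\simeq}p\pr C$ in $\cat B$, we must find an equivalence of $\cat E$ with target $C$ lying over $f$. By the symmetric characterization in \cite[Corollary 2.4.6.5]{HTT}, lifting with prescribed target is as good as lifting with prescribed source.

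The plan is to use (O2C-4) to choose a $p$-cartesian lift $\tilde f\from C'\to C$ of $f$, and then to show that $\tilde f$ is an equivalence in $\Und\pr{\cat E}$. To do this, let $g\from p\pr C\to D'$ be an inverse of $f$, and let $\beta\from fg\To 1_{p\pr C}$ be a thin $2$-simplex witnessing this. We then lift, in two steps.

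\begin{enumerate}
\item First lift the identity $1_C$ together with the $2$-simplex $\beta$. Cartesianness of $\tilde f$, in the form of the $\Lambda^{2}_{2}$-filling property, produces a morphism $\tilde g\from C\to C'$ over $g$ and a $2$-simplex $\tilde\beta$ over $\beta$ with boundary $\pr{\tilde g,\tilde f,1_C}$, so that $\tilde f\tilde g\simeq1_C$.
\item Next use the thin-triangle version of the definition of $p$-cartesian edges, for $n\geq3$, to ensure that $\tilde\beta$ is thin. The point is that thin triangles over thin triangles can be arranged, because $\beta$ is thin and the filler condition preserves thinness of $\Delta^{\{0,n-1,n\}}$.
\item Finally, build a left inverse. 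Compare $\tilde g\tilde f$ with $1_{C'}$: both are endomorphisms of $C'$ whose composite with $\tilde f$ is homotopic to $\tilde f$, by a thin triangle over $\alpha\from gf\simeq1_{D'}$. Cartesianness of $\tilde f$ says that maps into $C'$ are determined, up to thin $2$-cells, by their composites with $\tilde f$ and their images in $\cat B$. Concretely, solve a $\Lambda^{3}_{3}$ lifting problem whose last edge is $\tilde f$ to obtain a thin triangle exhibiting $\tilde g\tilde f\simeq1_{C'}$.
\end{enumerate}

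Hence $\tilde f$ is an equivalence in $\Und\pr{\cat E}$ lying over $f$.

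I expect the main obstacle to be the bookkeeping of thinness in the last step. The definition of $p$-cartesian guarantees thin fillers only when the face $\Delta^{\{0,n-1,n\}}$ is thin, so the $3$-simplex has to be arranged so that this face is the degenerate or thin witness for $\tilde f\tilde g\simeq1_C$. The remaining faces then have to be recognized as thin, using that $\cat E$ is an $\infty$-bicategory: thin triangles are closed under the composition axioms of Remark \ref{rem:bicatfib}(1). If this direct approach proves awkward, the alternative is to pass to the mapping categories $\Hom^{R}_{\cat E}$. There one would use Remark \ref{rem:Hom_innfib}, which identifies equivalences with thin triangles, together with the description of fake-cartesian edges from Remark \ref{rem:fakecart}. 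That description says $\tilde f_{\ast}$ induces a pullback of mapping $\infty$-categories, from which invertibility of $\tilde f$ follows formally, by a Yoneda argument on $\Und$, from invertibility of $f$.
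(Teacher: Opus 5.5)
Your argument is essentially the paper's: the paper makes the same reduction via Lemma \ref{lem:bicat_fib} and then simply notes that $\operatorname{Und}(p)$ is a cartesian fibration of $\infty$-categories by (O2C-4), hence a categorical fibration, and your steps 1--3 verify by hand the standard fact behind this, namely that a cartesian lift of an equivalence is an equivalence. Two small corrections: in step 3 the base $3$-simplex in $\mathcal{B}$ need not exist for an arbitrarily chosen $\alpha$, so obtain it (and thereby $\alpha$) by filling the horn $\Lambda^{3}_{3}\to\operatorname{Und}(\mathcal{B})$, whose last edge $f$ is an equivalence \cite[Proposition 1.2.4.3]{HTT}; also note that thinness of $\tilde\beta$ already comes from the $n=2$ case, and drop your fallback through Remark \ref{rem:fakecart}, since that remark assumes $p$ is already a bicategorical fibration and would make the argument circular.
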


\begin{proof}
By Lemma \ref{lem:bicat_fib}, it will suffice to show that $\Und\pr p$
is a categorical fibration. This is clear, because $\Und\pr p$ is
a cartesian fibration of $\infty$-categories.
\end{proof}

\begin{prop}
\label{prop:O2C-1}Let $\cat B$ be an $\infty$-category (with all
triangles scaled). A weak $\mathbf{S}$-fibration $p\from\mathcal{E}\to\mathcal{B}$
is an $\mathbf{O2C}$-fibration if and only if it satisfies the following
pair of conditions:
\begin{enumerate}
\item The map $p$ is a bicategorical fibration.
\item Every morphism of $S$ admits a $p$-cartesian lift with a given target.
\end{enumerate}
Moreover, a triangle of $\mathcal{E}$ is $p$-cocartesian if and
only if it is thin. 
\end{prop}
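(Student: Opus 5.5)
We first unwind what the conditions say when $\cat B$ is an $\infty$-category, so that every triangle of $\cat B$ is thin. In that case, for objects $X,Y\in\cat E$, the target $\Hom^{R}_{\cat B}\pr{p\pr X,p\pr Y}$ of the map in (O2C-1) is a Kan complex. Indeed, its equivalences are precisely the thin triangles (Remark \ref{rem:Hom_innfib}), and all triangles of $\cat B$ are thin. An inner fibration over a Kan complex is a cocartesian fibration exactly when it is an isofibration. Moreover, a morphism of the source is cocartesian exactly when it is an equivalence, because every morphism of the base is invertible. By Remark \ref{rem:Hom_innfib} again, the equivalences of $\Hom^{R}_{\cat E}\pr{X,Y}$ are the thin triangles. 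This gives the final assertion: left degenerate $p$-cocartesian triangles are exactly the thin left degenerate ones. We extend this to arbitrary triangles using left degenerations. A triangle $\sigma$ is thin iff its left degenerations are, which follows from the scaled anodyne maps of Remark \ref{rem:bicatfib}(1)(a) applied to the $3$-simplex $\rho$ whose face $\alpha$ is thin.

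For ``only if'', assume $p$ is an $\mathbf{O2C}$-fibration. Proposition \ref{prop:O2C} shows that $p$ is a bicategorical fibration, which is (1). Condition (O2C-4) gives $p$-cartesian lifts in the sense of \cite{AS22_I}. By Remark \ref{rem:fakecart}, now applicable because $p$ is a bicategorical fibration, these are the (fake) cartesian lifts demanded in (2).

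For ``if'', assume (1) and (2). We check the four conditions in turn.
\begin{enumerate}
\item (O2C-4) follows from (2) together with Remark \ref{rem:fakecart}.
\item (O2C-1) follows from the observation above: it suffices that $\Hom^{R}_{\cat E}\pr{X,Y}\to\Hom^{R}_{\cat B}\pr{pX,pY}$ is an isofibration. To get this, lift an equivalence, i.e.\ a thin triangle, in the base using the right lifting property of $p$ against the scaled anodyne horn inclusions. Concretely, lift $\Lambda^{2}_{1}$- or $\Lambda^{2}_{0}$-type data with the marked triangle, which is possible since $p$ is a bicategorical fibration.
\item (O2C-2) reduces to the statement that a $3$-simplex whose $\Lambda^{3}_{i}$-faces are thin has a thin remaining face. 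This is part of the definition of weak $\infty$-bicategory, via the anodyne maps (a) of Remark \ref{rem:bicatfib}, because cocartesian triangles are exactly the thin ones.
\item (O2C-3) requires that degenerate edges be strongly $p$-cartesian, which again reduces to the thin case. By Remark \ref{rem:fakecart}, a degenerate edge is $p$-cartesian, since identities are trivially fake cartesian. Since ``strongly cartesian'' asks for a filler with thin face $\Delta^{\{0,n-1,n\}}$ given that the corresponding horn face is thin, it coincides with ``cartesian'' here.
\end{enumerate}

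The main obstacle is the identification of $p$-cocartesian triangles with thin ones for triangles that are not left degenerate. This requires passing between $\sigma$ and its left degenerations. We plan to do this by filling $\Lambda^{3}_{1}$- and $\Lambda^{3}_{2}$-horns whose faces are degenerate or thin, and invoking the thinness-propagation in the definition of weak $\infty$-bicategories. We expect some care to be needed to produce such a horn in $\cat E$ compatibly over $\cat B$. For this we use that $p$ has the right lifting property against scaled anodyne maps.
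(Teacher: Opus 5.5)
Your proposal is correct and follows the paper's proof essentially step for step. The shared steps are: cocartesian triangles are identified with thin ones using the Kan-complex base of $\Hom^{R}$, left degenerations and three-out-of-four; the ``only if'' direction comes from Proposition \ref{prop:O2C} (with Remark \ref{rem:fakecart} for (2)); and (O2C-1)--(O2C-4) are checked exactly as in the paper.

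One small precision for (O2C-1). The $\Lambda^{2}_{0}$-horn with degenerate first edge and thin face is not among the generating scaled anodyne maps; it is a bicategorical trivial cofibration, which you would need to check. The cleaner route is the $n=2$ inner horn $\Lambda^{2}_{1}$ with degenerate first edge. It lifts morphisms of $\Hom^{R}_{\cat B}\pr{pX,pY}$ with prescribed target to thin triangles. The usual inverse-and-adjust argument for the inner fibration $\Hom^{R}_{\cat E}\pr{X,Y}\to\Hom^{R}_{\cat B}\pr{pX,pY}$ then gives lifts with prescribed source.
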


\begin{proof}
The last assertion is immediate from the definitions and the ``three
out of four'' property of thin triangles \cite[Remark 3.1.4]{GC}.
The ``only if'' part of the proposition follows from Proposition
\ref{prop:O2C}. For the ``if'' part, suppose that $p$ satisfies
conditions (1) and (2). We must show that $p$ satisfies conditions
(O2C-1) through (O2C-4) of Definition \ref{def:O2C}:

\begin{enumerate}[label=(O2C-\arabic*)]

\item \textit{For each pair of objects $X,Y\in\cat E$, the functor
\[
p_{X,Y}\from\Hom^{R}_{\cat E}\pr{X,Y}\to\Hom^{R}_{\cat B}\pr{p\pr X,p\pr Y}
\]
is a cocartesian fibration of $\infty$-categories.} We know that
$p_{X,Y}$ is an inner fibration (Remark \ref{rem:Hom_innfib}). Moreover,
$\Hom^{R}_{\cat B}\pr{p\pr X,p\pr Y}$ is a Kan complex because $\cat B$
is an $\infty$-category. Thus, our task is to show that every morphism
of $\Hom^{R}_{\cat B}\pr{p\pr X,p\pr Y}$ lifts to an equivalence
of $\Hom^{R}_{\cat E}\pr{X,Y}$ with a given source. This follows
from the definition of weak $\mathbf{S}$-fibrations.

\item \textit{The map $p$ is functorially fibered.} This follows
from the characterization of $p$-cocartesian triangles as thin triangles
and the ``three out of four'' property of thin triangles.

\item \textit{Every degenerate edge of $\cat E$ is strongly $p$-cartesian.}
Note that there is no distinction between strongly $p$-cartesian
edges and $p$-cartesian edges, because $p$-cocartesian triangles
are thin. Hence, it suffices to show that every degenerate edge of
$\cat E$ is $p$-cartesian. This is immediate from Remark \ref{rem:fakecart}.

\item \textit{The map $p$ has enough cartesian morphisms.} This
is part of the assumption.

\end{enumerate}

Thus we have shown that $p$ satisfies conditions (O2C-1) through
(O2C-4) of Definition \ref{def:O2C}, as required.
\end{proof}

We finally arrive at the proof of Proposition \ref{prop:fakeO2C}.
\begin{proof}
[Proof of Proposition \ref{prop:fakeO2C}]This follows from Remark
\ref{rem:fakecart} and Proposition \ref{prop:O2C-1}.
\end{proof}

\subsection{Endomorphism $\infty$-category}

A monoidal category can be regarded as a bicategory with a single
object. This perspective gives us a fully faithful left adjoint
\[
B\from\mathsf{MonCat}\to\mathsf{BiCat}_{[0]/}
\]
with right adjoint given by $\pr{\cat C,X}\mapsto\pr{\cat C\pr{X,X},\circ}$.
In this subsection, we record an $\infty$-categorical version of
this observation. 
\begin{prop}
\label{prop:deloop}There is an adjunction of $\infty$-categories
\[
\Mon\Cat_{\infty}\adj\pr{\BiCat_{\infty}}_{[0]/}
\]
whose left adjoint is fully faithful. The essential image consists
of essentially surjective functors $[0]\to\cat C$ of $\infty$-bicategories. 
\end{prop}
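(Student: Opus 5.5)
The plan is to build the left adjoint $B$ by delooping via $\infty$-operads, exhibit the endomorphism construction $\cEnd$ as its right adjoint through an explicit comparison of mapping spaces, and then identify the essential image using the triangle identities.

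\emph{Constructing $B$.} Regard a monoidal $\infty$-category as a cocartesian fibration $\cat M^{\t}\to\Del^{\op}$ satisfying the Segal condition. Straightening gives a functor $\Del^{\op}\to\Cat_{\infty}$, that is, a simplicial object in $\Cat_{\infty}$ with $X_0\simeq[0]$ and $X_n\simeq\cat M^{n}$. In the model of $(\infty,2)$-categories as complete 2-fold Segal spaces (Segal objects in $\Cat_{\infty}$), such an object is a reduced Segal object. Transport along the known equivalence between that model and $\infty$-bicategories, coming from the scaled-simplicial-set comparison of Gagna--Harpaz--Lanari. Reducedness gives a canonical pointing $[0]\to B\cat M$, so $B$ lands in $\pr{\BiCat_{\infty}}_{[0]/}$.

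\emph{Constructing $\cEnd$ and the adjunction.} This is Definition~\ref{def:end}. Given a pointed $\infty$-bicategory $\pr{\cat C,X}$, pass to its Segal-object model $\cat C_\bullet$. Form the fiber of $\cat C_\bullet\to\cat C_0^{\times(\bullet+1)}$ over the constant tuple at $X$; this is a reduced Segal object whose first term is $\cat C\pr{X,X}$, and it straightens to $\cEnd_{\cat C}\pr X^{\circ}$. To verify the adjunction, compute mapping spaces. A pointed functor $B\cat M\to\cat C$ is a map of Segal objects sending the unique object of $X_0$ to $X$. Because $\pr{B\cat M}_0=[0]$, such a map factors uniquely, up to contractible choice, through the fiber just described. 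This gives
\[
\Map_{[0]/}\pr{B\cat M,\cat C}\simeq\Map_{\Mon\Cat_{\infty}}\pr{\cat M,\cEnd_{\cat C}\pr X}.
\]
The equivalence is natural in both variables because everything is expressed as a pullback of mapping spaces in $\Fun\pr{\Del^{\op},\Cat_{\infty}}$.

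\emph{Full faithfulness and essential image.} Full faithfulness amounts to the unit $\cat M\to\cEnd_{B\cat M}\pr{\ast}$ being an equivalence. This holds because the fiber of $\pr{B\cat M}_\bullet$ over the basepoint is $\pr{B\cat M}_\bullet$ itself, since $\pr{B\cat M}_0=\ast$. For the essential image, the triangle identities reduce the question to when the counit $B\cEnd_{\cat C}\pr X\to\cat C$ is an equivalence. This counit is fully faithful by construction: it induces $\cat C\pr{X,X}$ on the only mapping category. It is therefore an equivalence exactly when it is essentially surjective, that is, when $[0]\to\cat C$ is essentially surjective. Conversely, every $B\cat M$ is essentially surjective from $[0]$.

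\emph{Main obstacle.} The hardest step is making the comparison between $\infty$-bicategories (scaled simplicial sets) and Segal objects in $\Cat_{\infty}$ precise and functorial, in particular its compatibility with the mapping-category functor of Definition~\ref{def:map_cat}. I would first try to cite the published equivalence of models and check the mapping categories on the nerve $N^{\sc}$ of strict $\SS^{+}$-categories, where $\cEnd$ is literally $\cat C_+\pr{X,X}$ with composition. If that route is too indirect, the alternative is to present one-object $\SS^{+}$-enriched categories as monoid objects in $\SS^{+}$ and check the adjunction at the model-categorical level, using Recollection~\ref{recall:Bergner}.
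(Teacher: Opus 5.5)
Your primary route (reduced Segal objects in $\Cat_{\infty}$, transported to $\infty$-bicategories) differs from the paper's. The paper takes what you list as your fallback: the Quillen adjunction $\Alg\pr{\SS^{+}}\rightleftarrows\pr{\mathsf{Cat}_{\SS^{+}}}_{[0]/}$, given by $M\mapsto BM$ and $\pr{\cat C,X}\mapsto\cat C\pr{X,X}$. It identifies the underlying $\infty$-categories with $\Mon\Cat_{\infty}$ and $\pr{\BiCat_{\infty}}_{[0]/}$ via \cite[Theorem 4.1.8.4]{HA} and a slice-of-a-localization result, then checks the unit directly.

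As written, your primary route has a genuine gap at the first step. The $(\infty,2)$-categories in the Segal model are the \emph{complete} Segal objects. The reduced Segal object $X_\bullet$ with $X_0\simeq *$ and $X_1\simeq\cat M$ is complete only when the $\infty$-groupoid of $\otimes$-invertible objects of $\cat M$ is contractible; in particular this needs $\Aut\pr{\mathbf{1}}\simeq *$. That already fails for a nontrivial discrete group regarded as a monoidal category, and for spectra, where $\Aut\pr{\mathbb{S}}\simeq GL_1(\mathbb{S})$. So $X_\bullet$ is not an object of the model you transport along. The object that actually represents $B\cat M$ is its completion, whose $0$-th term is the delooping of the $\infty$-group of invertible objects, not a point. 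Both of your key computations use $\pr{B\cat M}_0\simeq *$: the factorization of a pointed map through the fiber over $X$, and the description of the unit as the fiber of $\pr{B\cat M}_\bullet$ over the basepoint. Neither holds for the completed object as stated.

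The repair is standard, but you need to make it. Write $B^{\mathrm{pre}}\cat M$ for the reduced Segal object and $B\cat M$ for its completion. Completion is left adjoint to the inclusion of complete Segal objects. Hence, for complete $\cat C$, pointed maps $B\cat M\to\cat C$ are the same as pointed maps $B^{\mathrm{pre}}\cat M\to\cat C$ in $\Fun\pr{\Del^{\op},\Cat_{\infty}}$, and there your coskeleton/fiber argument is valid. The completion map is fully faithful, so it induces an equivalence on fibers over the basepoint; this gives the unit. Your counit and essential-image argument then goes through, since every object of $B\cat M$ is equivalent to the basepoint.

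The paper's $\SS^{+}$-enriched route never meets this issue. A one-object fibrant $\SS^{+}$-category is already a valid model, since Recollection \ref{recall:Bergner} imposes no completeness condition. The right adjoint is also literally the hom-object $\cat C_{+}\pr{X,X}$ of Definition \ref{def:map_cat} with its composition. That makes it direct that $\cEnd$ of a one-object bicategory recovers the original monoidal category, which is how Definition \ref{def:end} is used later. Once repaired, your route buys a transparent formula for $\cEnd$ as a fiber and a clean mapping-space proof of the adjunction. Its cost is the model comparison, whose compatibility with mapping categories you would still have to verify.
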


\begin{proof}
Let $\Alg\pr{\SS^{+}}$ denote the category of monoid objects in $\SS^{+}$.
By \cite[Proposition 4.1.8.3]{HA}, $\Alg\pr{\SS^{+}}$ has a model
structure whose weak equivalences and fibrations are detected by the
forgetful functor $\Alg\pr{\SS^{+}}\to\SS^{+}$. The inclusion $\Alg\pr{\SS^{+}}\hookrightarrow\pr{\mathsf{Cat}_{\SS^{+}}}_{[0]/}$
admits a right adjoint, which carries a pointed $\SS^{+}$-enriched
category $\pr{\cat C,X}$ to the mononid object $\cat C\pr{X,X}\in\Alg\pr{\SS^{+}}$.
The resulting adjunction

\[
\Alg\pr{\SS^{+}}\adj\pr{\mathsf{Cat}_{\SS^{+}}}_{[0]/}
\]
is a Quillen adjunction. Moreover, the underlying $\infty$-categories
of $\Alg\pr{\SS^{+}}$ and $\pr{\mathsf{Cat}_{\SS^{+}}}_{[0]/}$ can
be identified with $\Mon\Cat_{\infty}$ and $\pr{\BiCat_{\infty}}_{[0]/}$
by \cite[Theorem 4.1.8.4]{HA} and \cite[Corollary 7.6.13]{HCHA},
respectively. We thus get an induced adjunction
\[
\Mon\Cat_{\infty}\adj\pr{\BiCat_{\infty}}_{[0]/}.
\]
By inspection, the unit of this adjunction is an equivalence, so the
left adjoint is fully faithful. The claim follows.
\end{proof}

\begin{rem}
A version of Proposition \ref{prop:deloop} appears in \cite[Theorem 6.3.2]{GH15}.
\end{rem}

\begin{defn}
\label{def:end}Let $\cat C$ be an $\infty$-bicategory, and let
$X\in\cat C$ be its object. We write $\cEnd_{\cat C}\pr X^{\circ}$
for the image of $\pr{\cat C,X}\in\pr{\BiCat_{\infty}}_{[0]/}$ under
the functor $\pr{\BiCat_{\infty}}_{[0]/}\to\Mon\Cat_{\infty}$, and
call it the \textbf{endomorphism monoidal $\infty$-category} of $\cat C$
at $X$.
\end{defn}

\subsection*{Acknowledgment}

This work was supported by JSPS KAKENHI Grant Number 24KJ1443.

\providecommand{\bysame}{\leavevmode\hbox to3em{\hrulefill}\thinspace}
\providecommand{\MR}{\relax\ifhmode\unskip\space\fi MR }
\providecommand{\MRhref}[2]{%
  \href{http://www.ams.org/mathscinet-getitem?mr=#1}{#2}
}
\providecommand{\href}[2]{#2}

\end{document}